\renewcommand{\leq}{\leqslant}
\renewcommand{\geq}{\geqslant}
\newcommand{\crit}{\text{\tiny{cr}}}
\DeclareFixedFont{\beaupetit}{T1}{ftp}{b}{n}{2cm}
\newcommand{\RW}{ {\normalfont\text{\tiny{RW}}}}
\newcommand{\K}{\mathrm{K}}
\newtheorem{example}{Example}
\newtheorem{theorem}{Theorem}
\newtheorem{definition}[]{Definition}
\newtheorem{proposition}[theorem]{Proposition}
\newtheorem{lemma}[theorem]{Lemma}
\newtheorem{corollary}[theorem]{Corollary}
\theoremstyle{definition}
\newtheorem{remark}[theorem]{Remark}
\def\llbracket{[\hspace{-.10em} [ }
\def\rrbracket{ ] \hspace{-.10em}]}
\title{\textsc{Universality for catalytic equations \\ and fully parked trees}}
\author{
Alice \textsc{Contat}\thanks{Universit\'e Sorbonne Paris Nord.\hfill  \href{mailto:alice.contat@math.cnrs.fr}{\texttt{alice.contat@math.cnrs.fr}}}
\qquad\&\qquad
Nicolas \textsc{Curien}\thanks{Universit\'e Paris-Saclay.\hfill  \href{mailto:nicolas.curien@gmail.com}{\texttt{nicolas.curien@gmail.com}}}
}
\date{}
\begin{document}
\maketitle 

\abstract{ We show that critical parking trees conditioned to be fully parked  converge in the scaling limits towards the Brownian growth-fragmentation tree, a self-similar Markov tree different from Aldous' Brownian tree recently introduced and studied in \cite{bertoin2024self}. As a by-product of our study, we prove that positive non-linear polynomial equations involving a catalytic variable display a universal polynomial exponent $5/2$ at their singularity, confirming a conjecture by Chapuy, Schaeffer and  Drmota \& Hainzl. Compared to previous analytical works on the subject, our approach is probabilistic and exploits an underlying random walk hidden in the random tree model.}

\section {Introduction}

\paragraph{Universality for polynomial equation with a catalytic variable.} Starting with the pioneer works of Tutte on the enumeration of planar maps \cite{Tutte:census3,Tut62,Tut63}, the idea of introducing a ``catalytic" variable to solve an equation involving generating functions has been an extremely fruitful idea. Formally those equations are of the form 
$$ P\big(x,y, F(x,y),F_1(x),F_2(x),... , F_{\K}(x)\big)=0,$$
where $P$ is a polynomial, $F(x,y)$ is the generating function of interest, and the series $F_i(x)$, which are part of the unknowns, are typically related to the coefficient of $y^i$ in $F(x,y)$. The variable $y$ is called the \textbf{catalytic} variable since its standard fate is to be eliminated to get $[y^0]F(x,y)$. Building on the quadratic and kernel methods, Bousquet-M\'elou \& Jehanne \cite{BMJ06} developed a systematic approach  to solve those equations. This technique is proved to always work  in the case where 
  \begin{eqnarray} \label{eq:1} F(x,y) = x Q\big(y,F, \Delta^{(1)} F,\Delta^{(2)} F, ... , \Delta^{(\K)} F\big),  \end{eqnarray} where $Q$ is polynomial and $\Delta^{(i)} F=\Delta^{(i)} F(x,y)$ are the \textbf{discrete differences} of $F(x,y)$ with respect to $y$ i.e. 
  \begin{eqnarray} \label{eq:discretederivative} \Delta^{(i)} F=\Delta^{(i)} F(x,y) := \dfrac{F(x,y) -\displaystyle \sum_{j=0}^{i-1} y^j[y^j]F(x,y)}{y^{i}}, \quad \mbox{ for }i \geq 1.  \end{eqnarray} In this case, $[y^{0}]F(x,y)$ is an algebraic generating function of $x$ amenable to singularity analysis \cite{FS09} via Puiseux's expansion. In particular, when the dominant singularity is unique, the coefficients of $F(x,y)$ satisfy
 \begin{eqnarray} \label{eq:expo} [x^n][y^0]F(x,y) \underset{n \to \infty}{\sim} C \cdot x_{\crit}^{-n}\cdot n^{-\alpha}, \quad \mbox{ for some } x_{\crit}>0,\quad  C>0 \mbox{ \ \ and } \alpha \in \mathbb{Q} \backslash \{1,2,3,...\} .  \end{eqnarray}
This method has been applied with tremendous success for many combinatorial problems such as enumeration of many (many) classes of planar maps \cite{bousquet2008rational}, Tamari intervals \cite{chapuy2024scaling}, description trees \cite{CoriSchaefferDescription} or fighting fishes \cite{duchi2017fightingbis} just to name a few. In all these cases, the exponent $\alpha$ popping-up in \eqref{eq:expo} is always  either $ \frac{3}{2}$, characteristic of ``tree'' enumeration,  or $ \frac{5}{2}$, characteristic of ``planar map" enumeration.  This surprising fact has been rigorously proved for equation \eqref{eq:1} with {positive coefficients} by Drmota, Noy \& Yu \cite{drmota2022universal} \textbf{in the case ${\K =1}$} (see also \cite{schaeffer2023universal,duchi2024order}) and extended to the case $ \K=2$, modulo an extra assumption in a technical tour de force by  Drmota \& Hainzl \cite{drmotaHainzl}. One of the outcome of our work is to extend this result for arbitrary discrete differences $\K \geq1$: 
 \begin{theorem} \label{thm:5/2} Let $F$ be the solution to Equation \eqref{eq:1} with a polynomial $Q$ having positive coefficients. We suppose that $Q$ obeys our standing assumptions on aperiodicity and branching conditions, see~(*) below. Then there exists $x_\crit \in (0, \infty)$ and $C >0$ so that 
 $$ [x^n][y^0]F(x,y) \sim C\cdot (x_\crit)^{-n} \cdot n^{-5/2}, \quad \mbox{ as }n \to \infty.$$
 \end{theorem}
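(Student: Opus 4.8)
\medskip
\noindent\textbf{Strategy of proof.} The plan is to set aside singularity analysis of the algebraic system obeyed by $[y^{0}]F$ and instead read \eqref{eq:1} \emph{combinatorially}. Because $Q$ has non-negative coefficients, the formal solution $F(x,y)$ of \eqref{eq:1} is the generating series of a family of finite plane trees carrying a local decoration dictated by $Q$, with $x$ marking vertices and $y$ marking a non-negative ``flux'' statistic read at the root; the truncations $\sum_{j<i}y^{j}[y^{j}]F$ appearing in the discrete differences \eqref{eq:discretederivative} are precisely what forces this flux to be constrained from below along the exploration of the tree, with memory (range) at most $\K$. Under this dictionary $[y^{0}]F(x,y)$ enumerates the \textbf{fully parked} trees --- those whose root flux vanishes --- and, after fixing the unique critical tuning of the weight sequence, for which the branching part of our standing assumptions~(*) guarantees that the step law of the exploration is centred with finite positive variance, Theorem~\ref{thm:5/2} reduces to the statement that the critical weight of the set of size-$n$ fully parked trees is asymptotic to $C\cdot n^{-5/2}$ times the $n$-th power of the inverse radius.

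Next I would exhibit the \textbf{hidden random walk}. Exploring a Boltzmann-weighted decorated tree in depth-first order while recording at each visited vertex both its \L{}ukasiewicz increment and the local surplus of the decoration encodes the pair (tree, decoration) as a two-dimensional lattice path $(X_{k},W_{k})_{0\le k\le n}$ with increments i.i.d.\ under the critical weight; the bounded range $\K$ lets one fold the finite memory of the decoration rule into a bounded enlargement of the state, so that $(X,W)$ is a deterministic functional of a centred, finite-variance, aperiodic random walk --- the aperiodicity clause of~(*) being used exactly to remove lattice obstructions and to ensure the dominant singularity is unique, consistently with \eqref{eq:expo}. In these coordinates ``size $n$'' forces $X$ to first hit $-1$ at time $n$, criticality forces the $W$-coordinate to be a bridge, and ``fully parked'' forces $W$ to respect a one-sided barrier throughout. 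A cycle-lemma argument in the $X$-coordinate then turns the count of size-$n$ fully parked trees into $\tfrac1n$ times the $\Pq$-probability that the bivariate walk of length $n$ is such a barrier-respecting bridge.

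It remains to evaluate that probability sharply. The expectation is that it is $\sim c\, n^{-3/2}$: heuristically a bivariate local limit theorem contributes $n^{-1}$ for pinning both coordinates, and confining one coordinate to a half-space for the whole trajectory costs a further $n^{-1/2}$, so that the cycle-lemma factor $\tfrac1n$ produces the claimed $n^{-5/2}$; tracking the constants in the relevant fluctuation and local estimates (following Vatutin--Wachtel and Denisov--Wachtel, combined with a local central limit theorem) should also pin down the multiplicative constant $C$. Carrying this out honestly is where the work lies: one needs a \emph{local} limit theorem for the bivariate walk \emph{conditioned to stay in a half-space}, uniform enough to survive multiplication by $\tfrac1n$ and robust to the interaction between the half-space constraint and the $X$-first-passage constraint, and --- crucially --- one must do this for the enlarged-state walk produced by an \emph{arbitrary} discrete-difference order $\K$, where the decoration rule genuinely has memory. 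This last point is exactly what obstructs the analytic approach beyond $\K\le 2$ and is, I expect, the main difficulty here as well; the pay-off of the probabilistic route is that $\K$ only inflates the state space by a bounded amount and does not affect the Brownian scaling, so the exponent stays pinned at $5/2$.
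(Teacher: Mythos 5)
Your starting point matches the paper: interpret $[y^{0}]F$ as the partition function of fully parked trees, go probabilistic, and locate a hidden random walk. But the walk you propose, and the estimates you hang on it, are not the ones that make the paper go, and I do not think your version can be pushed through as written.

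The paper's random walk is \emph{not} a depth-first \L ukasiewicz path paired with a bounded local flux coordinate. It is the flux process $(S_i)$ read along the \emph{locally largest branch} of the labelled tree, together with the reproduction measure $\eta$, and the central mechanism is the ``Key formula'' (Proposition~\ref{prop:llrw}): under $\mathbb{P}_p$ this exploration is an $h$-transform, with $h=\widetilde W_\cdot$, of a walk with i.i.d.\ increments of law $\nu$. Crucially $\nu$ has a heavy left tail $\nu(-k)\approx k^{-\beta}$ inherited from the singular behaviour of $\widetilde W_p$, so the walk is in the domain of attraction of a spectrally negative $(\beta-1)$-stable Lévy process, not of Brownian motion. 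The exponent $\beta$ is then \emph{forced} to lie in $\{3/2,5/2\}$ by passing the Key formula to the scaling limit and using Lamperti's representation (Theorem~\ref{thm:magic}); the value $5/2$ at $x=x_{\crit}$ is not assumed by centring a finite-variance increment law, it is \emph{derived}. Your appeal to ``finite positive variance'' and ``Brownian scaling'' is therefore aimed at the wrong object: the exponent $5/2$ in this paper comes from $3/2$-stable fluctuations of the label along branches, and the tree-size factor $p^2$ (hence $\alpha=5/2$ via $\mathbb{P}_0^\circ(\mathrm{Vol}^\circ\geq n)\threesim n^{-\alpha+2}$, a pointed-tree / big-jump estimate, and stable time-scaling $p^{3/2}$), not from a bivariate Gaussian local limit theorem with a half-space ballot cost.

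There are two concrete gaps in your route that I do not see how to close. First, the claim that the ``bounded range $\K$'' lets you ``fold the finite memory of the decoration rule into a bounded enlargement of the state'' so that $(X,W)$ is a functional of a finite-state, finite-variance walk is exactly the reduction that the Remark following Proposition~\ref{prop:tutte} identifies as special to $\K=1$ (Drmota--Noy--Yu's change of variable, understood bijectively via edge redirection): it is open for general $\K$, and in the paper's multi-type Bienaymé--Galton--Watson encoding the type (flux) $\phi(u)$ genuinely ranges over all of $\mathbb{Z}_{\geq 0}$ and scales like $\sqrt n$ under $\mathbb{P}_0^{x_\crit}$ conditioned on size $n$; there is no bounded state space. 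Second, ``fully parked'' is not a one-sided barrier on a single coordinate $W$: the nonnegativity of $\phi$ is a constraint on the partial sums of the local balance over the \emph{nested subtree intervals} carved out by $X$, and the spot constraint $\phi(v)\ge s(uv)$ couples edges to the child's flux. So the event you need to estimate is not ``a bivariate bridge confined to a half-space'' but a random family of interval constraints indexed by $X$; the Denisov--Wachtel/Vatutin--Wachtel machinery you invoke is not set up for that, and making it so is precisely where the difficulty lies. You acknowledge this interaction as ``the main difficulty'', but the resolution you expect (that $\K$ only inflates the state by a bounded amount and leaves the scaling Brownian) is the step that is missing, and the paper's solution is to sidestep the issue entirely by exploring along one branch and absorbing the unbounded flux into the $h$-transform rather than into the state.

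What your sketch does share with the paper is the arithmetic target $n^{-5/2}$ and the instinct that a first-passage/cycle-lemma factor $1/n$ should appear alongside local and confinement estimates; but in the paper the remaining $n^{-3/2}$ comes from $\mathbb{P}_0(\mathrm{Vol}^\circ\geq n)\threesim n^{-3/2}$, proved via a big-jump scenario for the heavy-tailed $\nu$-walk along the pointed branch (one negative jump of order $\sqrt n$ producing a subtree of volume $\geq n$), together with the renewal/ladder estimates of Section~\ref{sec:fluctuations}, not via a bivariate Gaussian LLT in a half-space. If you want to pursue the \L ukasiewicz-type encoding you would at minimum need (a) an explicit finite-memory encoding of the $\K$-step discrete differences that keeps increments i.i.d.\ --- currently unknown for $\K\geq2$ --- and (b) a local limit theorem for the walk conditioned on a family of nested-interval nonnegativity constraints rather than a half-space; absent both, I do not think the proposal proves the theorem.
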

The conditions of branching and aperiodicity are described below in Section \ref{sec:2.1}. The branching assumption implies that the equation is non-linear, whereas the aperiodicity condition is roughly equivalent to the fact that the degree of the catalytic variable may increase and decrease and is not contained in a strict sub-lattice. Both conditions will be enforced in the rest of the paper. Contrary to the analytical approach taken in \cite{drmota2022universal,drmotaHainzl} our techniques to prove the above theorem are \textbf{probabilistic} and heavily rely on \textbf{random walks} and {random trees} as well as their scaling limits, the stable L\'evy processes and self-similar Markov trees introduced in \cite{bertoin2024self}. In particular, the variable $y$ becomes an \textbf{explicative variable} instead of a discredited \textbf{catalytic one}.\\

Let us caricature the overall idea: A positive equation of the form \eqref{eq:1} is naturally associated with an integer-type Bienaym\'e--Galton--Watson tree, the (degree of the) catalytic variable $y$ representing the (non-negative) label while the (degree of the) variable $x$ accounts for the size of such trees. Such labeled trees have been encountered many times in the literature and especially recently in connection with the parking process on random trees. The fact that \eqref{eq:1} involves only discrete differences implies that the evolution of labels along long branches of those integer-type Bienaym\'e--Galton--Watson are intimately connected to discrete random walks with i.i.d.\ increments. In particular, the scaling limits of such trees which is a self-similar Markov tree in the sense of \cite{bertoin2024self}, must be intimately connected to stable L\'evy process of exponent $\alpha-1$. Among all (distributions of) stable self-similar Markov trees this singles out the cases  $\alpha \in \{3/2, 5/2\}$. The case $\alpha = 3/2$ corresponding to the ``subcritical'' case where the underlying random tree is the Brownian Continuum Random Tree (CRT), while $\alpha = 5/2$ corresponds to the critical case $x = x_{c}$ where the underlying random tree is the Brownian Growth-Fragmentation tree that shows up inside the Brownian sphere, \cite{BBCK18,BCK18,da2025growth,le2020growth}. Let us now develop our approach in more details:

 \paragraph{Parking on trees and their scaling limits.} The first step is to interpret the function $F(x,y)$ solving \eqref{eq:1} for polynomial $Q$ with positive coefficient as the generating series of a model of \textbf{parking on trees}. The study of {parking} on trees has witnessed many developments in recent years in probability \cite{aldous2023parking,contat2020sharpness,Contat21+,ConCurParking,curien2022phase,GP19} and in combinatorics \cite{chen2021enumeration,contat2022last,duchi2024order} after the pioneer work of Lackner \& Panholzer \cite{LaP16,panholzer2020parking}. In our case, the parking model we consider is as follows: Consider  a finite rooted plane tree $ \mathrm{t}$ with two labeling functions $$\ell^{ \mathrm{car}} : \mathrm{Vertices}( \mathrm{t}) \to \{0,1,2,...\} \quad \mbox{ and }\ell^{ \mathrm{spot}} : \mathrm{Edges}( \mathrm{t}) \to \{0,1,2,...\}.$$
 The label $\ell^{ \mathrm{car}}$ is interpreted as \textbf{car arrivals on the vertices} of $ \mathrm{t}$, whereas $\ell^{ \mathrm{spot}}$ are \textbf{parking spots capacities of the edges}. We then imagine that the cars arrive on their arrival vertex and go down the tree to park on the edges (according to their parking capacities) as soon as possible, see Figure~\ref{fig:parking1} for an illustration. An Abelian property of the model shows that the resulting configuration does not depend on the order in which the cars have been parked. We say that $$ \mathbbm{t} = ( \mathrm{t}, \ell^{ \mathrm{car}},\ell^{ \mathrm{spot}})$$ is \textbf{fully parked} if there are no available spots on the edges after the parking process. We shall only consider such situations below and for each $u \in  \mathrm{Vertices}(\mathrm{t})$ we denote by $\phi_{ \mathbbm{t}}(u)$ the \textbf{flux} of cars at~$u$, i.e. the number of cars that exited the vertex $u$ towards the root of the tree (if $u = \varnothing$ is the root of the tree, this corresponds to the number of cars that did not find any parking spot). We then denote by  $$ \mathbf{t}= ( \mathrm{t}, \phi) \equiv ( \mathrm{t}, ( \phi_{ \mathbbm{t}}(u))_{u \in \mathrm{t}}),$$ the obtained $  \mathbb{Z}_{\geq 0}$-labeled tree, see Figure \ref{fig:parking1}. 
 
 \begin{figure}[!h]
  \begin{center}
  \includegraphics[width=13cm]{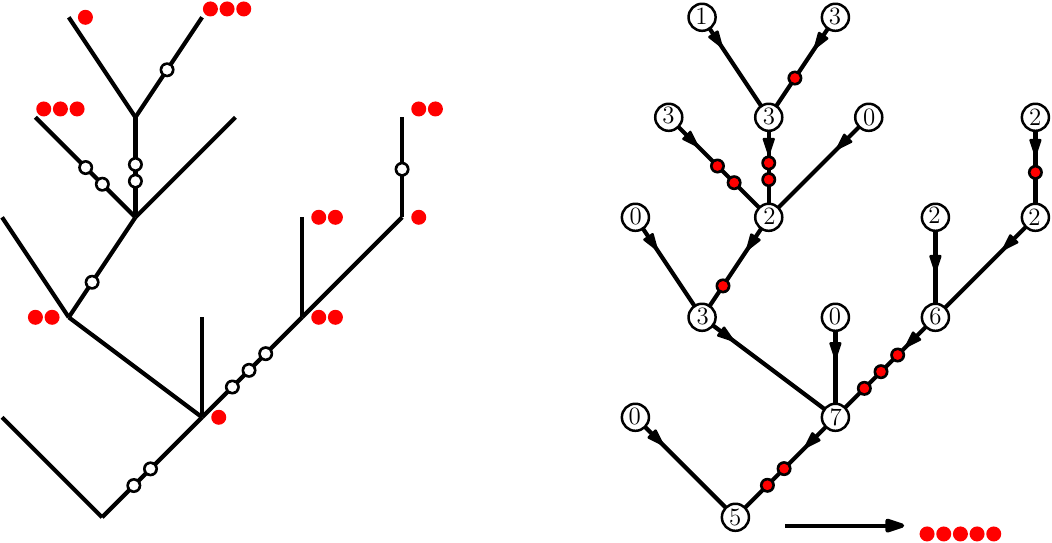}
  \caption{Left: An example of a parking tree decorated with car arrivals (in red) and parking spots on the edges (in white). Right: After the parking process, the resulting tree is fully parked (no spot is empty) and gives rise to a $ \mathbb{Z}_{\geq0}$-labeled tree where the label of a vertex is the number of cars that exited this vertex downwards. \label{fig:parking1}}
  \end{center}
  \end{figure}
  We shall denote by $ \mathrm{FPT}_p^n$ the set of all fully parked trees with outgoing flux $\phi_{ \mathbbm{t}}(\varnothing) =p$ and $n$ vertices in total. We shall now put a measure on those objects using weights. A \textbf{local weight function} is collection of non-negative numbers 
$$ w_{c,k ,(s_1, ... , s_k)} \quad \mbox{ for } c,k,s_1,... , s_k \in  \mathbb{Z}_{\geq 0}, $$ and we write $w_{c,0, \varnothing}$ when $k=0$.
This can be used to produce a sigma-finite measure $ \mathrm{w}$ on the set of all finite fully parked trees by setting 
 \begin{eqnarray} \label{def:measurew}  \mathrm{w}( \mathbbm{t})=\mathrm{w}( \mathrm{t}, \ell^{ \mathrm{car}},\ell^{ \mathrm{spot}}) = \prod_{u \in \mathrm{t}}w_{ \ell^{ \mathrm{car}}(u),k_u (\mathrm{t}) ,(\ell^{ \mathrm{spot}}(e_i(u, \mathrm{t})))_{1 \leq i \leq k_{u}(  \mathrm{t})}},  \end{eqnarray} where $k_u (\mathrm{t})$ is the number of children of $u$ in $ \mathrm{t}$ and $e_1(u, \mathrm{t}), ..., e_{k_u( \mathrm{t})}(u, \mathrm{t})$ are the edges above $u$ enumerated from left to right. For $x \geq 0$ and $p \in \{0,1,2, ...\}$, we form the \textbf{partition functions} of the measure $w$ by putting 
  \begin{eqnarray} \label{def:wpx} W_p^x = \sum_{n \geq 1}\sum_{ \mathbbm{t} \in \mathrm{FPT}_p^n} x^n \cdot   \mathrm{w}( \mathbbm{t}).	 \end{eqnarray}	
  A priori, this defines a family of formal power series $ \in \mathbb{R}\llbracket x \rrbracket$. It is then easy to see that for any polynomial $Q$ with positive coefficients, there exists a local weight function $w_{c,k,(s_1, ... , s_k)}$ so that the solution $F$ to \eqref{eq:1} satisfies 
$$ [y^p]F(x,y) = W_p^x, \quad \mbox{ for any } p \in \{0,1,2, ... \},$$ at least in terms of formal power series.   See Section \ref{sec:2.1} for details, see also \cite{duchi2024order}. 

\begin{remark}[On the case $\K=1$] Notice that the case $ { \K=1}$ corresponds to the case when there are at most one parking spot per edge. In \cite{drmota2022universal} an ingenious change of variable maps the problem to a finite positive system of polynomial equations (in probabilistic terms, a Bienaym\'e--Galton--Watson tree with finitely many types) from which they derive Theorem \ref{thm:5/2} using the Drmota--Lalley--Woods theorem, see also \cite{schaeffer2023universal}. This change of variable has   recently been understood bijectively in \cite{duchi2024order} using a redirection of the edges of the tree. Such redirections were previously  used to couple the parking process on trees to a random configuration model, see \cite{ConCurParking,Contat21+}. In particular, the critical threshold for the parking phase transition is explicitly known in the case $ { \K=1}$, see \cite{curien2022phase,contat2020sharpness,Contat21+}, showing once more that the case $\K=1$ is special compared to the general case.
\end{remark}

Coming back to the partition functions $ W_p^x$, we then show (Lemma \ref{lem:xcyc}) that there exists $x_\crit>0$ such that $W_p^x < \infty$ for all $p$'s if and only if $x \leq x_\crit$. In particular, for any $x \in (0,x_{\crit}]$, one can consider the probability measure   \begin{eqnarray} \label{def:boltz} \frac{1}{W_p^x} x^{ \# \mathrm{t}}  \mathrm{w} ( \mathrm{d} \mathbbm{t}),  \end{eqnarray} on $\cup_{n \geq 1} \mathrm{FPT}^n_p$ which is called in these pages the $x$-\textbf{Boltzmann law} (critical if $ x=x_{\crit}$) and denoted by $ \mathbb{P}_p^x$ (it is part of our standing assumption that $W_{p}^{x}>0$ for all $p \geq 0$ and all $x>0$). After a pushforward by the parking process
$$\mathbbm{t} = ( \mathrm{t}, \ell^{ \mathrm{car}},\ell^{ \mathrm{spot}}) \longmapsto  \mathbf{t} = ( \mathrm{t}, (\phi(u))_{u \in \mathrm{t}}),$$
we observe that under $ \mathbb{P}^{x}_p$, the $ \mathbb{Z}_{\geq 0}$-labeled tree $( \mathrm{t}, \phi)$ is a \textbf{multi-type Bienaym\'e--Galton--Watson tree} where types are in $ \mathbb{Z}_{\geq 0}$ and whose offspring distribution is explicitly given in terms of $(W_p^x : p \geq 0)$ and the local weight function $w$, see Proposition \ref{prop:GW}. In particular, the large scale behavior of those trees is governed by an exponent $\beta^{x}$ appearing in the asymptotic of $W^{x}_{p}$ 
  \begin{eqnarray} W_{p}^{x} \quad \underset{p \to \infty}{\sim}  \mathrm{C}^{x} (y_{\crit}^x)^{-p} \cdot p^{-\beta^{x}}, \label{def:betax}  \end{eqnarray} for some $C^x>0$ (see Lemma \ref{lem:asympW} and Proposition \ref{prop:nu_proba}). Although the existence of such expansions is straightforward using analytic combinatorics, the value of the exponent $\beta^{x}$ (as it was the case for $\alpha$ above) is  a priori \textit{unknown}\footnote{For each model its value can be algorithmically computed, but may a priori vary from one model to the other.}, and we shall pin it using the scaling limits of the above random trees, more precisely of the evolution of labels along their branches.

  \paragraph{An underlying random walk.} As we said above, the key to our approach is to use a \textbf{random walk} hidden in the parking model on trees. Such random walks have already been identified in the enumeration of planar maps, see \cite{CLGpeeling}, the inspiring works of Timothy Budd \cite{Bud15,BudOn} or the  monograph \cite[Section 5.1]{CurStFlour}. Our work can be seen as a vast generalization and unification of those results. To describe this random walk, consider the random decorated tree $ \mathbf{t}=( \mathrm{t}, \phi)$ under $ \mathbb{P}^{x}_p$ as $p \to \infty$. We prove in Section \ref{sec:rw} that, among the children of the root vertex labeled $p$, typically only one of them has a label of order $p$, whereas the other ones are of order $ \mathcal{O}_{ \mathbb{P}_{p}^{x}}(1)$ as $p \to \infty$. More precisely, we show in Proposition \ref{prop:nu_proba} that the difference between $p$ and the maximal label of the children of the root converges in distribution under $ \mathbb{P}_p^x$ as $p \to \infty$ towards a probability measure $\nu^{x}$ which, under our standing hypotheses, is supported by $\{... , -3,-2,-1,0,1,... , \K \}$  and satisfies, thanks to \eqref{def:betax} that 
$$ \nu^{x}(-k) \sim \mathrm{Cst}^{x} \cdot k^{-\beta^{x}} \mbox{ as } k \to \infty, \quad \mbox{ for some } \mathrm{Cst}^{x}>0,$$ 
where $\beta^{x}$ is the unknown exponent appearing in the asymptotic of $W_{p}^{x}$. Using an intrinsic property of the model, which can be thought of as an invariance under translation, we are able to describe the evolution of the labels along the locally largest branch of our trees as a $h$-transformation of a $\nu^{x}$-random walk where the function $h$ is merely $W_{p}$ itself. This is done in Proposition \ref{prop:llrw} and is named in these pages the ``Key formula'' because of its uttermost importance. For example, using the Key formula we are able to prove (without any calculations) that  as soon as $\nu^{x}$ has a finite first moment, \textbf{it must be centered}, so that random walks with i.i.d.\ increments of law $\nu^{x}$ converge in the scaling limit towards the spectrally negative $(\beta^{x}-1)$-stable L\'evy process. Passing then the Key formula to the scaling limit, we obtain an equation (see Eq. \eqref{eq:keyformulalimit} below) involving the exponent $\beta^{x}$ which enables us to pin-down its value:
\begin{equation}\label{eq:beta3252} \beta^{x} \in \left\{ \frac{3}{2}, \frac{5}{2}\right\}.\end{equation}
In fact, the case $ \beta^{x} = \frac{3}{2}$ corresponds to the subcritical case $x < x_{\crit}$ whereas $ \beta^{x} = \frac{5}{2}$ corresponds to the critical case $ x= x_{\crit}$, see Proposition \ref{prop:dichotomyxxc}. We insist on the fact that although the inspiration is taken from the theory of self-similar Markov trees (see below), the derivation of \eqref{eq:beta3252} is self-contained and does not require prior knowledge on continuous random trees, but still uses fine properties of stable L\'evy processes, in particular, their Lamperti representation. 
  
  Gathering-up the pieces, we prove the following universal asymptotics:

\begin{corollary}[Universality of asymptotics for partition functions] \label{cor:3/25/2} Under our standing assumptions~$(*)$, the functions $x \mapsto [y^p]F(x,y)$ for $p \geq 0$ have a common radius of convergence $x_{\crit} \in (0,\infty)$. For $ x \in (0, x_ \crit]$, if $y_{\crit}^x$ is the radius of convergence of $y\mapsto F(x_{\crit},y)$ then $y_{\crit}^x \in (0,\infty)$ and $F(x,y^x_{\crit}) < \infty$. Furthermore, for each $x \in (0, x_\crit]$ there exists some constants $C^x >0$ such that
$$ \mbox{for } x <x_{\crit}, \quad  W_p^x := [y^p] F(x,y) \sim \mathrm{C}^x \cdot (y_\crit^x)^{-p} \cdot p^{-3/2}, \quad \mbox{ as }p \to \infty,$$
$$ \mbox{for } x =x_{\crit}, \quad  W_p^{x_{\crit}} := [y^p] F(x_{\crit},y) \sim \mathrm{C}^{x_{\crit}} \cdot (y_{\crit}^{x_\crit})^{-p} \cdot p^{-5/2}, \quad \mbox{ as }p \to \infty.$$
\end{corollary}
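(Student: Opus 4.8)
The plan for proving Corollary \ref{cor:3/25/2} is to collect the ingredients that the excerpt has announced as already available and assemble them in the following order.

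First, I would establish the finiteness and radius-of-convergence statements. By Lemma \ref{lem:xcyc} there is a threshold $x_\crit > 0$ such that $W_p^x < \infty$ for all $p$ iff $x \leq x_\crit$; combined with the identification $[y^p]F(x,y) = W_p^x$ as formal power series (Section \ref{sec:2.1}), this gives that every $x \mapsto [y^p]F(x,y)$ has radius of convergence $x_\crit$, and that $x_\crit \in (0,\infty)$. For fixed $x \in (0,x_\crit]$, the existence of the polynomial/exponential asymptotics $W_p^x \sim \mathrm{C}^x (y_\crit^x)^{-p} p^{-\beta^x}$ with $y_\crit^x \in (0,\infty)$ comes from Lemma \ref{lem:asympW} (standard analytic combinatorics applied to the algebraic function $y\mapsto F(x,y)$, whose unique dominant singularity is at $y_\crit^x$); in particular $y_\crit^x$ is finite and positive, and the Puiseux expansion at that singularity being of square-root–with-polynomial-correction type already forces $\beta^x$ to be a half-integer. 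That $F(x,y_\crit^x) < \infty$ is then a consequence of $\beta^x > 1$ for all $x \leq x_\crit$ (which is itself part of what the next step pins down), since $\sum_p (y_\crit^x)^p W_p^x$ converges when the summand decays like $p^{-\beta^x}$ with $\beta^x > 1$.

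Second — and this is the heart of the matter — I would invoke the dichotomy $\beta^x \in \{3/2, 5/2\}$ together with its sharpening that $\beta^x = 3/2$ for $x < x_\crit$ and $\beta^x = 5/2$ for $x = x_\crit$. Both facts are exactly the content of \eqref{eq:beta3252} and Proposition \ref{prop:dichotomyxxc}, obtained via the probabilistic ``Key formula'' of Proposition \ref{prop:llrw}: the label process along the locally-largest branch is an $h$-transform (with $h = W_\bullet$) of a $\nu^x$-random walk; finiteness of the first moment of $\nu^x$ forces $\nu^x$ to be centered, so rescaled $\nu^x$-walks converge to the spectrally negative $(\beta^x-1)$-stable Lévy process, and passing the Key formula to the scaling limit (using the Lamperti representation of the limiting stable process, as in the excerpt) yields a functional constraint that only $\beta^x - 1 \in \{1/2, 3/2\}$ can satisfy; the sub/critical distinction is read off from whether the underlying Bienaymé–Galton–Watson tree is sub- or exactly critical. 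Feeding the resulting value of $\beta^x$ back into the asymptotic $W_p^x \sim \mathrm{C}^x (y_\crit^x)^{-p} p^{-\beta^x}$ from Lemma \ref{lem:asympW} gives precisely the two displayed asymptotics, with the constants $\mathrm{C}^x$ those already produced there.

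The main obstacle is entirely concentrated in the second step, i.e.\ in justifying that the scaling limit of the Key formula is legitimate and that it genuinely rules out every half-integer except $3/2$ and $5/2$: one must check that $\nu^x$ really does have the advertised tail $\nu^x(-k) \sim \mathrm{Cst}^x\, k^{-\beta^x}$ and finite (hence zero) mean in order to identify the limiting Lévy process, control the $h$-transform in the limit carefully enough that no spurious boundary behaviour is introduced, and verify that the resulting continuous identity is rigid. All of this, however, is precisely what Propositions \ref{prop:nu_proba}, \ref{prop:llrw} and \ref{prop:dichotomyxxc} are asserted to deliver, so at the level of this corollary the argument is a bookkeeping assembly: state finiteness from Lemmas \ref{lem:xcyc} and \ref{lem:asympW}, plug in $\beta^x$ from \eqref{eq:beta3252} and Proposition \ref{prop:dichotomyxxc}, and conclude. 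One small point I would be careful about is the edge case $x = x_\crit$ for the claim $F(x,y_\crit^x) < \infty$, where $\beta^{x_\crit} = 5/2 > 1$ still gives an absolutely convergent series $\sum_p (y_\crit^{x_\crit})^p W_p^{x_\crit}$, so the boundary value is finite exactly as stated.
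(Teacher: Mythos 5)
Your assembly plan is the correct one at the level of structure — the corollary really is read off from Lemmas \ref{lem:xcyc}, \ref{lem:asympW}, Propositions \ref{prop:nu\_proba}, \ref{prop:llrw}, \ref{dicho2} (and \eqref{eq:beta3252}), and Proposition \ref{prop:dichotomyxxc} — but two of your intermediate steps are stated in a way that would not survive scrutiny.

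First, the claim that ``the Puiseux expansion at that singularity being of square-root–with-polynomial-correction type already forces $\beta^x$ to be a half-integer'' is false, and in fact would trivialize the hardest part of the paper if it were true. Algebraicity and \cite[Theorem VII.8]{FS09} give only that $\beta^x$ is rational and $\beta^x \notin \{1,2,3,\dots\}$; nothing in the Puiseux analysis forbids $\beta^x = 4/3$ or $7/5$. Pinning $\beta^x$ down to $\{3/2, 5/2\}$ is precisely the content of the probabilistic machinery (Propositions \ref{prop:nodrift}, \ref{dicho2} and the Lamperti calculation in Theorem \ref{thm:magic}) and cannot be short-circuited. You do later cite the right dichotomy, so this reads as an overreach in your description of Lemma \ref{lem:asympW} rather than a missing idea, but as written it misstates where the real work lies. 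Relatedly, Lemma \ref{lem:asympW} a priori only yields asymptotics along residue classes $p\equiv j \bmod b^x$ with possibly different constants $C^x_j$; one needs Proposition \ref{prop:nu\_proba} (the maximum-principle argument) to show $b^x=1$, i.e.\ that a single constant $C^x$ emerges. You invoke Proposition \ref{prop:nu\_proba} but for a different purpose (tails of $\nu$), so you should flag this explicitly.

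Second, your causality for $F(x,y_\crit^x) < \infty$ is backwards. You propose to derive $F(x,y_\crit^x)<\infty$ from $\beta^x>1$, but in the paper $F(x,y_\crit^x)<\infty$ is proved \emph{directly} in Lemma \ref{lem:xcyc} (via the branching assumption, with no reference to $\beta^x$), and $\beta^x>1$ is then \emph{deduced} from it inside the proof of Lemma \ref{lem:asympW}. Your route would need an independent argument that $\beta^x>1$ before any of the probabilistic identifications, which you do not supply; as phrased it borders on circularity. The fix is simply to take $F(x,y_\crit^x)<\infty$ as the input from Lemma \ref{lem:xcyc}, not an output of the asymptotics.

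With those two corrections, the rest of your plan matches the paper's route.
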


\paragraph{Self-Similar Markov trees.} At this point, the reader may think that the exponent $\beta^{ x_{\crit}} = 5/2$ is the same as the exponent $\alpha$ we are looking for in Theorem \ref{thm:5/2}, but this is merely a coincidence! The derivation of the exponent $\alpha$ needs first to understand the typical \textbf{volume} of a tree under $ \mathbb{P}_{p}^{x}$, or in analytic terms to understand $\partial_{x} W_{p}^{x}$. We in fact prove a full scaling limit result for the labeled trees using the powerful invariance principle recently established in \cite{bertoin2024self}.
Indeed, the integer-type Galton--Watson trees discussed above exactly fall in the setting of the recent monograph \cite{bertoin2024self}, and it is natural to conjecture that they converge in the scaling limit i.e.~under $ \mathbb{P}^{x}_{p}$ and after rescaling of the distances by some power of $p$ as $p \to \infty$ to a natural self-similar Markov tree. A \textbf{self-similar Markov tree} is a random real rooted tree $ ( \mathcal{T}, \rho)$ given with a decoration $ g : \mathcal{T} \to \mathbb{R}_+$ which starts from $1$ at the root $\rho$ and is positive on its skeleton. Such trees come equipped with a natural mass measure $\mu_\mathcal{T}$ which is usually supported by the leaves of $\mathcal{T}$. Those random decorated trees  enjoy both a Markov and a self-similar property and  are believed to be all the possible scaling limits of multi-type Galton--Watson tree with type in $ \mathbb{Z}_{\geq 0}$. A special family of such random trees $ \boldsymbol{\mathcal{T}}_{\gamma} = ( \mathcal{T}_\gamma, g_\gamma, \mu_{\mathcal{T}_\gamma})$ was exhibited in \cite[Section 5.5]{bertoin2024self} in relation with spectrally negative stable processes of index $\gamma \in (0,2)$. In particular, $ \boldsymbol{\mathcal{T}}_{1/2}$ is nothing but a decorated version of the famous Brownian CRT of Aldous, and $ \boldsymbol{ \mathcal{T}}_{3/2}$ is the Brownian growth-fragmentation tree which already appeared inside the Brownian sphere/disk and was conjectured to be the scaling limits of the parking trees appearing in our previous works \cite{ConCurParking,aldous2023parking}. Those two trees are in fact distinguished members of the family $ \boldsymbol{\mathcal{T}}_{\gamma}$ since they are the only ones for which the evolution along ``typical'' branches is exactly given by a version of the $\gamma$-stable L\'evy process. This striking property was first observed in a different context by Miller \& Sheffield \cite{MS15}, later made more explicit by Bertoin, Budd, Curien, Kortchemski in \cite[Section 5]{BBCK18} and finally explained in the context of self-similar Markov trees in  \cite[Section 5.5]{bertoin2024self} to which we refer for details. Our Theorem~\ref{thm:magic} where $\beta^{x}$ is proved to belong to $\{3/2,5/2\}$ is directly inspired by those works.
 
Building on the fine properties of $W^{x}_{p}$ established in Corollary \ref{cor:3/25/2}, we are able to apply the results of \cite[Chapter VI]{bertoin2024self} and show that indeed, the above multi-type Galton--Watson tree $ \mathbf{t}$ under the laws $ \mathbb{P}_{p}^{x}$ admit $ \boldsymbol{ \mathcal{T}}_{1/2}$ or $ \boldsymbol{ \mathcal{T}}_{3/2}$ as scaling limits. This can be seen as a universal \textbf{invariance principle for random fully-parked trees} where all distributions have bounded support. We write $\mu_{ \mathbf{t}}$ for the counting measure on the vertices of $ \mathbf{t}$.

\begin{theorem}[Universal self-similar limits for the fully parked trees]  \label{thm:scaling} Suppose $(*)$ then we have
\begin{itemize}
\item When $x < x_{\crit}$ there exists some constant $ s^{x},v^{x}>0$ such that 
$$  \left(    s^{x} \cdot \frac{ \mathrm{t}}{ p^{1/2}}, \frac{\phi}{p}, v^{x} \cdot \frac{\mu_ \mathbf{t}}{p}\right) \mbox{ under } \mathbb{P}_p^{x} \xrightarrow[p\to\infty]{(d)}  \boldsymbol{ \mathcal{T}}_{1/2},$$
\item When $x=x_{\crit}$
$$  \left(   s^{x_{\crit}} \cdot \frac{ \mathrm{t}}{ p^{3/2}}, \frac{\phi}{p}, v^{x_{\crit}}\cdot \frac{\mu_ \mathbf{t}}{ p^{2}}\right) \mbox{ under } \mathbb{P}_p^{x} \xrightarrow[p\to\infty]{(d)}  \boldsymbol{ \mathcal{T}}_{3/2},$$
\end{itemize} 
the above convergence holds for the Gromov--Hausdorff--Prokhorov hypograph convergence developed in \cite[Chapter 1]{bertoin2024self} to which we refer for details.
\end{theorem}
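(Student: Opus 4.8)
The plan is to deduce Theorem~\ref{thm:scaling} by checking that the multi-type Bienaymé--Galton--Watson trees $\mathbf{t}$ under $\mathbb{P}_p^x$ satisfy the hypotheses of the general invariance principle for integer-type Galton--Watson trees developed in \cite[Chapter VI]{bertoin2024self}. Concretely, the convergence of a sequence of integer-type GW trees towards a self-similar Markov tree $\boldsymbol{\mathcal{T}}_\gamma$ is governed, in the framework of \cite{bertoin2024self}, by the convergence of the evolution of labels along the spine (the locally largest branch) to the corresponding spectrally negative $\gamma$-stable Lévy process run in the Lamperti time-scale, together with the convergence of the associated ``mass'' and ``length'' functionals built out of the tree decomposed along this spine. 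So the first step is to invoke the Key formula (Proposition~\ref{prop:llrw}) which identifies the label process along the locally largest branch as a $W_p$-Doob transform of a $\nu^x$-random walk; combined with the tail estimate $\nu^x(-k)\sim \mathrm{Cst}^x\cdot k^{-\beta^x}$ and the centering established via the Key formula, a standard functional limit theorem for $h$-transformed random walks (the $h$-transform being, up to the polynomial correction $p^{-\beta^x}$ from Corollary~\ref{cor:3/25/2}, asymptotically a power function, hence harmonic-like in the scaling limit) gives convergence of the spine label process, after rescaling space by $p$ and time by the appropriate power of $p$, to the $\gamma$-stable Lévy process conditioned to stay positive, with $\gamma = \beta^x - 1 \in \{1/2, 3/2\}$ by Proposition~\ref{prop:dichotomyxxc}.

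Second, I would control the total size (volume) of the tree. This is where the exponent $\alpha$, as opposed to $\beta^x$, enters: one needs the asymptotics of $\partial_x W_p^x$, equivalently the typical number of vertices of $\mathbf{t}$ under $\mathbb{P}_p^x$. In the subcritical regime $x<x_{\crit}$ the offspring distribution of the multi-type GW tree has a uniformly positive ``drift toward smaller types'', so the tree attached to each spine vertex is subcritical of geometric-type size and the volume scales like the spine length, i.e.\ like $p^{1/2}$ for the measure normalization $p$; this is the $\boldsymbol{\mathcal{T}}_{1/2}$ regime and matches the decoration exponent $g_\gamma$ of the Brownian CRT. At criticality $x=x_{\crit}$, the branching mechanism is exactly critical, the spine has length of order $p^{3/2}$, and the total number of vertices picks up an extra power, scaling like $p^2$, which is exactly the self-similarity exponent of $\boldsymbol{\mathcal{T}}_{3/2}$; here one uses the $n^{-5/2}$ asymptotics of Theorem~\ref{thm:5/2} (equivalently the $p^{-5/2}$ asymptotics of Corollary~\ref{cor:3/25/2}) to pin the scaling of the mass measure. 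In both cases the constants $s^x, v^x$ are read off from the limiting Lévy process parameters and the slowly varying constants $C^x$.

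Third, I would package these inputs into the precise convergence criterion of \cite[Theorem in Chapter VI]{bertoin2024self}: tightness of the rescaled trees for the Gromov--Hausdorff--Prokhorov hypograph topology, identification of finite-dimensional marginals along a uniform sampling of spine endpoints (leaves sampled from $\mu_{\mathbf{t}}$), and consistency under re-rooting, all of which reduce, by the recursive/branching structure of GW trees, to the one-spine statement from the first step. The decoration $\phi/p$ converges jointly because it \emph{is} the rescaled label field, whose restriction to the spine is the quantity controlled above and whose restriction to each bush is $\mathcal{O}_{\mathbb{P}_p^x}(1)$ hence negligible after division by $p$.

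The main obstacle I expect is the second step: precisely controlling the volume asymptotics $\partial_x W_p^x$ and proving that the mass measure $\mu_{\mathbf{t}}$, suitably rescaled, converges to $\mu_{\mathcal{T}_\gamma}$ rather than merely being tight. Analytic combinatorics gives the existence of the relevant asymptotic expansions, but transferring the exponent (and in particular ruling out slowly varying corrections or a degenerate limit) requires combining the $h$-transform structure of the spine with a careful second-moment or Markov-chain-ergodic estimate on the sizes of the bushes hanging off the spine, uniformly in the label; this is exactly the place where the $5/2$ exponent from Theorem~\ref{thm:5/2} is genuinely needed and cannot be bypassed by soft arguments. The re-rooting invariance and the GHP-hypograph tightness, by contrast, should follow fairly mechanically from the general machinery of \cite{bertoin2024self} once the one-spine convergence and the volume estimate are in hand.
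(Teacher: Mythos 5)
Your overall plan---verify the hypotheses of the invariance principle in \cite[Chapter VI]{bertoin2024self} by (i) controlling the spine label process via the Key formula and (ii) controlling volumes---is indeed the skeleton the paper follows. But there are two genuine problems in the way you carry it out.

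First, your second step conflates the two exponents $\alpha$ and $\beta^x$, and as a result proposes a circular argument. You write that the mass scaling at $x=x_{\crit}$ is pinned down by ``the $n^{-5/2}$ asymptotics of Theorem~\ref{thm:5/2} (equivalently the $p^{-5/2}$ asymptotics of Corollary~\ref{cor:3/25/2}).'' These are \emph{not} equivalent: Corollary~\ref{cor:3/25/2} gives $W_p^{x_{\crit}}\approx (y_{\crit})^{-p}p^{-\beta}$ with $\beta=5/2$ (polynomial tail in the catalytic parameter $p$), whereas Theorem~\ref{thm:5/2} gives $[x^n]W_0^x\approx x_{\crit}^{-n}n^{-\alpha}$ with $\alpha=5/2$ (tail in the size parameter $n$). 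The paper explicitly warns that the coincidence of these two numbers is ``merely a coincidence,'' and more importantly Theorem~\ref{thm:5/2} is itself proved \emph{using} the volume convergence from Theorem~\ref{thm:scaling}, so you cannot feed it back in as input. The volume control actually used in the proof of Theorem~\ref{thm:scaling} is a separate piece of work: one introduces the pointed partition functions $W_p^{\circ,x}$ (built on the count of vertices of label $\K$), shows by an $h$-transform/renewal argument that $\widetilde{W}_p^{\circ,x}$ is sandwiched between constants times the pre-renewal function $h_{\mathrm{pre}}(p)$ of the $\nu^x$-walk, giving $\widetilde{W}_p^{\circ,x}\threesim p^{-1/2}$ (Lemma~\ref{lem:wtildepoint}); this is then upgraded to $\approx$ via algebraicity (Proposition~\ref{prop:equiv!}), and complemented by uniform integrability of $\mathrm{Vol}^\circ/p^{\beta-1/2}$ (Lemma~\ref{lem:ui}). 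None of this goes through $\partial_x W_p^x$ or Theorem~\ref{thm:5/2}.

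Second, and less fatally, your description of the spine limit as ``the $\gamma$-stable L\'evy process conditioned to stay positive'' is not what the $h$-transform gives here. The Radon--Nikodym factor in the Key formula is $\widetilde{W}_{S_t}/\widetilde{W}_{S_0}\approx (S_t/S_0)^{-\beta}$, a \emph{decreasing} power of $S_t$, not an increasing harmonic function, so the transformed process is not the walk conditioned to stay positive. In the scaling limit (Lemma~\ref{lem:scalingLLbertoin}) one gets the specific positive self-similar Markov process whose Lamperti L\'evy process matches that of the locally largest fragment of $\boldsymbol{\mathcal{T}}_{1/2}$ or $\boldsymbol{\mathcal{T}}_{3/2}$---a process with support-bounded jump measure on $[-\log 2,0]$, not the Doney--Tanaka conditioned process. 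Getting this identification right is what makes the limit match the ssMt framework, and it requires the explicit Lamperti computation carried out in Section~\ref{sec:lampertistable}.
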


The above convergence  result, although very satisfactory, may frighten the reader which has little acquaintance with random real trees. Luckily, the proof of  Theorem \ref{thm:5/2} only uses the convergence of the third coordinate in the case $x= x_{\crit}$, more precisely only the fact that the size of $ \mathrm{t}$ under $ \mathbb{P}_{p}^{x_{\crit}}$ is roughly of order $p^{2}$. This fact can also be proved using our random walk estimates without requiring the full technology of \cite{bertoin2024self}. 
From this, the tail exponent $\alpha$ appearing in \eqref{eq:expo} is related to the tail exponent of the survival probability of the $\nu$-random walk along a typical branch of $ \mathbf{t}$, which in our case is deduced from  classical results on random walks in the domain of attraction of the spectrally negative $3/2$-stable L\'evy process. \bigskip

 \noindent \textbf{Acknowledgments:} This work started during a conference on self-similar Markov trees organized at CIRM in July 2024. We thank the CIRM for hospitality, and all the participants, in particular Timothy Budd and Thomas Budzinski for stimulating discussions. N.C. is indebted to Armand  Riera and Jean Bertoin and for the many conversations they  had about \cite{bertoin2024self}. 
 This work was supported by SuPerGRandMa, the ERC Consolidator Grant No.\ 101087572.

\tableofcontents

\clearpage \section*{Table of notation}
To help the reader navigate through these pages, we gather here the most important notation used.

\newcommand*{\threesim}{%
  \mathrel{\vcenter{\offinterlineskip
  \hbox{$\sim$}\vskip-.25ex\hbox{$\approx$}}}}

\noindent \begin{tabular}{ll}
$ \mathbb{Z}_{>0}$ & $=\{1,2,3,\cdots\}$\\
$ \mathbb{Z}_{\geq0}$ & $=\{0,1,2,3,\cdots\}$\\
$ \mathbb{Z}_{<0}$ & $=\{\cdots,-3,-2,-1\}$\\
$ \mathbb{Z}_{\leq0}$ & $=\{\cdots,-3,-2,-1,0\}$\\
\end{tabular}\\ 
\noindent For positive sequences $(a_{n})_{n \geq 0}, (b_{n})_{n \geq 0}$ we write \\
\noindent \begin{tabular}{ll}
$a_{n} \sim b_{n}$ & if $ \frac{a_{n}}{b_{n}} \to 1$ as $n \to \infty$\\
$a_{n} \approx b_{n}$ & if $ \frac{a_{n}}{b_{n}} \to C$ for some $C > 0$ as $n \to \infty$\\
$a_{n} \lesssim b_{n}$ & if for all $n \geq 0$ we have $ \frac{a_{n}}{b_{n}} < C$ for some $C > 0$\\
$a_{n} \threesim b_{n}$ & if for all $n \geq 0$, $ c< \frac{a_{n}}{b_{n}} <C$ for some $c,C > 0$  $n \to \infty$\\
\end{tabular}\\ 
\noindent In particular, \textbf{we shall use $ \approx$ a lot below since we shall not care about positive constants}. \\ 
\noindent \begin{tabular}{ll}
$Q$ & positive polynomial in \eqref{eq:1}\\
$F(x,y)$ & generating series solution to  \eqref{eq:1}\\
$w$ & local weight function\\
$ \mathrm{w}$ & push forward of the local weight function on fully parked trees\\
$ \mathrm{t}$ & plane rooted tree \\
$ \mathbbm{t}$ & $ = ( \mathrm{t}, \ell^{ \mathrm{car}}, \ell^{ \mathrm{spot}} )$ tree with car arrivals on vertices and parking spots on edges \\
$\mathbf{t}$ & $= ( \mathrm{t}, \phi) $ our underlying $ \mathbb{Z}_{ \geq 0}$-labeled  tree  \\
$\K$ & the absolute constant bounding the degree, car arrivals, and parking spot capacities\\

$ \mathrm{FPT}_p^n$ & set of the fully parked tree with $n$ vertices and outgoing flux $p$ \\
$W_p^x$&  $= [y^{p}] F(x,y)$ partition function of labeled trees starting with label $p$\\
$x_{\crit}$ & radius of convergence of $x \mapsto W_{p}^{x}$\\
$ y_{\crit}^{x}$ & radius of convergence of  $y \mapsto F(x,y)$\\
$\widetilde{W}_p^x $ & $ (y^{x}_{\crit})^{p} W_{p}^{x}$, i.e. polynomial part in the partition function\\
$\beta^x, \beta$ & polynomial tail in the asymptotic of $W_{p}^{x}$\\
$ \mathbb{P}_{p}^{x}$& $x$-Boltzmann law\\
$ \hat{\nu}, \nu$ & see Definition \ref{def:nuhat}\\
$(S, \eta)$ & decoration and reproduction process\\
$ \mathcal{L}_{t}$ & $(S, \eta)$ is a locally largest exploration up to time $t$ with no ties, see \eqref{def:Lt}\\
$\tau$ & absorption time for the decoration process\\
$\tau_{x}$ & $= \inf \{i \geq 0 : S_{i} \leq x\}$\\
$\tau_{\{x\}}$ & $= \inf \{i \geq 0 : S_{i} =x\}$\\
$\tau_{\geq x}$ & $= \inf \{i \geq 0 : S_{i} \geq x\}$\\
$ \mathbb{P}_{p}^{\RW}$& law under which $(S, \eta)$ is made of iid increments.\\
$ \mathbf{t}^ \circ$ & vertices of $ \mathrm{t}$ with label $\K$ \\
$  \mathrm{Vol}^ \circ$ & $= \# \mathbf{t}^{\circ}$\\
$  \mathrm{Vol}^ \bullet$ & $= \# \mathrm{t}$\\
$W_p^{\circ,x}$ & $ = \mathbb{E}_{p}^{x}[ \mathrm{Vol}^{\circ}]$  \\
$W_p^{\bullet,x}$ & $ = \mathbb{E}_{p}^{x}[ \mathrm{Vol}^{\bullet}]$  \\
$ \mathbb{P}_{p}^{\circ,x}$ & pointed $x$-Boltzmann law, see \eqref{eq:deflawp}\\
\end{tabular}

\section{Catalytic equation as parking models} \label{sec:2.1}
In this section we make explicit the connection between catalytic equations of the form \eqref{eq:1} with a polynomial $Q$ having positive coefficients, and the enumeration of weighted fully-parked trees. Interpreting \eqref{eq:1} as an \emph{infinite} positive system of polynomial equations we deduce the first properties on the partition functions (Lemma \ref{lem:asympW}). This will enable us to introduce the Boltzmann measure on fully-parked trees which will be the key random variables studied in the rest of this work.
\subsection{Catalytic equation, parking models}
In this work, we follow Neveu's formalism \cite{Nev86} and see planar trees $ \mathrm{t}$ as subsets of the Ulam tree $ \mathcal{U} =  \bigcup_{n \geq 0}  (\mathbb{Z}_{> 0})^{n}$. In particular, the tree $ \mathrm{t}$ is itself the set of its vertices, $ \varnothing$ is the root vertex of $ \mathrm{t}$ and for $u \in \mathrm{t}$, we write $k_{u}( \mathrm{t})$ for the number of children of $u$ in $ \mathrm{t}$.  We write $ u \preceq v$ for the genealogical (partial) order on $  \mathcal{U}$.\\
Recall our notation $ \mathbbm{t} = ( \mathrm{t}, \ell^{ \mathrm{car}}, \ell^{ \mathrm{spot}})$ for a plane tree with car arrivals $\ell^{ \mathrm{car}}$ on the vertices and parking spot assignations $\ell^{ \mathrm{spot}}$ on the edges as well as $ \mathbf{t}= ( \mathrm{t}, \phi)$ for the underlying plane tree labeled by the outgoing flux of cars at each vertex. We always suppose that after parking, the tree is fully-parked (no available spots anymore) and recall that $ \mathrm{FPT}_{p}^{n}$ is the set of all parking trees with $n$ vertices with outgoing flux $\phi(\varnothing) =p$. Recall also the definition of the measure $ \mathrm{w}$ from the local weight function $w_{c,k,(s_{1}, ... , s_{k})}$ in \eqref{def:measurew}, recall  from  \eqref{def:wpx} the definition of $W_{p}^{x}$ and form the (formal) generating series $$F(x,y) = \sum_{p \geq 0} y^{p} W_{p} ^{x}.$$ Recall also from \eqref{eq:discretederivative} the definition of $\Delta^{{(i)}} F$. The link between catalytic equations of the form \eqref{eq:1} and parking model on trees is as follows:
\begin{proposition}[See also \cite{duchi2024order}]\label{prop:tutte} The generating series $F$ is the unique solution  (as formal power series) to Equation \eqref{eq:1} where $Q$ is the polynomial with  positive coefficients specified by the local weight function as: for all $ c, a_0, \dots, a_\K \geq 0$, writing $ k = \sum_{i = 0}^{\K} a_i$,
 \begin{eqnarray} \label{eq:lienwQ}  [y^c f_{}^{a_0} f_1^{a_1}... f_\K^{a_\K}]Q(y,f,f_1, ... , f_\K) =  \sum_{\substack{(s_1, \dots s_k)\\ \forall i, \#\{ j: s_j = i\} = a_i   }}w_{c,k, (s_1,\dots, s_k) }.  \end{eqnarray}
\end{proposition}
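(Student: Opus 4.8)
The plan is to exhibit the correspondence between the recursive structure of a fully parked tree and one application of the polynomial $Q$, and then invoke uniqueness of solutions of \eqref{eq:1} as formal power series. First I would set up a \emph{one-step decomposition} of a fully parked tree $\mathbbm{t}\in\mathrm{FPT}_p^n$ rooted at $\varnothing$: record the car arrival $c=\ell^{\mathrm{car}}(\varnothing)$, the number of children $k=k_\varnothing(\mathrm{t})$, the spot capacities $s_1,\dots,s_k$ of the edges above $\varnothing$ (left to right), and the $k$ fully parked subtrees $\mathbbm{t}_1,\dots,\mathbbm{t}_k$ hanging from these edges. The key combinatorial point is to describe the flux at $\varnothing$. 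If $\phi_i$ denotes the outgoing flux of $\mathbbm{t}_i$ at its root, then the cars arriving from above edge $e_i(\varnothing)$ number $\phi_i$; of these, $s_i$ park on that edge (the edge must be saturated since $\mathbbm{t}$ is fully parked, which forces $\phi_i\geq s_i$, equivalently after truncation the relevant contribution is $y^{\phi_i}/y^{s_i}$ kept only when $\phi_i\geq s_i$) and $\phi_i-s_i$ continue downward; hence $\phi_\varnothing(\mathbbm{t}) = c + \sum_{i=1}^k(\phi_i-s_i)$. This is exactly the bookkeeping that the discrete difference $\Delta^{(s_i)}$ performs: dividing the subtree partition function by $y^{s_i}$ and discarding the terms $y^0,\dots,y^{s_i-1}$ (which correspond to subtrees whose root flux is too small to saturate the edge, and are therefore not fully parked in the full tree) shifts the exponent from $\phi_i$ to $\phi_i-s_i$.

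Next I would translate this into generating functions. Summing $x^n\mathrm{w}(\mathbbm{t})$ over all fully parked trees with a given outgoing flux $p$, and using the product form \eqref{def:measurew} together with the one-step decomposition, yields
\begin{equation*}
W_p^x = \sum_{c,k\geq 0}\ \sum_{s_1,\dots,s_k\geq 0} x\, w_{c,k,(s_1,\dots,s_k)}\ \sum_{\substack{\phi_1,\dots,\phi_k\\ c+\sum_i(\phi_i-s_i)=p}}\ \prod_{i=1}^k [y^{\phi_i}\text{ with }\phi_i\geq s_i]\,W_{\phi_i}^x,
\end{equation*}
where the extra factor of $x$ accounts for the root vertex. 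Multiplying by $y^p$ and summing over $p$, the inner constrained sum becomes the product $y^c\prod_{i=1}^k \Delta^{(s_i)}F(x,y)$ by definition \eqref{eq:discretederivative}; here the truncation in $\Delta^{(s_i)}$ is precisely what enforces $\phi_i\geq s_i$, i.e.\ that each edge is saturated. Grouping the edges by their capacity value $i$, with $a_i=\#\{j:s_j=i\}$ and $k=\sum_{i=0}^\K a_i$, the sum over $(s_1,\dots,s_k)$ regroups exactly as in \eqref{eq:lienwQ}, giving
\begin{equation*}
F(x,y)=\sum_p y^p W_p^x = x\sum_{c,a_0,\dots,a_\K\geq 0} \Big(\sum_{\substack{(s_1,\dots,s_k)\\ \forall i,\ \#\{j:s_j=i\}=a_i}} w_{c,k,(s_1,\dots,s_k)}\Big)\, y^c\, F^{?}\prod_{i=1}^\K(\Delta^{(i)}F)^{a_i},
\end{equation*}
which is precisely $xQ(y,F,\Delta^{(1)}F,\dots,\Delta^{(\K)}F)$ with $Q$ defined by \eqref{eq:lienwQ} (the factor $F^{a_0}=F^{\#\{j:s_j=0\}}$ accounts for edges of capacity $0$, where $\Delta^{(0)}F$ is just $F$). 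Conversely, given any positive polynomial $Q$, reading \eqref{eq:lienwQ} backwards defines non-negative weights $w_{c,k,(s_1,\dots,s_k)}$ (distributing the coefficient of each monomial arbitrarily among the $\binom{k}{a_0,\dots,a_\K}$ orderings, e.g.\ equally), which shows every such equation arises from a parking model.

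Finally, for \textbf{uniqueness} I would argue by induction on the coefficient of $x^n$: Equation \eqref{eq:1} has the form $F = xQ(\cdots)$ with $Q$ polynomial, so $[x^n]F$ is determined by $[x^{<n}]F$ (each term on the right carries at least one factor of $x$, and $\Delta^{(i)}$ does not lower the degree in $x$); since $[x^0]F=0$, all coefficients are determined recursively, and our constructed $F$ is therefore the unique formal power series solution. The main obstacle I expect is the careful verification of the saturation/truncation bookkeeping — namely that the precise set of subtrees allowed above a capacity-$s_i$ edge in a \emph{fully parked} tree is exactly those with root flux $\geq s_i$, and that this matches the truncation in $\Delta^{(s_i)}$ — together with checking that the Abelian (order-independence) property of the parking process makes the one-step decomposition well-defined and bijective. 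Once that combinatorial identity is pinned down, the generating-function manipulation and the uniqueness argument are routine.
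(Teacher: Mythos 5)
Your proof is correct and takes essentially the same route as the paper: decomposition of a fully parked tree at its root, the parking constraint $c+\sum_i(\phi_i-s_i)=p$ with $\phi_i\geq s_i$ forced by saturation, identification of the truncation in $\Delta^{(s_i)}$ with the saturation condition, regrouping of $(s_1,\dots,s_k)$ by value to match \eqref{eq:lienwQ}, and an inductive uniqueness argument on the $x$-degree. The only cosmetic slips are the stray $F^{?}$ placeholder (clearly meant to be $F^{a_0}$, as you note) and the slightly ambiguous bracket notation $[y^{\phi_i}\text{ with }\phi_i\geq s_i]$ where a plain indicator $\mathbbm{1}_{\phi_i\geq s_i}$ would be clearer.
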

Before the proof, remark that given a local weight function, we can always find a polynomial $Q$ satisfying \eqref{eq:lienwQ} but there are multiple choices of local weight function corresponding to a given $Q$. This issue is fixed below by ensuring exchangeability of $w$ in our standing assumptions $(*)$.
\begin{proof} 
The proposition is a reformulation of the well-known \textbf{Tutte equations} we obtain by decomposing a fully parked tree with respect to the splitting at its root vertex, and considering $y$ as a catalytic variable. Specifically, given a fully parked tree with a root having weight $w_{c,k,(s_1, \ldots, s_k)}$, then for $1 \leq i \leq k$, the subtree above the $i$-th children is a fully parked tree with a flux of at least $s_i$, call it $p_i$. Moreover, the total flux emanating from the root is simply the sum of $c$  cars arriving at the root plus the flux coming from all its children minus the cars that park on the edges between the root and its children (all spots have to be occupied since we consider fully parked trees), see Figure \ref{fig:parking1}. The constraint, which we shall call the ``parking constraints" are then 
  \begin{eqnarray} \label{eq:parkingconstraint} \sum_{i=1}^k (p_i-s_i) +c =p, \quad \mbox{ and }\quad   p_i \geq s_i, \ \ \forall\ 1 \leq i \leq k.  
 \end{eqnarray}
In terms of the generating function, this gives 
\begin{eqnarray}\label{eq:tutte} F(x,y) &=& x \sum_{c,k} \sum_{(s_1, \dots ,s_k)} w_{c,k,(s_1, \ldots, s_k)} y^c \prod_{i = 1}^{k} \sum_{p_i \geq s_i} y^{p_i - s_i} W_{p_i}^x  \\
& \underset{\eqref{eq:discretederivative}}{=}& x \sum_{c,k} \sum_{(s_1, \dots ,s_k)} w_{c,k,(s_1, \ldots, s_k)} y^c \prod_{i = 1}^{k} \Delta^{(s_i)}F(x,y). \nonumber   \end{eqnarray}
Introducing $a_0, ... , a_\K$ such that $\forall i, \{ j : s_j = i\} = a_i $ and recalling \eqref{eq:lienwQ} we can write the above as:
  \begin{eqnarray*}
F(x,y)&=& x \sum_{c,k}  \sum_{\substack{(s_0, s_1, \dots s_{k})\\ \forall i, \{ j, s_j = i\} = a_i   }} w_{c,k,(s_1, \ldots, s_k)} y^c  \prod_{i = 0}^{\K} \left(\Delta^{(i)}F \right)^{a_i} \nonumber \\
&\underset{\eqref{eq:lienwQ}}{=}& x \sum_{a_0, \dots, a_\K} \sum_{c} [y^c f^{a_0} f_1^{a_1}... f_\K^{a_\K}]Q(y,f,f_1, ... , f_\K) y^c  \prod_{i = 0}^{\K} \left(\Delta^{(i)}F \right)^{a_i} \nonumber \\
&=&x Q(y, F,\Delta^{(1)}F, \dots, \Delta^{(\K)}F).  \nonumber 
\end{eqnarray*}
In particular, it shows that the coefficients of $[x^n] F(x,y)$ can be deduced from that of $ ([x^{k}] F(x,y) : 0 \leq k \leq n-1)$ for $n \geq 1$, and the function $F$ is unique as a formal power series. This proves the proposition. \end{proof}

\subsection{Positive system and standing assumptions}
In all this work we shall suppose that $Q$ and the local weight function are tighten as in the previous proposition.   Equation \eqref{eq:1} can also be interpreted as an infinite system of equations for the partitions functions 
$$ W_p^x = x Q_p(x, W_0^x,W_1^x, ... ), \quad \mbox{ for } p \geq 0,$$
where $Q_p$ is a series with positive coefficients. Such systems of equations are very-well understood when there are finitely many of them, see \cite{banderier2015formulae,lalley2004algebraic,drmotaHainzl} or the beautiful thesis of Hainzl \cite{hainzl2023singularity} for more details. The salvation in our case will ultimately comes from a ``translation invariance'' property inherited from the parking model. In this section we prove that all partitions functions $W_p^x$ have the same behavior at their singularity, yielding our first rough estimates.
\paragraph{Dependency graph.}  We introduce the \textbf{dependency graph}  $ \mathcal{G}_{Q}$ of the system of functional equations on the $(W_i^x)$ induced by Equation \eqref{eq:1}, which is the oriented graph on the integers $\{0,1,2, ... \}$ constructed by putting an oriented edge going from $i$ to $j$ if in $W_i^x = [y^i]\left(x Q ( y, F, \Delta^{(1)} F, \ldots,  \Delta^{( \K)} F ) \right)$ one can find a non-trivial term involving $W_j^x$.  See Figure \ref{fig:grapheGQ} for an illustration. In other words, the graph $ \mathcal{G}_Q$ contains an oriented edge from $i$ to $j$ if
$$ \frac{ \partial }{ \partial W_j^x} [y^i]\left(x Q ( y, F, \Delta^{(1)} F, \ldots,  \Delta^{( \K)} F ) \right) \neq 0.$$
In particular this implies that $W_i^x \geq x^{d_{i \to j}} C_{i \to j} W_j^{x}$ for some positive constant $C_{i \to j}$ and some degree $d_{i \to j} \geq 1$. 
Notice then an important property of our parking model (and thus of the system of functional equation), which can be thought of \textbf{translation invariance} and will eventually yield to the connection with random walks: \medskip 

\begin{center}
\fbox{
 (trans.) $\qquad $ 
\begin{minipage}{11cm}
if there is a factor involving $W^x_{j}$ in $[y^i] x Q (y, F, \Delta^{(1)} F, \ldots,  \Delta^{( \K)} F )$ for $i,j \geq 0$, then we can find the  exact same factor by replacing $W^x_{j}$ with $W_{j+1}^x$  in $[y^{i+1}] x Q ( y, F, \Delta^{(1)} F, \ldots,  \Delta^{( \K)} F )$. 
\end{minipage} 
}\end{center}
\medskip 

This is plain since in a fully parked tree, adding one to the flux of a descendant of a vertex automatically add $1$ to the flux of this vertex. In particular, if the edge $i \to j$ in present in $ \mathcal{G}_Q$, then all edges $i +q \to j+q$ are also present for $q \geq 0$. As an example, if there is an oriented edge from $0$ to some $p \geq 1$, then for all $q \geq 0$ and $k \geq 0$, then there is an oriented path from $q$ to $ q+ kp$ where this second quantity can be made as large as we wish by taking $k$ large enough.\bigskip

With this graphical construction at hands, one can now properly state our standing assumptions the local weight function denoted by (*) in the introduction: \\

\noindent \fbox{\begin{minipage}{15cm} 
\textbf{Standing assumptions (*)}:
\begin{enumerate}
\item\label{ass:bound} \emph{(Boundedness)} We assume that the offspring, the car arrival and parking spot numbers are bounded. More precisely, we assume that there exists a constant denoted by $$ \K \geq 0$$ in the rest of the paper, such that if $k >  \K$ or $ c>   \K$ or if  there exists $ 1 \leq i \leq k$ such that $ s_i>  \K$, then $ w_{c,k,(s_1, \ldots, s_k)} = 0$. 
\item\label{ass:ex} \emph{(Exchangeability)} We assume that the local weight function is exchangeable in $(s_1, \dots , s_k)$. This is granted up to replacing the $ w_{c,k,(s_1, \ldots, s_k)}$ by 
\begin{eqnarray*}  \widetilde{w}_{c,k,(s_1, \ldots, s_k)} :=  \frac{1}{k!} \sum_{ \sigma \in \mathfrak{S}_k}w_{c,k,(s_ {\sigma(1)}, \ldots, s_ {\sigma(k)})},
\end{eqnarray*}
where $\mathfrak{S}_k$ denotes the set of permutations of  $ \{ 1, \dots, k\} $.
\item\label{ass:branch} \emph{(Branching)} We assume that there exist $k \geq 2$, $c \geq 0$ and $s_1, \ldots, s_k \geq 0$ such that $$w_{k,c,(s_1, \ldots, s_k)}>0.$$
In particular the polynomial $Q$ is non-linear.
\item\label{ass:ap} \emph{(Aperiodicity for the flux)} We assume that the flux is aperiodic, 
which means that 
 there exists $p \geq 0$ such that 
the vertex set $ \{r : r \geq p\}$ is strongly connected in the dependency graph $ \mathcal{G}_Q$  (i.e. $q,r \geq p$ are connected by an oriented path going from $q$ to $r$ and from $r$ to $q$ while staying above $p$). We write $p_0$ the minimal value of $p$ such that the system is strongly connected from rank $p$. See Figure \ref{fig:grapheGQ}.
\item\label{ass:connect}{(Connectivity)} We assume that $0$ is connected to a vertex $p \geq1$ in $ \mathcal{G}_{Q}$ and that $W_{p}^{x}>0$ for all $p \geq 0$ and all $x>0$.
\item\label{ass:apvertex} \emph{(Aperiodicity for the vertices)} 
We assume that $ x \mapsto W_0^x$ has a unique dominant singularity on the boundary of its disk of convergence.
\end{enumerate} 
\end{minipage}}

\medskip

Let us quickly comment on those assumptions. The boundedness assumption will ensure that all equations considered below are polynomial equations. Of course we expect that this condition can be slightly relaxed, but in the general case new universality classes should appear as in \cite{chen2021enumeration} see \cite[Chap. 8.3]{bertoin2024self} and Section \ref{sec:comments}. The branching assumption assures that the underlying tree is not a line segment and so \eqref{eq:1} is not linear. Finally, the aperiodicity assumption, appart from avoiding sub lattice issues, ensures that the flux $\phi$ can both increase and decrease along the branches of our labeled trees. A stronger version of those assumptions is already present in \cite{drmota2022universal} and \cite{drmota2023universal}. The aperiodicity assumption for the vertices is here to avoid sub-lattice issues for the degree of $x$.

\begin{figure}[!h]
 \begin{center}
 \includegraphics[width=13cm]{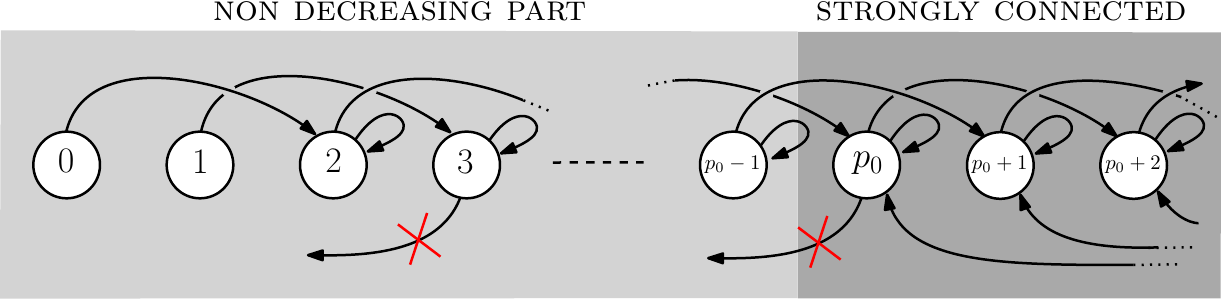}
 \caption{Illustration of the aperiodicity conditions: the graph $ \mathcal{G}_Q$ can be decomposed in a non decreasing and a strongly connected part from $p_0$ on. If an edge $i \to j$ is present in the graph, then so is $i +q \to j+q$ with $q \geq 0$. \label{fig:grapheGQ}}
 \end{center}
 \end{figure}
 
 \paragraph{Link between $p_0$ and $\K$.} Recall from above that $p_0$ is the minimal value from which the graph $ \mathcal{G}_{Q}$ is strongly connected and that $0$ is connected to some $ p \geq 1$, see Figure \ref{fig:grapheGQ}. We first claim that $p_0$ is not directly linked to any $p < p_0$, for otherwise, by translation invariance, the graph $ \mathcal{G}_{Q}$ would by strongly connected on $\{p, p+1, ..., p_{0}, ... \}$. The graph $ \mathcal{G}_Q$ can thus be decomposed into the part $\{0,1, ... , p_0-1\}$ from which there are only edges pointed towards vertices with larger labels (the non decreasing part) and the part $\{p_0, p_0+1, ... \}$ which is strongly connected.

 Our last remark on the assumptions above is that $p_0 \leq \K$. Indeed, let $p > p_0 $ such that there exists an edge from $p$ to $p_0$ in $ \mathcal{G}_Q$ (such $p$ exists  since by hypothesis, the subset $ \{ p_0, p_0+1, \dots\}$ is strongly connected in $ \mathcal{G}_Q$). By definition, there exists $k \geq 1$, $ c, \geq 0$, $s_1 \leq p_0$ and $  p_i \geq s_i \geq 0$ for $2 \leq i \leq k$ such that  the term 
 $$ x w_{c,k,(s_1, \dots, s_k)} W_{p_0}^x \prod_{i=2}^{k} W_{p_i}^x$$ appears in $[y^p]\left(x Q ( x,y, F, \Delta^{(1)} F, \ldots,  \Delta^{( \K)} F ) \right)$. Note that we must have $ \sum_{i=2}^{k} (p_i -s_i) + p_0 - s_1 +c = p$. But in this case, the term
  $$ x w_{c,k,(s_1, \dots, s_k)} W_{s_1}^x \prod_{i=2}^{k} W_{p_i}^x$$ appears in $[y^{p-(p_0 -s_1)}]\left(x Q ( x,y, F, \Delta^{(1)} F, \ldots,  \Delta^{(\K)} F ) \right)$ and $p-(p_0 - s_1) > s_1$ since $ p> p_0$, which implies that there exists a decreasing edge from $p-(p_0 - s_1)$ to $s_1$ in $ \mathcal{G}_Q$. In particular, we have $p_0 \leq s_1 \leq \K$.  \bigskip

Let us give some natural examples of weight functions that satisfies our standing assumption, see also the many examples given in \cite{hainzl2023singularity}.
 \begin{example}[Parking on BGW trees] The most natural example is to come back to the classical model of parking on Bienaym\'e--Galton--Watson trees. Let $ \mu$ and $ \xi$ be two probability distributions on the non negative integers $ \{ 0, 1, \dots \}$ which are respectively thought of as the offspring and car arrival distribution. For all $k,c \geq 0$, we set  $$w_{c,k, \underbrace{ \text{\scriptsize{(1,\dots , 1)}}}_{k \text{ times}}} = \mu (k) \cdot \xi (c)$$ and $w_{c,k,(s_1, \dots, s_k)} = 0$ otherwise. Then this choice of weights corresponds to choosing a tree with weights proportional to its Bienaym\'e--Galton--Watson measure and with i.i.d.\ car arrivals on its vertices and with exactly one spot per edge (and restricting to those fully parked configurations). It corresponds to the classical model of parking cars on Bienaym\'e--Galton--Watson  (see e.g.\ \cite{ConCurParking,Contat21+,contat2020sharpness,GP19,curien2022phase,aldous2023parking}), except that in the literature parking spots are usually on the vertices and not on the edges. The only difference is that the parking spot of the root vertex disappear when the parking spots are slide from each vertex to the edge below (in the direction of the root). 
In the case of a geometric offspring distribution $ \nu$,  our main Equation \eqref{eq:1} has already been considered by Chen \cite{chen2021enumeration}. He already showed  that in the ``generic case", then 
$$[x^n] W_0^x \sim \mathrm{cst}\cdot n^{-5/2} \rho^{-n}$$
for some constant $\mathrm{cst}>0$ and radius $ \rho>0$.
 \end{example}

\begin{example}[Planar maps] A rooted planar map is a connected planar graph (loops and multiple edges are allowed) embedded into the sphere, and with one distinguished oriented edge called the root edge. The face that is left to the root edge is often called the exterior face and its length (number of edges adjacent to the face) is called the perimeter of the map. We denote by $M(x,y)$ the generating function of rooted planar maps where $x$ counts the number of edges and $y$ counts the perimeter. Tutte's equation writes as 
$$ M(x,y) = 1 + x y^2 M(x,y)^2 + x y \frac{ y M(x,y) - M(x,1)}{ y-1}.$$
By defining $ N(x,y) = M(x,y+1)$, we get
\begin{eqnarray*}
 N(x,y) &=& 1 + x (y+1)^2 N(x,y)^2 + x (y+1) \frac{ (y+1) N(x,y) - N(x,0)}{ y} \\
 &=& 1 + x (y+1)^2 N(x,y)^2 + x (y+1)  \left( N(x,y) + \Delta^{(1)}N(x,y)\right),
 \end{eqnarray*}
We obtain an Equation of the form \eqref{eq:1} for $N(x,y)-1$. It is easy to see that the system is strongly connected and satisfies all our standing assumptions. 
Applying the so-called \emph{kernel method} \cite{BMJ06}, we can  solve explicitly the system and show that the number of rooted planar maps is asymptotically
$$ [x^n] N(x,0) = [x^n] M(x,1) \sim \frac{2}{ \sqrt{\pi}} n^{-5/2} 12^n \quad \mbox{ as } n \to \infty.$$
 \end{example}

\subsection{First properties of the generating function $F$ using analytic combinatorics}
We now gather a few basic properties of the partition functions $W_{p}^{x}$. Roughly speaking, we use standard analytic combinatorics arguments to prove that those functions behaves as $C \cdot \rho^n\cdot n^{-\beta}$ for $\beta \in \mathbb{Q}_{>0} \backslash \{1,2,3,...\}$.  The next sections will actually identify $\beta$ as being $ \frac{3}{2}$ or $ \frac{5}{2}$ using a connection with an underlying random walk.\medskip

Recall the definition of $p_0$ in the aperiodicity Assumption \eqref{ass:ap} part of our standing assumptions $(*)$.
We start by proving the modest result below using our standing assumptions on the local weight function:
\begin{lemma}[Introducing $x_{\crit}, y_{\crit}$] \label{lem:xcyc}We have the following:
\begin{itemize} \item The partition functions $(W_p^x : p \geq 0)$, as  functions of $x$, are positive functions, and have a common finite radius of convergence $x_{\crit} \in (0, \infty)$. Furthermore, $W_{p}^{x_{\crit}} < \infty$ for all $p \geq 0$.
\item For all $x \in (0,x_{\crit}]$, the function $y \mapsto F(x,y) = \sum_{p\geq 0} y^{p} W_{p}^{x}$ has a finite and positive radius of convergence $y_{\crit}^x \in (0, \infty)$ and $F(x,y_{\crit}^x) < \infty$ (even for $x = x_{\crit}$). 
\end{itemize}
\end{lemma}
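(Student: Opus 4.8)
The plan is to treat the two bullets of Lemma~\ref{lem:xcyc} in parallel: in each, the existence of a positive and finite critical value is routine analytic combinatorics, while the finiteness of the generating function \emph{at} that value is the genuine obstacle. First I would record two elementary bounds. Positivity of $x\mapsto W_p^x$ on $(0,\infty)$ is part of the standing assumption~(\ref{ass:connect}), and each $W_p^x$ is a power series in $x$ with non-negative coefficients. By boundedness~(\ref{ass:bound}) only finitely many weights $w_{c,k,(s_1,\dots,s_k)}$ are non-zero and all are $\le$ some $w_{\max}$, while every vertex of a relevant tree $\mathbbm t$ has offspring and labels bounded by $\K$; hence $\sum_{\mathbbm t\in\mathrm{FPT}_p^n}\mathrm w(\mathbbm t)\le C^n$ for a constant $C$ independent of $p$, so $W_p^x\le\sum_n(Cx)^n<\infty$ for $x<1/C$, and the radius of convergence $x_p$ of $W_p^x$ satisfies $x_p\ge 1/C>0$ for every $p$. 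Secondly, since $\phi(u)\le\K\cdot\#\mathrm{subtree}(u)$ by induction on $\mathbbm t$, one has $\mathrm{FPT}_p^n=\varnothing$ for $n<p/\K$; this will be used for the $y$-variable.

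To get a common $x_{\crit}$, I would work in the dependency graph $\mathcal G_Q$: an oriented edge $i\to j$ gives $W_i^x\ge C_{i\to j}\,x^{d_{i\to j}}\,W_j^x$ with $d_{i\to j}\ge1$ for all $x>0$, so vertices in a common strongly connected component of $\mathcal G_Q$ have the same radius. By~(\ref{ass:ap}) all $p\ge p_0$ lie in one strongly connected component, giving a common value $x_{\crit}:=x_{p_0}$. For $p<p_0$, the translation-invariance analysis carried out after~(\ref{ass:ap}) shows that $p$ is connected to $p_0$ (so $x_p\le x_{\crit}$) while no vertex $\ge p_0$ sends an edge into $\{0,\dots,p_0-1\}$, so the identity $W_p^x=xQ_p\big(x,(W_j^x)_{p<j\le p+\K}\big)$ involves only partition functions of strictly larger index; reading these finitely many polynomial identities downwards from $p=p_0-1$ to $p=0$ gives $W_p^x<\infty$ for $x<x_{\crit}$, hence $x_p=x_{\crit}$ for all $p$. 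Finally $x_{\crit}<\infty$: the strongly connected part above $p_0$, being infinite, contains a genuine cycle (concatenate oriented paths $p_0\rightsquigarrow p_0+1$ and $p_0+1\rightsquigarrow p_0$), which gives $W_{p_0}^x\ge C\,x^{d}W_{p_0}^x$ with $d\ge1$ and hence $W_{p_0}^x=+\infty$ as soon as $Cx^d>1$, so $x_{\crit}\le C^{-1/d}<\infty$.

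The step I expect to be the main obstacle is the finiteness at the critical points. For the first bullet it suffices to prove $W_{p_0}^{x_{\crit}}<\infty$: then $W_p^{x_{\crit}}\le (Cx_{\crit}^{d})^{-1}W_{p_0}^{x_{\crit}}<\infty$ for $p>p_0$ through a path $p_0\rightsquigarrow p$, and $W_p^{x_{\crit}}<\infty$ for $p<p_0$ by the same downward reading as above; moreover $W_p^{x_{\crit}}=\lim_{x\uparrow x_{\crit}}W_p^x$ by monotone convergence. To control $W_{p_0}^{x_{\crit}}$ I would use the algebraic nature of the model: by Proposition~\ref{prop:tutte} the series $F$ solves the catalytic equation~\eqref{eq:1}, whose Bousquet-M\'elou--Jehanne resolution makes $W_0^x,\dots,W_{\K-1}^x$ — hence, via the relations $W_p^x=xQ_p(x,\cdot)$, every $W_p^x$ — algebraic functions of $x$ with non-negative coefficients; together with the uniqueness of the dominant singularity at $x_{\crit}$ provided by~(\ref{ass:apvertex}), this brings us within the scope of singularity analysis \cite{FS09}. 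The crucial sub-point is to rule out a \emph{polar} singularity at $x_{\crit}$, and this is exactly where translation invariance enters: it forces the relevant block of equations to be strongly connected, so that a Drmota--Lalley--Woods type argument applies and the singularity at $x_{\crit}$ is of square-root/branch-point type, whence $W_{p_0}^x$ extends continuously with a finite value at $x_{\crit}$. If the analytic route proves awkward — the system being infinite — I would instead derive a direct \emph{a priori} bound, using translation invariance to compare the $W_p^x$ at different fluxes and bootstrapping the finiteness for $x<x_{\crit}$ into a uniform bound at $x_{\crit}$.

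For the second bullet, fix $x\in(0,x_{\crit}]$. The inequality $y_{\crit}^x<\infty$ comes from a ``decreasing'' edge: aperiodicity of the flux produces an edge $q\to r$ of $\mathcal G_Q$ with $q>r\ge p_0$, arising from a term $x\,w_{c,k,(s_1,\dots,s_k)}\,W_r^x\prod_{i\ge2}W_{p_i}^x$ of $[y^q]\big(xQ(y,F,\Delta^{(1)}F,\dots,\Delta^{(\K)}F)\big)$ in which all auxiliary fluxes may be taken $\le\K$; the property~(trans.)\ turns this into $W_n^x\ge c(x)\,W_{n-(q-r)}^x$ for $n\ge q$ with a fixed $c(x)>0$, so iterating $W_n^x\ge c_0(x)\,\rho(x)^n$ with $\rho(x)=c(x)^{1/(q-r)}>0$, and therefore $F(x,y)=\sum_p y^pW_p^x=\infty$ for $y\ge 1/\rho(x)$. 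The inequality $y_{\crit}^x>0$ comes from an ``increasing'' edge: (\ref{ass:connect}) gives an edge $0\to p_1$ with $p_1\ge1$; the corresponding term of $[y^0]\big(xQ(\cdots)\big)$ forces the car arrival to vanish and all fluxes to be $\le\K$, so~(trans.)\ gives $W_{q+p_1}^x\le c(x)^{-1}W_q^x$ with $c(x)>0$ bounded below using the positivity of the finitely many $W_\ell^x$, $\ell\le\K$; iterating yields $W_n^x\le C(x)\,\lambda(x)^n$ with $\lambda(x)=c(x)^{-1/p_1}<\infty$, hence $y_{\crit}^x\ge 1/\lambda(x)>0$. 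The last claim $F(x,y_{\crit}^x)<\infty$ is once more a critical-point finiteness: for fixed (non-exceptional) $x$ the function $y\mapsto F(x,y)$ is algebraic, and the branch-point-versus-pole dichotomy above, applied in the variable $y$, shows the singularity at $y_{\crit}^x$ is soft, so $F(x,y_{\crit}^x)=\lim_{y\uparrow y_{\crit}^x}F(x,y)<\infty$.
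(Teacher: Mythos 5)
Your treatment of the common radius of convergence (via the dependency graph, strong connectivity, and translation invariance), of $x_{\crit}>0$ (via the crude count of contributing trees), and of $y_\crit^x\in(0,\infty)$ is essentially in line with the paper, and your cycle argument for $x_\crit<\infty$ is valid. One small slip: for $p<p_0$ you assert that $[y^p]\big(xQ(\cdots)\big)$ involves only partition functions of \emph{strictly} larger index, but a self-loop $p\to p$ in $\mathcal G_Q$ is not ruled out by the ``no decreasing edge'' observation; the paper notes the corresponding term is at worst linear in $W_p^x$ and deals with the resulting $(1-wx)$ factor using translation invariance.

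The genuine gap is exactly the point you flag as ``the main obstacle''. Your primary route to $W_{p_0}^{x_\crit}<\infty$ — algebraicity plus a Drmota--Lalley--Woods--type dichotomy to rule out a pole — does not go through as sketched: the catalytic system here is an \emph{infinite} strongly connected positive system, and DLW applies to finite ones; ``translation invariance'' does not by itself reduce matters to a finite strongly connected block (handling the infinite case is the whole point of the paper). Your fallback ``direct a priori bound'' is the right idea, but the crucial step is not stated. The paper's argument is that the branching assumption, combined with connectivity, produces a quadratic self-bounding inequality $W_p^x\ge C\,x^n(W_p^x)^2$ for some fixed $p,C,n$, hence $W_p^x\le (Cx^n)^{-1}$ whenever finite; this uniform bound is stable as $x\uparrow x_\crit$ and, by monotone convergence, gives $W_p^{x_\crit}<\infty$, while combined with $W_p^x\ge C'x^m$ it also gives $x_\crit<\infty$. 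The same mechanism works in the $y$-variable: branching yields $F(x,y)\gtrsim y^d\big(\Delta^{(\K)}F(x,y)\big)^2$, and dividing by $y^\K\Delta^{(\K)}F(x,y)$ forces $\Delta^{(\K)}F(x,y)$ to stay below a polynomial in $y$ while the left-hand side tends to $1$, giving simultaneously $y_\crit^x<\infty$ and $F(x,y_\crit^x)<\infty$. Without this self-bounding inequality, your proof of the finiteness at the critical points is incomplete, and your algebraic substitute for it is not justified in the infinite setting.
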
 
\begin{proof}
We start by considering the function $x \mapsto W_p^x $ for $ p \geq p_0$ where we recall that the system is strongly connected for $ p \geq p_0$. It implies that we can find a direct path $i_0:= p_0+1, i_1, \dots, i_{k_1}:=p_0$ from $p_0+1$ to $p_0$,  such that for all $1 \leq j \leq k_1$, there exists $c_j>0$ and $\ell_j\geq 0$ such that $ W_{i_{j-1}} \geq c_j x^{\ell_j} W_{i_j}$. Writing $C_1 := \prod_{j = 1}^{k_1} c_j$  as well as $ L_1 := \sum_{j =1}^{k_1} \ell_j$  we deduce that $C_1 x^{L_1} W^x_{p_0+1} \leq  W^x_{p_0}$ and performing the same reasoning with a path linking $p_0$ to $p_{0}+1$ yields the further existence of $C_2,L_2 \geq 0$ such that 
\begin{equation*}C_1 x^{L_1} W^x_{p_0+1} \leq  W^x_{p_0} \leq  C_2 x^{L_2} W^x_{p_0+1}.\end{equation*}
In fact, our system is not only strongly connected, but it is also ``translation invariant".  Indeed, if there is a factor involving $W^x_{p}$ in $[y^r] x Q (y, F, \Delta^{(1)} F, \ldots,  \Delta^{(\K)} F )$ for $p,r \geq p_0$, then the same factor involving $W_{p+1}^x$ appears  in $[y^{r+1}] x Q ( y, F, \Delta^{(1)} F, \ldots,  \Delta^{(\K)} F )$.
In particular, for the same $C_1,C_2, L_1,L_2$ as above, we have for all $ p \geq p_0$ and all $x \geq 0$, then 
\begin{equation}\label{eq:aperiodic} C_1 x^{L_1} W^x_{p+1} \leq W^x_{p} \leq C_2 x^{L_2}W^x_{p+1}.\end{equation}
This shows that the functions $( x \mapsto W^x_{p} : p \geq p_0)$ have the same (possibly infinite) radius of convergence $x_{\crit} \in [0, \infty]$  and the same behaviour at $x = x_{\crit}$. Moreover, for $0 \leq p < p_0$, since they are no edge linking $p$ to a smaller value in $ \mathcal{G}_Q$,  the function $W_p^x = [y^p] F(x,y) = [y^p]x Q ( y, F, \Delta^{(1)} F, \ldots,  \Delta^{(\K)} F )$ can be written as a finite combination of the $( W^x_{q} : q \geq p)$ with positive coefficients. Furthermore, the only term involving $ W_p^x$  which might appear in \linebreak $[y^p]x Q ( y, F, \Delta^{(1)} F, \ldots,  \Delta^{(\K)} F )$ are linear in $ W_p^x$ (with no other $(W_q : q \geq p)$). Indeed, if there is a term divisible by $ W_p^x W_q^x$ in $[y^p]x Q ( y, F, \Delta^{(1)} F, \ldots,  \Delta^{(\K)} F )$, then the same term with replacing $W_q^x$ by $W_{q+1}^x$ appears in $[y^{p+1}]x Q ( y, F, \Delta^{(1)} F, \ldots,  \Delta^{(\K)} F )$ which implies that there is an edge in the dependency graph $ \mathcal{G}_Q$ from $p+1$ to $p$, contradicting the fact that $ p < p_0$. This means that there exists $w \in [0,  \infty) $ such that $ (1 - w x) W_p$ is a finite combination of the $( W^x_{q} : q > p)$ with positive coefficients. Thus, the radius of convergence of $ W_p^x$ is the minimum between the radius of convergence of the  $( W^x_{q} : q > p)$'s and $1/w$. But recall that our system is translation invariant, meaning that if $w x W_p^x$ appears in $[y^p]x Q ( y, F, \Delta^{(1)} F, \ldots,  \Delta^{(\K)} F )$, then for all $ q > p$, there exists $w' \geq w$ such that the term  $w' x W_{q}^x$ appears in $[y^q]x Q ( y, F, \Delta^{(1)} F, \ldots,  \Delta^{(\K)} F )$ and the radius of convergence of $ W_q$ is smaller than $ 1/w' \leq 1/w$. As the consequence, we obtain that the radius of convergence of $W_p$ is the minimum of the radius of convergence of the  $( W^x_{q} : q > p)$'s and a straightforward induction shows that this minimum is $ x_{\crit}$,  and the whole family as the same radius of convergence.

Let us now prove that $ x_c > 0$. The first observation is that since the car arrivals, the number of children of a vertex and the number of spots on an edge are bounded by $\K$, then there are at most $ (\K+1)^{\K+2}$ weights of the form $w_{c,k,(s_1, \dots, s_k)}$ which are positive. In particular, the number of trees with $n$ vertices which have a positive weight is at most $ \mathrm{Cat}(n)  (\K+1)^{n (\K+2)} \leq 4^{n}(\K+1)^{n (\K+2)} $. Let us write 
$$M := \max_{\substack{c \geq0\\ k \geq 0}} \max_{s_1, \dots, s_k \geq 0} w_{c,k,(s_1, \dots, s_k)} < \infty.$$
Then for all $x \geq 0$ and all $ p \geq 0$, we have 
\begin{eqnarray*}  
W_p^x &=&\sum_{n \geq 0} x^n \sum_{ \mathbbm{t} \in \mathrm{FPT}_n^p} \mathrm{w}( \mathbbm{t}) \\
&\leq&\sum_{n \geq 0} x^n \sum_{ \mathbbm{t} \in \mathrm{FPT}_n^p} M^n \cdot \mathbb{1}_{ \mathrm{w}( \mathbbm{t})> 0} \\ 
&\leq&\sum_{n \geq 0} x^n(4(\K+1)^{\K+2}M)^n.
\end{eqnarray*}
In particular, the right-hand side is finite when $x < 1/(4 M (\K+1)^{\K+2})$, which proves that $x_c \geq 1/(4 M (\K+1)^{\K+2}) >0$.

To prove that $x_\crit$ is finite, we use the branching assumption: We know that there exists $k \geq 2$ and $ (c,k,(s_1, \ldots, s_k))$ such that $w_{c,k,(s_1, \ldots, s_k)} >0.$  Let $p \geq \max(c-s_1, p_0)$.  Then by \eqref{eq:tutte} we have \begin{eqnarray*} W_p^x \geq x  w_{(c,k,(s_1, \ldots, s_k))}\cdot W_{p+ s_1 - c}^x \cdot \prod_{i = 2}^{k} W_{s_i}^x 
\end{eqnarray*}
Moreover, since $p+ s_1 - c$ and $ s_2$ are connected via an oriented path to $p$ in $ \mathcal{G}_Q$, we can find an integer $n$ and a constant $C$ such that for all $x >0$ we have
\begin{eqnarray*}W_p^x  \geq C x^n (W_p^x)^2.
\end{eqnarray*}
In particular, for positive $x$ and when $W_p^x$ is finite we have 
\begin{eqnarray*}W_p^x  \leq \frac{1}{C} x^{-n}.
\end{eqnarray*}
Moreover, the function $ x \mapsto W_p^x $ is a non-zero series with positive coefficients. Thus there exists $C'> 0$ and $m \geq 1$ such that $$ W_p^x \geq C' x^m.$$
This implies that $ F(x,0)$ can be finite only for the $x$ such that $ C' x^m \leq C x^{-n/(k-1)}.$ 
In particular, the series $ x \mapsto W_p^x$ must have a finite radius of convergence $ x_{\crit} < \infty$  and also that $ \lim_{x \to x_{\crit}^-} W_p^x < + \infty$ so that $ W_p^{x_{\crit}} < + \infty.$ 
This proves the first item of the Lemma. 

Let us now move to the second item and use similar ideas.  Let us fix $x \in (0, x_{\crit}]$. 
Again, on the one hand, we have $ W_{p}^{x} >0$ for all $p> 0$, thus $F(x,y) \geq y^{p } W_{p}^{x}$. On the other hand, using again our branching assumption with the same notation as above, we have for all $p$ large enough, 
$$ W_{p+c}^{x} \geq x w_{c,k,(s_1, \ldots, s_k)}\cdot \sum_{q = 0 }^{p} \left( W_{q+s_1}^{x} W^x_{p-q+s_2}\right)\cdot \prod_{i = 3}^{k} W_{s_i}^x, $$
Summing over all possibilities for $p \geq 0$ and recalling $s_i \leq \K$ we deduce that 
$$ y^ \K \Delta^{(\K)} F(x,y) + \sum_{i=0}^{\K-1} y^i W_i^x = F(x,y)   \geq \mathrm{Cst}^x y^{d} \left(\Delta^{(\K)} F(x,y)\right)^2.$$
Note that $\Delta^{(\K)} F(x,y) \geq W_ \K^x$. Thus, if $\Delta^{( \K)} F(x,y) < \infty$, dividing the previous inequality by $ y^ \K \Delta^{(\K)} F(x,y)$, we obtain
$$ 1 + \frac{1}{W_ \K^x} \sum_{i=0}^{\K-1} y^{i-\K} W_i^x   \geq \mathrm{Cst}^x y^{d- \K} \Delta^{(\K)} F(x,y).$$

Here the left-hand side tends to $1$ as $y$ tends to infinity, whereas the right-hand side is larger than any polynomial in $y$. We conclude as for the series in $x$ that $ y \mapsto F(x,y)$ must have a finite radius of convergence $ y_{\crit}^x$ and must be finite at $y_{\crit}^x$.
 \end{proof}

 Our next step (which is also folklore) is to use algebraicity and standard analytic combinatorics techniques to get a finer asymptotic on $W_{p}^{x}$:
\begin{lemma}\label{lem:asympW} For $0 < x \leq x_{\crit}$, there exist $b^{x} \in \{1,2, ...\} $, constants $C^{x}_j > 0$ for $ j \in \{0, 1, \dots, b^x-1\}$ together with $y_{\crit}^{x}>0$ and an exponent $ \beta^{x}  \in (1, \infty) \backslash \{2,3,... \}$ such that 
$$ W_{p}^x \underset{\substack{p \to + \infty\\ p \equiv j\, \mathrm{mod}\, b^x}}{\sim} C^{x}_j \cdot  p^{- \beta^{x}} \cdot  (y_{\crit}^{x}) ^{-p}.$$
\end{lemma}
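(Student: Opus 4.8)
The plan is to fix $x \in (0, x_{\crit}]$, to show that $y \mapsto F(x,y)$ is an algebraic function of $y$, and then to read off the asymptotics of its Taylor coefficients $W_p^x$ from the classical singularity analysis of algebraic generating functions developed in \cite[Chapters VI and VII]{FS09}. Note that the \emph{infinite} positive system $W_p^x = x\, Q_p(x, W_0^x, W_1^x, \ldots)$ prevents a direct appeal to the Drmota--Lalley--Woods theorem, which is tailored to finite systems; the point is therefore to work, for each fixed $x$, inside the single algebraic function field generated by the section $F(x, \cdot)$.

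First I would establish algebraicity. Starting from \eqref{eq:tutte}, namely $F(x,y) = x\, Q\big(y, F, \Delta^{(1)}F, \ldots, \Delta^{(\K)}F\big)$ with $\Delta^{(i)}F = y^{-i}\big(F(x,y) - \sum_{j=0}^{i-1} y^j W_j^x\big)$, a routine clearing of denominators (multiplying by a large enough power of $y$) turns this into a polynomial identity $P_x\big(y, F(x,y)\big) = 0$ for some nonzero $P_x \in \mathbb{R}[y,z]$: indeed $\K$ is a fixed integer (boundedness in $(*)$), the constants $W_0^x, \ldots, W_{\K-1}^x$ are finite by Lemma~\ref{lem:xcyc}, and the branching assumption $(*)$ makes $Q$ non-linear so that $\deg_z P_x \geq 2$, with top-$z$-degree coefficient a nonzero polynomial in $y$ (the highest-$F$-degree part of $Q$ having nonnegative coefficients, and the substitutions being affine in $F$). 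Hence $y \mapsto F(x,y) = \sum_{p \geq 0} W_p^x\, y^p$ is algebraic over $\mathbb{R}(y)$, and by Lemma~\ref{lem:xcyc} it has a finite positive radius of convergence $y_{\crit}^x$ with $F(x, y_{\crit}^x) < \infty$.

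Next I would run the singularity analysis. Since the $W_p^x$ are nonnegative, Pringsheim's theorem forces $y_{\crit}^x$ itself to be a singularity of $F(x, \cdot)$; it is not a pole because $F(x, y_{\crit}^x) < \infty$, and since $F(x, \cdot)$ is algebraic its local expansions are logarithm-free Puiseux series, so $y_{\crit}^x$ is an algebraic branch point with ramification index $\kappa \geq 2$ (ramification $1$ would make it a pole or a regular point). An algebraic function has only finitely many singularities, and — the coefficients being nonnegative — its dominant singularities all sit at $y_{\crit}^x\, e^{2\pi \mathrm{i} k / b^x}$, $0 \leq k < b^x$, for some integer $b^x \geq 1$ (see \cite[Chapter VII]{FS09}). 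Let $\gamma^x$ be the smallest exponent, not a nonnegative integer, occurring with a nonzero coefficient in the Puiseux expansion at any of these dominant singularities: it is strictly positive because $F(x, y_{\crit}^x) < \infty$ forbids negative exponents, and it exists because a purely integer Puiseux expansion would make the corresponding point regular. Applying the transfer theorem for algebraic singularities \cite[Chapter VI]{FS09} to the finitely many dominant singular terms then yields
$$ W_p^x = [y^p] F(x,y) = (y_{\crit}^x)^{-p}\, p^{-\gamma^x - 1}\,\big(\Theta(p) + o(1)\big), \qquad p \to \infty, $$
where $\Theta$ is a $b^x$-periodic function of $p$, assembled from the leading Puiseux coefficients and the factor $1/\Gamma(-\gamma^x)$ (finite and nonzero because $\gamma^x \notin \mathbb{Z}$). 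Restricting to $p \equiv j \pmod{b^x}$ and setting $\beta^x := \gamma^x + 1 \in (1, \infty) \setminus \{2, 3, \ldots\}$ gives $W_p^x \sim C_j^x\, p^{-\beta^x}\,(y_{\crit}^x)^{-p}$ with $C_j^x := \Theta(j) \geq 0$; finally $C_j^x > 0$, for otherwise, the remaining Puiseux contributions being of strictly smaller order $O\big(p^{-\beta^x - 1/\kappa}(y_{\crit}^x)^{-p}\big)$, one would get $W_p^x = o\big(p^{-\beta^x}(y_{\crit}^x)^{-p}\big)$ along that progression, incompatible with $p^{-\beta^x}(y_{\crit}^x)^{-p}$ being the true order of magnitude of the positive numbers $W_p^x$ (Assumption~\ref{ass:connect}).

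I expect the main obstacle to be the bookkeeping around several dominant singularities on the circle $|y| = y_{\crit}^x$: one must justify that they lie at $y_{\crit}^x$ times roots of unity and that a single period $b^x$ captures the resulting oscillation of the coefficients. This is a standard property of generating functions of combinatorial classes — here, the weighted fully parked trees — but it has to be invoked with care. Ruling out a purely integer (hence regular) Puiseux expansion at $y_{\crit}^x$ is by contrast immediate, since an algebraic function regular at every point of its closed disk of convergence would be entire, contradicting the finiteness of $y_{\crit}^x$. Everything else is routine analytic combinatorics, which is precisely why the statement is folklore.
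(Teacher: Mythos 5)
Your overall strategy — fix $x$, clear denominators in \eqref{eq:tutte} to obtain algebraicity of $y \mapsto F(x,y)$, then run singularity analysis on this single-variable algebraic function — matches the paper's, which simply delegates the Puiseux and transfer-theorem bookkeeping to a single citation of \cite[Theorem~VII.8]{FS09}, whereas you reconstruct that statement from Pringsheim, the branch-point dichotomy, and the transfer theorem.

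There is, however, a gap in your concluding positivity argument. You define $\gamma^x$ as the smallest non-integer Puiseux exponent occurring at \emph{any} dominant singularity, set $\beta^x = \gamma^x + 1$, and then argue $C_j^x > 0$ because otherwise $W_p^x = o\big(p^{-\beta^x}(y_{\crit}^x)^{-p}\big)$ along that progression, ``incompatible with $p^{-\beta^x}(y_{\crit}^x)^{-p}$ being the true order of magnitude of the positive numbers $W_p^x$.'' This is circular: that $p^{-\beta^x}(y_{\crit}^x)^{-p}$ is the common order along \emph{every} residue class modulo $b^x$ is precisely what has to be established, and the algebraic coefficient asymptotics a priori deliver possibly different polynomial exponents along different progressions (the one you chose as $\gamma^x$ need not be realized along each of them). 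Mere positivity of $W_p^x$ (Assumption~\ref{ass:connect}) fixes neither the exponent nor the order of magnitude along a given progression. The ingredient you are missing is the strong connectivity and translation invariance of the dependency graph $\mathcal{G}_Q$ (aperiodicity, Assumption~\ref{ass:ap}), whose consequence \eqref{eq:aperiodic}, namely $C_1 x^{L_1} W_{p+1}^x \leq W_p^x \leq C_2 x^{L_2} W_{p+1}^x$ for $p \geq p_0$, forces consecutive coefficients to have comparable magnitude; this equalizes the exponents across residue classes and, since at least one class must carry the (nonzero) leading Puiseux contribution from $y_{\crit}^x$, yields $C_j^x > 0$ for all $j$. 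This is exactly how the paper proceeds before quoting $\beta^x \notin \{2,3,\dots\}$ from \cite{flajolet1987analytic}. Replace your appeal to Assumption~\ref{ass:connect} by an appeal to \eqref{eq:aperiodic} and the proof closes; the rest — the branch-point argument, the bound $\gamma^x > 0$ from $F(x,y_{\crit}^x) < \infty$, and the exclusion of integer values of $\beta^x$ via the non-integrality of $\gamma^x$ — is sound.
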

We will show in the next section that actually $b^{x}=1$ so that $C_{j}^{x}$ are constant in $j$. To simplify notation, in the sequel we shall denote by $\widetilde{W}_{p}^{x}$ the function obtained after correcting  the exponential  part, i.e.
  \begin{eqnarray} \label{eq:polyalways} \forall p \geq 0, \forall 0 < x \leq x_{\crit}, \quad \widetilde{W}_{p}^{x} := (y_{\crit}^{x})^{p}\cdot W_{p}^{x},  \end{eqnarray} so that the latter always decay in a polynomial fashion like $ p^{-\beta^{x}}$. 

\begin{proof} Recall that by Proposition \ref{prop:tutte}, the bivariate series $ F(x,y)$ satisfies \eqref{eq:1}. Thus, if we \textbf{fix} $ 0< x \leq x_{\crit}$, multiplying both sides of \eqref{eq:1} by $y^\K$, we deduce that the series $ y \mapsto F(x,y)$ satisfies a polynomial equation $P(y, F(x,y)) =0$ and is for this reason algebraic. Moreover, by the second item of Lemma \ref{lem:xcyc}, this series has a finite radius of convergence $ y_{\crit}^x \in (0, \infty)$.  As a consequence, we can use \cite[Theorem VII.8 p 501]{FS09}, which implies that there exists $b^x \in \mathbb{Z}_{ \geq 1}$,  constants $C^{x}_j > 0$ and exponents $ \beta^{x}_j \in \mathbb{R}$ for $ j \in \{0 \leq j \leq b^x-1\}$ such that 
$$ W_{p}^x \underset{\substack{p \to + \infty\\ p \equiv j\, \mathrm{mod}\, b^x}}{\sim} C^{x}_j \cdot  p^{- \beta^{x}_j} \cdot  (y_{\crit}^{x}) ^{-p}.$$
Moreover, since our system is aperiodic (Assumption \ref{ass:ap}), by \eqref{eq:aperiodic} it implies that all exponents $ \beta^x_j$ are equal and let us denote by $ \beta^x$ their common value. By the second item of Lemma \ref{lem:xcyc}, we must have $ \beta^x > 1$ since $ F(x,y_{\crit}^x) < + \infty$ and furthermore $\beta^{x} \notin \{2,3, ...\}$ by \cite[Theorem D, p.293]{flajolet1987analytic} 
\end{proof}

\subsection{A multi-type Galton--Watson tree}

We end this section by introducing  the random tree model underlying  \eqref{eq:1}. Since we know that $ W_{p}^{x}$ are (non zero) finite numbers for $x \leq x_{\crit}$, we can define the associated Boltzmann distribution on fully-parked trees (plane trees) of $\cup_{n \geq 1} \mathrm{FPT}_{p}^{n}$ by normalizing the measure $ \mathrm{w}( \mathrm{d} \mathbbm{t}) x^{\#  \mathrm{t}}$ as in \eqref{def:boltz}.  We shall write $ \mathbb{P}^x_{p}$  the obtained probability measure on the canonical space of $ \mathbb{Z}_{\geq 0}$-labeled trees $  \mathbf{t} = ( \mathrm{t}, (\phi(u))_{u \in \mathrm{t}})$ and denote by $ \mathbb{E}_{p}^x$ its associated expectation. We insist on the fact that $ \mathbf{t}$ are in particular \textit{plane} trees, see Figure \ref{fig:parking1}. 
\begin{proposition}[Fully parked trees as integer-type Bienaym\'e--Galton--Watson trees] \label{prop:GW}For any $x \in (0, x_\crit]$, under $ \mathbb{P}^x_{p}$ the random tree $(  \mathrm{t}, \phi)$ is a multitype Bienaymé--Galton--Watson tree with offspring distribution prescribed by 
$$ \pi_{p}^x(p_{1}, ... ,p_{k}) := x \sum_{ \substack{ c \geq 0 \\ \forall i , 0 \leq s_i \leq p_i \\ \sum_{i = 1}^{k} (p_i - s_i) +c =p}} w_{c, k, (s_1, \ldots, s_k)} \cdot  \frac{ W_{p_1}^x \ldots W_{p_k}^x}{ W_p^x},$$ where $\pi_{p}^x(p_{1}, ... ,p_{k})$ is the probability that a particle of type $p$ gives rise to $k$ particles of label $p_1, ... , p_k$ from left to right.
\end{proposition}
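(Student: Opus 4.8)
The plan is to identify the fibres of the parking map by a purely local condition, to recognise the Tutte equation used in the proof of Proposition~\ref{prop:tutte} as the assertion that the candidate offspring laws $\pi_p^x$ are probability distributions, and then to conclude by a telescoping product. First I would establish the deterministic fact that a decorated tree $\mathbbm{t} = (\mathrm{t}, \ell^{\mathrm{car}}, \ell^{\mathrm{spot}})$ is fully parked with flux field $\phi = (\phi_{\mathbbm{t}}(u))_{u \in \mathrm{t}}$ if and only if, at every vertex $u$ with children $u_1, \dots, u_k$ from left to right, writing $c = \ell^{\mathrm{car}}(u)$ and $s_i = \ell^{\mathrm{spot}}(e_i(u,\mathrm{t}))$, one has $0 \leq s_i \leq \phi(u_i)$ for all $i$ and $c + \sum_{i=1}^k(\phi(u_i) - s_i) = \phi(u)$. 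This follows by induction on subtrees from the Abelian property: all $\phi(u_i)$ cars leaving $u_i$ cross the edge $e_i(u,\mathrm{t})$, so this edge is saturated if and only if $\phi(u_i) \geq s_i$, in which case exactly $\phi(u_i) - s_i$ of them continue to $u$; conversely, if the displayed relations hold at every vertex, running the process from the leaves saturates every edge and outputs precisely $\phi$. These are exactly the ``parking constraints'' \eqref{eq:parkingconstraint} already used to derive the Tutte equation.

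Because this constraint is \emph{local} at each vertex, the pushforward by the parking map of the Boltzmann weight $x^{\#\mathrm{t}}\,\mathrm{w}(\mathrm{d}\mathbbm{t})$, evaluated at a $\mathbb{Z}_{\geq0}$-labeled plane tree $\mathbf{t} = (\mathrm{t},\phi)$, factorises over vertices as $\sum_{\mathbbm{t}\,\mapsto\,\mathbf{t}} x^{\#\mathrm{t}}\,\mathrm{w}(\mathbbm{t}) = \prod_{u\in\mathrm{t}} \Omega^x_{\phi(u)}(\phi(u_1),\dots,\phi(u_{k_u}))$, where
\begin{equation*}
\Omega^x_p(p_1,\dots,p_k) \;:=\; x\!\!\!\sum_{\substack{c\geq 0,\; 0\leq s_i\leq p_i\\ c + \sum_i(p_i - s_i) = p}}\!\!\! w_{c,k,(s_1,\dots,s_k)}, \qquad\text{so that}\qquad \pi_p^x(p_1,\dots,p_k) = \Omega^x_p(p_1,\dots,p_k)\,\frac{W_{p_1}^x\cdots W_{p_k}^x}{W_p^x}.
\end{equation*}
Extracting $[y^p]$ from the Tutte equation \eqref{eq:tutte} after writing $\Delta^{(s)}F(x,y) = \sum_{q\geq s} y^{q-s}W_q^x$ gives exactly $W_p^x = \sum_{k\geq 0}\sum_{(p_1,\dots,p_k)} \Omega^x_p(p_1,\dots,p_k)\,W_{p_1}^x\cdots W_{p_k}^x$; since $0 < W_q^x < \infty$ for every $q$ (Lemma~\ref{lem:xcyc} and Assumption~\ref{ass:connect}) and $\Omega^x_p \geq 0$, dividing by $W_p^x$ shows that each $\pi_p^x$ is a probability distribution on $\bigcup_{k\geq0}(\mathbb{Z}_{\geq0})^k$, so the multitype Galton--Watson tree with offspring laws $(\pi_p^x)_{p\geq0}$ is well defined.

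It remains to telescope. For a finite $\mathbb{Z}_{\geq0}$-labeled plane tree $\mathbf{t} = (\mathrm{t},\phi)$ with $\phi(\varnothing) = p$,
\begin{equation*}
\prod_{u\in\mathrm{t}} \pi^x_{\phi(u)}\big(\phi(u_1),\dots,\phi(u_{k_u})\big) \;=\; \Bigg(\prod_{u\in\mathrm{t}}\Omega^x_{\phi(u)}\big(\phi(u_1),\dots\big)\Bigg)\frac{\prod_{v\in\mathrm{t},\,v\neq\varnothing}W^x_{\phi(v)}}{\prod_{u\in\mathrm{t}}W^x_{\phi(u)}} \;=\; \frac{x^{\#\mathrm{t}}}{W_p^x}\sum_{\mathbbm{t}\,\mapsto\,\mathbf{t}}\mathrm{w}(\mathbbm{t}),
\end{equation*}
the $W$-factors cancelling because each non-root vertex occurs once as a child of its parent (in the numerator) and once on its own (in the denominator), while the root occurs only in the denominator. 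By \eqref{def:boltz} and \eqref{def:wpx} the right-hand side is precisely $\mathbb{P}^x_p(\mathbf{t})$, so $\mathbb{P}^x_p$ assigns to every finite $\mathbb{Z}_{\geq0}$-labeled plane tree the Galton--Watson probability; since these probabilities sum to one, the Galton--Watson tree is almost surely finite and its distribution equals $\mathbb{P}^x_p$, which is the claim. The one genuinely delicate step is the local characterisation of the parking map in the first paragraph, since it is exactly what makes the preimage sum $\sum_{\mathbbm{t}\mapsto\mathbf{t}}$ factorise over vertices; everything downstream is bookkeeping together with the already established Tutte identity, so that is where I would concentrate the effort.
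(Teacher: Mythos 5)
Your proof is correct and follows exactly the route the paper has in mind: the paper's one-sentence proof declares the result a ``straightforward probabilistic reformulation of Tutte's decomposition at the root'' (Proposition~\ref{prop:tutte}), and what you have written out in full — local characterisation of the parking fibres, factorisation of the pushforward weight, the Tutte identity as the normalisation of $\pi^x_p$, and the telescoping cancellation of the $W$'s — is precisely that reformulation made explicit.
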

\begin{proof} This proposition is a straightforward probabilistic reformulation of Tutte's decomposition at the root proved in Proposition \ref{prop:tutte}.
\end{proof}

\paragraph{A word on canonical spaces.} In this work it will be convenient to deal with random variables of the same type (e.g.~random trees, random discrete processes, random c\`adl\`ag processes...) but having slightly different laws. Instead of working on an abstract probability space $ (\Omega, \mathbb{P},  \mathcal{F})$ and defining different random variables $X_{1},X_{2} ... : \Omega \to E$ having the different laws, we shall rather work on the canonical space $\Omega= E$ (e.g. $E=  \{ \mbox{all finite plane labeled trees}\}$)  endowed with its obvious Borel sigma-field and endow this canonical space with different probability measures $ \mathbb{P}_{1}, \mathbb{P}_{2}, ...$ under which the canonical random variable $ \mathrm{Id} :\Omega \to \Omega$ has the law of $X_{1}, X_{2}, ...$. This will make many of our arguments more transparent to the cost of paying attention to the probability symbol first!

\begin{center} \hrulefill \textit{\ In the sequel $ 0 < x \leq x_{\crit}$ is fixed and is dropped from notation for simplicity.} \hrulefill  \end{center}

\section{An underlying random walk}

\label{sec:rw}
In this section we make precise the connection with a random walk. We start by introducing the asymptotic law $\nu$ of the increments in the locally largest branch along our labeled trees under $ \mathbb{P}^x_{p}$ as $p \to \infty$. We then show that the locally largest exploration is obtained by a  simple $h$-transformation of the $\nu$-random walk with independent increments.  This connection will enable us to import results from  fluctuation and scaling limit theory of random walk and ultimately will pin down the values of the exponents $\beta^{x}$ to $\{ \frac{3}{2}, \frac{5}{2}\}$ in Theorem \ref{thm:magic}. 

Given a labeled tree $ \mathbf{t}= ( \mathrm{t}, \phi)$ where $\phi : \mathrm{t} \to \{0,1,2, ... \}$, we call the \textbf{locally largest branch} the set of vertices $\varnothing = u_0 \preceq u_1 \preceq u_2 \preceq ... \preceq u_\tau$  such that for every $0 \leq i \leq \tau-1$ the vertex $u_{i+1}$ is the children of $u_{i}$ having the largest label, and $ \tau$ is the first time $i$ such that $u_{i}$ is a leaf (and has thus no children). In case of tie, then we take the left-most of such children, see Figure \ref{fig:ll} below.

\begin{figure}[!h]
 \begin{center}
 \includegraphics[width=13cm]{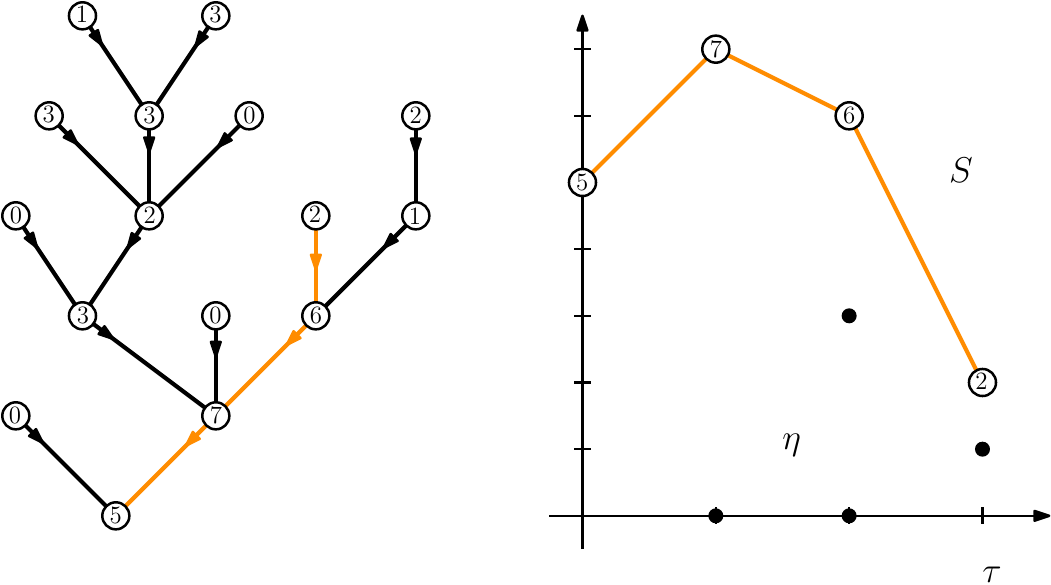}
 \caption{ Example of a locally largest exploration in a labeled tree: the locally largest branch is in orange. The decoration process $S$ is in orange on the right, whereas the reproduction measure $\eta$ is displayed with black dots on the right figure. \label{fig:ll}}
 \end{center}
 \end{figure}  
 We shall write below $S_i = \phi(u_i)$ for $0 \leq i \leq \tau$ for the labels along the locally largest branch and $Y^{i}_{j}$ for $1 \leq i \leq \tau$ and $1 \leq j \leq k_{u_{i-1}}(  \mathrm{t})-1$ for the labels $ \phi(v)$ of the siblings of $u_{i}$, i.e.~those vertices $v\ne u_{i}$ having $u_{i-1}$ for parent, ranked from left-to-right. Notice that this collection may be empty and has always less than $K-1$ numbers by our standing assumption. In the terminology of \cite[Section 6]{bertoin2024self} the process $S= (S_{i})_{0 \leq i \leq \tau}$ is called the \textbf{decoration process}, whereas the point measure 
 $$ \eta = \sum_{i \geq 1} \sum_{j \geq 1} \delta_{i , Y^{{i}}_{j}}$$ is called the \textbf{reproduction process} (it is here a random measure). Each atom  of $\eta$ is interpreted as a new particle created on the side of the locally largest exploration: It should be clear  from the multitype branching structure (see Proposition \ref{prop:GW}), that under $ \mathbb{P}_p$ and conditionally on $ (S,\eta)$,  the discrete labeled trees dangling from the locally largest branch are independent labeled trees of law $ \mathbb{P}_{Y^{i}_{j}}$. See \cite[Chap. 6.1]{bertoin2024self} for details.
 
 Recall for further use that if $(S)$ is the decoration process of a locally largest exploration, since the degree, car arrivals and parking spots capacities are bounded by $ \K$ by our standing assumptions, using the parking constraints \eqref{eq:parkingconstraint} we deduce that we must have 
  \begin{eqnarray} \label{eq:sautll} S_{i+1} \geq \frac{S_i- \K}{\K}, \mbox{ for all } 0 \leq i < \tau,  \end{eqnarray}
and even if $(S)$ is not the locally largest exploration (that is, regardless of branch followed) we have 
  \begin{eqnarray} -\K \leq \big(S_{i+1}-S_{i}\big) + \sum_{j \geq 1} Y_{j}^{i+1} \leq \K^{2} \quad \mbox{ and  }\quad \label{eq:sommedelta} S_i + \K \geq S_{i+1},  \qquad \mbox{ for all } 0 \leq i < \tau.   \end{eqnarray}

\subsection{Splitting large flux and measures $ \hat{\nu}$ and  $\nu$}
Recall that $x \in (0, x_\crit]$ has been fixed once and for all in this section, and is often dropped from notation. In particular, the coming measures $\hat{\nu}$ and $\nu$ implicitly depend on $x$. 
\begin{definition}[Introducing $\hat{\nu}$ and $\nu$]  \label{def:nuhat} We define a measure $\hat{\nu}$ on $ \mathbb{Z}_{\geq 0} \times \mathbb{Z}_{\geq 0} \times \bigcup_{k=0}^{\K-1}\big(( \mathbb{Z}_{\geq 0}) ^{k+1} \times ( \mathbb{Z}_{\geq 0}) ^{k}\big)$ by 
\begin{eqnarray*} 
\hat{\nu} \big(q ; c, (s_0, ... , s_k), (p_1, ..., p_k )\big) &:=&  x\cdot y_{\crit}^{-q} \cdot  (k+1)  w_{ c,k+1, (s_0, s_1, \dots, s_k)} \prod_{j= 1}^{k} {W}_{p_i}  \cdot \mathbbm{1}_{ \mathscr{C}}\\ 
&=&  x\cdot y_{\crit}^{-\sum_{i=0}^k s_i +c} \cdot  (k+1)  w_{ c,k+1, (s_0, s_1, \dots, s_k)} \prod_{j= 1}^{k} {\widetilde{W}}_{p_i}  \cdot  \mathbbm{1}_{ \mathscr{C}}
,
\end{eqnarray*}  
where the parking constraints $ \mathscr{C}$ are
 \begin{eqnarray} \label{eq:parkingconstraintnu} \sum_{i=1}^k p_i- \sum_{i=0}^{k}s_i +c = -q, \quad \mbox{ and }\quad\forall\ 1 \leq i \leq k,\ p_i \geq s_i  \end{eqnarray}
When dealing with labeled trees only (as opposed to parking trees), we will often consider the push-forward of this measure $\hat{\nu}$ on several coordinates, which will then be denoted by $*$ when forgotten. For example, the measure $\hat{\nu}(\cdot ; *,(*), (\cdot))$ is the induced measure on $(q ; (p_{1}, ... , p_{k}))$ in the above notation.  The measure $\hat{\nu}( \cdot ; *,(*),(*))$ supported $\{... , -1,0,1,2, ... , \K\}$ will simply be denoted by $\nu$. 
\end{definition}

The probabilistic interpretation of this measure is the following: 
\begin{proposition} \label{prop:nu_proba} The law of $(S_1-S_0 ; Y^1_1, Y^1_2,... , Y^1_k)$ under $ \mathbb{P}_p$ converges as $p \to \infty$ towards $\hat{\nu}(\cdot; *,(*), (\cdot))$. In particular, $\hat{\nu}$ is a probability measure. Furthermore, the different constants $C_j^x$ appearing in Lemma~\ref{lem:asympW} so that we have 
 \begin{eqnarray} \label{eq:tailnu} \widetilde{W}_p \underset{p \to \infty}{\approx}  p^{-\beta} \quad \mbox{ and } \quad \nu(-j) \underset{j \to \infty}{\approx}j^{-\beta},  \end{eqnarray}
for  $\beta \in (1, \infty) \backslash \{2,3, ... \}$ where $\approx$ means that the RHS is asymptotic to the LHS times a positive constant.  Furthermore  for all $ \ell \geq 1$, as $ A \to \infty$ we have 
 \begin{eqnarray} \label{eq:notwojumps} \hat\nu\left( \left \{(q ; *,(*), (p_1, ..., p_k)) : \exists  \mbox{ distinct }1 \leq i_1, \dots, i_{\ell} \leq k \mbox{ s.t. }\ p_{i_1}, \dots, p_{i_{\ell}} \geq A \right\} \right)  =  \mathcal{O}( A^{ \ell (1-\beta)}) .  \end{eqnarray}
\end{proposition}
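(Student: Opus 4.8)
\textbf{Proof strategy for Proposition \ref{prop:nu_proba}.}

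The plan is to establish the three assertions in sequence, each building on the previous. First I would prove the convergence of the law of $(S_1-S_0; Y^1_1,\dots,Y^1_k)$ under $\mathbb{P}_p$ towards $\hat\nu(\cdot;*,(*),(\cdot))$. Starting from the explicit offspring distribution $\pi_p^x$ in Proposition \ref{prop:GW}, the event that $u_1$ (the leftmost child of maximal label) has label $p_0$ with siblings $p_1,\dots,p_k$ corresponds, via the parking constraint $\sum_i(p_i-s_i)+c=p$ and the locally-largest selection rule, to a sum over the weight data $(c,k+1,(s_0,\dots,s_k))$ with $p_0=p+s_0-c-\sum_{i\ge1}(p_i-s_i)$. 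The ratio $\frac{W_{p_0}W_{p_1}\cdots W_{p_k}}{W_p}$ has to be analyzed: since the siblings are $\mathcal{O}_{\mathbb{P}_p}(1)$ while $p_0\to\infty$ at the same rate as $p$, I would write everything in terms of $\widetilde W$ using \eqref{eq:polyalways}, so that $\frac{W_{p_0}}{W_p}=y_\crit^{-(p_0-p)}\frac{\widetilde W_{p_0}}{\widetilde W_p}$ and, by Lemma \ref{lem:asympW}, $\frac{\widetilde W_{p_0}}{\widetilde W_p}\to 1$ along the relevant residue class (using aperiodicity to kill the $b^x$ subtleties, or keeping $b^x$ and letting it drop out after the forthcoming $b^x=1$ result — here I would simply invoke \eqref{eq:aperiodic} to see the ratio is bounded and converges). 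The factor $(k+1)$ in $\hat\nu$ accounts for the choice of which of the $k+1$ children is the distinguished locally-largest one. Dominated convergence, justified by the boundedness assumption (only finitely many weight tuples are nonzero and the siblings live in a set of total $\hat\nu$-mass controlled by the tail bound), upgrades pointwise convergence of probabilities to convergence of the full law and shows $\hat\nu$ is a probability measure.

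Second, for the tail estimate \eqref{eq:tailnu}: the asymptotic $\widetilde W_p\approx p^{-\beta}$ is immediate from Lemma \ref{lem:asympW} and \eqref{eq:polyalways}. For $\nu(-j)\approx j^{-\beta}$ I would push $\hat\nu$ forward onto the first coordinate $q$ and use the formula for $\hat\nu$: writing $\nu(-j)$ as a finite sum over $(c,k,(s_0,\dots,s_k))$ of $x\,y_\crit^{-\sum s_i+c}(k+1)w_{\cdots}$ times $\sum_{p_1+\cdots+p_k=j+\sum s_i-c-\sum(\text{stuff})}\prod\widetilde W_{p_i}$ under the constraint $\sum p_i-\sum s_i+c=-q=j$, i.e. $\sum_{i=1}^k p_i = j+\sum_{i=0}^k s_i - c$ with $p_i\ge s_i$. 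When $k=1$ this is a single term $\approx\widetilde W_{j+\text{const}}\approx j^{-\beta}$; when $k\ge2$ it is a convolution $\sum_{a+b=j+O(1)}\widetilde W_a\widetilde W_b\approx j^{-\beta}$ as well (the convolution of a regularly-varying-$(-\beta)$ sequence with $\beta>1$ with itself is again $\approx j^{-\beta}$, dominated by the two endpoint regimes, since $\sum\widetilde W<\infty$). So the $k=1$ contribution — which is nonempty because the connectivity/branching assumptions force a decreasing edge, hence a weight with a single large sibling-free child — already gives the matching lower bound $j^{-\beta}$, and all terms are $\lesssim j^{-\beta}$, giving $\nu(-j)\approx j^{-\beta}$ and in particular pinning the constant $C_j^x$ relation. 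Here one also needs that the exponential rates match up, i.e.\ that $y_\crit$ is exactly the radius of convergence so no extra exponential factor survives — this is where $F(x,y_\crit)<\infty$ from Lemma \ref{lem:xcyc} is used.

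Third, the ``no two large jumps'' bound \eqref{eq:notwojumps}: I would bound $\hat\nu$ of the event $\{p_{i_1},\dots,p_{i_\ell}\ge A\}$ by summing over the finitely many choices of weight data and of the $\ell$ distinguished indices, and over the free sibling labels. Fixing $\ell$ of the $p_i$'s to be $\ge A$, the corresponding factor $\prod_{m=1}^\ell\widetilde W_{p_{i_m}}$ summed over $p_{i_m}\ge A$ with the remaining labels free contributes $\big(\sum_{a\ge A}\widetilde W_a\big)^\ell\approx (A^{1-\beta})^\ell=A^{\ell(1-\beta)}$, since $\sum_{a\ge A}\widetilde W_a\approx A^{1-\beta}$ by \eqref{eq:tailnu} and $\beta>1$; the remaining factors (the bounded weight data, the $\le\K-1$ other bounded sibling labels, and the single constraint) contribute only a bounded factor. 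Summing the finitely many such contributions gives $\mathcal{O}(A^{\ell(1-\beta)})$.

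\textbf{Main obstacle.} I expect the delicate point to be the first part: rigorously controlling the ratio $\widetilde W_{p_0}/\widetilde W_p$ uniformly enough to apply dominated convergence, in particular handling the periodicity parameter $b^x$ of Lemma \ref{lem:asympW} before it is known that $b^x=1$, and making sure the selection of the locally-largest child (with the left-most tie-breaking rule) is correctly matched to the combinatorial factor $(k+1)$ without double-counting the rare configurations where two children share the maximal label of order $p$ — configurations whose contribution is precisely the kind of event shown to be negligible in \eqref{eq:notwojumps}, so there is a mild circularity to untangle by first proving a weak form of the two-large-jumps bound directly from $\widetilde W_p\to0$ and finiteness of $\sum\widetilde W$.
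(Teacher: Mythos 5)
Your overall plan matches the paper's: start from the offspring distribution of Proposition \ref{prop:GW}, rewrite in terms of $\widetilde W$, obtain pointwise convergence of $\mathbb{P}_p(S_1-S_0=q;Y^1_j=p_j)$, upgrade to convergence of laws, then read off the tail estimates. The treatment of \eqref{eq:tailnu} for $\nu$ via the convolution structure and of \eqref{eq:notwojumps} via the $\bigl(\sum_{a\geq A}\widetilde W_a\bigr)^\ell$ bound is essentially what the paper does.

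However, there is a genuine gap in how you handle the periodicity $b^x$ from Lemma \ref{lem:asympW}, and it is not a technical detail you can postpone: it is one of the actual conclusions of the proposition. You write that you would ``keep $b^x$ and let it drop out after the forthcoming $b^x=1$ result,'' but there is no such separate result in the paper -- the equality of the constants $C_j^x$ (equivalently the disappearance of residue-class dependence in the asymptotics) is precisely what Proposition \ref{prop:nu_proba} establishes, and it is needed before one can even write $\widetilde W_p\approx p^{-\beta}$ without a subsequence. The paper's mechanism is as follows: pointwise convergence along $p\equiv j\ \mathrm{mod}\ b$ gives the limit $\frac{C_{j+q\bmod b}}{C_j}\hat\nu(q;\ast,(\ast),(p_1,\ldots,p_k))$; tightness (from \eqref{eq:tightness} together with the lower bound \eqref{eq:sautll}) forces this to be a probability measure, yielding $\sum_q \frac{C_{j+q\bmod b}}{C_j}\nu(q)=1$, i.e.\ $C_j=\sum_{q}C_{j+q}\nu(b\mathbb{Z}+(j+q))$; then a maximum-principle argument (pick $J$ minimizing $C_j$, observe the inequality must be an equality, conclude all $C_j$ equal) kills the $b^x$-dependence. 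This step is absent from your proposal and cannot be replaced by the vague appeal to \eqref{eq:aperiodic}, which only gives two-sided bounds on $W_p/W_{p+1}$, not equality of the Puiseux constants.

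Two smaller remarks. First, ``dominated convergence'' is not quite the right device to pass from pointwise convergence of probabilities to convergence of laws and to conclude $\hat\nu$ is a probability measure; what is actually used is Fatou's lemma (to see the limit is a sub-probability) combined with tightness of the family $(S_1-S_0;Y^1_j)_{j}$ under $\mathbb{P}_p$, which follows from the tail bound on a single large sibling. Second, your worry about ties interfering with the factor $(k+1)$ is unfounded at this stage: for fixed $(q;p_1,\ldots,p_k)$ and $p\to\infty$, eventually $p+q>p_i$ for all $i$, so the locally-largest child is unambiguous and the exchangeability assumption gives the factor $(k+1)$ exactly, with no circularity. (Ties are only a concern at later stages, and are handled separately in Lemma \ref{lem:notiesll}.)
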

This fact has already been observed (using explicit computations) in the context of random planar maps, see \cite[Section 4.1]{CurStFlour}. It is based on a ``big-jump" principle for polynomial tail random variables which has been used in many contexts see \cite[Lemma 2.5]{AS03}, \cite[Lemma 4.3]{CKdissections}, or \cite[Lemma 19]{ConCurParking}.
\begin{proof} Recall that by Proposition \ref{prop:GW}, under $ \mathbb{P}_p$ the tree $ ( \mathrm{t}, \phi)$ is a Bienaym\'e--Galton--Watson tree with offspring distribution given by 
\begin{eqnarray*}
 \pi_{p}(p_{1}, ... ,p_{k}) &=&  \sum_{ \substack{ c \geq 0 \\ \forall i , 0 \leq s_i \leq p_i \\ \sum_{i = 1}^{k} (p_i - s_i) +c =p}} w_{c, k, (s_1, \ldots, s_k)} \cdot x \cdot \frac{ W_{p_1}^x \ldots W_{p_k}^x}{ W_p^x}\\
&=&  \frac{1}{ \widetilde{W}_p^x}\sum_{ \substack{ c \geq 0 \\ \forall i , 0 \leq s_i \leq p_i \\ \sum_{i = 1}^{k} (p_i - s_i) +c =p}} (y_{\crit})^{c} w_{c, k, (s_1, \ldots, s_k)} \cdot x \cdot \prod_{i =1}^{k} \widetilde{W}_{p_i}^x y_{\crit}^{- s_i}
 \end{eqnarray*}

Recall from \eqref{eq:sautll} that one of the $p_i$'s must be larger than $(p-\K)/\K$.   
Actually, when $p$ is large we will show that the most likely scenario is that only one the $p_i$'s is of order $p$ and the others are $ \mathcal{O}(1)$. We start by evaluating the probability that two children are large. We make the change of variable $k \to k+1$ and write ${p+q}, p_1, ..., p_{k}$ for the labels of the children of the particle label $p$.

Fix $A >0$ large.  Let us compute the contribution of the configurations such that $p+q \geq (p-\K)/\K$ and $p_1 \geq A$. 
Recall from Lemma \ref{lem:asympW} that   \begin{eqnarray} \label{eq:asymptWlb} \widetilde{W}_{p} \threesim  p^{-\beta} \quad \mbox{ as }p \to \infty, \end{eqnarray} for  $ \beta > 1$, where $\threesim$ means that both sides are comparable up to positive multiplicative constants.  
Then, recall that  $ F(x, y_{\crit}) < \infty$ so that  for $0 \leq k \leq \K-1$ and for $(s_{0}, ... , s_{k}) \in \{0,... , \K \}^{k}$ fixed we have 
\begin{eqnarray*} \sum_{\substack{(q, p_1, \ldots, p_k)\\ \forall i \geq 1, p_i \geq s_i \ \ {p+q\geq s_{0}}\\ \sum_{i=1}^{k} (p_i - s_i)  + c =s_0 -q \\ (p-\K)/\K \leq p+q}} \widetilde{W}_{p+q}\prod_{i=1}^k \widetilde{W}_{p_i}
&\leq& \underbrace{\left( \sup_{q \geq (p-\K)/\K- p } \widetilde{W}_{p+q}\right)}_{\lesssim \left(\frac{p-\K}{\K} \right)^{ - \beta}}
\underbrace{\sum_{p_1 \geq A} \widetilde{W}_{p_1}}_{\lesssim \frac{(A-1)^{1-\beta}}{\beta-1}} \underbrace{\prod_{i = 2}^{k} \left( \sum_{p_i = 0}^{ + \infty} \widetilde{W}_{p_i}\right)}_{ = F(x, y_ \crit) \lesssim 1} \quad \lesssim \ \ 
 p^{- \beta} A^{- \beta+1},
\end{eqnarray*}
Recall that by our exchangeability assumption in $(*)$, the weights are invariant under permutation of the $ s_i$, and so the roles of the $p_i$ are also invariant under permutation. Since the local weight are bounded above by $\K$,  after dividing by $ \widetilde{W}_p$ and using \eqref{eq:asymptWlb}, we deduce that 
\begin{eqnarray} \label{eq:tightness} \forall p \ge 0, \qquad \mathbb{P}_p\left( \exists j \mbox{ s.t. } Y_j^1 > A \right) \lesssim A^{- \beta +1}.\end{eqnarray} Now, if $p_1, ... , p_k \geq 0$ and  $q \in \mathbb{Z}$ are fixed, for large $p$'s we have

\begin{eqnarray*}
&& \mathbb{P}_p (S_1 - S_0 = q ;  Y_1^1=p_1, Y^1_2=p_2, ... , Y_{k}^1=p_k)\\ 
& \underset{ \mathrm{exchg.}}{=}& (k+1)
\pi_p ( p+q, p_1, \dots , p_k)  \\
&=& \frac{ \widetilde{W}_{p+q}}{ \widetilde{W}_p} y_{\crit}^{-q} x  (k+1) \sum_{ c \geq 0} \sum_{\substack{(s_0, \ldots, s_{k})\\ 
{\forall i \geq 1, p_{i}\geq s_{i}}\\ 
\sum_{i=1}^{k} (p_i - s_i) + c = s_0 - q }} w_{c,k+1, (s_0, \ldots, s_k)} \prod_{i=1}^{k} {W}_{p_i}.
\end{eqnarray*}
In the view of the form of the asymptotic given by Lemma \ref{lem:asympW}, we first fix $ j \in \{0, \ldots, b-1\}$ and concentrate on those 
$p \to + \infty$ with $p \equiv j\ \mathrm{mod}\ b$. The last display then converges along that sequence towards 
$$\frac{ C_{j+q \mathrm{\ mod\ }b}}{ C_j}\cdot  \hat{\nu} (q; *, (*), (p_1, p_2, ... ,p_k)). $$
This establishes the weak convergence of the law of $(S_1 - S_0; (Y_j^1)_{j\geq 1})$ under $ \mathbb{P}_p$ towards the above distribution along $p \to \infty$ with $p \equiv j\ \mathrm{mod}\ b$. In particular, by Fatou's lemma, the last display is a sub-probability measure. However, it follows from \eqref{eq:tightness} {and \eqref{eq:sommedelta}} that the family of laws of $(S_1 - S_0; (Y_j^1)_{j\geq 1})$ under $ \mathbb{P}_p$  is tight for  $p \geq 0$. In particular, the previous display defines a probability distribution. Focusing on the first coordinate, we thus have 
$$\sum_{q \in \mathbb{Z}} \frac{ C_{j+q \mathrm{\ mod\ }b}}{ C_j} \nu (q) = 1 \quad \mbox{ that is } \quad  C_j = \sum_{-j \leq q \leq b -1- j} C_{j+q} \nu(b \mathbb{Z} + (j+q)).$$ 
In particular, if $J \in \mathrm{argmin} \{C_j : 0 \leq j \leq b-1\}$, then the inequality
$$C_J =\sum_{-J\leq q \leq b -1- J} C_{J+q} \nu(b \mathbb{Z} + (J+q)) \geq C_J \sum_{-J\leq q \leq b -1- J} \nu(b \mathbb{Z} + (J+q)) = C_J$$
must be an equality since $ \nu$ is a probability measure, implying (by the maximum principle) that all the $(C_j : 0 \leq j \leq b-1)$ must be equal to $C_J$, as desired. 
The estimates \eqref{eq:tailnu} and \eqref{eq:notwojumps} are then easy: notice that conditionally on $c,k+1,(s_0, ... , s_k)$ the variables $p_1, ... , p_k$ under $\hat{\nu}$ are independent random variables with polynomial tails of order $j^{-\beta}$ by Lemma \ref{lem:asympW}. It is easy to see that the sum of finitely many such variables keep the same polynomial tail. Eq.~\eqref{eq:notwojumps} is straightforward from the above remark. \end{proof}

\subsection{The locally largest exploration as an $h$-transform}
For $p \geq 0$, we shall introduce a new probability measure $ \mathbb{P}_{p}^{ \RW}$ (with associated expectation $ \mathbb{E}_{p}^{ \RW}$) under which the canonical process $(S_{i})_{i \geq 0}$  and its decoration process $\eta$ is given by i.i.d.~increments of law $\hat{\nu}$ (recall that $x \in (0, x_{\crit}]$ is implicit in the notion). More precisely, if $(X_{i}, ( Y^{i}_{j} : j \geq 1))_{i \geq 1}$ are i.i.d.\ according to $\hat{\nu}(\cdot ; *,(*), \cdot) $, under $ \mathbb{P}_{p}^{ \RW}$ we put 
$$ S_{0} = p, \quad S_{k} = S_{0} + \sum_{i=1}^{k}  X_{i},$$
$$ \eta = \sum_{i \geq 1} \sum_{j \geq 1} \delta_{i , Y^{{i}}_{j}}.$$
In particular, under $ \mathbb{P}_{p}^{ \RW}$ the process $(S)$ is a vanilla $\nu$-random walk started from $p$. {It does not generally correspond to a locally largest exploration, but \eqref{eq:sommedelta} remains true. } The more complex locally largest exploration process $(S, \eta)$ under $ \mathbb{P}_p$ can be recovered by $h$-transforming the latter: If $(S)$ is a $ \mathbb{Z}$-valued process and $\eta$ an atomic measure on $ \mathbb{Z}_{\geq 0} \times \mathbb{Z}_{ \geq 0}$ (when several atoms are confounded, the weight of the point belongs to $\{2,3, ...\}$), for $t \geq 0$ we denote by $ \mathcal{L}_{t}$ the event on which $(S,\eta)$ may be the \textbf{locally largest exploration} in a tree up to height $t$  \textbf{without any ties} i.e.~if $\eta = \sum_{i \geq 1} \sum_{j \geq 1} \delta_{i, Y^{i}_{j}}$ we have
  \begin{eqnarray} \label{def:Lt} S_{i} > Y^{i}_{j}, \quad \forall j \geq 1, \quad \mbox{ for all }i \leq t.   \end{eqnarray}
 Avoiding ties is the above definition is only a technical simplification (see Lemma \ref{lem:notiesll}). 
Below we use the shorthand notation $S|_{[0,t]} \equiv (S_i : 0 \leq i \leq t)$ and $\eta|_{[0,t]} \equiv \eta|_{[0,t] \times \mathbb{ Z}_{\geq 0}}$ for the restriction of the processes over the time intervalle $[0,t]$. The link between locally largest decoration-reproduction processes under $ \mathbb{P}_p$ and $ \mathbb{P}_p^\RW$ is given by the following: 
\begin{proposition}[Key formula] \label{prop:llrw}For any positive functional $f$ we have 
 \begin{eqnarray*} \mathbb{E}_{p}\left[ f\Big(S|_{[0,t]}, \eta|_{[0,t]}\Big) \mathbbm{1}_{ \mathcal{L}_{t}}\mathbbm{1}_{ S|_{[0,t]} \geq \K} \right] &=& \mathbb{E}_{p}^{ \RW}\left[ f\Big((S|_{[0,t]}, \eta|_{[0,t]}  \Big)    \mathbbm{1}_{ \mathcal{L}_{t}} \mathbbm{1}_{ S|_{[0,t]} \geq \K} \cdot  \frac{ \widetilde{W}_{S_{t}}}{ \widetilde{W}_{ S_{0}}}     \right].  \end{eqnarray*}
 \end{proposition}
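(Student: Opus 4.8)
The plan is to establish the Key formula by an inductive/telescoping argument over the time steps $i=1,\dots,t$, peeling off one step of the locally largest exploration at a time and accounting for the change of measure factor $\widetilde{W}_{S_{i}}/\widetilde{W}_{S_{i-1}}$ at each step. The crucial input is the explicit form of the multitype offspring distribution $\pi_p$ from Proposition~\ref{prop:GW} together with the exchangeability assumption in $(*)$, which allows the "locally largest child" to be any one of the $k+1$ children with equal bookkeeping weight.

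**Step 1: One-step identity.** First I would prove the $t=1$ case. Under $\mathbb{P}_p$, the root has offspring $(p_1,\dots,p_{k+1})$ with probability $\pi_p(p_1,\dots,p_{k+1})$; on the event $\mathcal{L}_1$ (no ties) the locally largest child $u_1$ is the unique child with the strictly maximal label, say $p+q$ with $p+q > p_i$ for the other labels $p_1,\dots,p_k$. Summing the offspring probability over the position of this maximal child and over the internal data $(c,(s_0,\dots,s_k))$ compatible with the parking constraints $\mathscr{C}$, and using exchangeability to pull out the factor $k+1$, one gets
\begin{eqnarray*}
\mathbb{P}_p\big(S_1-S_0=q;\,Y^1_1=p_1,\dots,Y^1_k=p_k\big)\mathbbm{1}_{\mathcal{L}_1}
= \frac{\widetilde{W}_{p+q}}{\widetilde{W}_p}\,\hat\nu\big(q;*,(*),(p_1,\dots,p_k)\big),
\end{eqnarray*}
exactly as computed in the proof of Proposition~\ref{prop:nu_proba}. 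On the right-hand side, $\hat\nu(q;*,(*),(p_1,\dots,p_k))$ is precisely the law of $(X_1;(Y^1_j)_j)$ under $\mathbb{P}^{\RW}_p$, so this identity is the $t=1$ instance of the Key formula (the indicator $\mathbbm{1}_{S_0\geq\K}$ plays no role here but is carried along for uniformity).

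**Step 2: Iteration via the branching/Markov property.** Conditionally on $(S_1,\eta|_{[0,1]})$, under $\mathbb{P}_p$ the subtree hanging above $u_1$ is, by the multitype branching structure (Proposition~\ref{prop:GW} and the discussion preceding Proposition~\ref{prop:nu_proba}), an independent copy of the tree under $\mathbb{P}_{S_1}$; likewise under $\mathbb{P}^{\RW}_p$ the process $(S_{i+1}-S_i,(Y^{i+1}_j)_j)_{i\geq 1}$ is i.i.d.\ and independent of the first step, i.e.\ a fresh $\mathbb{P}^{\RW}_{S_1}$. Writing $f$ as a function of the first coordinate-block and the shifted remainder, conditioning on the first step, applying the $t=1$ identity, and invoking the induction hypothesis at level $t-1$ started from $S_1$, the change-of-measure factor telescopes: $\frac{\widetilde{W}_{S_1}}{\widetilde{W}_{S_0}}\cdot\frac{\widetilde{W}_{S_t}}{\widetilde{W}_{S_1}}=\frac{\widetilde{W}_{S_t}}{\widetilde{W}_{S_0}}$. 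The event $\mathcal{L}_t$ factors as $\mathcal{L}_1$ on the first step intersected with the $\mathcal{L}_{t-1}$-type event for the shifted process, and similarly $\{S|_{[0,t]}\geq\K\}$ splits; the positivity of $f$ lets us apply Tonelli throughout without integrability worries (though one should note $\widetilde{W}_{S_t}$ is finite since $x\leq x_\crit$). One subtlety to flag: the indicator $\mathbbm{1}_{S|_{[0,t]}\geq\K}$ is needed precisely so that at each step the label $S_i$ is large enough that the one-step decomposition sees a genuine offspring event with the full range of $q$ recorded by $\nu$; without it, boundary effects near small labels would spoil the clean telescoping, and the constraint $S_{i+1}\geq (S_i-\K)/\K$ from \eqref{eq:sautll} together with $S_i\geq\K$ keeps everything consistent.

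**The main obstacle** I expect is purely bookkeeping rather than conceptual: carefully matching, on the $\mathbb{P}_p$ side, the event $\mathcal{L}_t$ "no ties along the locally largest branch" with the requirement that $u_{i+1}$ be *the* child of maximal label (so that the factor $k_{u_i}+1$ from exchangeability is assigned unambiguously to one child), and checking that the residual data $(c,(s_0,\dots,s_k))$ is correctly summed out to reconstitute $\hat\nu$ with its $y_\crit$-weights — i.e.\ that the passage from $W$ to $\widetilde{W}=y_\crit^p W$ absorbs all the $y_\crit^{c-\sum s_i}$ factors exactly as in Definition~\ref{def:nuhat}. Once the $t=1$ identity is written out cleanly (essentially a reprise of the computation already done in the proof of Proposition~\ref{prop:nu_proba}, but now tracking the full $\eta$ and the $\mathcal{L}_1$ indicator rather than only taking a limit), the induction is routine.
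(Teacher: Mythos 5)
Your approach coincides with the paper's: reduce to $t=1$ via the Markov property, then verify the one-step identity by a direct computation with $\pi_p$, using exchangeability to exchange the locally largest child with the first one and pull out the factor $k+1$. The induction/telescoping you spell out in Step~2 is exactly what the paper compresses into the sentence ``By the Markov property of both processes and the form of the above display, it is sufficient to prove the equality for $t=1$.''

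One small imprecision worth fixing: you write that the indicator $\mathbbm{1}_{S_0\geq\K}$ ``plays no role here but is carried along for uniformity.'' In fact the indicator $\mathbbm{1}_{S|_{[0,1]}\geq\K}$ is essential to the $t=1$ identity, not decorative: the offspring probability $\pi_p(p+q,p_1,\dots,p_k)$ carries the constraint $p+q\geq s_0$ inside its sum over $(c,s_0,\dots,s_k)$, while $\hat\nu$ does not. The paper fixes $p\geq\K$ and $q\geq\K-p$ (so $S_1=p+q\geq\K\geq s_0$ automatically) precisely so this constraint can be dropped, making the two sides match exactly. Without $S_1\geq\K$ the displayed one-step equality would fail, leaving only an inequality (compare the pointed version, Proposition~\ref{prop:llprw}, where the inequality versus equality distinction hinges on exactly this point). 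You do gesture at this in your ``obstacle'' paragraph, but the Step~1 display should be stated under the hypothesis $p\geq\K$, $p+q\geq\K$.
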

\begin{proof} By the Markov property of both processes and the form of the above display, it is sufficient to prove the equality for $t=1$. 
In that direction, fix $p \geq \K$, $ q \geq \K - p$ and  fix $p +q > p_1, \dots, p_k \geq 0$  (so that the event $ \mathcal{L}_1$ is satisfied) and let us compute $ \mathbb{P}_p ( S_1 = p +q  \mbox{ and } Y^{1}_{j} = p_j, \ j \geq 1)$ using Proposition \ref{prop:GW}. Recall that the increments are exchangeable by our standing assumption so that we can exchange the locally only largest label with the first one without ambiguity. It gives
 \begin{eqnarray*} &&\hspace{-1.5cm} \mathbb{P}_p ( S_1 = p+q \mbox{ and } Y^{1}_{j} = p_j, \ j \geq 1) \\
 &=& (k+1) \pi_{p} (p +q , p_1, \ldots, p_k)  \\
 &=& (k+1) \sum_{c \geq 0 }  \sum_{ \substack{ (s_0, \dots, s_k) \\ {\forall i \geq 1 , s_i \leq p_i \ \ p+q \geq s_{0}} \\ \sum_{i = 1}^{k} (p_i - s_i) +c= s_0 - q}} w_{c, k+1, (s_0, \ldots, s_k)} \cdot x \cdot \frac{ W_{p+q} \cdot W_{p_1} \ldots W_{p_k}}{ W_p}.  \end{eqnarray*}
 Notice that since $p +q \geq \K$ the inequality $p + q \geq s_0$ is always satisfied so that setting $p+q$ aside we can write the last display as
 \begin{eqnarray*}
   \frac{W_{p+q}y_{\crit}^{p +q} }{W_p y_{\crit}^p} \cdot  \hat{\nu} \big(q ; *, (*), (p_1, p_2, ... , p_k)\big) = 
   \frac{ \widetilde{W}_{p+q} }{\widetilde{W}_p }  \cdot  \mathbb{P}_p^\RW ( S_1 = p+q \mbox{ and } Y^{1}_{j} = p_j, \ j \geq 1), 
 \end{eqnarray*}
 as desired. \end{proof}
 Before diving into the properties of the laws $\hat\nu,\nu$ let us explain why we named the above proposition the ``Key formula''. It obviously links the locally largest exploration $(S,\eta)$ under $ \mathbb{P}_{p}$ to a much simpler process with i.i.d.\ increments under $ \mathbb{P}_{p}^{\RW}$, thus justifying the fact that a random walk is hidden in the model. More importantly, notice that the partition functions $ \widetilde{W}_{p}$ appear twice in the right-hand side of the key formula: 
\begin{itemize}
\item first implicitly in the law $\nu$,  in particular through the negative tail in $p^{-\beta}$ of $\nu$ which comes from the asymptotic of $\widetilde{W}_{p}$ in \eqref{eq:tailnu},
\item  and second, it appears in the Radon--Nikodym derivative $ \displaystyle \frac{ \widetilde{W}_{S_{t}}}{ \widetilde{W}_{ S_{0}}}$.
\end{itemize} This double apparition will ultimately give an equation on $\beta$ which will pin down its value in $  \{3/2, 5/2\}$. For this we need to pass the key formula to the scaling limit and perform a calculation using stable L\'evy process. But in order to understand the scaling limits of $\nu$-random walk, we need to understand the tails of $\nu$ (as done in Proposition \ref{prop:nu_proba}) as well as its expectation: this is the goal of the next subsection. But before, let us tackle the problem of ties. Indeed, we show that under $ \mathbb{P}_{p}$ for large $p$'s, then the probability of ties is negligible. For $x \subset \mathbb{R}$, we denote by $$ \tau_{x} := \inf \{ n \geq 0 : S_n \leq x\} \quad \mbox{ and } \quad \tau_{\geq x} = \inf \{ n \geq 0 : S_n \geq x\}.$$ 
\begin{lemma}[No ties in the locally largest exploration.]\label{lem:notiesll}For all $p \geq 1$,
$$\mathbb{P}_p \left( \exists \mbox{ tie at step $1$} \right)  \lesssim p^{-\beta}. $$
In particular, for any $ \delta, A >0$, then we have 
$$ \mathbb{P}_{p}\big( \mathcal{L}_{ \tau_{\delta p} \wedge A p^{\beta-1}}\big) \xrightarrow[p\to\infty]{} 1.$$
\end{lemma}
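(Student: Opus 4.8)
The first assertion is a "big-jump" estimate at a single step, and the second is an upgrade to a whole branch of bounded length via a union bound. I would start with the first. A tie at step $1$ means that among the children of the root vertex (labeled $p$), at least two of them carry the maximal label, so in particular there are two children with label $\geq (p-\K)/\K$ (by \eqref{eq:sautll}, applied to whichever child is the locally largest, and then the tie forces a second one of comparable size). But the event that two of the offspring labels $p_i$ are simultaneously $\geq c p$ for a fixed constant $c>0$ is exactly the kind of event controlled by \eqref{eq:notwojumps} in Proposition \ref{prop:nu_proba}, with $\ell = 2$ and $A \threesim p$: it has $\hat\nu$-mass $\mathcal{O}(p^{2(1-\beta)})$. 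Since $\beta > 1$ we have $2(1-\beta) = 2 - 2\beta \leq -\beta$ precisely when $\beta \geq 2$; for $\beta \in (1,2)$ one instead bounds more crudely. Actually the cleaner route is: a tie at step $1$ forces \emph{some} sibling label $Y^1_j$ to be $\geq (p-\K)/\K$, and by \eqref{eq:tightness} in Proposition \ref{prop:nu_proba} this has $\mathbb{P}_p$-probability $\lesssim ((p-\K)/\K)^{1-\beta} \lesssim p^{1-\beta}$. Hmm, but the claim is $p^{-\beta}$, which is smaller — so one genuinely needs the \emph{two large jumps} bound, not just one. So the argument is: a tie at step $1$ forces two siblings, or a sibling and $S_1$, to both be of order $p$; translating through Proposition \ref{prop:GW} and dividing by $\widetilde W_p \threesim p^{-\beta}$, the contribution of configurations with two labels $\geq (p-\K)/\K$ is $\lesssim \frac{p^{-\beta}\cdot p^{-\beta}\cdot F(x,y_\crit)}{p^{-\beta}} \threesim p^{-\beta}$, exactly as in the displayed computation in the proof of Proposition \ref{prop:nu_proba} but now with \emph{both} supremum-type factors being powers of $p$ rather than one of them being a tail sum over $p_1 \geq A$. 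That gives the first bound.

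**From one step to a branch.** For the second statement, fix $\delta, A > 0$ and set $T_p := \tau_{\delta p} \wedge \lceil A p^{\beta-1}\rceil$. On the complement of $\mathcal{L}_{T_p}$ there is a tie at some step $i \leq T_p$, and at that step the decoration value $S_i$ satisfies $S_i > \delta p$ (since $i \leq \tau_{\delta p}$, actually $i < \tau_{\delta p}$ so $S_i > \delta p$; one should be slightly careful about whether $\tau_{\delta p}$ is $\leq$ or $<$, but this is harmless up to adjusting $\delta$). By the Markov property under $\mathbb{P}_p$ and the first part of the lemma applied at a vertex of label $S_i > \delta p$, the conditional probability of a tie at step $i+1$ given the past is $\lesssim (\delta p)^{-\beta} \lesssim p^{-\beta}$. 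A union bound over $i \in \{0,1,\dots,T_p\}$ — using $T_p \leq A p^{\beta-1} + 1$ — then gives
\[
\mathbb{P}_p\big(\mathcal{L}_{T_p}^{\,c}\big) \;\lesssim\; (A p^{\beta-1} + 1)\cdot p^{-\beta} \;\lesssim\; A\, p^{-1} \;\xrightarrow[p\to\infty]{}\; 0,
\]
which is the claim. One subtlety: to make the Markov-property step clean I would phrase the union bound as $\mathbb{P}_p(\mathcal{L}_{T_p}^c) \leq \sum_{i=0}^{\infty}\mathbb{P}_p(\text{tie at step } i+1,\ i < T_p)$ and note that on $\{i < T_p\}$ we have $S_i > \delta p$, so conditioning on $\mathcal{F}_i$ (the sigma-field of the exploration up to step $i$) and using $\{i < T_p\} \in \mathcal{F}_i$ together with the branching property of Proposition \ref{prop:GW} bounds each term by $\mathbb{E}_p[\mathbbm{1}_{i<T_p}\cdot c\, S_i^{-\beta}] \leq c(\delta p)^{-\beta}\mathbb{P}_p(i < T_p)$, and then $\sum_i \mathbb{P}_p(i<T_p) = \mathbb{E}_p[T_p] \leq A p^{\beta-1}+1$.

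**Main obstacle.** The genuinely delicate point is the first estimate and getting the exponent $-\beta$ rather than the easier $1-\beta$: one must recognize that a \emph{tie} (as opposed to merely one large sibling, which always happens) requires \emph{two} labels of order $p$, and then extract the $p^{-\beta}$ by the same boundedness-of-$F(x,y_\crit)$ plus asymptotics-of-$\widetilde W_p$ argument already used for \eqref{eq:notwojumps}. In fact the whole of the first bound is essentially the $\ell=2$, $A \threesim p$ specialization of \eqref{eq:notwojumps}, combined with $\eqref{eq:sautll}$ to see that a tie indeed entails two labels $\geq (p-\K)/\K$; I would write it out reusing the displayed chain of inequalities in the proof of Proposition \ref{prop:nu_proba} almost verbatim, replacing the factor $\sum_{p_1 \geq A}\widetilde W_{p_1} \lesssim A^{1-\beta}$ by $\sup_{p_1 \geq (p-\K)/\K}\widetilde W_{p_1} \lesssim p^{-\beta}$. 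After that, the branch-level statement is a routine union bound, and the appearance of $A p^{\beta-1}$ as the time horizon is exactly calibrated so that $A p^{\beta-1}\cdot p^{-\beta} \to 0$.
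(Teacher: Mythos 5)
Your overall strategy is correct, and the union-bound passage from the one-step estimate to $\mathbb{P}_p(\mathcal{L}_{\tau_{\delta p}\wedge Ap^{\beta-1}})\to 1$ is exactly what the paper does (and you fill in the details the paper leaves to the reader). For the one-step bound you arrive at the right answer $\lesssim p^{-\beta}$, but by a bookkeeping that is genuinely different from the paper's and deserves a word of comparison, and one step in your writeup is stated in a way that would not survive scrutiny without an extra sentence.

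The paper's computation keeps the tied pair $p_1=p_2$ as a free summation variable and then controls the inner sum over $(p_3,\dots,p_k)$ with the one-big-jump estimate, giving the factor $(p-2p_1)^{-\beta}$; one then sums $p_1^{-2\beta}(p-2p_1)^{-\beta}$ over $p_1 \in [(p-\K)/\K,(p+\K)/2]$ and uses $\sum_i i^{-\beta}<\infty$. Your approach instead notes that the tie constraint $p_1 = S_1$ together with the parking constraint \eqref{eq:parkingconstraint} \emph{pins down} $p_1$ (and $q$) as a function of the remaining data $(p_2,\dots,p_k,s,c)$, up to the finitely many choices of $(s,c)$; so there is \emph{no} free sum over $p_1$, and the unconstrained sum over the other siblings simply contributes $F(x,y_\crit)^{\K}\lesssim 1$. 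That is a legitimate and arguably cleaner route, avoiding the one-big-jump estimate altogether. However, you state it as ``replacing $\sum_{p_1\geq A}\widetilde W_{p_1}$ by $\sup_{p_1\geq (p-\K)/\K}\widetilde W_{p_1}$'' without saying \emph{why} a sum may be traded for a sup. That trade is valid precisely because $p_1$ is determined, not because the two labels are both $\geq (p-\K)/\K$; the event ``two labels $\geq (p-\K)/\K$'' alone (without the exact equality) has probability $\threesim p^{1-\beta}$, not $p^{-\beta}$, by the very computation you quote. So the sentence ``the contribution of configurations with two labels $\geq (p-\K)/\K$ is $\lesssim p^{-\beta}$'' is false as stated; you must insert the observation that the tie constraint eliminates a summation variable.

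Related to this, the opening detour via \eqref{eq:notwojumps} with $\ell=2$ is a red herring that you yourself flag: for $\beta\in(1,2)$ one has $p^{2(1-\beta)}>p^{-\beta}$, and moreover \eqref{eq:notwojumps} concerns two \emph{siblings} $Y^1_{i_1},Y^1_{i_2}$ both large (the locally largest $S_1$ is always large and is not one of the $p_i$'s in that formula), which is not what a tie requires. Dropping that paragraph and stating the ``$p_1$ is pinned'' observation explicitly would make the argument both correct and shorter than the paper's.
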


\begin{proof} By definition, we have 
 \begin{eqnarray*} \mathbb{P}_{p}( \exists \mbox{ tie at step $1$}) &=& \mathbb{P}_{p}( \exists j \geq 1 \mbox{ s.t. } Y_j^1 = S_1)  \\
 &\underset{ \mathrm{exch.}}{\lesssim}&  \sum_{\substack{p_{1} =p_{2} \geq p_{3}, ... , p_{k}\\ \sum_{i = 3}^{k} p_i \in [p- 2p_1 - \K, p-2p_1 + \K^2]}} \pi_{p}(p_{1},p_{2}, p_{3}, ... ,p_{k})\\
 & \lesssim & \sum_{\substack{p_{1} =p_{2} \geq p_{3}, ... , p_{k}\\ \sum_{i = 3}^{k} p_i \in [p- 2p_1 - \K, p-2p_1 + \K^2]}}  \frac{\widetilde{W}_{p_{1}} \cdots \widetilde{W}_{p_{k}}}{\widetilde{W}_{p}}  \end{eqnarray*} 
where the  inequalities on $p_{i}$'s use the parking constraints \eqref{eq:sommedelta}. Recall from \eqref{eq:sautll}  that we must have  $(p-\K)/\K  \leq p_1\leq (p+ \K)/2$, and from the proof of Proposition \ref{prop:nu_proba} the fact that if $ \sum_{i=3}^{k} p_i$ is large, the most probable scenario is that only one of the $p_i$ is large and almost equal to the sum. We can then use the asymptotics on the partition function $\widetilde{W}_p$ given in \eqref{eq:tailnu} and  deduce that for large $p$'s the above display is bounded above by
 $$ \lesssim p^{\beta} \sum_{p_{1}=p_{2} \geq (p- \K)/\K} p_{1}^{-\beta} p_{2}^{-\beta} (p-2p_1)^{- \beta}\ \lesssim \ p^{-\beta} \sum_{i =0}^{p} i^{- \beta} \lesssim p^{- \beta}.$$
 The second point follows from a union bound on all times $0 \leq i \leq A p^{\beta-1}$.
\end{proof}

\subsection{Expectation of $\nu$}
\label{sec:expectationnnu}
 In this section, we use the key formula (Proposition \ref{prop:llrw}) to prove a dichotomy: either $\nu$ has an infinite (negative) drift or it has zero drift. We will see later (Proposition \ref{prop:dichotomyxxc}) that the first case appears when $x<x_{\crit}$ whereas the second happens when $x=x_{\crit}$. Combined with the tail estimates given in Proposition \ref{prop:nu_proba}, this will tell us that $\nu$ is in the domain of attraction of the spectrally negative $(\beta-1)$-stable random variable.

Recall from Definition \ref{def:nuhat} that $\nu$ has support in $ (- \infty, \K]$, and in particular its expectation denoted by $ \mathbb{E}[\nu]$ is well-defined and belongs to  $ [- \infty , \K]$. Recall that $x \in (0, x_\crit]$ has been fixed and is often dropped from notation.
 
 \begin{proposition}[Dichotomy (I), with drifts] \label{prop:nodrift} In the notation of Proposition \ref{prop:nu_proba}, we have the following dichotomy: 
 \begin{center}
 either $\beta  \in (2, \infty) \backslash\{3,4, ... \}$ and $\mathbb{E}[\nu] = 0$ or $\beta \in (1,2)$ and $\mathbb{E}[\nu] = -\infty$.
  \end{center}
\end{proposition}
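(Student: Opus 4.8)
The plan is to exploit the Key formula (Proposition~\ref{prop:llrw}) to derive a constraint that forces the drift of $\nu$ to vanish whenever it is finite. First I would record what Proposition~\ref{prop:nu_proba} already gives: $\nu$ is supported on $\{\dots,-2,-1,0,1,\dots,\K\}$ with $\nu(-j)\approx j^{-\beta}$ for some $\beta\in(1,\infty)\backslash\{2,3,\dots\}$. The dichotomy on $\mathbb{E}[\nu]$ is then immediate from the tail: if $\beta\in(1,2)$ the series $\sum_j j\cdot j^{-\beta}$ diverges, so $\mathbb{E}[\nu]=-\infty$ (the positive part being bounded since the support is capped at $\K$); if $\beta>2$ the first moment is finite, so $\mathbb{E}[\nu]\in(-\infty,\K]$. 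The entire content of the proposition is therefore the assertion that in the finite-moment regime the drift is \emph{exactly} $0$ — it cannot be strictly negative (and certainly not positive, since $S_{i+1}\ge(S_i-\K)/\K$ by~\eqref{eq:sautll} forces recurrence-like behaviour). So the real task is: assuming $\mathbb{E}[\nu]=:m$ is finite, show $m=0$.

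\textbf{The core argument.} Suppose for contradiction that $m<0$. Apply the Key formula with $f\equiv 1$ and $t=\tau_{\delta p}$, the first time the decoration process drops below $\delta p$ for a small fixed $\delta\in(0,1)$; more precisely, use the stopped version justified by the Markov property together with Lemma~\ref{lem:notiesll}, which guarantees $\mathbb{P}_p(\mathcal{L}_{\tau_{\delta p}})\to 1$ and that no tie occurs, and note $S$ stays above $\K$ on this event for $p$ large. The Key formula then reads, up to the negligible error from $\mathcal{L}$ and the $S\ge\K$ indicator,
$$ 1 \;\approx\; \mathbb{E}_p^{\RW}\!\left[\frac{\widetilde{W}_{S_{\tau_{\delta p}}}}{\widetilde{W}_p}\right]. $$
Now under $\mathbb{P}_p^{\RW}$ the process $S$ is an honest $\nu$-random walk started at $p$ with drift $m<0$, so by the law of large numbers (or Doob's optional stopping applied to $S_n-mn$) it crosses level $\delta p$ in time $\tau_{\delta p}\approx (1-\delta)p/|m|$, and by~\eqref{eq:sommedelta} the overshoot at crossing is $\mathcal{O}(1)$, so $S_{\tau_{\delta p}}=\delta p+\mathcal{O}_{\mathbb{P}}(1)$. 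Using $\widetilde{W}_p\approx p^{-\beta}$ from~\eqref{eq:tailnu}, the ratio concentrates around $(\delta p)^{-\beta}/p^{-\beta}=\delta^{-\beta}$, which is bounded away from $1$ for $\delta\ne 1$. To turn this heuristic into a genuine contradiction one must control the expectation, not just the typical value: the upper bound is easy since $S$ cannot overshoot $\delta p$ by more than $\K$ \emph{downward} but \emph{could} undershoot, i.e.\ land well below $\delta p$ — however $S_{\tau_{\delta p}}\ge(\delta p-\K)/\K$ by~\eqref{eq:sautll} bounds the excursion below, and combined with uniform tightness of the overshoot (itself a consequence of~\eqref{eq:sommedelta} and standard renewal theory for $\nu$-walks) one gets $\mathbb{E}_p^{\RW}[\widetilde{W}_{S_{\tau_{\delta p}}}/\widetilde{W}_p]\to\delta^{-\beta}\ne 1$, contradiction. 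The same scheme rules out $m>0$ directly via~\eqref{eq:sautll}, or one simply observes $m\le\K$ already and that $m>0$ would make $S$ transient to $+\infty$, incompatible with $\tau_{\delta p}<\infty$ a.s.\ needed for the identity; alternatively the cleanest route is to run the Key formula in the \emph{other} direction with the stopping time $\tau_{\geq Kp}$ for $K>1$ and derive the reverse inequality, pinning $m=0$.

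\textbf{Main obstacle.} The delicate point is the interchange of limit and expectation in $\mathbb{E}_p^{\RW}[\widetilde{W}_{S_{\tau_{\delta p}}}/\widetilde{W}_p]$: the integrand is a ratio of polynomially-decaying quantities evaluated at a random (stopped) location, and a priori the walk could, with small probability, land at an atypically small value of $S_{\tau_{\delta p}}$ where $\widetilde{W}_{S_{\tau_{\delta p}}}$ is comparatively large, inflating the expectation. The resolution is the lower bound~\eqref{eq:sautll}, which confines $S_{\tau_{\delta p}}\ge(\delta p-\K)/\K$ and hence $\widetilde{W}_{S_{\tau_{\delta p}}}\lesssim (\delta p/\K)^{-\beta}$ deterministically on the relevant event, giving a uniformly integrable bound; combined with the weak convergence $S_{\tau_{\delta p}}/p\to\delta$ (from the LLN plus bounded overshoot) and~\eqref{eq:notwojumps} to rule out anomalous jump contributions, dominated convergence applies. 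A secondary technical nuisance is making the stopping-time version of the Key formula rigorous — this follows by writing $\tau_{\delta p}\wedge Ap^{\beta-1}$, applying the bounded-time formula, and letting $A\to\infty$ using Lemma~\ref{lem:notiesll} and the fact that a $\nu$-walk with negative (or zero, by the tail $\beta>2$ the walk is still recurrent) drift reaches $\delta p$ in time $\mathcal{O}_{\mathbb{P}}(p)\ll p^{\beta-1}$ — except precisely when $\beta<2$, but there $\mathbb{E}[\nu]=-\infty$ is already established and no contradiction is sought.
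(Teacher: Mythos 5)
Your proposal follows essentially the same route as the paper's proof: both apply the Key formula (Proposition~\ref{prop:llrw}) at a first-passage time whose location and magnitude are governed by the law of large numbers, both exploit the lower bound \eqref{eq:sautll} on the event $\mathcal{L}$ to keep the ratio $\widetilde{W}_{S_{\theta}}/\widetilde{W}_{p}$ bounded, and both derive the contradiction from this ratio being asymptotically bounded away from $1$ while the Key formula pins its expectation to $1$. The differences are minor. The paper treats both signs of $\mathbb{E}[\nu]$ simultaneously with the single combined stopping time $\theta_p = \tau_{\geq 3p/2}\wedge\tau_{p/2}\wedge(p/|\mathbb{E}[\nu]|)$ and settles for the one-sided inequality $\widetilde{W}_{S_{\tau_{p/2}}}/\widetilde{W}_{p}\gtrsim 2^{\beta}$, which avoids your sharper (but unnecessary) dominated-convergence computation of the exact limit $\delta^{-\beta}$. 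More substantively, the paper explicitly verifies that $\mathbb{P}_p^{\RW}(\mathcal{L}_{\theta_p})\to 1$ — a step that does \emph{not} follow from Lemma~\ref{lem:notiesll}, which only concerns $\mathbb{P}_p$ — using the tail bound on the siblings $Y^1_j$, the time cap $\theta_p\lesssim p$, and crucially $\beta>2$; your sketch invokes Lemma~\ref{lem:notiesll} and drops the indicator on the right-hand side as a ``negligible error'' without establishing this. The argument is sound as a strategy, but this verification on the $\mathbb{P}_p^{\RW}$ side is needed before the lower bound $\mathbb{E}_p^{\RW}[\mathbbm{1}_{\mathcal{L}}\cdot\widetilde{W}_{S_{\theta}}/\widetilde{W}_p]\geq \delta^{-\beta}\,\mathbb{P}_p^{\RW}(\mathcal{L}_{\theta_p})(1+o(1))$ produces a contradiction.
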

\begin{proof} Recall the notation  $\tau_{ p} $ and $\tau_{ \geq p}$ for the hitting times of $(-\infty,p]$ and $[p, \infty)$ respectively  by the canonical process $(S)$. Suppose that $ \mathbb{E}[\nu]$ is finite (which implies that $ \beta > 2$ by \eqref{eq:tailnu}) and non-zero and consider the stopping time 
$$ \theta_{p} = \Big(\tau_{ \geq 3p/2}\Big) \wedge \Big(\tau_{p/2}\Big) \wedge \Big( \frac{p}{| \mathbb{E} [\nu]|}\Big).$$
The easy claim is that by the law of large numbers, under $ \mathbb{P}^{\RW}_{p}$ the stopping time $\theta_{p}$ coincides with high probability either with $\tau_{p/2}$ if  $ \mathbb{E}[\nu] <0$ or with $\tau_{ \geq 3p/2}$ if $ \mathbb{E}[\nu] >0$, that is we have
  \begin{eqnarray}  \mathbb{P}_p^{\RW} \left(\Big(\tau_{ \geq 3p/2}\Big) \wedge \Big(\tau_{ p/2}\Big) \geq \frac{p}{ |\mathbb{E}[\nu]| } \right) \xrightarrow[p\to\infty]{} 0.   \label{eq:lundesdeux}\end{eqnarray}
Furthermore, we claim that the event $ \mathcal{L}_{ \theta_p }$ (corresponds to a locally largest exploration with no ties up to time $\theta_p$) is satisfied with high probability {under $\mathbb{P}_p^{\RW}$}. Indeed
by Proposition \ref{prop:nu_proba}, we have
$$ \mathbb{P}_p^{\RW} \big( \sup_{1 \leq j \leq \K} {Y_{j}^1} \geq  \frac{p}{3 \K}\big) \lesssim p^{- \beta+1}.$$
Hence,  
$$ \mathbb{P}_p^{\RW} \left( \exists i \leq  \theta_{p}, 1 \leq j \leq K :  Y_{i}^j \geq \frac{p}{3 \K}\right) \lesssim p^{- \beta+2} \xrightarrow[p\to\infty]{} 0,$$ because $\beta >2$. Notice now that thanks to \eqref{eq:sommedelta} when $p$ is large enough, as long as $S_{i-1} \geq p/2$, if $Y_{i}^j \leq \frac{p}{3 \K}$, we must have $S_i > \max Y_j^i$ and thus corresponds to a locally largest exploration with no ties. Thus, 
\begin{align*} 
\mathbb{P}_p^{ \RW} \left(  \mathcal{L}_{\theta_p} \right) \ \geq\  \mathbb{P}_p^{ \RW} \left(  \forall i  \leq \theta_{p}, 1 \leq j \leq \K \mbox{ we have }   Y_{i}^j \leq  \frac{p}{3 \K}\right) \  \xrightarrow[p\to\infty]{} 1.
\end{align*}
Using \eqref{eq:sommedelta} again, notice that $S_{\tau_{ \geq3p/2}} \leq  \frac{3p}{2} + \K$ if $\tau_{ \geq 3p/2}< \infty$ and by \eqref{eq:sautll} on the event $ \mathcal{L}_{\theta_{p}}$ we have $S_{\tau_{p/2}} \geq  \frac{p/2-\K}{\K}$. Hence by \eqref{eq:tailnu}, the variable ${ \widetilde{W}_{ \theta_p}}/{ \widetilde{W}_p}$ is bounded and converges to $(3/2)^{-\beta}$ when $\theta_p = \tau_{ \geq 3p/2}$ or is asymptotically larger than $2^\beta$ when $\theta_p = \tau_{p/2}$. We deduce from the above discussion that  
\begin{eqnarray*}  
1 \xleftarrow[p \to \infty]{ \mathrm{Lem.\ } \ref{lem:notiesll}} \quad \mathbb{P}_{p}( \mathcal{L}_{\theta_{p}}) \underset{ \mathrm{Prop.\ } \ref{prop:llrw}}{=}\mathbb{E}_p^{ \RW} \left[ \mathbbm{1}_{(S, \eta) \in \mathcal{L}_{\theta_p}} \frac{ \widetilde{W}_{ \theta_p}}{ \widetilde{W}_p} \right ]  \quad \xrightarrow[p\to\infty]{ \eqref{eq:lundesdeux}} \begin{cases}
	\geq 2^{\beta} &\text{ if $ \mathbb{E} [\nu] < 0$}\\
	\left(\frac{3}{2}\right)^{-\beta}  &\text{ if $ \mathbb{E} [\nu]> 0$.}
\end{cases}
\end{eqnarray*}
This is a contradiction, hence $ \mathbb{E}[\nu]=0$ or $ \mathbb{E}[\nu] = -\infty$. \end{proof}

\section{$\beta$ belongs to $\{3/2, 5/2\}$ via Lamperti's calculus} 
\label{sec:lampertistable}
In this section we prove:
\begin{proposition}[Dichotomy (II), with exponents] \label{dicho2} The exponent $\beta \in (1,\infty) \backslash \{2,3, ... \}$ appearing Proposition \ref{prop:nu_proba}  must belong to $\{3/2, 5/2\}$, that is $ \beta = \frac{3}{2}$ if $ \mathbb{E}[\nu]= \infty$ and $\beta=  \frac{5}{2}$ if $\mathbb{E}[\nu]=0$.
\end{proposition}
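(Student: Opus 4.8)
The plan is to push the Key formula (Proposition~\ref{prop:llrw}) to the scaling limit and read off from it a closed equation for $\beta$. First, by Proposition~\ref{prop:nu_proba} the law $\nu$ is supported on $(-\infty,\K]$ with $\nu(-j)\approx j^{-\beta}$, and by Proposition~\ref{prop:nodrift} it is either centered (when $\beta\in(2,\infty)\setminus\{3,4,\dots\}$) or has infinite negative mean (when $\beta\in(1,2)$). In either regime $\nu$ lies in the domain of attraction of the spectrally negative $(\beta-1)$-stable law, so under $\mathbb{P}_{p}^{\RW}$ the rescaled decoration $\bigl(p^{-1}S_{\lfloor p^{\beta-1}s\rfloor}\bigr)_{s\ge0}$ converges in distribution, for the Skorokhod topology, to $(\xi_s)_{s\ge0}$, the spectrally negative $(\beta-1)$-stable L\'evy process issued from $1$, while the reproduction atoms satisfy $Y^{i}_{j}/p\to0$. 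I would also note at once that $\beta<3$: if $\beta>3$ then $\nu$ has finite variance, so $S$ rescaled diffusively tends to a Brownian motion, $\tau_{\delta p}$ is reached before time $\lfloor ap^{\beta-1}\rfloor$ with an overshoot of order $1$, and the identity below degenerates to $\delta^{-\beta}=1$, which is absurd; since $\beta=3$ is excluded by hypothesis, it suffices to prove that $2\beta$ is an integer.

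Next I would fix $\delta\in(0,1)$ and $a>0$, set $\theta=\tau_{\delta p}\wedge\lfloor ap^{\beta-1}\rfloor$, and apply the Key formula at time $\theta$ with $f\equiv1$. On $\mathcal{L}_{\theta}$ the bound~\eqref{eq:sautll} forces $S_{\theta}\ge(\delta p-\K)/\K\ge\K$ for $p$ large, so $\mathbbm{1}_{S|_{[0,\theta]}\ge\K}=1$ there; hence the left-hand side equals $\mathbb{P}_{p}(\mathcal{L}_{\theta})$, which tends to $1$ by Lemma~\ref{lem:notiesll}. On the right-hand side, using~\eqref{eq:sautll} and~\eqref{eq:tailnu}, the Radon--Nikodym factor $\widetilde{W}_{S_{\theta}}/\widetilde{W}_{S_{0}}$ is bounded on $\mathcal{L}_{\theta}$ by a constant depending only on $\delta$, hence uniformly integrable, and it converges to $\xi_{\theta'}^{-\beta}$ with $\theta'=\tau_{\delta}\wedge a$, $\tau_{\delta}=\inf\{s:\xi_{s}\le\delta\}$. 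The delicate part is the weak limit of $\mathbbm{1}_{\mathcal{L}_{\theta}}$: by the big-jump principle~\eqref{eq:notwojumps}, $\mathcal{L}_{\theta}$ fails asymptotically only at a macroscopic down-jump of the decoration, which then carries exactly one macroscopic sibling whose label is asymptotic to the size of the jump; the requirement of $\mathcal{L}_{\theta}$ that this sibling be dominated by the post-jump value becomes, in the limit, the universal (in particular $\K$-free) condition that $\xi$ never jump by more than half of its running value. Collecting the pieces, we arrive at
\begin{equation}\label{eq:keyformulalimit}
\mathbb{E}\!\left[\,\xi_{\theta'}^{-\beta}\,;\ \xi_{s}\ge\tfrac{1}{2}\,\xi_{s^{-}}\ \text{ for all }s\le\theta'\,\right]\;=\;1\;=\;\xi_{0}^{-\beta},\qquad\text{for every }\delta\in(0,1),\ a>0.
\end{equation}

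Finally I would solve~\eqref{eq:keyformulalimit}. Holding it for all $a$ shows that $t\mapsto\xi_{t\wedge\tau_{\delta}}^{-\beta}\mathbbm{1}_{\{\xi\text{ has no half-jump on }[0,t]\}}$ is a bounded martingale, that is, $x\mapsto x^{-\beta}$ is a positive harmonic function on $(0,\infty)$ for $\xi$ regarded as a positive self-similar Markov process absorbed at $0$ and killed at its first half-jump. Through the Lamperti transform $\xi_{t}=\exp(\zeta_{A_{t}})$, the underlying L\'evy process $\zeta$ has an explicit Laplace exponent $\psi$ --- a ratio of Gamma functions once the bounded range of the log-jumps and the killing rate are accounted for --- and the harmonicity of $x^{-\beta}$ is precisely the statement $\psi(-\beta)=0$. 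Since the Gamma function has no zeros, $\psi(-\beta)=0$ forces a Gamma factor in its denominator to be infinite, i.e.\ $2-2\beta$ to be a non-positive integer, so $2\beta\in\{2,3,4,\dots\}$; together with $\beta\in(1,3)\setminus\{2\}$ from the first paragraph this leaves exactly $\beta\in\{3/2,5/2\}$. The case $\beta=3/2$ arises when $\beta-1\in(0,1)$, so $\xi$ is the negative of a stable subordinator and $\mathbb{E}[\nu]=-\infty$, and the case $\beta=5/2$ when $\beta-1\in(1,2)$ and $\mathbb{E}[\nu]=0$, matching Proposition~\ref{prop:nodrift}.

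The principal obstacle is the middle step: justifying the exchange of limit and expectation on the right of the Key formula and, above all, pinning down the joint weak limit of $\mathbbm{1}_{\mathcal{L}_{\theta}}$ and $\widetilde{W}_{S_{\theta}}/\widetilde{W}_{S_{0}}$ --- one must verify, using~\eqref{eq:sommedelta}, \eqref{eq:notwojumps} and Lemma~\ref{lem:notiesll}, that only the finitely many macroscopic jumps of $\xi$ on $[0,\theta']$ contribute and that they produce exactly the half-jump constraint in~\eqref{eq:keyformulalimit} with no residual dependence on $\K$, the overshoot at $\tau_{\delta p}$ and the absorption being controlled by the estimates of Section~\ref{sec:rw}. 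The Lamperti computation of the last paragraph, although classical, is the other technical core of the argument.
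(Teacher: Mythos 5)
Your passage of the Key formula to the scaling limit and the martingale reformulation (your first three paragraphs) correctly reproduce the paper's derivation of~\eqref{eq:keyformulalimit} and~\eqref{eq:magiclamperti}. The gap is in the final Lamperti step. You claim that the Laplace exponent $\psi$ of the Lamperti process $\zeta$ is ``a ratio of Gamma functions once the bounded range of the log-jumps and the killing rate are accounted for'' and that harmonicity of $x^{-\beta}$ reads $\psi(-\beta)=0$, so that a pole of a denominator Gamma factor forces $2\beta\in\mathbb{Z}_{\geq2}$. But the Gamma-ratio formula $\Gamma(1+z)/\Gamma(2-\beta+z)$ is the exponent of the \emph{unrestricted} Lamperti process (killed at $\tau_0$ only). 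The harmonicity equation~\eqref{eq:keyformulalimit} concerns $\zeta$ \emph{additionally killed at its first jump $\leq-\log 2$}, whose exponent has L\'evy measure truncated to $[-\log 2,0]$ and an enlarged killing rate; this truncated exponent is not a Gamma ratio --- the paper's formulas~\eqref{eq:calculpoisson} and~\eqref{eq:calculcompensation} express it through incomplete Beta and regularized hypergeometric functions. Nor does analytic continuation rescue the argument: the integral defining the unrestricted exponent converges only for $\mathrm{Re}(z)>-1$ while $-\beta<-1$, and the (entire) truncated exponent differs from the meromorphic continuation of the unrestricted one by the continued contribution of the suppressed jumps, which does not vanish at $z=-\beta$. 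So the deduction that the truncated exponent vanishing at $-\beta$ forces a Gamma pole is unsupported.

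To close the gap, one must solve the equation for the truncated exponent directly. The paper writes this quantity in closed form (equation~\eqref{eq:calculpoisson} for $\beta\in(1,2)$ and equation~\eqref{eq:calculcompensation} for $\beta\in(2,3)$) and verifies, with a formal-calculus system together with a monotonicity check, that the unique zeros on those intervals are $\beta=3/2$ and $\beta=5/2$ respectively. Your Gamma-pole heuristic correctly anticipates the answer, but it does not prove it; your final paragraph needs to be replaced by this explicit root-finding computation.
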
 
  \begin{center} \hrulefill \textit{\ In the sequel we shall thus speak of the case $\beta = 3/2$ or $\beta=5/2$.} \hrulefill  \end{center}
  
  The proof of the previous result is done by passing the proof of Proposition \ref{prop:nodrift} (which is based on the Key formula) to the scaling limit and performing a calculation with stable L\'evy process (using the Lamperti representation of stable L\'evy processes killed when entering the negative half-line). {This section is the only one where continuous objects and stochastic calculus is used.} Let us start with some background on L\'evy processes and their domain of attraction. 

\subsection{Background on stable L\'evy processes}

Recall from Proposition \ref{prop:nu_proba} that the law $\nu$ has a support bounded from above, satisfies {$\nu(-j) \approx \ j^{-\beta}$} as $j \to \infty$ for some $\beta \in (1, \infty) \backslash \{2,3, 4, ... \}$ and  is centered whenever $\beta \in (2, \infty)$ by Proposition~\ref{prop:nodrift}. This implies in particular that the $\nu$-random walk is in the domain of attraction of the $((\beta-1) \wedge 2)$-stable L\'evy process. Let us explain this in more details. Introduce the shorthand notation  $$\gamma = (\beta-1) \wedge 2 \quad \in (0,1)\cup(1,2],$$ and for $x \in \mathbb{R}$, under the law $ \mathbf{P}_{x}$ let $(\xi_{t} : t \geq 0)$ be a stable spectrally negative L\'evy process, starting from $x$, with L\'evy--Khintchine exponent $  \mathbf{E}_x[\exp(  -\lambda  (\xi_{t}-x))] = \exp( t \lambda^{\gamma})$ for $\lambda \leq 0$, see \cite{Ber96,kyprianou2022stable} for background. In particular, when $\gamma \in (0,2) \backslash \{1\}$ it has L\'evy measure
 \begin{eqnarray} \label{eq:levymeasure} \frac{\gamma(\gamma-1)}{\Gamma(2-\gamma)} \  \frac{\mathrm{d}r}{|r|^{\gamma+1}}  \mathbbm{1}_{r <0}, \end{eqnarray}
and no Brownian part. When $\gamma \in (0,1)$ the process is the opposite of a pure-jump subordinator, while for $ \gamma \in (1,2)$   it may increase but has only negative jumps. We use the notation $\Delta \xi_t = \xi_t - \xi_{t-}$ for the jump of $\xi$ at time $t$. For $\gamma = 2$, it is equal to $ \sqrt{2}$ times a standard linear Brownian motion issued from $x$. Notice that the degenerate $\gamma =1$ where we have $\xi_{t} = x-t$ under $ \mathbf{P}_{x}$ has been excluded since we already know from analytic combinatorics that $\beta \ne 2$ (see Lemma \ref{lem:asympW}).

Recall also that under $ \mathbb{P}^{\RW}_{p}$ the process $(S)$ is a $\nu$-random walk starting from $p$. Since\footnote{The cases $\gamma=1$ or $\gamma=2$  are technically challenging where delicate centering or variance estimates are needed to establish convergence towards a stable law, whereas when $\gamma \in (0,1)\cup(1,2)$ we only need tail estimates and centering (when $\gamma >1$).} we have $\beta \ne \{2,3\}$, it follows from \cite[Theorem 8.3.1]{BGT89} that $\nu$ is in the domain of attraction (without centering) of the law $\xi_{1}$ under $ \mathbf{E}_0$. In turns, by \cite[Theorem 15.17]{Kal07} or \cite[Chapter VII]{JS03}, this implies the stronger convergence of processes in the Skorokhod sense:
$$ \left(\frac{S_{\lfloor n^{\gamma}t \rfloor }}{ n}\right)_{t \geq 0}  \mbox{\  under } \mathbb{P}^{\RW}_{0} \quad \xrightarrow[n\to\infty]{(d)} \quad  \left(  \mathfrak{c} \cdot \xi_{t}\right)_{t \geq 0}  \mbox{ \ under } \mathbf{P}_{0}, \qquad \mbox{ where } \mathfrak{c} >0.$$
{The constant $ \mathfrak{c}$ is related to the tail of $\nu$, so is unknown in general.} The preceding convergence is easily extended to get a scaling limit of the decoration-reproduction processes under $ \mathbb{P}_p^{ \RW}$:

\begin{lemma}[Scaling limit of the decoration-reproduction processes]  \label{lem:scalinglimit} Consider the rescaled processes $ S^{(p)}_{t} = p^{-1} S_{[p^{\gamma}t]}$ and the point measure $\eta^{(p)} = \sum_{i \geq 1}\sum_{j \geq 1} \delta_{i/p^{\gamma}, Y^{i}_{j}/p}$. Then we have 
$$  \left( S^{(p)} , \eta^{(p)}\right) \mbox{ \ under } \mathbb{P}_{p}^{ \RW} \quad \xrightarrow[p\to\infty]{(d)} \quad  \Big( (\mathfrak{c}\cdot  \xi^{}_{t})_{t \geq 0} , \sum_{\begin{subarray}{c}t \geq 0 \\    \Delta \xi^{}_{t}>0 \end{subarray}} \delta_{t, \mathfrak{c}\cdot \Delta \xi^{}_{t}} \Big) \mbox{ \ under } \mathbf{P}_{1},$$  where the convergence holds for the product topology with the Skorokhod topology for the first coordinate and  the vague convergence of measures on $ \mathbb{R}_{+} \times \mathbb{R}_{+}^{*}$ for the second one.
\end{lemma}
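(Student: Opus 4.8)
The plan is to combine the already-established one-dimensional convergence $(S_{\lfloor n^{\gamma}t\rfloor}/n)_{t\ge 0}$ under $\mathbb{P}^{\RW}_0$ towards $(\mathfrak{c}\,\xi_t)_{t\ge 0}$ under $\mathbf{P}_0$ with the fact that the pair $(S,\eta)$ under $\mathbb{P}_p^{\RW}$ is built from i.i.d.\ increments of law $\hat\nu(\cdot;*,(*),\cdot)$, so that the reproduction atoms are a \emph{deterministic measurable functional} of the increment sequence. First I would reduce to $p$ a free starting point: under $\mathbb{P}_p^{\RW}$ the walk is $S_k = p + \sum_{i\le k} X_i$, so $S^{(p)}_t = 1 + p^{-1}\sum_{i\le p^\gamma t} X_i$, and the sum is exactly the rescaled centered walk started from $0$, whence $S^{(p)}\Rightarrow (\mathfrak c\,\xi_t)_{t\ge0}$ under $\mathbf{P}_1$ by the cited domain-of-attraction result together with the shift by $1$. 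This handles the first coordinate.

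Next I would deal with the joint convergence of $(S^{(p)},\eta^{(p)})$. The key structural observation is that the negative jumps of $\xi$ arise, at the discrete level, from a single large offspring label $Y^i_j$: by the big-jump estimate \eqref{eq:notwojumps} in Proposition~\ref{prop:nu_proba}, conditionally on a step of $S$ having size $\approx -A$ the dominant contribution comes from exactly one of the $p_i$'s being $\approx A$, the others being $O(1)$. Quantitatively, $\sum_j Y^i_j$ and $-( S_{i}-S_{i-1})$ differ by a bounded (at most $\K^2$ by \eqref{eq:sommedelta}) amount, and \eqref{eq:notwojumps} shows that the probability that two of the $Y^i_j$ exceed $\varepsilon p$ at a fixed step is $O((\varepsilon p)^{2(1-\beta)})$, which summed over $O(p^\gamma)$ steps is $O(p^{\gamma + 2(1-\beta)}) = O(p^{-\beta+ (2-\beta)\wedge 0}) \to 0$ since $\gamma = (\beta-1)\wedge 2$ and $\beta>1$. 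Hence with probability tending to one, over the time window $[0,T]$ each macroscopic down-step of $S^{(p)}$ is produced by a single macroscopic $Y^i_j$ of essentially the same size, i.e.\ $-\Delta S^{(p)}$. Consequently the point measure $\eta^{(p)}$ restricted to atoms with second coordinate $>\varepsilon$ is, up to an error vanishing in probability, the same as $\sum_{i\ge1}\delta_{i/p^\gamma,\,-\Delta S^{(p)}_{i/p^\gamma}}$ on that region, and the latter converges jointly with $S^{(p)}$ to $\sum_{t:\Delta\xi_t<0}\delta_{t,-\mathfrak c\,\Delta\xi_t}$ by the continuous mapping theorem applied to the Skorokhod-continuous map $w\mapsto \sum \delta_{t,-\Delta w_t}$ at points of continuity (the limit process $\xi$ a.s.\ has no two jumps at the same time, and the jump-counting functional restricted to jumps above a fixed threshold is continuous off a null set in Skorokhod space). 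Letting $\varepsilon\downarrow 0$ after the limit, and using that $\eta$ places no mass on $\{0\}\times\mathbb{R}_+^*$ nor accumulates at the boundary, upgrades this to vague convergence of $\eta^{(p)}$ on $\mathbb{R}_+\times\mathbb{R}_+^*$, jointly with $S^{(p)}\Rightarrow\mathfrak c\,\xi$.

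To make the ``joint'' part rigorous I would invoke a standard coupling/Skorokhod-representation argument: realise the discrete increment sequences and the limiting L\'evy process on a common space so that $S^{(p)}\to\mathfrak c\,\xi$ almost surely in Skorokhod topology, then show the above approximation of $\eta^{(p)}$ by the jump measure of $S^{(p)}$ holds a.s.\ along this coupling (this is where \eqref{eq:notwojumps} is used, via Borel--Cantelli along a subsequence or directly in probability, which suffices for convergence in distribution). The main obstacle I expect is exactly this control that the reproduction atoms track the jumps of $S$ — one must rule out that the total offspring $\sum_j Y^i_j$ at a given step is macroscopic while $S$ itself only moves by $O(1)$ (i.e.\ a large creation compensated by a large drop), which is precisely what the two-atoms estimate \eqref{eq:notwojumps} together with the a priori bound \eqref{eq:sommedelta} forbids; and symmetrically that no macroscopic down-jump of $S$ occurs without a corresponding macroscopic $Y^i_j$, which again follows since a step $S_i - S_{i-1}\approx -A$ forces $\sum_j Y^i_j + \K \ge A$ by \eqref{eq:sommedelta}. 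Once these are in hand, the vague topology on $\mathbb{R}_+\times\mathbb{R}_+^*$ (which ignores atoms escaping to the boundary $\{0\}$ in the second coordinate) absorbs the remaining $O(1)$-sized offspring, and the stated convergence follows.
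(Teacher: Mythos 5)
Your proposal is correct and takes essentially the same approach as the paper: the first coordinate follows from the domain-of-attraction convergence, and the second from the two-big-jump estimate \eqref{eq:notwojumps} combined with \eqref{eq:sommedelta}, which together show that each macroscopic down-step of $S$ is carried by a single $Y^i_j$. (Minor arithmetic slip: for $\gamma=\beta-1$ the union-bound exponent is $p^{1-\beta}$, not $p^{-\beta+(2-\beta)\wedge 0}$, but both vanish since $\beta>1$.)
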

\begin{proof} The convergence of the first coordinate is granted by the above discussion. We thus need to show that the atoms of the measure are prescribed by the jumps of the process. 
Recalling \eqref{eq:sommedelta} the proof reduces to showing that a large negative jump  $S_i-S_{i-1}$ corresponds to a single large tree of label $Y^i_j$ (as opposed to several large trees whose total sum would roughly be equal to $S_i-S_{i-1}$). It thus suffices to show that for any $t \geq0$
$$ \sup_{ 0 \leq s \leq t\,p^{\gamma}} \frac{\big(\sum_{j \geq 1} Y^{s}_{j}\big) - \max_{j \geq 1} Y^{s}_{j}}{p} \quad \xrightarrow[p\to\infty]{ ( \mathbb{P}^{{\tiny{\RW}}}_{p})}\quad  0.$$ 
But recall from  Proposition \ref{prop:nu_proba} that
$$ \sup_{r \geq 0 }\mathbb{P}^{\RW}_{r}(\exists i \ne j : Y^{1}_{i} \mbox{ and } Y^{1}_{j} \geq  \varepsilon p) \lesssim (\varepsilon p)^{2(1-\beta)}.$$ The penultimate display easily follows after a union bound since $p^\gamma \times p^{2(1-\beta)} \leq p^{\beta-1+2-2\beta} = p^{1-\beta} \to 0$ as $p \to \infty$. This completes the proof.\end{proof}

\subsection{Identifying $\beta$ using Lamperti transformation}
Let us now pass the Key formula to the scaling limit. Fix a small $\delta>0$, a large $B >0$ and consider the stopping time 
$$\theta_p = \big(\tau_{\delta p}\big) \wedge B p^{\gamma}.$$ Recall that by Lemma \ref{lem:notiesll} there are no ties under $ \mathbb{P}_{p}$ until time $\theta_p$ with high probability, that is  $ \mathbb{P}_{p}( \mathcal{L}_{\theta_p}) \to 1$. Since by \eqref{eq:sautll} on the event $ \mathcal{L}_{\theta_p}$ we have $S_{\theta_p} \geq  (\delta p- \K)/\K$, by the asymptotic \eqref{eq:tailnu} on the partition function $ \widetilde{W}$, the function ${ \widetilde{W}_{S_{\theta_p}}}/{ \widetilde{W}_{p}}$ is bounded and is asymptotically of the same order as $(S_{\theta_p}/p)^{-\beta}$. One can then apply the Key formula (Proposition \ref{prop:llrw}) for  the function $f=1$ at the bounded stopping time $\theta_p$ and deduce that 
$$ 1 \xleftarrow[p \to \infty]{ \mathrm{Lem. }\ \ref{lem:notiesll}}  \mathbb{P}_{p}\big( \mathcal{L}_{\theta_p} \big) =\mathbb{E}_{p}^{ \RW}\left[  \frac{ \widetilde{W}_{S_{\theta_p}}}{ \widetilde{W}_{p}} \cdot  \mathbbm{1}_{ \mathcal{L}_{ \theta_p}}\right].$$
On the continuous side, since we have only binary splitting in the limit, being a locally largest exploration is the fact that $|\Delta \xi_s| \leq  \frac{\xi_{s-}}{2}$ for all $s$'s, and by standard properties of L\'evy process, the analog of ties happens with probability zero, that is under $ \mathbf{P}_{1}$ we have $|\Delta \xi_{s}| \ne \frac{\xi_{s-}}{2}$  for all $s \geq 0$, almost surely. We deduce from Lemma \ref{lem:scalinglimit} and the continuous mapping theorem that 
  \begin{eqnarray} \label{eq:passagelimite} 1 = \lim_{p \to \infty} \mathbb{E}_{p}^{ \RW}\left[  \frac{ \widetilde{W}_{S_{\theta_p}}}{ \widetilde{W}_{p}} \cdot  \mathbbm{1}_{ \mathcal{L}_{ \theta_p}}\right] \underset{ \mathrm{Lem.\ } \ref{lem:scalinglimit}}{=} \mathbf{E}_{1}\left[ \frac{(\xi_{\vartheta}^{})^{-\beta}}{1} \cdot \mathbbm{1}\left\{ |\Delta \xi_s| <  \frac{\xi_{s-}}{2} : \forall s \leq \vartheta \right\}\right],
 \end{eqnarray} where in the continuous setting we have set analogously $\vartheta = \tau_{\delta} \wedge B$ with the (abuse of) notation $\tau_\delta = \inf\{s \geq 0 : \xi_s \leq \delta\}$. Since $\xi_\vartheta > \delta/2$ on that event, and that under $ \mathbf{P}_{1}$ we almost surely have $\tau_{\delta} < \infty$, one can let $B \to \infty$ and deduce by dominated convergence that 
  \begin{eqnarray} \label{eq:keyformulalimit} \forall \delta \in (0,1), \qquad 1= \mathbf{E}_{1}\left[(\xi_{\tau_{ \delta}}^{})^{-\beta} \cdot \mathbbm{1}\left\{ |\Delta \xi_s| <  \frac{\xi_{s-}}{2} : \forall s \leq \tau_{\delta} \right\}\right].  \end{eqnarray}
The previous display is an implicit equation on $\beta$ which forces it to be equal to $3/2$ or $5/2$:
 \begin{theorem}[Lamperti's magic] \label{thm:magic}Let $\beta \in (1,\infty) \backslash \{2,3, ... \}$ and recall that $\gamma = (\beta-1) \wedge 2$. Let $\xi$ be a $\gamma$-stable spectrally negative L\'evy process started from $1$ under $ \mathbf{P}_{1}$. For $\delta \in (0,1)$ denote $\tau_{ \delta} = \inf\{ s \geq 0 : \xi_{s} \leq \delta\}$ and suppose that \eqref{eq:keyformulalimit} holds. Then necessarily we have  $ \beta = 3/2$ or $\beta = 5/2$.
 \end{theorem}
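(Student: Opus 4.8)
The plan is to turn the family of identities \eqref{eq:keyformulalimit}, valid for \emph{every} $\delta\in(0,1)$, into a single algebraic constraint on $\beta$ via the Lamperti representation of the killed stable process, and then to check that this constraint has no solution in $(1,\infty)\setminus\{2,3,\dots\}$ other than $3/2$ and $5/2$. I would first dispose of the range $\beta>3$: there $\gamma=(\beta-1)\wedge 2=2$, so $\xi$ is a (scaled) Brownian motion with continuous paths, hence $\xi_{\tau_\delta}=\delta$ almost surely and the jump condition in \eqref{eq:keyformulalimit} is vacuous; the identity would read $\delta^{-\beta}=1$ for all $\delta\in(0,1)$, which is absurd. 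Since also $\beta\notin\{2,3\}$, we are left with $\beta\in(1,2)\cup(2,3)$, i.e.\ $\gamma=\beta-1\in(0,1)\cup(1,2)$; the difference between these two sub-ranges (bounded versus unbounded variation of the underlying Lévy object) is exactly the source of the dichotomy.

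Next I would reduce \eqref{eq:keyformulalimit} to a martingale statement. On the event in \eqref{eq:keyformulalimit} the jump crossing level $\delta$ cannot more than halve the process (and since a $\gamma$-stable spectrally negative process has no Gaussian part it never creeps below $\delta$), so $\xi_{\tau_\delta}\in(\delta/2,\delta)$; in particular $(\xi_{\tau_\delta})^{-\beta}$ is well defined and $\leq(\delta/2)^{-\beta}$. Combining \eqref{eq:keyformulalimit} with the self-similarity of $\xi$ and the (strong) Markov property at $\tau_{\delta'}$ for $\delta<\delta'<1$ then shows that, for each fixed $\delta$, the bounded process $t\mapsto(\xi_{t\wedge\tau_\delta})^{-\beta}\,\mathbbm{1}\{|\Delta\xi_s|<\xi_{s-}/2\text{ for all }s\leq t\wedge\tau_\delta\}$ is a $\mathbf{P}_1$-martingale. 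Equivalently, writing $\mathcal{X}$ for $\xi$ \emph{killed} the first time a jump exceeds half of its current value — still a positive self-similar Markov process of index $\gamma$, the killing rule being scale invariant — the function $x\mapsto x^{-\beta}$ is invariant for $\mathcal{X}$ stopped at first passage below an arbitrary level (equivalently, $x^{-\beta}$ lies in the kernel of the explicit extended generator of $\mathcal{X}$).

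The Lamperti step is then automatic. Write $\mathcal{X}=\exp(\zeta^{\ast}_{\varrho(\cdot)})$ with $\zeta^{\ast}$ the associated Lévy process started from $0$ and killed at an independent exponential rate: the jump condition becomes ``$\zeta^{\ast}$ has no jump $\leq-\log 2$'', first passage of $\mathcal{X}$ below $\delta$ becomes first passage of $\zeta^{\ast}$ below $\log\delta$, and invariance of $x\mapsto x^{-\beta}$ translates, via the optional stopping theorem applied to $t\mapsto\exp(-\beta\zeta^{\ast}_t-t\,\Psi_{\zeta^{\ast}}(-\beta))$ — legitimate because the overshoot of a spectrally negative process with jumps bounded by $\log 2$ is bounded, so the relevant exponential moments are finite — into the single equation $\Psi_{\zeta^{\ast}}(-\beta)=0$, where $\Psi_{\zeta^{\ast}}$ is the Laplace exponent of $\zeta^{\ast}$. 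Now $\zeta^{\ast}$ is obtained from the Lamperti--Lévy process $\zeta$ of the $\gamma$-stable process killed on entering the negative half-line — whose Laplace exponent and Lévy measure are explicit ratios of Gamma functions, see e.g.\ \cite{kyprianou2022stable} — by deleting the jumps $\leq-\log 2$ and adding a killing at rate $\lambda:=\Pi_\zeta((-\infty,-\log 2])$, so that $\Psi_{\zeta^{\ast}}(q)=\Psi'(q)-\lambda$, where $\Psi'$ is the (finite, bounded-jump) Laplace exponent of $\zeta$ with its jumps $\leq-\log 2$ removed. Substituting the explicit Lamperti--stable triplet and $\gamma=\beta-1$, the constraint $\Psi_{\zeta^{\ast}}(-\beta)=0$ becomes a closed-form equation in $\beta$ alone.

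It remains to solve this equation in each of the intervals $(1,2)$ and $(2,3)$. A clean way to organise the ``no spurious root'' part is to use that $q\mapsto\Psi_{\zeta^{\ast}}(q)$ is convex, tends to $+\infty$ as $q\to-\infty$ (bounded negative jumps), and satisfies $\Psi_{\zeta^{\ast}}(0)=-\lambda<0$; hence it has exactly one negative root $q^{\ast}(\gamma)$, and $\Psi_{\zeta^{\ast}}(-\beta)=0$ is equivalent to $q^{\ast}(\gamma)=-(\gamma+1)$. One then checks that $\gamma\mapsto\Psi_{\zeta^{\ast}}^{(\gamma)}(-(\gamma+1))$ is strictly monotone on $(0,1)$ and on $(1,2)$ and vanishes at $\gamma=1/2$, respectively $\gamma=3/2$ — this last evaluation being the point where the Gamma-function computation collapses, i.e.\ the ``magic''. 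Since $\beta=3/2\in(1,2)$ and $\beta=5/2\in(2,3)$ are precisely the two alternatives of Proposition~\ref{prop:nodrift} ($\mathbb{E}[\nu]$ infinite, respectively $\mathbb{E}[\nu]=0$), the refined statement follows. I expect the main obstacle to be exactly this last step: writing the truncated Lamperti--stable Laplace exponent in closed form and proving both that $3/2$ and $5/2$ are roots and that the resulting transcendental equation has no others; everything upstream of it is soft.
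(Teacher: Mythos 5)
Your proposal follows the same overall route as the paper: dispose of $\beta>3$ via the Brownian case, reduce \eqref{eq:keyformulalimit} to a martingale/invariance statement, pass through Lamperti, and land on an equation $\psi(\beta)=0$ whose only roots in $(1,2)$ and $(2,3)$ are $3/2$ and $5/2$ (both you and the paper defer this last check to a closed-form/Gamma-function calculation verified symbolically). Your convexity observation for $\Psi_{\zeta^\ast}$ is a nice way to organise the ``unique negative root'' bookkeeping, but it does not change the computation that has to be done in the end.

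The one place where your sketch is not quite right is the reduction to the martingale property. Applying the strong Markov property at $\tau_{\delta'}$ (for $\delta<\delta'$) together with self-similarity and \eqref{eq:keyformulalimit} simply reproduces \eqref{eq:keyformulalimit} at level $\delta'$ — it is circular and gives no new information. The correct step applies the Markov property at a \emph{fixed deterministic time} $t$: conditioning $(\xi_{\tau_\delta})^{-\beta}\mathbbm{1}_{\mathcal L}$ on $\mathcal F_t$ and using self-similarity to rescale the remaining first-passage problem (the random threshold $\delta/\xi_t$ lies in $(0,1)$ on $\{\tau_\delta>t\}$, so \eqref{eq:keyformulalimit} applies) shows $(\xi_{t\wedge\tau_\delta})^{-\beta}\mathbbm 1_{\mathcal L_{t\wedge\tau_\delta}}$ equals $\mathbf E_1[(\xi_{\tau_\delta})^{-\beta}\mathbbm 1_{\mathcal L}\mid\mathcal F_t]$, hence is a bounded closed martingale. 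This is what the paper's Lemma~\ref{lem:martingale} does (in Lamperti coordinates, using translation invariance of $\zeta$ and a tail estimate for the first-passage time as $x\to-\infty$). Relatedly, your ``optional stopping of the Wald martingale'' step needs care: if $\Psi_{\zeta^\ast}(-\beta)<0$ the compensated exponential is not uniformly bounded up to $\varsigma_{\log\delta}$, and the integrability requires exactly the exponential tail bound on $\varsigma_{\log\delta}$ (bounded negative jumps $\Rightarrow$ super-exponential decay of $\mathbf P(\varsigma_x\le t)$ in $|x|$) that the paper proves at the end of Lemma~\ref{lem:martingale}; you allude to this via ``the relevant exponential moments are finite'' but do not supply the estimate. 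Once these two local points are patched — and they are both short — your argument coincides with the paper's proof.
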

 
 Before starting the proof let us explain how we will compute the RHS of \eqref{eq:keyformulalimit} using  standard tools from the theory of L\'evy processes and especially the Lamperti representation of stable L\'evy processes killed when becoming negative. The key idea is to notice that \eqref{eq:keyformulalimit} involves a local constraint which is scale invariant and only depends upon the process $\xi$ killed at time $\tau_{ 0}$ when it reaches negative values (since then the indicator function cannot be satisfied anymore).  In particular,  by the seminal work of Lamperti \cite{Lam72}, under $ \mathbf{P}_{x}$ for $x >0$ the process $X_{s} = \xi_{s} \mathbbm{1}_{s \leq \tau_{0}}$ obtained from $\xi$ started from $x$ and killed at time $\tau_{ 0}$ is a $\gamma$-positive self-similar Markov process (pssMp), that is 
 $$ \Big(x\cdot X_{t / x^{\gamma}}\Big)_{0 \leq t \leq \tau_{ 0} \cdot x^{\gamma}} \mbox{ under } \mathbf{P}_{1} \quad \overset{(d)}{=} \big(X_{t}\big)_{0 \leq t \leq \tau_{ 0}} \mbox{ under } \mathbf{P}_{x}.$$ This process can thus be seen, after time change, as the exponential of another L\'evy process called $\zeta$. Specifically, we can introduce the process $(\zeta_{s})_{s \geq 0}$ defined by 
   \begin{eqnarray} \label{eq:lamperti}	 \xi_t \mathbbm{1}_{t \leq \tau_0} = X_t =  \mathrm{e}^{\zeta_{\kappa(t)}}, \quad \mbox{ where} \quad \int_{0}^{\kappa(t)} \mathrm{e}^{\gamma \zeta_{s}} \mathrm{d}s = t, \end{eqnarray} as long as $t \leq \int_{0}^{\infty} \mathrm{e}^{\gamma \zeta_{s}} \mathrm{d}s = \tau_{ 0}$. The crucial observation of Lamperti is that under $ \mathbf{P}_x$ the process $\zeta$ is a L\'evy process started from $\log x$ with explicit characteristics (see below). We refer to \cite{kyprianou2022stable} for background on this transformation. We can then translate \eqref{eq:keyformulalimit} using $\zeta$ via the Lamperti transformation. Indeed, if $\varsigma_x = \inf\{ s \geq 0 : \zeta_s \leq x\}$ is the hitting time of $(-\infty, x]$ by $\zeta$ (notice the different notation not to confuse with $\tau_x$ which applies to $\xi$), then  in particular we have $$\xi_{\tau_{ \delta}} = \exp( \zeta_{\varsigma_{ \log \delta}})$$  and the event $\{ |\Delta \xi_s| <  \frac{\xi_{s-}}{2} : \forall s \leq \tau_{\delta} \}$ can be translated in the Lamperti transformation into 
   $$ \left\{ |\Delta \xi_s| <  \frac{\xi_{s-}}{2} : \forall s \leq \tau_{\delta} \right\} = \left\{\Delta \zeta_{s} > -\log(2) : \forall s \leq \varsigma_{ \log \delta}\right\}.$$ Recalling that under $ \mathbf{P}_1$ the L\'evy process $\zeta$ starts from $\log 1 = 0$, we get from \eqref{eq:keyformulalimit} that for any $x <0$ we have 
 \begin{eqnarray} \mathbf{E}_{1}\left[    \exp( - \beta \zeta_{\varsigma_{ x}}) \mathbbm{1}_{\{\Delta \zeta_{s} \geq -\log(2) : \forall s \leq \varsigma_{ x}\}} \right] =1,   \label{eq:magiclamperti}\end{eqnarray}
 and the proof will use this equation.

 \begin{proof}[Proof of Theorem \ref{thm:magic}]  For clarity, we split the calculation depending on whether $\beta \in (1,2)$, where both $\xi$ and $\zeta$  are pure jump, or in the case $\beta \in (2,3)$, where compensation is involved. We start with the easy case $ \beta > 3$ where $\xi$ is a Brownian motion.
 
 \noindent \textsc{Case without jump $\beta >3$.} In this case, since Brownian motion has continuous sample paths, for any $\delta >0$ the indicator function in \eqref{eq:keyformulalimit} is plainly verified and $\xi_{\tau_{ \delta}} =\delta$ so that  we have 
 $$ 1 = \mathbf{E}_{1}\left[{(\xi_{\tau_{ \delta}}^{})^{-\beta}} \cdot \mathbbm{1}\left\{ |\Delta \xi_s| <  \frac{\xi_{s-}}{2} : \forall s \leq \tau_{ \delta} \right\} \right] =  (\delta)^{-\beta},$$
 which is absurd since $\delta \in (0,1)$ and $\beta >1$. 
 \medskip

In the general case $\beta \in (1,3)$, we will first use forthcoming Lemma \ref{lem:martingale}  to argue that \eqref{eq:magiclamperti} will imply a similar equation for fixed time: for any $t >0$ we have   \begin{eqnarray} \label{eq:exp1}\mathbf{E}_{1}\left[ \exp( - \beta \zeta_{t}) \cdot \mathbbm{1}_{\left\{\Delta \zeta_{s} \geq -\log(2) : \forall s \leq t\right\}}\right]=1.  \end{eqnarray} We shall then compute the expectation on the LHS using the exact characteristics of the process $\zeta$ in each case $\beta \in (1,2)$ and $\beta \in (2,3)$ and find only one possible solution in each interval:\medskip

 \noindent \textsc{Pure jump case.} When $\beta \in (1,2)$, the process $\xi$ is the opposite of a subordinator which is pure jump with L\'evy measure proportional to $ \mathrm{d}x\, x^{-\beta} \mathbbm{1}_{ x < 0}$ by \eqref{eq:levymeasure}.  By \cite[Theorem 5.10 and 5.15]{kyprianou2022stable}, the L\'evy process $ \zeta$ in the Lamperti representation of $\xi_{s} \mathbbm{1}_{s \leq \tau_{0}}$ is again the opposite of a pure-jump subordinator, started from $0$ with a killing rate $ \mathrm{k}^{\dagger}_{\beta}= \frac{1}{\Gamma(2-\beta)}$ and whose L\'evy measure is, after push-forward by $ x \mapsto \mathrm{e}^{x}$,  given by 
$$ \pi^{\dagger}_{\beta}( \mathrm{d}x) = \frac{-1}{\Gamma(1-\beta)}\cdot  \frac{ \mathrm{d}x}{(1-x)^{\beta}} \mathbbm{1}_{x \in [0,1]}.$$
With this at hands, the LHS of \eqref{eq:exp1}  is easily computed using the exponential formula for Poisson measure \cite[Chap. 0.5]{Ber96}, and is equal to $ \exp(t \psi( \beta))$ for the function $\psi(\beta)$ given by 
  \begin{eqnarray} \psi(\beta) & = &\int_{0}^{1} \underbrace{\frac{-1}{\Gamma(1-\beta)} \frac{ \mathrm{d}x}{(1-x)^{\beta}}}_{ \mbox{L\'evy measure}} \left(x^{-\beta} \mathbbm{1}_{x> \frac{1}{2}}-1\right) \underbrace{- \frac{1}{\Gamma(2- \beta)}}_{ \mbox{ killing }}\label{eq:calculpoisson}\\ &=& - 2^{\beta-1}\left( 2^{\beta}\cos((\beta-1)\pi) \Gamma(\beta- \frac{1}{2}) \pi^{-1/2} - {}_{2}F_{1}(-\beta+1,\beta,2-\beta, 1/2)\right),\nonumber   \end{eqnarray}
  where ${}_{2}F_{1}$ is the regularized hypergeometric function. Using a formal calculus software, the above equation is easily seen to have $\beta = \frac{3}{2}$ as its unique root, for example, one can check that the function is increasing over $(1,2)$ (its derivative is negative) and $\beta=3/2$ is a root.

    \begin{figure}[!h]
   \begin{center}
   \includegraphics[width=8cm]{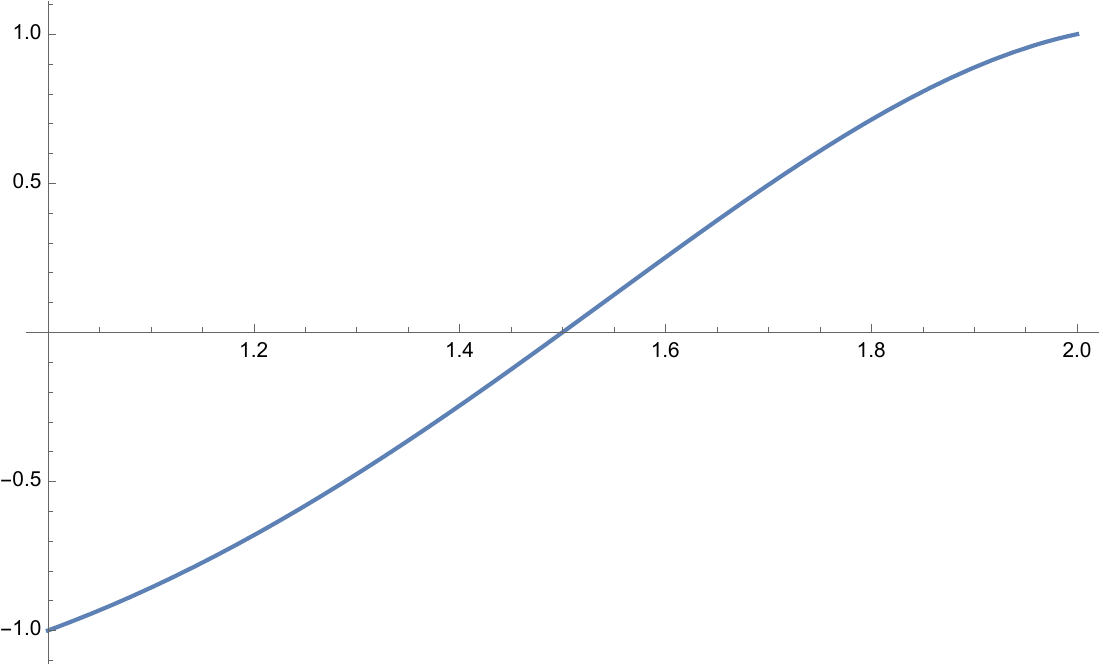}
   \caption{A plot of the function appearing in  \eqref{eq:calculpoisson} with the only root at $\beta=3/2$.}
   \end{center}
   \end{figure}

   \noindent \textsc{Compensation case.} When $\beta \in (2,3)$, we can still perform the Lamperti transformation \eqref{eq:lamperti} on the  $\xi_{s} \mathbbm{1}_{s \leq \tau_{0}}$ and this time, by \cite[Theorem 5.10 and 5.15]{kyprianou2022stable}, the process $\zeta$ is a L\'evy process with no Brownian part, killing rate $  \mathrm{k}_\beta^\dagger=\frac{\beta-2}{\Gamma(3-\beta)}$, and L\'evy measure given, after a push-forward by $ x \mapsto \mathrm{e}^{x}$,   by 
$$ \pi^{\dagger}_{\beta}( \mathrm{d}x) = \frac{-\Gamma(\beta)\sin((\beta-1) \pi)}{\pi} \frac{ \mathrm{d}x}{(1-x)^{\beta}}.$$ 
 To finish specifying the characteristic of $\zeta$ under $ \mathbf{P}_{1}$, one should give its drift which is the coefficient in front of $z$ in  the following normalization of its L\'evy Khtinchine exponent ($\gamma$ stands for Euler's Gamma constant)
  \begin{eqnarray*} \mathbf{E}_{1}[ \mathrm{e}^{z \zeta_{t}}] & =& \exp( t \psi^{\dagger}(z))\\
\mbox{ where }  \psi^{\dagger}(z)&=& \int_{0}^{1} 
\pi^{\dagger}_{\beta}( \mathrm{d}x) \big( x^{z}-1-z\log x\big) -  \mathrm{k}_\beta^\dagger - z \left( \gamma + \frac{\Gamma'(2-\beta)}{\big(\Gamma(2- \beta)\big)^2} \right) \\
&=&  \frac{1}{\pi}\left(\Gamma(\beta-1 - z) \Gamma(1 + z) \sin(\pi (\beta-1-z))\right) = \frac{\Gamma(1+z)}{\Gamma(2-\beta+z)}. \end{eqnarray*} 
The analog of \eqref{eq:calculpoisson} is now obtained using compensation in the exponential formula for Poisson measures and this yields to 
$ \mathbf{E}_{1}[\exp( -\beta \zeta_{t}) \mathbbm{1}_{\{\Delta \zeta_{s} \geq -\log(2) : \forall s \leq t\}}] = \exp(t \psi(\beta))$ where 
\begin{eqnarray} \label{eq:calculcompensation} \psi(\beta) & = &  \int_{0}^{1} 
\pi^{\dagger}_{\beta}( \mathrm{d}x) \big( x^{-\beta}\mathbbm{1}_{x>1/2}-1+ \beta \log x\big) -  \mathrm{k}_\beta^\dagger + \beta  \left( \gamma + \frac{\Gamma'(2-\beta)}{\big(\Gamma(2- \beta)\big)^2} \right).  \end{eqnarray}

This again has an exact expression in terms of Incomplete Beta and Hypergeometric functions (which we skip to spare the reader), and one can check (with the help of  a formal calculus software) that the above function admits a unique root in 
$(2,3)$ at  $\beta = \frac{5}{2}$. 
\end{proof}
    \begin{figure}[!h]
   \begin{center}
   \includegraphics[width=8cm]{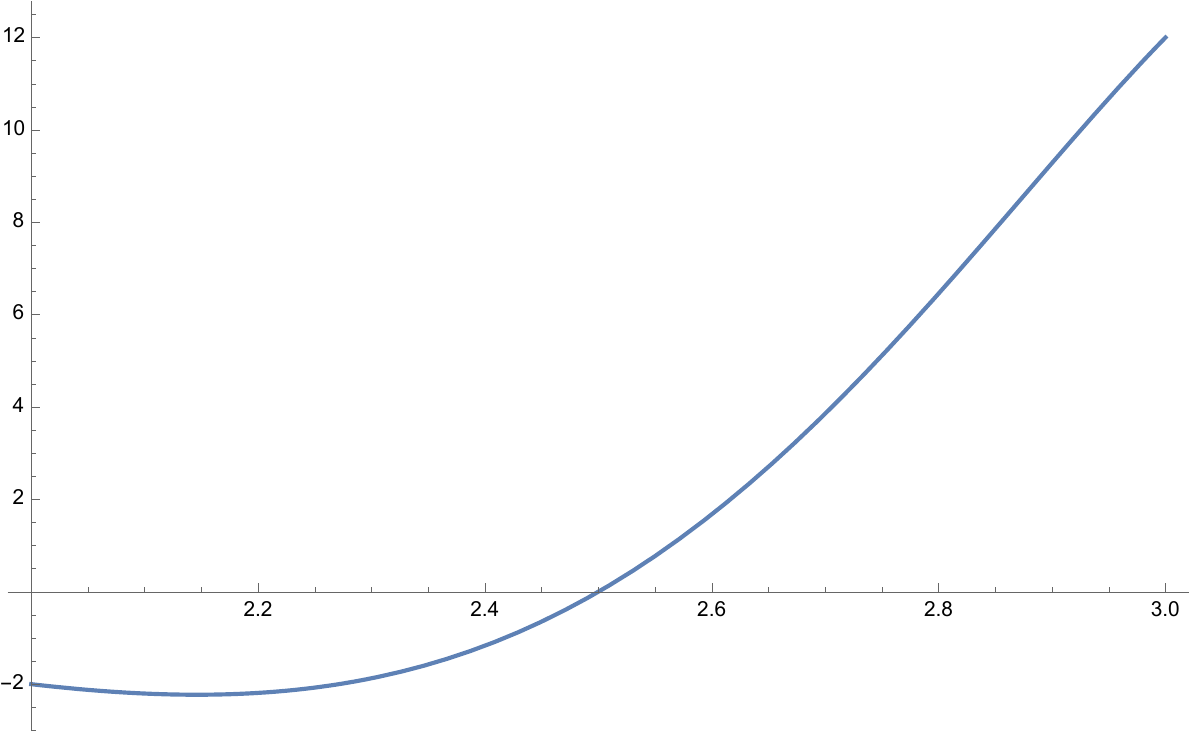}
   \caption{A plot of the function appearing in  \eqref{eq:calculcompensation} with the only root at $\beta=5/2$.}
   \end{center}
   \end{figure}

  \begin{lemma} \label{lem:martingale} Under the assumption \eqref{eq:magiclamperti}, under $ \mathbf{E}_1$  the process $t \mapsto \Big( \exp( - \beta \zeta_{t}) \mathbbm{1}_{\{\Delta \zeta_{s} \geq -\log(2) : \forall s \leq t\}}\Big)$ is a martingale for its canonical filtration and in particular has constant expectation $1$.
  \end{lemma}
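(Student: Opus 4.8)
The plan is to observe that, up to a deterministic exponential factor, the process $M_{t}:=\exp(-\beta\zeta_{t})\,\mathbbm{1}_{\{\Delta\zeta_{s}\geq-\log 2\,:\,\forall s\leq t\}}$ is automatically a martingale because $\zeta$ is a L\'evy process, and then to use \eqref{eq:magiclamperti} to pin down the deterministic factor as trivial. Set $c(t):=\mathbf{E}_{1}[M_{t}]$ and write $J_{t}:=\{\Delta\zeta_{s}\geq-\log 2:\forall s\leq t\}$, a non-increasing family of events. By the simple Markov property and the stationarity and independence of the increments of $\zeta$ (together with the lack of memory of the possible killing) one has $\mathbf{E}_{1}[M_{t+s}\mid\mathcal{F}_{t}]=M_{t}\,c(s)$ for the natural filtration $(\mathcal{F}_{t})$ of $\zeta$, so taking expectations yields $c(t+s)=c(t)c(s)$. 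Moreover $c$ is finite: on $J_{t}$, and before any killing, every jump of $\zeta$ is $\geq-\log 2$, and since $\zeta$ is spectrally negative (its L\'evy measure, pushed forward by $u\mapsto e^{u}$, is supported on $[0,1]$), removing from $\zeta$ its compound-Poisson part made of the jumps smaller than $-\log 2$ leaves a L\'evy process $\zeta^{\flat}$ with jumps bounded below, hence with exponential moments of all orders; this gives $c(t)=e^{-\lambda t}\,\mathbf{E}_{1}[\exp(-\beta\zeta^{\flat}_{t})]<\infty$, where $\lambda$ collects the killing rate and the rate of the removed part. As $c$ is also measurable (the process $M$ is nonnegative and jointly measurable, so apply Tonelli) and positive, a measurable multiplicative function must be an exponential, so $c(t)=e^{\rho t}$ for some $\rho\in\mathbb{R}$, and $\widetilde M_{t}:=e^{-\rho t}M_{t}$ is then a genuine nonnegative $\mathbf{P}_{1}$-martingale with $\widetilde M_{0}=M_{0}=1$.

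It remains to show $\rho=0$; fix any $x<0$. The geometric input is an overshoot estimate: on the event $\{M_{t\wedge\varsigma_{x}}>0\}$ every jump of $\zeta$ up to time $t\wedge\varsigma_{x}$ is $\geq-\log 2$, and as $\zeta$ is spectrally negative it overshoots the level $x$ by at most $\log 2$ when it reaches it, so $\zeta_{t\wedge\varsigma_{x}}\geq x-\log 2$ when $\varsigma_{x}\leq t$ and $\zeta_{t\wedge\varsigma_{x}}\geq x$ otherwise; in all cases $M_{t\wedge\varsigma_{x}}\leq 2^{\beta}e^{-\beta x}=:K_{x}<\infty$. Optional stopping at the bounded time $t\wedge\varsigma_{x}$ gives $\mathbf{E}_{1}[\widetilde M_{t\wedge\varsigma_{x}}]=1$ for all $t$, and since $(\widetilde M_{t\wedge\varsigma_{x}})_{t}$ is a nonnegative martingale it converges $\mathbf{P}_{1}$-a.s.\ to a limit $\widetilde M^{x}_{\infty}$, equal to $e^{-\rho\varsigma_{x}}M_{\varsigma_{x}}$ on $\{M_{\varsigma_{x}}>0\}$ (where $\varsigma_{x}<\infty$ necessarily) and to $0$ elsewhere; recall also that \eqref{eq:magiclamperti} reads $\mathbf{E}_{1}[M_{\varsigma_{x}}]=1$, with the convention $M_{\varsigma_{x}}=0$ whenever $\zeta$ is killed (or crosses $x$ by a jump bigger than $\log 2$) before reaching $x$ — a configuration which, in the $\xi$-picture, corresponds precisely to $\xi$ crossing $0$ by a jump larger than half its height, hence is excluded by the indicator anyway.

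If $\rho>0$, then $\widetilde M_{t\wedge\varsigma_{x}}\leq M_{t\wedge\varsigma_{x}}\leq K_{x}$, so by bounded convergence $\mathbf{E}_{1}[\widetilde M^{x}_{\infty}]=\lim_{t}\mathbf{E}_{1}[\widetilde M_{t\wedge\varsigma_{x}}]=1$; subtracting from $\mathbf{E}_{1}[M_{\varsigma_{x}}]=1$ gives $\mathbf{E}_{1}[(1-e^{-\rho\varsigma_{x}})M_{\varsigma_{x}}]=0$ with a nonnegative integrand, so $M_{\varsigma_{x}}=0$ $\mathbf{P}_{1}$-a.s.\ on $\{\varsigma_{x}>0\}$, and since $\varsigma_{x}>0$ a.s.\ (as $\zeta_{0}=0>x$) this contradicts $\mathbf{E}_{1}[M_{\varsigma_{x}}]=1$. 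If $\rho<0$, then $\widetilde M^{x}_{\infty}\geq M_{\varsigma_{x}}$ pointwise, so Fatou's lemma gives $1\geq\mathbf{E}_{1}[\widetilde M^{x}_{\infty}]\geq\mathbf{E}_{1}[M_{\varsigma_{x}}]=1$, whence $\widetilde M^{x}_{\infty}=M_{\varsigma_{x}}$ a.s., i.e.\ $(e^{-\rho\varsigma_{x}}-1)M_{\varsigma_{x}}=0$ a.s., so again $M_{\varsigma_{x}}=0$ a.s., a contradiction. Hence $\rho=0$, so $c\equiv 1$, the process $M=\widetilde M$ is a martingale for $(\mathcal{F}_{t})$ — a fortiori for its own canonical filtration — and $\mathbf{E}_{1}[M_{t}]=M_{0}=1$ for all $t$, which is exactly what is needed to pass from \eqref{eq:magiclamperti} to \eqref{eq:exp1} in the proof of Theorem~\ref{thm:magic}.

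The step I expect to be most delicate is precisely this passage from the first-passage identity to the fixed-time one: one must combine the overshoot estimate (which makes $M_{t\wedge\varsigma_{x}}$ uniformly bounded on the event where it is positive, so that bounded convergence or Fatou can be applied to the stopped martingale) with a clean handling of the event where $\zeta$ reaches its cemetery before hitting $x$ — harmless only because, on that event, the jump constraint of \eqref{eq:magiclamperti} is automatically violated. The remaining ingredients (multiplicativity plus measurability forcing an exponential, and optional stopping for nonnegative martingales) are routine.
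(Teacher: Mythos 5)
Your proof is correct, and it takes a genuinely different route than the paper's. The paper fixes $t$, applies the Markov property at time $t$ inside the first-passage identity \eqref{eq:magiclamperti}, and lets $x\to-\infty$: the term $\mathbf{E}_1\big[\mathbbm{1}_{t<\varsigma_x}M_t\big]$ increases to $\mathbf{E}_1[M_t]$ by monotone convergence, and the residual term $\mathbf{E}_1\big[\mathbbm{1}_{t\geq\varsigma_x}\,e^{-\beta\zeta_{\varsigma_x}}\mathbbm{1}_{J_{\varsigma_x}}\big]$ is bounded by $e^{-\beta(x-\log 2)}\,\mathbf{P}_1(\varsigma_x\leq t,\, \text{no jump}\leq-\log 2)$, which the paper controls via a super-exponential tail estimate for first-passage times of a L\'evy process whose jumps are bounded from below (that estimate, via Markov's inequality and exponential moments, is essentially the same analytic input that you need for the finiteness of $c(t)$). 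You instead run the limits in the other order: you first obtain the exponential form $c(t)=e^{\rho t}$ from the Cauchy functional equation plus measurability, then show $\rho=0$ by optional stopping at $t\wedge\varsigma_x$ with $x$ fixed, using the overshoot bound $M_{t\wedge\varsigma_x}\leq 2^{\beta}e^{-\beta x}$ and a bounded-convergence or Fatou argument depending on the sign of $\rho$. The two proofs have the same ingredients in disguise --- both ultimately rest on spectral negativity of $\zeta$ (only negative jumps, and those bounded by $-\log 2$ on the surviving event) and on the hypothesis \eqref{eq:magiclamperti} at every level $x<0$ --- but your route replaces the quantitative tail estimate for $\mathbf{P}_1(\varsigma_x\leq t)$ by the softer observation that multiplicativity forces $c$ to be exponential, and then by the pointwise inequality $(1-e^{-\rho\varsigma_x})M_{\varsigma_x}\geq 0$. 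What your version buys is a cleaner structural picture (and dispenses with the sub-exponential tail bound in favor of the overshoot bound); what the paper's version buys is a one-pass argument that doesn't need to separately dispatch the cases $\rho>0$ and $\rho<0$. Both are equally rigorous once the killing is handled correctly, which you do: a jump to the cemetery counts as $\Delta\zeta_s=-\infty<-\log 2$, killing $M$ and ensuring $\mathbf{E}_1[M_{\varsigma_x}]=1$ is literally \eqref{eq:magiclamperti}.
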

  \begin{proof} By the Markov property of the process $\zeta$ and the multiplicative form of the process considered, the martingale property is granted once we have proved it has constant expectation. Fix $t >0$ and take $x<0$ negative. By the Markov property applied at time $t$ we have 
 \begin{eqnarray*} 1 &\underset{\eqref{eq:magiclamperti}}{=}& \mathbf{E}_{1} \left[  \exp( - \beta \zeta_{\varsigma_{ x}}) \mathbbm{1}_{\{\Delta \zeta_{s} \geq -\log(2) : \forall s \leq \varsigma_{ x}\}} \right]\\
 &=& \mathbf{E}_{1} \left[   \mathbbm{1}_{t \geq \varsigma_{ x}}\exp( - \beta \zeta_{\varsigma_{ x}}) \mathbbm{1}_{\{\Delta \zeta_{s} \geq -\log(2) : \forall s \leq \varsigma_{ x}\}} \right] \\ 
 && +  \mathbf{E}_{1} \Big[   \mathbbm{1}_{t < \varsigma_{ x}} \underbrace{\mathbb{E}\left[ \exp( - \beta \zeta_{\varsigma_{ x}}) \mathbbm{1}_{\{\Delta \zeta_{s} \geq -\log(2) : \forall s \leq \varsigma_{ x}\}} \mid \mathcal{F}_{t}\right]}_{ = \exp( - \beta \zeta_{t}) \mathbbm{1}_{\{\Delta \zeta_{s} \geq -\log(2) : \forall s \leq t\}}} \Big].  \end{eqnarray*}
 We can then let $x \to -\infty$ and get by monotone convergence that the second term tends to the desired expectation. To finish the proof it remains to show that the first term tends to $0$ as $x \to -\infty$ for fixed $t$. For this we bound it as follows 
 $$\mathbf{E}_{1} \left[   \mathbbm{1}_{t \geq \varsigma_{ x}}\exp( - \beta \zeta_{\varsigma_{ x}}) \mathbbm{1}_{\{\Delta \zeta_{s} \geq -\log(2) : \forall s \leq \varsigma_{ x}\}} \right] \leq \mathrm{e}^{-\beta (x- \log 2)} \cdot  \mathbf{P}_{1}\big( \varsigma_{ x} \leq t \mbox{ and  no jumps  }  \leq- \log 2\big).$$
 We claim that for any L\'evy process $ \mathfrak{z}$ started from $0$ with jumps bounded from below (here by $-\log 2$), the probability to reach a very negative level $x$ by the fixed time $t$ decreases faster than any exponential in $|x|$, so that the above display tends to $0$ as $x \to -\infty$. To see this, by applying  the Markov property at the first time the process $ \mathfrak{z}$ drops below $x$, we see that the desired probability is upper bounded by a constant  times the probability that such a L\'evy process is below $x/2$ (or even $x-1$) at time $t$. Using  Markov inequality this is itself upper bounded for any $a >0$ by 
 $$  \mathbb{P}( \mathfrak{z}_{t} \leq \frac{x}{2}) \leq \mathbb{E}[ \exp(-  a \mathfrak{z}_{t})] \cdot \mathrm{e}^{a  x/2}.$$
 By the L\'evy--Khintichine formula, any L\'evy process $ \mathfrak{z}$ with L\'evy measure supported on $[- \mathrm{cst}, \infty)$ for some $ \mathrm{cst}>0$ satisfies $ \mathbb{E}[ \exp(-  a \mathfrak{z}_{t})] < \infty$ for any $ a >0$, so the previous display indeed decreases faster than any exponential as $x \to -\infty$. This proves the desired claim. 
  \end{proof}

  \subsection{Scaling limit of locally largest exploration}
  We end this section by a technical result that we shall need in the proof of Theorem \ref{thm:scaling}. It formally links the scaling limit of the decoration-reproduction process under $ \mathbb{P}_{p}$ that is implicitly derived above to the formal decoration-reproduction process used to construct the $\gamma$-ssMt as in \cite{bertoin2024self}. This section can be skipped at first reading. \\ 
  
  We start by recalling the construction of the locally largest exploration decoration-reproduction process from  \cite[Example 3.6]{bertoin2024self} in the case $\beta=3/2$ and \cite[Example 3.9]{bertoin2024self} (and the remark following it)  in the case $\beta=5/2$. First, in the case $\beta = 3/2$, under (yet another) probability measure $ \mathrm{P}$, suppose that $\zeta$ is a L\'evy process starting from $0$, with no Brownian part, and being pure jump with only negative jump with L\'evy measure ${ \Lambda}_{\mathrm{Bro}}$ given by 
    \begin{eqnarray} \label{eq:GLMbrownianfrag} \int_{[-\log 2,0]} F\big(  \mathrm{e}^{{y}}\big)   \  { \Lambda}_{\mathrm{Bro}}(\mathrm{d} y) \quad  := \quad   \sqrt{\frac{2}{\pi}} \int_{1/2}^{1} F(x) \frac{ \mathrm{d}x}{(x(1-x))^{3/2}}.  \end{eqnarray}
    In particular we have  $  \mathrm{E}[ \mathrm{e}^{ z \zeta_{t}}] = \exp( t \psi(z))$ with 
    \begin{eqnarray} \label{eq:LK32} \psi(z) &=&  \int_{ \mathbb{R}^*}   \left( \mathrm{e}^{z y} -1 \right) \Lambda_{\mathrm{Bro}} (  \mathrm{d} y).  \end{eqnarray}
In the case $\beta = 5/2$, we suppose  that $\zeta$ is a L\'evy process starting from $0$, with no Brownian part,  L\'evy measure ${ \Lambda}_{\mathrm{BroGF}}$ given by 
  \begin{eqnarray*} \label{eq:GLMgrowthBrownian} \int_{[-\log 2,0]} F\big(  \mathrm{e}^{{y}}\big)  \  { \Lambda}_{\mathrm{BroGF}}(\mathrm{d} y)  \quad  := \quad    \frac{3}{ 4 \sqrt{\pi}} \int_{1/2}^{1} F\big(x\big) \frac{ \mathrm{d}x}{(x(1-x))^{5/2}},  \end{eqnarray*}
  and whose L\'evy--Khintchine exponent $\psi(z)$ specified by $  \mathrm{E}[ \mathrm{e}^{ z \zeta_{t}}] = \exp( t \psi(z))$ is given by
    \begin{eqnarray*} \psi(z) &=&  \frac{4(7-3\pi)}{3\sqrt{\pi}} z  + \int_{ \mathbb{R}^*}   \left( \mathrm{e}^{z y} -1-  z y \mathbbm{1}_{|y|\leq 1} \right) \Lambda_{\mathrm{BroGF}} (  \mathrm{d} y),  \end{eqnarray*}
  see \cite[Example 3.9]{bertoin2024self} from where the formula is taken. We then construct under $ \mathrm{P}$ the positive self-similar Markov process $(X_{t})_{0 \leq t \leq \tau_{ 0}}$ associated with $ \zeta$ via \eqref{eq:lamperti} and for the self-similarity exponent $\gamma = \beta-1  \in \{ 1/2, 3/2\}$. In particular, since the L\'evy measure of $\zeta$ is supported by $[-\log 2, 0]$, the process $X$ never makes negative jumps larger than half of its size.
  
  \begin{lemma}[Scaling limit of the locally largest exploration]  \label{lem:scalingLLbertoin}There exists a constant $ \mathfrak{c}>0$ such that for any $ \delta>0$, with the same notation as in Lemma \ref{lem:scalinglimit} we have
  $$\left( S^{(p)} , \eta^{(p)}\right)|_{[0, p^{-\gamma} \tau_{ \delta p}]} \mbox{ \ under } \mathbb{P}_{p} \quad \xrightarrow[p\to\infty]{(d)} \quad  \Big( (\mathfrak{c}\cdot X_{t})_{0 \leq t \leq \tau_{ \delta} } , \sum_{\begin{subarray}{c}0 \leq t \leq \tau_{ \delta} \\    \Delta X_{t}>0 \end{subarray}} \delta_{t, \mathfrak{c}\cdot \Delta X_{t}} \Big) \mbox{ \ under } \mathbf{P}_1,$$ 
  for the product topology where for the first coordinate we use the Skorokhod topology  and for the second one the vague convergence of measures on $ \mathbb{R}_{+} \times \mathbb{R}_{+}^{*}$. 
  \end{lemma}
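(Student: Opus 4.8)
The plan is to transport the scaling limit of Lemma~\ref{lem:scalinglimit}, which holds under the random-walk law $\mathbb{P}_p^{\RW}$, to the genuine Boltzmann law $\mathbb{P}_p$ by means of the Key formula (Proposition~\ref{prop:llrw}), and then to identify the resulting limit with the decoration--reproduction process of the positive self-similar Markov process $X$ through the Lamperti representation already computed in the proof of Theorem~\ref{thm:magic}.

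First I would fix a bounded continuous functional $G$ of $(S^{(p)},\eta^{(p)})|_{[0,p^{-\gamma}\tau_{\delta p}]}$ and, as in the derivation of \eqref{eq:passagelimite}, introduce the stopping time $\theta_p=\tau_{\delta p}\wedge Bp^{\gamma}$. Applying the Key formula at the bounded stopping time $\theta_p$ (its proof being written so as to allow bounded stopping times), together with Lemma~\ref{lem:notiesll} to discard the ties, gives
$$\mathbb{E}_p\!\left[G\cdot\mathbbm{1}_{\mathcal{L}_{\theta_p}}\right]=\mathbb{E}_p^{\RW}\!\left[G\cdot\mathbbm{1}_{\mathcal{L}_{\theta_p}}\cdot\frac{\widetilde{W}_{S_{\theta_p}}}{\widetilde{W}_p}\right]+o(1).$$
On the event $\mathcal{L}_{\theta_p}$ one has $S_{\theta_p}\geq(\delta p-\K)/\K$ by \eqref{eq:sautll}, so the Radon--Nikodym weight is deterministically bounded (hence uniformly integrable) and, by the uniform version of the asymptotics $\widetilde{W}_{\lfloor kp\rfloor}/\widetilde{W}_p\to k^{-\beta}$ from Lemma~\ref{lem:asympW}, it converges jointly with the convergence of Lemma~\ref{lem:scalinglimit} to a power of $\xi_{\vartheta}$, where $\vartheta=\tau_{\delta}\wedge B$ and $\tau_\delta=\inf\{s\geq 0:\xi_s\leq\delta\}$. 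Here one uses that, for $\beta\in\{3/2,5/2\}$, the spectrally negative $\gamma$-stable process $\xi$ has no positive jumps and does not creep below a level, so that $p^{-\gamma}\tau_{\delta p}\to\tau_{\delta}$ and both the stopped path and the locally-largest constraint $\{|\Delta\xi_s|<\xi_{s-}/2:\forall s\leq\vartheta\}$ are a.s.\ continuity sets of the limit. Passing to the limit and then letting $B\to\infty$ (using that the weight is bounded by $(\delta/2)^{-\beta}$, that $\tau_{\delta}<\infty$ a.s., and the tightness of $p^{-\gamma}\tau_{\delta p}$ under $\mathbb{P}_p$, itself a consequence of the Key formula), we obtain that $\mathbb{E}_p[G]$ converges to the expectation of $G$ against the law obtained from $\xi$ killed on $(-\infty,0)$, further killed by the multiplicative functional enforcing $\{|\Delta\xi_s|<\xi_{s-}/2\}$, and Doob-transformed by $h(x)=x^{-\beta}$; this is a probability measure by \eqref{eq:keyformulalimit}. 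The convergence of the reproduction measures $\eta^{(p)}$ — i.e.\ that a macroscopic negative increment of $S$ hides a single macroscopic dangling subtree — follows exactly as in Lemma~\ref{lem:scalinglimit} from \eqref{eq:notwojumps}, again transported to $\mathbb{P}_p$ by the Key formula.

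It remains to identify this limit law with that of $\big(\mathfrak{c}X_t,\sum_{0\leq t\leq\tau_\delta,\ \Delta X_t>0}\delta_{t,\mathfrak{c}\Delta X_t}\big)$ under $\mathbf{P}_1$, for $X$ the positive self-similar Markov process built from the L\'evy process $\zeta$ with L\'evy measure $\Lambda_{\mathrm{Bro}}$ (case $\beta=3/2$) or $\Lambda_{\mathrm{BroGF}}$ (case $\beta=5/2$) as in \cite[Examples 3.6 and 3.9]{bertoin2024self}. I would argue on the Lamperti side: by Lemma~\ref{lem:martingale} the process $t\mapsto\exp(-\beta\zeta_t)\mathbbm{1}_{\{\Delta\zeta_s\geq-\log 2:\forall s\leq t\}}$ is a mean-one martingale for the Lamperti L\'evy process of $\xi$ killed at $\tau_0$, whose characteristics (killing rate $\mathrm{k}_\beta^{\dagger}$, L\'evy measure $\pi_\beta^{\dagger}$, and the explicit drift in the case $\beta=5/2$) were computed in the proof of Theorem~\ref{thm:magic}; the associated Girsanov change of measure is precisely the Doob transform above lifted through the Lamperti correspondence, and under it $\zeta$ becomes a L\'evy process with L\'evy measure $\mathrm{e}^{-\beta y}\pi_\beta^{\dagger}(\mathrm{d}y)\mathbbm{1}_{y\geq-\log 2}$, no killing (the killing rate being exactly cancelled since the martingale has mean one), and a shifted drift. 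A direct computation — parallel to \eqref{eq:calculpoisson} and \eqref{eq:calculcompensation}, and using $\Gamma(-1/2)=-2\sqrt{\pi}$ — checks that this measure, pushed forward by $y\mapsto\mathrm{e}^y$, is proportional to $\Lambda_{\mathrm{Bro}}$ (resp.\ $\Lambda_{\mathrm{BroGF}}$) and, in the case $\beta=5/2$, that the drift matches $\tfrac{4(7-3\pi)}{3\sqrt{\pi}}$, up to the single free normalization that becomes the constant $\mathfrak{c}$. Since both sides are then $\gamma$-self-similar Markov processes with the same Lamperti characteristics, they agree in law up to $\tau_\delta$, and the atoms of the reproduction measure coincide with the jumps of $X$ as in Lemma~\ref{lem:scalinglimit}.

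The main obstacle will be the clean transfer of weak convergence through the $h$-transform in the first step: one must simultaneously control the truncation at $Bp^{\gamma}$ and the tightness of $p^{-\gamma}\tau_{\delta p}$ under $\mathbb{P}_p$, verify that $\tau_\delta$ and the stopped-path map are a.s.\ continuity points of the limiting process, and secure the uniform integrability of the weight $\widetilde{W}_{S_{\theta_p}}/\widetilde{W}_p$ — all of which are available because on $\mathcal{L}_{\theta_p}$ this weight is deterministically bounded and the limit $X$ reaches level $\delta$ in finite time without creeping. The identification step, by contrast, is a finite (if slightly tedious) computation once the abstract principle ``the Lamperti L\'evy process of an $h$-transformed self-similar Markov process is an Esscher tilt of the original one'' is invoked.
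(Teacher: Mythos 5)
Your proposal follows essentially the same route as the paper: transport the scaling limit of Lemma~\ref{lem:scalinglimit} from $\mathbb{P}_p^{\RW}$ to $\mathbb{P}_p$ via the Key formula applied at the stopped time $\theta_p$, identify the limit as the $h$-transform $\widetilde{X}$ of $\xi$ by $(\xi_{\tau_\delta})^{-\beta}$ on the locally-largest event, pass through Lamperti to recognize this as an Esscher transform of $\zeta$ by the mean-one martingale of Lemma~\ref{lem:martingale}, and check by the Lévy--Khintchine computation (parallel to \eqref{eq:calculpoisson}/\eqref{eq:calculcompensation}) that the resulting Lévy process matches $\Lambda_{\mathrm{Bro}}$ or $\Lambda_{\mathrm{BroGF}}$ up to the free normalization $\mathfrak{c}$. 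The paper compresses the first step into ``mutatis mutandis from \eqref{eq:keyformulalimit}''; your more explicit handling of the truncation at $Bp^\gamma$, the continuity of the stopped-path map, and the uniform integrability of the Radon--Nikodym weight is a faithful unpacking of what that phrase entails.
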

  \begin{proof} The arguments yielding to \eqref{eq:keyformulalimit} can be reproduced mutatis mutandis to show the statement of the lemma but where the process $(X_{t})_{0 \leq \tau_{ \delta}}$ is replaced by the process $ \tilde{X}$ whose law is prescribed by  
  $$ \mathrm{E}[F( \tilde{X}_{t} : 0 \leq t \leq \tau_{ \delta})] =  \mathbf{E}_{1}\left[ F(\xi_{t} : 0 \leq t \leq \tau_{\delta}) \cdot (\xi_{\tau_{ \delta}}^{})^{-\beta} \cdot \mathbbm{1}\left\{ |\Delta \xi_s| <  \frac{\xi_{s-}}{2} : \forall s \leq \tau_{\delta} \right\}\right],$$
  where we recall that under  $\mathbf{E}_1$ the process $\xi$ is a spectrally negative $\gamma$-stable L\'evy process. Using \eqref{eq:lamperti} it is easy to see that the above display defines a (family of laws of) positive self-similar Markov process with index $\gamma$ whose associated L\'evy process $\zeta$ associated via the Lamperti transformation  is obtained  using a similar $h$-transformation.  Namely, with some obvious notation, for any $x <0$ we have
  \begin{eqnarray} \label{eq:charaklevy} \mathrm{E}[F( \zeta_{t} : 0 \leq t \leq \varsigma_{ \log x})] =  \mathbf{E}_{1}\left[ F(\zeta_{t} : 0 \leq t \leq \tau_{ x}) \cdot  \mathrm{exp}\big({-\beta \zeta_{\tau_{ \delta}}}\big) \cdot \mathbbm{1}_{\left\{ \Delta \zeta^{}_{s} \geq -\log 2  : \forall s \leq \tau_{ x} \right\}}\right].  \end{eqnarray}
    Using Lemma \ref{lem:martingale}, it follows that the law of $\zeta$ under $ \mathrm{P}$ it just given by the law of $\zeta$ under $ \mathbf{P}_{1}$ biaised by the martingale $ t \mapsto \mathrm{exp}\big({-\beta \zeta_{t}}\big) \cdot \mathbbm{1}_{\left\{ \Delta \zeta^{}_{s} \geq -\log 2  : \forall s \leq t \right\}}$. Those exponential martingale transformations of L\'evy process are classical (sometimes called Esscher transforms) and in particular yields to L\'evy processes, see \cite[Section 2.8]{kyprianou2022stable}. One then has to check that the law of this L\'evy process matches with the one described just before the lemma. This is done by computing the L\'evy-Khintchine exponent using the same arguments that yielded to \eqref{eq:calculpoisson} in the case $\beta=3/2$ or  to \eqref{eq:calculcompensation} in the case $\beta=5/2$: for $z \in \mathbb{R}$ we have 
  \begin{eqnarray*} \mathrm{E}[ \mathrm{e}^{z \zeta_{t}}] &\underset{ \eqref{eq:charaklevy}}{=}&  \mathbf{E}_{1}\left[  \mathrm{exp}\big((z-\beta) \zeta_{t}\big) \cdot \mathbbm{1}_{\left\{ \Delta \zeta^{}_{s} \geq -\log 2  : \forall s \leq t \right\}}\right].  \end{eqnarray*}
  In the case $\beta=3/2$ the RHS is equal to 
    \begin{eqnarray*}
 & \underset{ \mathrm{see\ } \eqref{eq:calculpoisson}}{=}&    \int_{0}^{1} {\frac{-1}{\Gamma(1-\beta)} \frac{ \mathrm{d}x}{(1-x)^{\beta}}} \left(x^{(z-\beta)} \mathbbm{1}_{x> \frac{1}{2}}-1\right) - \frac{1}{\Gamma(2- \beta)} \\ 
 &=&   \frac{-1}{ \Gamma(1-\beta)} \int_{1/2}^{1} \frac{\mathrm{d}x}{(x(1-x))^{3/2}} \big( x^{z}-1) + \underbrace{ \frac{-1}{ \Gamma(1-\beta)} \int_{0}^{1} \frac{\mathrm{d}x}{(x(1-x))^{3/2}} \big( \mathbbm{1}_{x >1/2}-x^{3/2})- \frac{1}{\Gamma(2- \beta)}}_{=0},  \end{eqnarray*}
 so that we recover up to a multiplicative factor $2$, the L\'evy--Khintchine formula in \eqref{eq:LK32}. This proves indeed that the law of $\tilde{X}$ is the same as that of a multiple of $X$ as desired. The case $\beta=5/2$ is similar, using the analog of \eqref{eq:calculcompensation}  and performing the same manipulations. We leave the details of the calculation to the fearless reader.  
\end{proof}

\section{Scaling limits of the trees and their volume}
In this section, we focus on the \textbf{volume} of the random tree $ \mathbf{t}$ under $ \mathbb{P}_p^x$ thus making the link between the catalytic variable $y$ and the size variable $x$. For this, we shall first make precise various notions of size, or equivalently of \textbf{pointed} structures. 
Let us begin with the most natural one, denote $ \mathrm{Vol}^{\bullet}  = \# \mathrm{t}$ for the number of vertices in the underlying tree $ \mathrm{t}$. We can then introduce the associated pointed partition functions 
$$ W_{p}^{\bullet, x} = \sum_{n \geq 1} \sum_{ \begin{subarray}{c}\mathbbm{t} \in \mathrm{FPT}_{p}^{n} \\ v \in \mathrm{t} \end{subarray}}  \mathrm{w}( \mathbbm{t}) \cdot x^{n} = \sum_{n \geq 1} \sum_{ \mathbbm{t} \in \mathrm{FPT}_{p}^{n}} \mathrm{w}( \mathbbm{t}) \cdot n \cdot x^{n} =  [y^{p}] x\frac{ \partial}{ \partial x} F(x,y).$$
In particular, we have $ W_{p}^{ \bullet,x} = W_{p}^{x}  \cdot \mathbb{E}^{x}_{p}[ \mathrm{Vol}^{\bullet}]$ for all $ x \in (0, x_{\crit}]$ and all $p \geq 0$. We shall also consider the following variation on the notion of size which is slightly more practical to approach from a probabilistic point of view (both notions will be related in Lemma \ref{lem:volcompa}). 
Recall from Assumption $(*)$ the value $ \K \geq 1$ which in particular bounds the maximal number of parking spots on a given edge. If $  \mathbf{t} = (\mathrm{t}, \phi)$ is a labeled tree, we denote by $ \mathbf{t}^{\circ}$ the set of its vertices $v$ of label $\K$ (we chose this particular value for commodity). In particular, we may very well have $ \mathbf{t}^{\circ}=\varnothing$. We then write $ \mathrm{Vol}^{\circ} = \# \mathbf{t}^{\circ}$ for the number of such vertices. Note that trivially $ \# \mathbf{t}^{\circ} = \mathrm{Vol}^ \circ \leq \mathrm{Vol}^ \bullet = \# \mathrm{t}$. 
We can then consider the pointed partition function obtained by putting for $x>0$ 
  \begin{eqnarray} \label{def:Wcircp}W_{p}^{\circ, x} = \sum_{n \geq 1} \sum_{\mathbbm{t} \in \mathrm{FPT}_{p}^{n}} \#\mathbf{t}^\circ \cdot \mathrm{w}( \mathbbm{t}) \cdot x^{n}.  \end{eqnarray} 
Even if we may have $ \mathbf{t}^{\circ}=\varnothing$ for a specific $ \mathbf{t}$, we have $ W_p^{\circ,x} >0$ as soon as $x>0$ by the connectivity assumption  in $(*)$.  Although $W_{p}^{\circ, x}$ cannot simply be obtained by differentiating $F(x,y)$ with respect to $x$, we similarly have $ W_{p}^{ \circ,x} = W_{p}^{x} \cdot \mathbb{E}_{p}^{x}[ \mathrm{Vol}^{\circ}]$. We shall also use the same notation as in \eqref{eq:polyalways} and put   \begin{eqnarray} \label{eq:polyalwaysbis} \widetilde{W}_{p}^{\circ,x} = {W}_{p}^{\circ,x} \cdot (y_{\crit}^{x})^{p} \quad  \mbox{and} \quad \widetilde{W}_{p}^{\bullet,x} = {W}_{p}^{\bullet,x} \cdot (y_{\crit}^{x})^{p}.  \end{eqnarray}
When $x < x_{\crit}$, since $[y^{p}]F(\cdot,y)$ is analytic around $x$ by Lemma \ref{lem:xcyc}, the functions $W_{p}^{\bullet, x}$ are well-defined and so is $W_{p}^{\circ, x}$  since $W_{p}^{ \circ,x} \leq W_{p}^{ \bullet,x}$. However, at this point it is not clear whether those functions are finite for $x=x_{\crit}$.

We shall prove below that $ p \mapsto  \widetilde{W}_p^{\circ,x}$ is ``almost'' a harmonic function for the $\nu$-random walk killed when dropping below $\K$. This will enable us to understand quite precisely the behaviors of those functions in the cases $\beta \in \{3/2, 5/2\}$ as identified above. For this, we recall some background on fluctuation theory for random walk in Section \ref{sec:fluctuations}.  Before that, we present a rough comparison of volumes which only relies on aperiodicity and the multitype branching structure. As in the preceding sections the value  $ x \in (0, x_{\crit}]$ is implicit in the notation for simplicity. 

 \subsection{Comparison of volumes}
This section contains a lemma enabling us  transfer back to $ \mathrm{Vol}^\bullet$ and $ \widetilde{W}^\bullet_p$ the knowledge we will acquire on $ \mathrm{Vol}^\circ$ and $ \widetilde{W}^\circ_p$ via random walk estimates. The take-away message being that $\mathrm{Vol}^\bullet$ and $ \mathrm{Vol}^{\circ}$ are roughly proportional when large.
 
 \begin{lemma}[$\mathrm{Vol}^\bullet \preceq \mathrm{Vol}^\circ$] \label{lem:volcompa}There exists a constant $ \delta >0$ such that for $n\geq 1$ large  enough we have
 $$\forall p \geq 0, \qquad  \mathbb{P}_{p}( \mathrm{Vol}^{\circ} \geq n) \geq \mathbb{P}_{p}( \mathrm{Vol}^{\bullet}\geq \delta n)- \mathrm{e}^{-\delta n}.$$
 \end{lemma}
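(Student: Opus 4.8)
The plan is to show that, uniformly over the starting type $p$, it is exponentially unlikely --- in the number of vertices of the tree --- that $\mathbf{t}$ is large while carrying only few label-$\K$ vertices. Precisely, I would establish the existence of constants $\varepsilon,c>0$ such that
\begin{equation}\label{eq:badevent}
\mathbb{P}_{p}\big(\mathrm{Vol}^{\bullet}\geq m,\ \mathrm{Vol}^{\circ}<\varepsilon m\big)\ \leq\ \mathrm{e}^{-c\,m}\qquad\text{for all }p\geq 0\text{ and }m\geq 1,
\end{equation}
and then take $\delta:=1/\varepsilon$: summing \eqref{eq:badevent} over $m\geq\delta n$, where $\varepsilon m\geq n$ so that $\{\mathrm{Vol}^{\circ}<n\}\subseteq\{\mathrm{Vol}^{\circ}<\varepsilon m\}$, bounds $\mathbb{P}_{p}(\mathrm{Vol}^{\bullet}\geq\delta n,\ \mathrm{Vol}^{\circ}<n)$ by a geometric series of order $\mathrm{e}^{-c\delta n}$, which is $\leq\mathrm{e}^{-\delta n}$ for $n$ large once $\varepsilon$ is small enough (the constant $c$ in \eqref{eq:badevent} only improves as $\varepsilon\downarrow 0$). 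The inequality of the lemma is then exactly this rewritten through $\{\mathrm{Vol}^{\bullet}\geq\delta n\}\subseteq\{\mathrm{Vol}^{\circ}\geq n\}\cup\{\mathrm{Vol}^{\bullet}\geq\delta n,\ \mathrm{Vol}^{\circ}<n\}$.

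To obtain \eqref{eq:badevent} I would decompose $\mathbf{t}$ along its label-$\K$ vertices: erasing $\mathbf{t}^{\circ}$ breaks $\mathbf{t}$ into \emph{$\K$-avoiding clusters} --- maximal subtrees none of whose vertices has label $\K$ --- so that $\mathrm{Vol}^{\bullet}=\mathrm{Vol}^{\circ}+\sum_{\mathcal{C}}\#\mathcal{C}$, with at most $\K\,\mathrm{Vol}^{\circ}+1$ clusters since a label-$\K$ vertex has $\leq\K$ children. In a breadth-first exploration the clusters are revealed successively and, conditionally on their root labels, are independent subtrees; every cluster except the one rooted at $\varnothing$ is rooted at a \emph{child} of a label-$\K$ vertex, hence --- by the parking constraints \eqref{eq:parkingconstraint} and the boundedness assumption --- at a vertex of label $\leq 2\K$. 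The crucial input is then a uniform exponential tail for $\K$-avoiding clusters issued from a bounded label: writing $\mathcal{C}_{\varnothing}$ for the $\K$-avoiding cluster of the root, $\mathbb{P}_{q}(\#\mathcal{C}_{\varnothing}\geq\ell)\leq C_{1}\mathrm{e}^{-c_{1}\ell}$ for all $q\leq 2\K$ and $\ell\geq 1$. This is where aperiodicity (Assumption~\eqref{ass:ap}) is used: since $p_{0}\leq\K$ and $\mathcal{G}_{Q}$ is strongly connected above $p_{0}$ (and every smaller type leads in $\mathcal{G}_{Q}$ to some type $\geq p_{0}$), the label $\K$ is reachable in $\mathcal{G}_{Q}$ from \emph{every} type; hence the multi-type Bienaymé--Galton--Watson tree of Proposition~\ref{prop:GW}, killed at each type-$\K$ particle, is subcritical with total progeny of uniformly bounded mean over bounded initial types, and the exponential tail follows from this bounded-mean property by a standard Markov-type argument. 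Two points require care: with an infinite type space, ``subcriticality after killing $\K$'' should be phrased through finiteness of a killed Green-type series rather than a spectral radius; and a cluster may make transient excursions to arbitrarily large flux values, but the deep portions of a cluster are subtrees of $\mathbf{t}$ along which the flux is constrained to stay large (by \eqref{eq:sautll}), so by the absence of positive drift of $\nu$ (Proposition~\ref{prop:nodrift}) they contribute only $O(1)$ --- with exponential tails --- to the cluster size.

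Finally one assembles the pieces. The root cluster is treated separately when $p$ is large, since its size is not bounded uniformly in $p$: here \eqref{eq:sautll} forces the locally largest branch, hence $\mathrm{Vol}^{\bullet}$, to be polynomially large in $p$, so the regime of large $p$ only occurs together with large $\mathrm{Vol}^{\bullet}$, and combining this with the no-positive-drift estimate of Proposition~\ref{prop:nodrift} shows that, up to events of probability $\mathrm{e}^{-c\,\mathrm{Vol}^{\bullet}}$, the root cluster contributes at most a bounded multiple of the number of label-$\K$ vertices sitting on or just off that branch. Granting this, on $\{\mathrm{Vol}^{\circ}=m'\}$ the tree consists of $m'$ label-$\K$ vertices together with at most $\K m'+1$ conditionally independent clusters of uniformly exponential tail; the event in \eqref{eq:badevent} forces these clusters to sum to $\geq(1-\varepsilon)m$ while numbering $\leq\K\varepsilon m+1$, which a Cramér-type bound for sums of independent exponential-tail variables makes exponentially unlikely. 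The main obstacle is the uniform-in-type exponential tail of the $\K$-avoiding cluster: beyond the infinite-type bookkeeping, the substantive point is that the cluster may wander off to large flux values, and it is precisely the no-positive-drift property of $\nu$ established via the Key formula (Proposition~\ref{prop:nodrift}) that rules out a heavy tail for $\#\mathcal{C}_{\varnothing}$.
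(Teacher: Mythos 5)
Your cluster decomposition is a genuinely different framework from the paper's, which instead explores $\mathbf{t}$ vertex-by-vertex in depth-first order and shows (its Eq.~\eqref{eq:filtration}) that, uniformly over the exploration history, a label-$\K$ vertex is discovered in every block of $N$ steps with probability at least $c>0$; a domination by i.i.d.\ Bernoulli variables and a Chernoff bound then conclude directly, without ever isolating clusters. The structural observation that $\mathrm{Vol}^\bullet = \mathrm{Vol}^\circ + \sum_{\mathcal{C}}\#\mathcal{C}$ with at most $\K\,\mathrm{Vol}^\circ+1$ clusters whose roots have label $\leq 2\K$ is correct and nice, and the reduction of the lemma to the single estimate \eqref{eq:badevent} is fine. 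But the proof of \eqref{eq:badevent} hinges on a uniform exponential tail for $\#\mathcal{C}_\varnothing$, and this is where there is a genuine gap.

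The justification you give --- that the tree killed at type $\K$ is ``subcritical with total progeny of uniformly bounded mean'' and that exponential tails follow by a standard Markov argument --- does not hold up. First, the killed process is emphatically not subcritical at large types: by Proposition~\ref{prop:nu_proba}, a cluster vertex of large label $q$ has, with probability tending to $1$, a child of label $\approx q \neq \K$ that survives the killing, so the conditional mean number of surviving children is $\geq 1-o(1)$ there; ``no positive drift of $\nu$'' controls a single branch but says nothing about the branching of the cluster. Second, bounded mean total progeny does not imply exponential tails for a multi-type branching process with infinitely many types (one can make the extinction probability tend to $0$ along a sequence of types while keeping the Green-type series finite). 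Most tellingly, at criticality $x=x_{\crit}$ the cluster size genuinely does not have an exponential tail: a cluster can be a constant fraction of the whole tree, whose size has only a polynomial tail (see Lemma~\ref{lem:polyvol}, where $\mathbb{E}_p[(\mathrm{Vol}^\bullet)^2]=\infty$). The lemma is still true because, whenever a cluster is large, the number of label-$\K$ vertices it \emph{sheds} on its boundary is, with high probability, a positive fraction of its size --- and this is exactly the uniform rate bound the paper proves as Eq.~\eqref{eq:filtration}. In other words the correct cluster-based argument must bound $\#\mathcal{C}$ in terms of the $\K$-boundary it produces, not by an a priori exponential tail of $\#\mathcal{C}$ itself. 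A further minor slip: $\mathrm{Vol}^\bullet \gtrsim p$ holds, but not via \eqref{eq:sautll}, which only gives logarithmic length for the locally largest branch; the correct reason is the parking constraint $p=\phi(\varnothing) \leq \K\cdot \#\mathrm{t}$ from \eqref{eq:parkingconstraint}.
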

We deduce from this lemma that the moments of $ \mathrm{Vol}^{\circ}$ and $ \mathrm{Vol}^{\bullet}$ are comparable up to constant, in particular they are simultaneously finite or infinite.
 
 \begin{proof} We imagine that under $ \mathbb{P}_p$ we explore the underlying labeled tree $ \mathbf{t}$ vertex-by-vertex, say in the depth first order. Since the tree is a multitype Bienaym\'e--Galton--Watson tree, this can be encoded in a Markov process $ ( \mathbb{X}_k : k \geq 0)$ with canonical filtration $ \mathcal{F}_k$ where the current state is given by the finite list $\in \mathbb{Z}_{\geq 0}^{( \mathbb{Z}_\geq 0)}$ of the labels of the vertices yet to be explored. The starting state is then $ \mathbb{X}_0 = \{p\}$ under $ \mathbb{P}_p$ and when exploring a vertex, we remove the first element of the list $ \mathbb{X}_k$ and replace it with the list of the labels of its children (possibly empty) to get $ \mathbb{X}_{k+1}$. The process stops at the first time the list $ \mathbb{X}_k$ becomes empty and since we are exploring one vertex at a time, this happens precisely at time $k= \mathrm{Vol}^\bullet$. We then claim that under our standing assumptions $(*)$, there exists $c>0$ and an integer $N$ such that for any time $k \geq 0$,
  \begin{eqnarray}\mathbb{P}_p\left( \begin{array}{c}\mbox{discovering a vertex of label }\K\\ \mbox{ in the time intervalle } [k,k+N) \end{array}  \Big| \mathcal{F}_k \ \& \  \mathrm{Vol}^\bullet > k\right) \geq c.   \label{eq:filtration}\end{eqnarray}
This is easy to see if the vertex to be explored is of large enough label, since then by Proposition~\ref{prop:nu_proba}, there is a probability bounded away from $0$ that the the left-most child of this vertex has label precisely $\K$ (that is we take $N=1$). Otherwise, if the label of the vertex to explore is small, we claim that by aperiodicity consideration, there is a probability bounded away from $0$ that when exploring in depth-first search we reach a vertex with large enough label within $  N$ steps so that the preceding reasoning applies. 

To deduce the lemma and overcome the problematic correlation between $ \mathrm{Vol}^ \bullet$ and the discovery of vertices of label $\K$, we decompose the exploration into time intervals $[kN,(k+1)N)$ of length $N$, even after the absorption time, and for each $k \geq 0$, if $kN \leq  \mathrm{Vol}^ \bullet$ we denote by $ \epsilon_{k} = 1$ if we discover a vertex of label $\K$ in this interval, and $0$ otherwise. When $  \mathrm{Vol}^ \bullet  < kN$, we complete this sequence by independent Bernoulli variables with parameter $c$. Using \eqref{eq:filtration}, it is easy to see that the sequence $ ( \epsilon_{k})_{k \geq 0}$ constructed this way always dominates a sequence of independent Bernoulli variables with parameter $c$. In particular, by large deviations estimates we have 
$$ \mathbb{P}_{p}\left( \sum_{i=0}^{n} \epsilon_{i} \leq \frac{c}{2}n\right) \leq \mathrm{e}^{-\delta n},$$ for some $\delta>0$ (independently of $p$). We can thus write
  \begin{eqnarray*} \mathbb{P}_{p}( \mathrm{Vol}^{\circ} \geq \frac{c}{2}n) &\geq& \mathbb{P}_{p}(  \mathrm{Vol}^ \bullet \geq n N) - \mathbb{P}_{p}\left( \sum_{i=0}^{n}\epsilon_{i} \leq \frac{c}{2}n\right)\\
  &\geq& \mathbb{P}_{p}(  \mathrm{Vol}^ \bullet \geq nN) - \mathrm{e}^{-\delta n}  \end{eqnarray*} as announced.
   \end{proof}

\subsection{Fluctuations theory for random walks} \label{sec:fluctuations}
In this section we gather a few (classical and more recent) estimates for random walk with i.i.d.~increments of law $\nu$. In particular recall that $\nu$ has support bounded from above, satisfies $\nu(-j) \approx j^{-\beta}$ for $\beta=3/2$ or $\beta=5/2$ and is centered in the last case. We saw in Section \ref{sec:levybeta} that  the step distribution $\nu$ is in the strict domain of attraction of  the $(\beta-1)$-spectrally negative stable law. If as above $(\xi_{t})_{t \geq 0}$ is the associated L\'evy process, its positivy parameter is defined by
$$ \varrho = \mathbf{P}_0( \xi_{1}>0) = \left\{ \begin{array}{cl} 0 &\mbox{ if } \beta = 3/2,\\   \frac{2}{3} & \mbox{ if }\beta=5/2.\end{array} \right.$$

\begin{figure}[!h]
 \begin{center}
 \includegraphics[width=6cm]{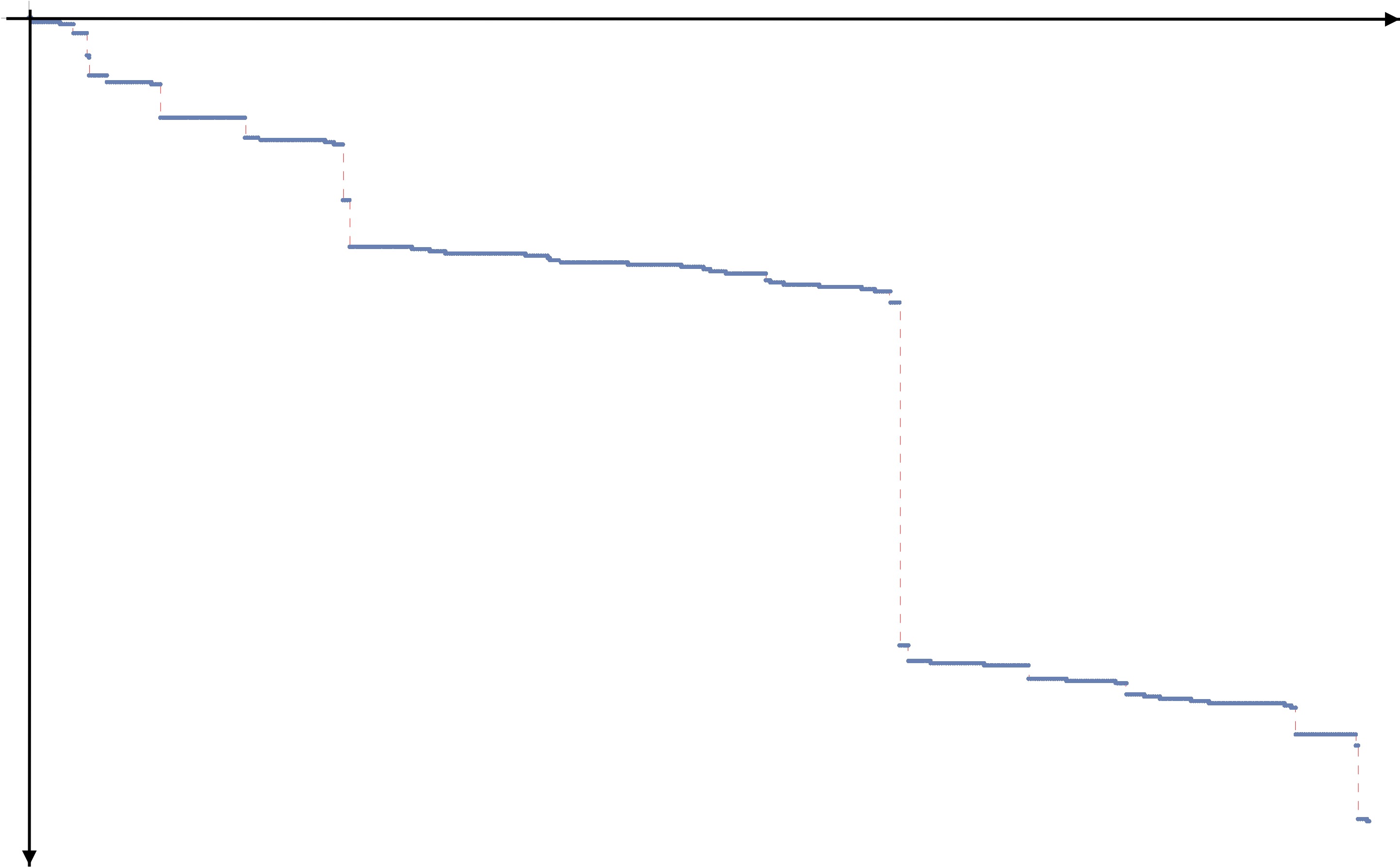} \hspace{2cm} \includegraphics[width=6cm]{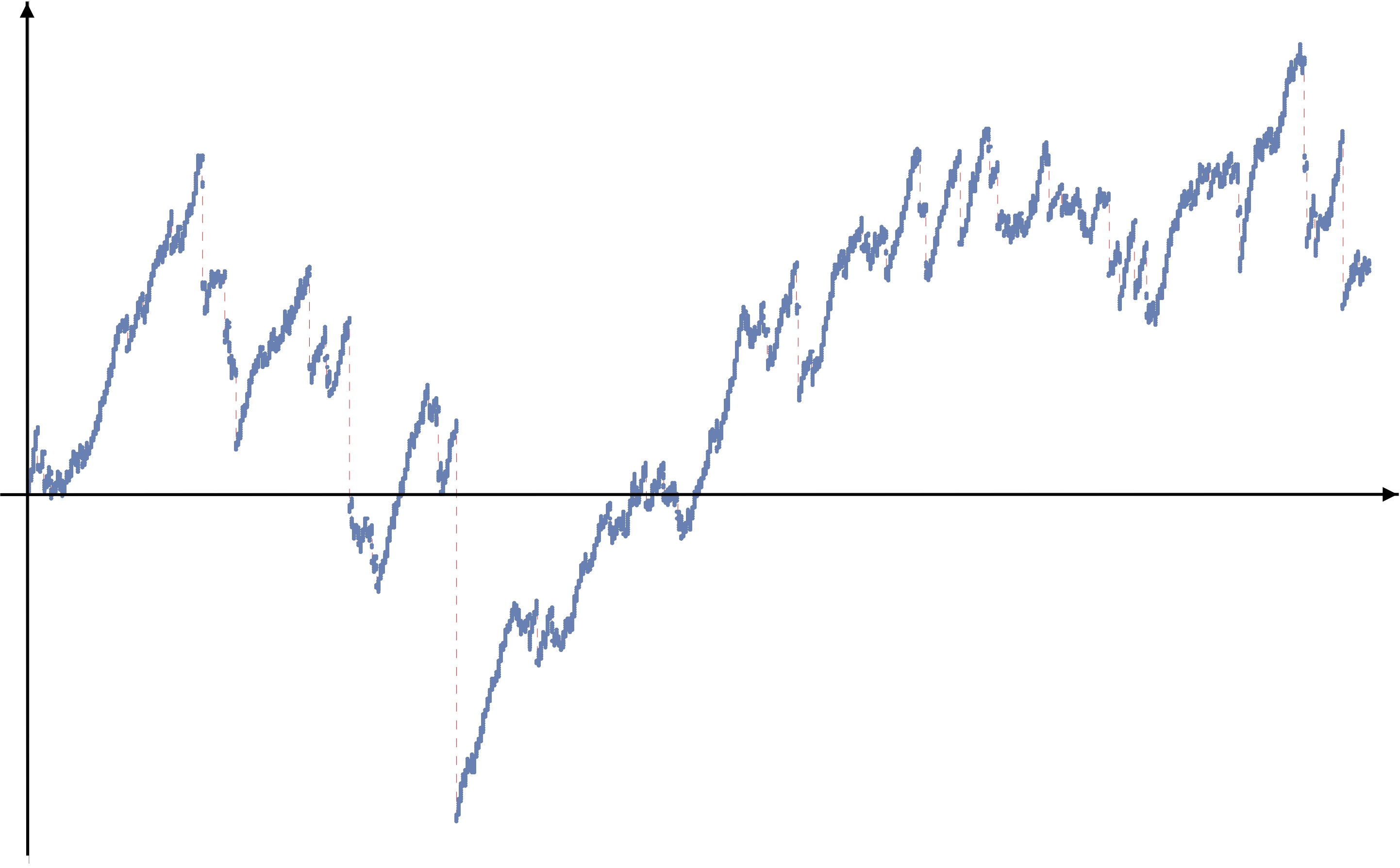}
 \caption{A simulation of the $\beta-1$-stable L\'evy processes, Left $\beta= 3/2$, Right $\beta=5/2$.}
 \end{center}
 \end{figure}

Recall that under $ \mathbb{P}^{\RW}_{0}$ the process $(S)$ is a $\nu$-random walk started from $0$ and consider the decreasing ladder heights $0=H_{0}> H_{1}> H_{2} > \cdots$ and epochs $0=T_{0}< T_{1}< \cdots$ obtained by setting $H_{0}=T_{0}=0$ and for $i \geq 1$,
$$ T_{i}= \inf \{ k > T_{i-1} : S_{k}< H_{i-1}\}, \quad \mbox{ and } \quad H_{i} = S_{T_{i}}.$$
In particular, in our notation we have $T_{1} = \tau_{-1}$. Since $\liminf_{k \to \infty} S_{k} = -\infty$,  by the strong Markov property the variables $(H_{i+1}-H_{i} , T_{i+1}-T_{i})_{ i \geq 0}$ are i.i.d. These ladder variables are used to defined two functions, the \textbf{pre-renewal} function 
 \begin{eqnarray} \label{def:hpre} \mbox{ for } p\geq 0, \quad h_{ \mathrm{pre}}(p) := \mathbb{P}_{0}^{\RW}( \exists i\geq 0 : H_{i} = -p) = \mathbb{P}_{0}^{\RW}( S_{\tau_{-p}} = - p),  \end{eqnarray} in particular $h_{ \mathrm{pre}}(0)=1$ and $ h_{ \mathrm{pre}}(-p) =0$ for $p \geq 1$; 
and its primitive the \textbf{renewal} function $$H_{ \mathrm{ren}}(p) = h_{ \mathrm{pre}}(0) + ... + h_{ \mathrm{pre}}(p) =  \mathbb{E}_{0}^{\RW}\big[ \# \{\mbox{minimal records  } \in [-p,0] \}\big].$$ By classical results that go back to at least Doney \& Greenwood \cite{doney1993joint}, it is known that  $H_{1}$ is in the domain of attraction of the  $ ( \beta-1) (1- \varrho) = 1/2$ stable  spectrally negative law (notice that $( \beta-1) (1- \varrho) = 1/2$ regardless of the value of $\beta \in \{3/2, 5/2\}$), more precisely:
\begin{proposition} In both cases $\beta \in \{3/2, 5/2\}$ we have 
  \begin{eqnarray} \label{eq:CarChau} \mathbb{P}_0^{\RW}(H_{1}< -x) \approx x^{-1/2}, \mbox{ as } x \to \infty.  \end{eqnarray} 
  In particular, the pre-renewal function satisfies 
  \begin{eqnarray} \label{eq:hpre} h_{ \mathrm{pre}}(p) \approx  p^{-1/2}, \mbox{ as } p \to \infty.  \end{eqnarray} 
Furthermore when $\beta = \frac{5}{2}$ we furthermore have
 \begin{eqnarray} \quad \label{eq:VatVach} \mathbb{P}_0^{\RW}(T_{1}> n) \approx n^{-1/3}
 \mbox{ as } n \to \infty.  \end{eqnarray}
 \end{proposition}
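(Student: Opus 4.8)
The plan is to establish the three asymptotic relations \eqref{eq:CarChau}, \eqref{eq:hpre} and \eqref{eq:VatVach} by invoking classical fluctuation theory for random walks in the domain of attraction of a stable law, together with the tail information on $\nu$ already collected in Proposition \ref{prop:nu_proba} and Proposition \ref{prop:nodrift}. The input is that $\nu$ has a support bounded from above, satisfies $\nu(-j)\approx j^{-\beta}$ with $\beta\in\{3/2,5/2\}$, and is centered when $\beta=5/2$; hence the $\nu$-random walk $S$ under $\mathbb{P}^{\RW}_0$ is in the strict domain of attraction of the spectrally negative $(\beta-1)$-stable L\'evy process $\xi$, whose positivity parameter $\varrho=\mathbf{P}_0(\xi_1>0)$ equals $0$ for $\beta=3/2$ and $2/3$ for $\beta=5/2$ (these values come from the classical formula $\varrho = 1/2 + (\pi\alpha)^{-1}\arctan(\tan(\pi\alpha/2)(\rho_+-\rho_-)/(\rho_++\rho_-))$ specialised to the spectrally negative case, or simply from the fact that a $3/2$-spectrally-negative stable process has no positive jumps and negative mean increments while the $5/2$ one is the $3/2$-stable process which is centered).

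First I would invoke the classical result of Doney and Greenwood \cite{doney1993joint} (see also Rogozin, or \cite[Chapter VIII]{Ber96}): when $S$ is in the domain of attraction of a stable law of index $\alpha=\beta-1$ with positivity parameter $\varrho$, the strict descending ladder height $H_1 = S_{\tau_{-1}}$ is in the domain of attraction of the positive-$\tfrac12$-stable (i.e.\ spectrally negative, one-sided) law of index $\alpha(1-\varrho)$, and the strict descending ladder epoch $T_1=\tau_{-1}$ is in the domain of attraction of the positive stable law of index $1-\varrho$. A direct substitution gives $\alpha(1-\varrho) = \tfrac12\cdot 1 = \tfrac12$ when $\beta=3/2$ and $\tfrac32\cdot\tfrac13 = \tfrac12$ when $\beta=5/2$, which is exactly the uniform value $\tfrac12$ claimed, so \eqref{eq:CarChau} follows: $\mathbb{P}^{\RW}_0(H_1<-x)\approx x^{-1/2}$. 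Similarly, $1-\varrho$ equals $1$ for $\beta=3/2$ (so $T_1$ has a finite mean, no tail statement needed) and $1/3$ for $\beta=5/2$, yielding \eqref{eq:VatVach}: $\mathbb{P}^{\RW}_0(T_1>n)\approx n^{-1/3}$. To obtain \eqref{eq:hpre} from \eqref{eq:CarChau} I would use the renewal-theoretic identity $h_{\mathrm{pre}}(p) = \mathbb{P}^{\RW}_0(\exists i : H_i = -p)$, which is the renewal mass function of the (integer-valued) renewal process with step law $-H_1$; since $-H_1$ is in the domain of attraction of the $\tfrac12$-stable law, the strong renewal theorem for such lattice renewal processes (Garsia--Lamperti, Erickson, or the more recent sharp statements of Caravenna--Doney \cite{doney1993joint} and references therein) gives $h_{\mathrm{pre}}(p)\approx p^{-1/2}$, using crucially that the exponent $\tfrac12$ lies in the regime $(\tfrac12,1)$ at its boundary — and here one needs the aperiodicity of the ladder structure, which follows from our aperiodicity assumption (*) on $\nu$ guaranteeing that $H_1$ is not supported on a strict sublattice.

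The main obstacle I anticipate is purely bookkeeping in the ladder-structure step rather than anything deep: one must check that the hypotheses of the Doney--Greenwood theorem are genuinely met (strict domain of attraction with \emph{no centering}, which is exactly why Proposition \ref{prop:nodrift}'s dichotomy is needed — the $\beta=5/2$ case is centered, the $\beta=3/2$ case has infinite mean but the spectrally negative side means the ascending ladder epoch is the one that is heavy, so the relevant normalisations are unambiguous), and one must be slightly careful that we want the \emph{strict} descending ladder height/epoch (corresponding to $\tau_{-1}=\inf\{k:S_k<0\}$) and not the weak one, though since $\nu$ puts mass at positive and zero increments and the exponents are the same for weak and strict ladder variables this is harmless. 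The second delicate point is the passage from the tail \eqref{eq:CarChau} to the \emph{local} renewal asymptotic \eqref{eq:hpre}: the strong renewal theorem in the infinite-mean case is subtle (it can fail without extra regularity), so I would cite the precise version valid here — the tail index $1/2$ is in $(0,1)$, and for index exactly in $(\tfrac12,1)$ the strong renewal theorem holds under the mere tail condition (Garsia--Lamperti), but at the boundary value $1/2$ one needs the supplementary second-moment-type condition $\sum_{k\le n}k\,\mathbb{P}^{\RW}_0(H_1=-k)^2 = o(\,n\,\mathbb{P}^{\RW}_0(H_1<-n)^2\,)$ in the Caravenna--Doney criterion; this is automatic because our underlying step distribution $\nu$ has a genuine power-law tail $\approx j^{-\beta}$ (not merely regularly varying with slowly varying corrections), which transfers to a genuine power-law tail $\approx k^{-3/2}$ for $-H_1$, and one checks the Caravenna--Doney condition by a direct computation. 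Once \eqref{eq:hpre} is in hand, summing gives $H_{\mathrm{ren}}(p) = \sum_{q=0}^p h_{\mathrm{pre}}(q) \approx p^{1/2}$, which is the form in which the renewal function will be used in the sequel.
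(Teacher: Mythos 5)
Your identification of the positivity parameter ($\varrho=0$ for $\beta=3/2$, $\varrho=2/3$ for $\beta=5/2$), of the tail indices $\alpha(1-\varrho)=1/2$ and $1-\varrho$, and of the relevant machinery (ladder variable limit laws plus a strong renewal theorem for the local estimate \eqref{eq:hpre}) is correct and aligned with the paper. But there is a genuine gap at the heart of the argument: the Doney--Greenwood domain-of-attraction theorem gives you regular variation of $\mathbb{P}_0^{\RW}(H_1<-x)$ with index $-1/2$, that is $L(x)x^{-1/2}$ for some slowly varying $L$, whereas \eqref{eq:CarChau} requires the sharper statement that $L$ converges to a positive constant. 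You acknowledge this and assert that the ``genuine power-law tail'' of $\nu$ ``transfers'' to $-H_1$, but this transfer \emph{is} the content of the proposition and does not come for free from a domain-of-attraction result. The paper closes exactly this gap with a two-step chain: in the $\beta=5/2$ case it first establishes the constant-prefactor asymptotic \eqref{eq:VatVach} for $T_1$ via Doney's theorem for integrable spectrally one-sided walks (where no spurious slowly varying correction can appear), and then pushes this through to $H_{\mathrm{ren}}$ and $H_1$ using Caravenna--Chaumont's Lemma 2.1, which precisely relates the slowly varying functions attached to $T_1$, $H_{\mathrm{ren}}$, and $H_1$.

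A second, smaller issue is that for $\beta=3/2$ the limiting process is the negative of a subordinator ($\varrho=0$), a degenerate case where it is unclear that Doney--Greenwood applies as stated; the paper sidesteps this by computing $H_{\mathrm{ren}}(p)\approx\sqrt{p}$ directly from transient renewal theory (the walk drifts to $-\infty$, so $H_{\mathrm{ren}}(p)$ is comparable to the expected occupation of $[-p,0]$) and then again invokes Caravenna--Chaumont Lemma 2.1 to read off the tail of $H_1$. Your citation of Caravenna--Doney for the strong renewal theorem in \eqref{eq:hpre} is the right tool (the paper cites the same Theorem 1.5 of \cite{caravenna2019local}), though note that in your text the reference ``Caravenna--Doney \cite{doney1993joint}'' points to the wrong paper. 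To make your proposal rigorous you should either substitute the Caravenna--Chaumont chain for the bare domain-of-attraction claim, or else carry out the tail-transfer computation for $H_1$ explicitly (e.g.\ via the classical representation of $\mathbb{P}(H_1<-x)$ as a convolution of $\nu$ against the ascending ladder renewal measure, whose asymptotics are accessible because ascending steps are bounded by $\K$).
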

 \begin{proof} Let us first suppose  that we are in the finite mean case $\beta=5/2$. Then the estimate on $ \mathbb{P}_{0}^{\RW}(T_{1}>n)$ follows directly from Doney \cite[Theorem 2]{Do82} which restricts to the integrable spectrally one-sided case and where there are no slowly varying function in our case. 
 Still in the case $\beta=5/2$, the estimate on $\mathbb{P}_0^{\RW}(H_{1}< -x)$ almost follows directly from  Caravenna \& Chaumont \cite{CC08}:   by \cite[Lemma 2.1]{CC08} the slowly varying function that may appear in the asymptotic of $\mathbb{P}_0^{\RW}(H_{1}< -x)$ is the inverse of that appearing in the asymptotic of $H_{ \mathrm{ren}}(p)$ as $p \to \infty$. In turn, by \cite[Lemma 2.1]{CC08}, the slowly varying function appearing in the asymptotic of $H_{ \mathrm{ren}}(p)$  is linked to the one present in $ \mathbb{P}_{0}^{\RW}(T_{1}>n)$ as $n \to \infty$. But by the preceding result, those functions are eventually constant.\\
   To deduce the tail estimate on $ h_{ \mathrm{pre}}$ from the estimate on the tail of $H_{1}$ we use renewal arguments: Introduce $ \mathcal{R} = \{H_{i}  : i \geq 0\}$ our task is equivalent to proving that 
  \begin{eqnarray} \label{eq:goalrenewalstrong} h_{ \mathrm{pre}}(p) = \mathbb{P}_p^{ \RW} \Big( S_{\tau_{0}} = 0 \Big) = \mathbb{P}_0^{\RW}(- p\in \mathcal{R}) \quad \underset{p \to \infty}{\approx} \quad  \frac{ 1}{ \sqrt{p}}. \end{eqnarray} In this setup, it is only recently that necessary and sufficient conditions on the law of the inter-arrivals  have been found so that  the strong renewal theorem holds: by the wonderful \cite[Theorem 1.5]{caravenna2019local} whose assumptions are granted in our context, we deduce \eqref{eq:goalrenewalstrong} thus completing the proof of the lemma in the case $\beta=5/2$.
  In the case $\beta = \frac{3}{2}$, the walk is transient and makes the analysis much simpler. In particular, we shall see below that $ \mathbb{P}_{0}^{\RW}(T_{1}>n)$ decreases stretched exponentially fast.  To get the estimate on $\mathbb{P}_{0}^{\RW}(H_{1}<-x)$, we still use \cite[Lemma 2.1]{CC08} which relates it to the tail of  $H_{ \mathrm{ren}}$. However, in the transient setup it is easy to see that 
  $$ H_{  \mathrm{ren}}(p) \approx \mathbb{E}_{0}^{\RW} \left[ \sum_{i=0}^{\infty} \mathbbm{1}_{-p \leq S_{i} \leq 0}\right],$$ and the RHS is easily seen to by $\approx \sqrt{p}$ be standard renewal arguments, see e.g.  \cite[Eq.(1.9)]{caravenna2019local}.   \end{proof}

\subsection{Pointed partition functions and pre-renewal functions}

Our goal is now to compare  $  \widetilde{W}^\circ_p$ with the pre-renewal function $ h_{ \mathrm{pre}}$  and deduce:

\begin{lemma} \label{lem:wtildepoint} Regardless of $\beta \in \{3/2, 5/2\}$, the pointed partition functions $\widetilde{W}_{p}^{\circ}$ are finite and  we have 
\begin{eqnarray*} \widetilde{W}_{p}^{\circ} \ \ \threesim \ \  p^{-1/2}, \quad \mbox{ as } p \to \infty,  \end{eqnarray*} where we recall that $a_{n}\threesim b_{n}$ means that $(a_{n}/b_{n})$ is bounded away from $0$ and $\infty$ as $n \to \infty$.
\end{lemma}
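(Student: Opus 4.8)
The plan is to exploit the ``almost harmonic'' character of $p\mapsto\widetilde W_p^\circ$ for the $\nu$-random walk killed below $\K$, together with the renewal estimate \eqref{eq:hpre} on the pre-renewal function $h_{\mathrm{pre}}$. First I would write a one-step decomposition for $\widetilde W_p^\circ$ by conditioning on the first step of the locally largest exploration, very much in the spirit of the Key formula (Proposition~\ref{prop:llrw}): a marked vertex of label $\K$ is either the root itself (contributing the indicator $\mathbbm 1_{p=\K}$, up to the weight factor) or lies in one of the subtrees dangling off the locally largest branch. Summing over which subtree contains the mark and using that these subtrees are independent with laws $\mathbb P_{Y^i_j}$, one gets an identity roughly of the form
\begin{equation*}
\widetilde W_p^\circ \;=\; \mathbbm 1_{p=\K}\,c_\K \;+\; \mathbb E_p^{\RW}\!\left[\frac{\widetilde W_{S_1}}{\widetilde W_p}\,\Big(\widetilde W_{S_1}^\circ/\widetilde W_{S_1}\;+\;\text{(side contributions)}\Big)\mathbbm 1_{\mathcal L_1}\mathbbm 1_{S_1\ge\K}\right]\;+\;(\text{ties, }S_1<\K\text{ terms}),
\end{equation*}
where the ``side contributions'' are $\sum_j \mathbb E_{Y^1_j}^{}[\mathrm{Vol}^\circ]\cdot\widetilde W_{Y^1_j}/\cdots$, which are bounded by a constant in expectation thanks to \eqref{eq:notwojumps} and \eqref{eq:tailnu}. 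After dividing through by $\widetilde W_p$ this says that the function $u(p):=\widetilde W_p^\circ/\widetilde W_p$ satisfies $u(p)=\mathbb E_p^{\RW}[u(S_1)\mathbbm 1_{\tau_{\K-1}>1}] + O(1)$ up to negligible boundary terms, i.e. $u$ is harmonic-plus-a-bounded-forcing-term for the walk $S$ killed on entering $(-\infty,\K)$.

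Next I would invoke the classical identification of the minimal-type-killed harmonic function with the renewal/pre-renewal function: for a random walk with i.i.d.\ increments $\nu$, the unique (up to scaling) positive function which is harmonic for the walk killed below a fixed level is $p\mapsto H_{\mathrm{ren}}(p)$, and the ``killed and stopped at the first record'' kernel is governed by $h_{\mathrm{pre}}$. Since the forcing term is bounded and summable against the Green's function of the killed walk (the expected number of visits to a bounded neighbourhood before going below $\K$ is finite and grows like $H_{\mathrm{ren}}(p)\threesim p^{1/2}$ — wait, one must be careful here), I expect to obtain $u(p)\threesim h_{\mathrm{pre}}(p)$, hence by \eqref{eq:hpre} that $u(p)\threesim p^{-1/2}$, and therefore $\widetilde W_p^\circ = u(p)\widetilde W_p \threesim p^{-1/2}\cdot p^{-\beta}$. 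Since the statement of Lemma~\ref{lem:wtildepoint} asserts $\widetilde W_p^\circ\threesim p^{-1/2}$ (not $p^{-1/2-\beta}$), I would reconcile this by working instead directly with $\widetilde W_p^\circ$ rather than the ratio: the correct statement is that $\widetilde W_p^\circ$ itself, as the expected number of $\K$-labelled vertices weighted by $\mathrm w$ and $(y_{\crit})^p$, is comparable to the probability $h_{\mathrm{pre}}(p)$ that the locally largest branch of $\mathbf t$ under $\mathbb P_p^{\RW}$ ever hits exactly $\K$ without undershoot — essentially $\widetilde W_p^\circ \approx \mathbb P_p^{\RW}(\tau_{\{\K\}}<\infty)\cdot(\text{bounded})$, and the transient/recurrent dichotomy plus \eqref{eq:hpre} gives the two-sided bound.

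Finiteness of $\widetilde W_p^\circ$ (which a priori is not clear at $x=x_{\crit}$) would follow from the same one-step recursion run as an upper bound: iterating it and summing the geometric-like series over the killed walk's excursion before it first drops below $\K$ gives a finite bound because the Green's function of the killed walk is finite (the walk drifts to $-\infty$, resp.\ is transient downward, in both cases $\beta=3/2$ and $\beta=5/2$). The main obstacle I anticipate is controlling the ``side contributions'' and the ties/low-label boundary terms uniformly in $p$ with the right order of magnitude: one needs that the total $\mathrm w$-mass of $\K$-vertices sitting in the small subtrees branching off the locally largest branch is $O(1)$ times $h_{\mathrm{pre}}(p)$, which requires combining the big-jump estimate \eqref{eq:notwojumps}, the tail \eqref{eq:tailnu}, Lemma~\ref{lem:notiesll}, and a summation of $h_{\mathrm{pre}}$ along the branch — this is where the renewal theorem \eqref{eq:hpre} and, in the case $\beta=5/2$, the ladder-epoch estimate \eqref{eq:VatVach} enter, and getting a matching lower bound (as opposed to just $\lesssim$) is the delicate part since it requires a genuine lower bound on the probability that some vertex of label exactly $\K$ appears, which is where the aperiodicity/connectivity assumptions in $(*)$ are used.
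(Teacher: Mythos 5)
Your instinct to compare $\widetilde W_p^\circ$ with the (pre-)renewal function via an ``almost-harmonic'' recursion is exactly the paper's strategy, but the route you sketch contains a structural error that leads you to the wrong exponent, and the subsequent ``reconciliation'' is a retraction rather than a fix. The error is in the change-of-measure bookkeeping: when you apply the Key-formula factor $\widetilde W_{S_1}/\widetilde W_p$ to the subtree contribution $\widetilde W_{S_1}^\circ/\widetilde W_{S_1}$, the two $\widetilde W_{S_1}$'s cancel and you are left with $\mathbb E_p^{\RW}\bigl[\widetilde W_{S_1}^\circ/\widetilde W_p\bigr]$, whereas the correct one-step identity (obtained directly from Tutte's decomposition of $W_p^\circ$, after normalizing by $(y_\crit)^p$ and recognizing $\hat\nu$) has no $\widetilde W_p$ in the denominator at all: it reads $\widetilde W_p^\circ = \widetilde W_{\K}\mathbbm 1_{p=\K} + \sum_{q,s_0}\widetilde W_{p+q}^\circ\,\hat{\hat\nu}(q;s_0)\,\mathbbm 1_{p+q\ge s_0}$. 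In other words, the quantity that is approximately harmonic for the $\nu$-random walk killed below $\K$ under $\mathbb P^\RW$ is $\widetilde W_p^\circ$ \emph{itself}, not the ratio $u(p)=\widetilde W_p^\circ/\widetilde W_p$. The ratio $u(p)=\mathbb E_p[\mathrm{Vol}^\circ]$ is harmonic for the $h$-transformed walk (the one under $\mathbb P_p$, conditioned to survive), which precisely why it \emph{grows} like $p^{\beta-1/2}$ rather than decaying like $p^{-1/2}$; your claim ``$u(p)=\mathbb E_p^{\RW}[u(S_1)\mathbbm 1_{\tau_{\K-1}>1}]+O(1)$'' is false because it omits the Radon--Nikodym factor. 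You do notice that your conclusion $\widetilde W_p^\circ\threesim p^{-1/2-\beta}$ contradicts the lemma, but the offered repair (``work directly with $\widetilde W_p^\circ$'') is a statement of intent, not a corrected derivation.

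Once the recursion for $\widetilde W_p^\circ$ is written correctly, the paper turns it into a two-sided estimate in a clean and elementary way that your sketch does not anticipate: truncate the marked height at $m$, observe that $\widetilde W_{p,m}^\circ$, $h_\circ(p,m):=\mathbb E_p^\RW[\sum_{i=0}^{m\wedge\tau_\K}\mathbbm 1_{S_i=\K}]$ and $h^\circ(p,m):=\mathbb E_p^\RW[\sum_{i=0}^{m\wedge\tau_{-1}}\mathbbm 1_{S_i=\K}]$ satisfy compatible one-step recursions with the same initial condition at $m=0$, and prove the sandwich $\widetilde W_\K\, h_\circ(p)\le \widetilde W_p^\circ\le \widetilde W_\K\, h^\circ(p)$ by induction on $m$, then let $m\to\infty$ (this also gives finiteness for free, which your sketch handles only loosely via a ``geometric-like series''). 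The lower bound is then $h_\circ(p)=h_{\mathrm{pre}}(p-\K)\approx p^{-1/2}$ from \eqref{eq:hpre}, while the upper bound uses the Markov property to write $h^\circ(p)=\mathbb P_p^\RW(\tau_{\{\K\}}<\tau_{-1})\cdot\mathbb E_\K^\RW[\cdots]$ and $\mathbb P_p^\RW(\tau_{\{\K\}}<\tau_{-1})\lesssim h_{\mathrm{pre}}(p)/h_{\mathrm{pre}}(\K)$, with aperiodicity ensuring $h_{\mathrm{pre}}(\K)>0$ and finiteness of the $\K$-local time constant. Your intuition about where aperiodicity enters and about side-contribution control is broadly right, but without the corrected recursion as starting point none of the remaining steps are on solid ground, so the proposal as written does not constitute a valid proof.
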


\begin{proof}
To gain intuition, let us first suppose that  $ W^{\circ}_{p} < \infty$ for all $p \geq 0$ and let us write Tutte's equation for them. By definition, see \eqref{def:Wcircp},  for $p \geq  0$  we have
\begin{eqnarray*} W_{p}^ \circ &= & W_{\K} \mathbbm{1}_{p={\K}} + x\sum_{k \geq 0} \sum_{i = 1}^{k}\sum_{ c \geq 0} \sum_{s_1, \ldots, s_k} w_{k,c, (s_1, \ldots, s_k)} \sum_{\substack{(p_1, \ldots, p_k)\\ \forall j, p_j \geq s_j \\ \sum_{j=1}^{k} (p_j - s_j) + c = p}} W_{p_i}^ \circ \prod_{j \neq i} W_{p_j} \\
&\underset{ \mathrm{exch.}}{=} & W_{\K} \mathbbm{1}_{p=\K} + x \sum_{k \geq 0} k \sum_{ c \geq 0} \sum_{s_1, \ldots, s_k} w_{k,c, (s_1, \ldots, s_k)} \sum_{\substack{(p_1, \ldots, p_k)\\ \forall j, p_j \geq s_j \\ \sum_{j=1}^{k} (p_j - s_j) + c = p}} W_{p_1}^ \circ \prod_{i =2}^k W_{p_i}
\end{eqnarray*}
As in Definition \ref{def:nuhat}, let us write $p_{1} = p+q$ and shift $k$ by one unit. Recognizing the appearance of $\hat{\nu}$ and denoting $\hat{\hat{\nu}}(q ; s_0) \equiv \hat{\nu}(q ; *, (s_0, *), (*))$  its push-forward on the variable $q$ and $s_{0}$ we can write using the same notation $ \mathscr{C}$ for the parking constraints as in Definition \ref{def:nuhat}
 \begin{eqnarray}
\widetilde{W}_{p}^ \circ & =&  \widetilde{W}_{\K} \mathbbm{1}_{p={\K}} + \sum_{q \geq -p } W^{ \circ}_{p+q} y_{ \crit}^{p+q} \cdot x y_{\crit}^{-q} \sum_{k \geq 0} (k+1)  \sum_{ c \geq 0} \sum_{s_0, \ldots, s_k} \sum_{\substack{(p_1, \ldots, p_k)\\ \forall i, p_i \geq s_i }} w_{k,c, (s_{0}, \ldots, s_k)} \prod_{i=1}^k W_{p_i}  \mathbbm{1}_{ p+q \geq s_{0}} \mathbbm{1}_{  \mathscr{C}}\nonumber \\
&=& \widetilde{W}_{\K} \mathbbm{1}_{p={\K}} + \sum_{q \geq -p } \widetilde{W}^{ \circ}_{p+q} \sum_{k \geq 0}  \sum_{ c \geq 0} \sum_{s_0, \ldots, s_k} \sum_{p_1, \ldots, p_k}  \hat{\nu}\big( q; c, (s_{0}, ... , s_{k}) , (p_{1}, ... , p_{k})\big) \mathbbm{1}_{p+q \geq s_{0}} \nonumber \\
&=& \widetilde{W}_{\K} \mathbbm{1}_{p=\K} + \sum_{q, s_{0} } \widetilde{W}^{ \circ}_{p+q} \cdot \hat{\hat{\nu}}(q; s_{0}) \mathbbm{1}_{p+q \geq s_{0}}.\label{eq:htransfos1} 
 \end{eqnarray}
When $ p+q \geq \K$ the inequality $p+q \geq s_{0}$ is automatically satisfied. In particular, we have 
 \begin{eqnarray}
 \sum_{q \geq \K -p } \widetilde{W}^{ \circ}_{p+q}\cdot \nu (q) \leq \widetilde{W}_{p}^ \circ \leq \widetilde{W}_{\K} \mathbbm{1}_{p={\K}} + \sum_{q \geq -p } \widetilde{W}^{ \circ}_{p+q}\cdot \nu (q).\label{eq:htransfo2}
 \end{eqnarray}
 In view of these inequalities, we shall compare $\widetilde{W}^{ \circ}_{p}$ with the functions $$ {h}_{ \circ}(p) := h_{ \mathrm{pre}}( p- \K) = \mathbb{P}_{p}^\RW( S_{\tau_\K} = \K) =  \mathbb{E}_p^{ \RW} \left[ \sum_{i = 0}^{\tau_ {  \K}} \mathbbm{1}_{ S_i = \K} \right ] ,$$ and the larger function $h^{\circ}$ defined by 
 $$ h^{\circ}(p) :=  \mathbb{E}_p^{ \RW} \left[ \sum_{i = 0}^{\tau_ {-1}} \mathbbm{1}_{ S_i = \K} \right ].$$ 
The function ${h}_{ \circ}$ is equal to $1$ at $ \K$ and null below, whereas $h^\circ \geq  h_\circ$ is null on $(-\infty,0)$. 
More precisely we will show  that   \begin{eqnarray}
\label{eq:goalsandwhich}\widetilde{W}_{\K} {h}_{ \circ} (p) \leq \widetilde{W}_p^ \circ  \leq \widetilde{W}_{\K} h^ \circ (p).  \end{eqnarray} 
To prove it,  for $ m \geq 0$,  let us introduce the notion of volume cut at height $m$, i.e.\ 
  \begin{eqnarray*}W_{p, m}^{\circ} := \sum_{n \geq 1} \sum_{ \begin{subarray}{c}\mathbbm{t} \in \mathrm{FPT}_{p}^{n} \\ v \in \mathbf{t}^{\circ} \\ \mathrm{height} (v) \leq m \end{subarray}}  \mathrm{w}( \mathbbm{t}) \cdot x^{n}. \end{eqnarray*}
so that for all $p,m \geq 0$, the function $\widetilde{W}_{p,m}^ \circ$ is finite and converges towards $\widetilde{W}_{p}^ \circ$ as $ m$ goes to $\infty$.  Similarly, we define the analogue for the functions ${h}_{ \circ}$ and $ h^ \circ$ by
  \begin{eqnarray*}h_{\circ} (p,m) := \mathbb{E}_p^{ \RW} \left[ \sum_{i = 0}^{m \wedge \tau_ {  \K}} \mathbbm{1}_{ S_i = \K} \right ], \quad \mbox{and} \quad h^ \circ (p,m) := \mathbb{E}_p^{ \RW} \left[ \sum_{i = 0}^{m \wedge \tau_ {-1}} \mathbbm{1}_{ S_i = \K} \right ], \end{eqnarray*}
so that for all $ p \geq 0$, by monotone convergence we have $h^\circ(p,m) \to h^\circ(p)$ as $m \to \infty$ and similarly for $h_\circ(p,m)$.
For $m=0$, we have  $$h_{\circ} (p,0) =  h^ \circ(p,0) = \widetilde{W}_{\K} \mathbbm{1}_{p = \K} = W_{p, 0}^{\circ},$$
and the same computation as above shows that for all $m \geq 0$, for all $ p > \K$, 
 \begin{eqnarray*} \sum_{q \geq \K-p } \widetilde{W}^{ \circ}_{p+q, m}\cdot \nu (q) \leq  \widetilde{W}_{p, m+1}^ \circ \leq   \widetilde{W}_{\K} \mathbbm{1}_{p=\K}+ \sum_{q \geq -p } \widetilde{W}^{ \circ}_{p+q, m}\cdot \nu (q), 
\end{eqnarray*}
where the inequality on the right-hand side is even valid for all $ p \in \mathbb{Z}$.
On the other hand, by decomposing the random walk according to its first step, for all $ m \geq 0$ and all $p \in \mathbb{Z}$, we have
\begin{eqnarray*}&\mbox{for all } p \geq 0, \quad h^ \circ (p,m+1) &=\mathbbm{1}_{p={\K}} + \sum_{ q \geq -p} h^ \circ(p+q,m) \nu (q) \quad \mbox{and}\\
&\mbox{for all } p > \K, \quad h_{\circ} (p,m+1) &=\sum_{ q \geq \K - p} h_\circ(p+q,m) \nu (q).
\end{eqnarray*} Thus, a straightforward recursion on $m$ shows that for all $m \geq 0$ and all $ p \in \mathbb{Z}$ we have $ \widetilde{W}_{\K} h_ \circ (p,m) \leq \widetilde{W}_{p,m}^ \circ \leq \widetilde{W}_{\K} h^ \circ (p,m), $ which by taking the limit as $m$ goes to infinity, proves  \eqref{eq:goalsandwhich}. We now use \eqref{eq:hpre} to conclude: On the one hand we have $h_\circ(p) \approx p^{-1/2} $ and on the other hand, denoting $\tau_{\{ \K\}} = \inf\{ i \geq 0: S_{i} = \K\}$ and $\tau_{\{ \K\}}^{+}= \inf\{ i \geq 1: S_{i} = \K\}$, by the Markov property, we have 
\begin{eqnarray*} 
h^ \circ (p) &=& \mathbb{E}_p^{\RW} \left[ \sum_{i = 0}^{ \tau_{ -1}} \mathbbm{1}_{S_i = \K} \right ]
=  \mathbb{P}_p^{\RW} \left( \tau_{ \{\K\}} < \tau_{  -1}  \right )\cdot  \underbrace{\mathbb{E}_{\K}^{\RW} \left[ \sum_{i = 0}^{ \tau_{ -1}} \mathbbm{1}_{S_i = \K} \right ]}_{  \mbox{finite  by aperiodicity}}. 
\end{eqnarray*}
Also we have $ h_{ \mathrm{pre}}(p) = \mathbb{P}_{p}^{\RW}(\tau_{\{0\}} = \tau_{0}) \geq \mathbb{P}^{\RW}_{p}(\tau_{ \{\K\}} < \tau_{  -1}) \cdot \mathrm{h}_{ \mathrm{pre}}(\K)$, and $ \mathrm{h}_{ \mathrm{pre}}(\K) \in (0, \infty)$ by aperiodicity again. We deduce that  $h^ \circ (p)$ is bounded above by a constant times $h_{ \mathrm{pre}}(p)$. Using \eqref{eq:hpre}, this proves the lemma.
\end{proof}

The reader may find surprising that the polynomial exponent of the tail of $ \widetilde{W}^\circ_p$ does not depend on the value of $\beta \in \{3/2, 5/2\}$ (a similar phenomenon has already  been observed  and used in the realm of planar maps by Timothy Budd \cite{Bud15,BuddPeelingLectureNotes}). However, this makes a great difference once this information is turned into probabilistic estimate: Once we know that $W_{p}^{\circ}$ is finite (and non-zero by our connectivity assumption) one can normalize the measure $ \mathrm{w}(  \mathrm{d}\mathfrak{t})$ to define a law $ \mathbb{P}_{p}^{\circ}$ on labeled trees starting with value $p\geq 0$ and given with a distinguished pointed vertex $v^{\circ} \in \mathbf{t}^{\circ}$ of label $\K$. Formally we put for any labeled tree $(t, \phi)$ starting with root label $p$ and with a vertex $v^{\circ} \in \mathbf{t}^{\circ}$ of label $\K$:
\begin{equation}\label{eq:deflawp} \mathbb{P}_{p}^{\circ}(( \mathrm{t}, \phi,v^{\circ})) = \frac{\widetilde{W}_{p}}{\widetilde{W}_{p}^{\circ}} \cdot \mathbb{P}_{p}(( \mathrm{t}, \phi)),\end{equation} so that in particular for any positive function $f$   \begin{eqnarray} \label{eq:biasing} \mathbb{E}^\circ_p\big[ f(\mathrm{Vol}^\circ)\big] = \frac{ \widetilde{W}_p}{\widetilde{W}_p^\circ} \mathbb{E}_p\left[ f(\mathrm{Vol}^\circ) \cdot \mathrm{Vol}^\circ\right].  \end{eqnarray}
When $f\equiv 1$ we deduce that 
  $$ \mathbb{E}_p[ \mathrm{Vol}^\circ] = \frac{ \widetilde{W}_p}{\widetilde{W}_p^\circ} \ \underset{\substack{ \mathrm{Eq.}\ \eqref{eq:tailnu}\\ \&\ \mathrm{Lem.}\ \ref{lem:wtildepoint}}}{\threesim} \ p^{\beta-1/2}, \quad \mbox{ as }p \to \infty.$$

\subsection{Pointed exploration}
 If $ ( \mathbf{t}, v^{\circ})$ is a labeled tree with $ v^{\circ} \in \mathbf{t}^{\circ}$, instead of using the locally largest exploration, one can use the branch towards $v^{\circ}$ to define a decoration-reproduction process:  we denote by $\varnothing = v_0 \preceq v_1 \preceq v_2 \preceq ... \preceq v_\tau = v^{\circ}$ the branch towards $v^{\circ}$. In particular, $\tau \geq 0$ is the length of the distinguished branch and the canonical decoration-reproduction process is defined analogously to what was done for the locally largest exploration: we write $S_i = \phi(v_i)$ for $0 \leq i \leq \tau$ for the labels along the distinguished branch and $Y^{i}_{j}$ for $1 \leq j \leq k_{v_{i-1}}(  \mathrm{t})-1$ for the labels of the siblings of $v_{i}$, i.e.~those vertices $v\ne v_{i}$ having $v_{i-1}$ for parent, ranked from left-to-right. Again,  this collection may be empty and has always fewer than $\K-1$ numbers by our standing assumption and we still have \eqref{eq:sommedelta}. We can thus consider, under the probability measure $ \mathbb{P}^{\circ}_{p}$ (with expectation $ \mathbb{E}^{\circ}_{p}$) the decoration process $S= (S_{i})_{0 \leq i \leq \tau}$ and the reproduction process 
 $ \eta = \sum_{i \geq 1} \sum_{j \geq 1} \delta_{i , Y^{{i}}_{j}}$. Again, by the Markov property of the pointed measure, it should be clear that the labeled trees dangling from the distinguished branch are, conditionally on their labels, independent and of law $ \mathbb{P}_{Y^{i}_{j}}$. We also have the exact analog of Proposition \ref{prop:llrw} (without requiring a locally largest condition this time):

\begin{proposition}[Pointed key-formula] \label{prop:llprw}For any positive functional $f$ we have 
 \begin{eqnarray*} \mathbb{E}^{\circ}_{p}\left[ f\Big(S|_{[0,t]}, \eta|_{[0,t]}\Big) \mathbbm{1}_{\tau \geq t} \right] & \leq & \mathbb{E}_{p}^{ \RW}\left[ f\Big((S|_{[0,t]}, \eta|_{[0,t]}  \Big)    \cdot    \frac{ \widetilde{W}_{S_t}^ \circ }{\widetilde{W}_{S_0}^ \circ }    \right],  \end{eqnarray*}
 furthermore the inequality is an equality if we further add $ \mathbbm{1}_{ S|_{[0,t]} > \K}$  in both expectations.
 \end{proposition}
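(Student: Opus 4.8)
The plan is to mirror the proof of the locally-largest Key formula (Proposition~\ref{prop:llrw}) almost verbatim, with two simplifications: the role of the weight $\widetilde{W}_p$ is played by the pointed weight $\widetilde{W}^\circ_p$, and we no longer need to impose the "locally largest/no ties" constraint because we are exploring the \emph{fixed} branch towards a distinguished vertex $v^\circ \in \mathbf{t}^\circ$. By the Markov property of the multi-type Bienaym\'e--Galton--Watson tree under $\mathbb{P}_p$ (Proposition~\ref{prop:GW}) and the corresponding spatial Markov property of the pointed law $\mathbb{P}^\circ_p$ defined in~\eqref{eq:deflawp}, together with the multiplicative form of the claimed identity, it suffices to establish it for $t=1$; the general statement then follows by iterating along the distinguished branch (and using that a labeled subtree hanging off the branch, conditionally on its root label $Y^i_j$, has law $\mathbb{P}_{Y^i_j}$, which in particular carries no distinguished vertex, so contributes a factor $W_{Y^i_j}$ and not $W^\circ_{Y^i_j}$).

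For the one-step computation, fix $p \geq 0$, $q \geq -p$ and $p_1,\dots,p_k \geq 0$, and compute $\mathbb{P}^\circ_p(S_1 = p+q,\ Y^1_j = p_j\ \forall j,\ \tau \geq 1)$. The event $\tau\ge 1$ means the distinguished vertex $v^\circ$ lies strictly above the root, so it lies in one of the $k+1$ subtrees hanging from the root. Decomposing at the root as in the proof of Proposition~\ref{prop:tutte}, choosing which of the $k+1$ children carries the distinguished subtree, and using exchangeability of the weights $w_{c,k+1,(s_0,\dots,s_k)}$ to move that child into the first slot (label $p+q = S_1$), one gets
\begin{eqnarray*}
 \mathbb{P}^\circ_p(S_1=p+q,\ Y^1_j=p_j\ \forall j,\ \tau\ge1)
 = \frac{\widetilde{W}_p}{\widetilde{W}^\circ_p}\cdot (k+1)\, x\!\!\sum_{c\ge0}\sum_{\substack{(s_0,\dots,s_k)\\ \forall i\ge1,\ p_i\ge s_i\\ p+q\ge s_0\\ \sum_i(p_i-s_i)+c = s_0-q}}\!\! w_{c,k+1,(s_0,\dots,s_k)}\, W^\circ_{p+q}\prod_{i=1}^k W_{p_i},
\end{eqnarray*}
where the $\widetilde{W}_p/\widetilde{W}^\circ_p$ comes from the tilting~\eqref{eq:deflawp} and the $W^\circ_{p+q}$ because the distinguished subtree is counted with its pointed weight. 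Recognising the definition of $\hat\nu$ (Definition~\ref{def:nuhat}), multiplying and dividing by the appropriate powers of $y_\crit$, this rewrites as $\dfrac{\widetilde{W}_p}{\widetilde{W}^\circ_p}\cdot\dfrac{\widetilde{W}^\circ_{p+q}}{\widetilde{W}_{p+q}}\cdot \widehat{\nu}(q;*,(*),(p_1,\dots,p_k))$ times an extra indicator $\mathbbm{1}_{p+q\ge s_0}$ summed inside; since $\widehat\nu$ restricted to the first coordinate is $\nu$ and $\mathbb{P}^{\RW}_p(S_1=p+q,\ Y^1_j=p_j\ \forall j) = \widehat\nu(q;*,(*),(p_1,\dots,p_k))$, we obtain the one-step version of the inequality, with equality precisely when the constraint $p+q\ge s_0$ is vacuous, i.e.\ when $p+q = S_1 > \K$ (recall $s_0\le\K$).

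The inequality (as opposed to equality) in general is exactly this point: when $S_1 \le \K$ the sum over $s_0$ in the pointed decomposition is truncated by the constraint $s_0 \le p+q$, whereas the unconstrained sum defining $\nu$ (and $\widehat\nu$) is over all $s_0\le\K$; dropping the truncation only increases the right-hand side, giving "$\le$", and reinstating $\mathbbm{1}_{S|_{[0,t]}>\K}$ makes it vacuous on the whole explored branch, restoring equality. Assembling the one-step estimate along the branch via the Markov property yields
\[
 \mathbb{E}^\circ_p\!\left[f\big(S|_{[0,t]},\eta|_{[0,t]}\big)\mathbbm{1}_{\tau\ge t}\right]
 \le \mathbb{E}^{\RW}_p\!\left[f\big(S|_{[0,t]},\eta|_{[0,t]}\big)\cdot\frac{\widetilde{W}^\circ_{S_t}}{\widetilde{W}^\circ_{S_0}}\right],
\]
with equality upon inserting $\mathbbm{1}_{S|_{[0,t]}>\K}$ on both sides, which is the statement. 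I do not expect any genuine obstacle here: the only subtlety is bookkeeping the truncation-versus-no-truncation of the $s_0$-sum (the source of the inequality), and making sure that the pointed weight $W^\circ$ propagates correctly down the distinguished branch while the ordinary weight $W$ governs the side-subtrees --- both of which are immediate from the definition~\eqref{eq:deflawp} of $\mathbb{P}^\circ_p$ and the branching property.
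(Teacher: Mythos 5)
Your plan and the key ideas are the same as the paper's: reduce to $t=1$ by the Markov property, decompose at the root, use the exchangeability assumption to move the pointed child into the first slot with a combinatorial factor $(k+1)$, recognize $\hat\nu$, and note that the extra constraint $p+q\ge s_0$ is the source of the inequality, which becomes vacuous (hence an equality) on $\{S|_{[0,t]}>\K\}$ since $s_0\le\K$.

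However, your one-step formula contains a bookkeeping error. The tilting \eqref{eq:deflawp} gives a prefactor $\widetilde{W}_p/\widetilde{W}_p^\circ = W_p/W_p^\circ$, but you then decompose $\mathbb{P}_p$ at the root and absorb the pointed subtree into a factor $W^\circ_{p+q}$ \emph{without} the $1/W_p$ normalisation that the Boltzmann law of Proposition~\ref{prop:GW} carries. The $W_p$'s cancel, so the correct prefactor is simply $1/W_p^\circ$, and the summand contains $W^\circ_{p+q}\prod W_{p_i}/W_p^\circ$ rather than $\frac{W_p}{W_p^\circ}\,W^\circ_{p+q}\prod W_{p_i}$; your displayed identity is off by a factor of $W_p$. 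The subsequent rewriting is likewise inconsistent: after extracting $\hat\nu$, the multiplicative factor should be $\widetilde{W}^\circ_{p+q}/\widetilde{W}^\circ_p$ (exactly the density in the statement), not $\frac{\widetilde{W}_p}{\widetilde{W}^\circ_p}\cdot\frac{\widetilde{W}^\circ_{p+q}}{\widetilde{W}_{p+q}}$, and the latter does not even follow from your previous line. These two expressions coincide only if $\widetilde W_p=\widetilde W_{p+q}$, which is false. Since you then assert the correct conclusion anyway, this reads as you knowing the target but not having tracked the normalisations; fix the one-step computation to $\mathbb{P}_p^\circ(\cdots)=\frac{\widetilde W^\circ_{p+q}}{\widetilde W^\circ_p}\sum_{c,s_0,\ldots,s_k}\hat\nu(q;c,(s_0,\ldots,s_k),(p_1,\ldots,p_k))\,\mathbbm{1}_{p+q\ge s_0}$ and the rest goes through as you describe.
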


 \begin{proof} As for Proposition \ref{prop:llrw}, by the Markov property, it suffices to show the proposition for $t=1$. 
 Let us fix $ p \geq 0$, $ q  \in \mathbb{Z}$ as well as  $ p_1, \dots, p_k \geq 0$ 
 and let us compute $ \mathbb{P}_p^ \circ ( S_1 = p +q  \mbox{ and } Y^{1}_{j} = p_j, \ j \geq 1)$.  Recall that the increments are exchangeable  so that we can exchange the vertex from the branch towards the pointed vertex label with the first one and keep the same local weight. Following the same steps that yielded to \eqref{eq:htransfos1} and using the same notation as in Definition \ref{def:nuhat} it gives
 \begin{eqnarray*} \hspace{-0.5cm} &&\mathbb{P}_p^ \circ ( S_1 = p+q \mbox{ and } Y^{1}_{j} = p_j, \ j \geq 1) \\
 &=&  x (k+1) \sum_{ c \geq 0} \sum_{\substack{s_0, \ldots, s_k\\  p_j \geq s_j, \forall j \geq 1}} w_{k+1,c, (s_0, \ldots, s_k)}  \frac{W_{p +q}^ \circ}{W_p^ \circ}  \mathbbm{1}_{p+q \geq s_{0}} \prod_{i =1}^k W_{p_i} \mathbbm{1}_{ \mathscr{C}}\nonumber \\
&=& \frac{W_{p +q}^ \circ}{W_p^ \circ} y_{\crit}^{q}\cdot  \sum_{c, s_{0}, ... , s_{k}}\hat{\nu} \big(q ; c, (s_{0},s_{1}, ... , s_{k}), (p_1, p_2, ... , p_k)\big) \mathbbm{1}_{p+q \geq s_{0}} \\
&\leq& 
   \frac{ \widetilde{W}_{p+q}^ \circ }{\widetilde{W}_p^ \circ }  \cdot  \mathbb{P}_p^\RW ( S_1 = p+q \mbox{ and } Y^{1}_{j} = p_j, \ j \geq 1).   
\end{eqnarray*}
Notice also that when $p+q \geq \K$ the indicator $\mathbbm{1}_{p+q \geq s_{0}}$ is automatically satisfied and the previous inequality is in fact an equality as desired.  \end{proof}

 Notice that the $h$-transformation of the $\nu$-random walk by the function $h_{ \mathrm{pre}}$ is usually called the $\nu$-random walk conditioned to die continuously at $0$. It has been proven by Caravena \& Chaumont \cite{CC08} that the scaling limit of this conditioned walk is given by the analogous $\gamma$-stable L\'evy process conditioned to die continuously at $0$. Although we believe that the same scaling limits result holds for the process $(S)$ under $ \mathbb{P}_{p}^{\circ}$ we settle for  rough estimates that will be useful in the next section:
 \begin{lemma}[Absorption and height estimates for the pointed exploration]\label{lem:absorption_pointed} Recall that $\tau$ denotes the absorption time of the process $(S)$ under $ \mathbb{P}_{p}^{\circ}$. There exists some $ \mathrm{c}>0$ such that  for all $A>1$ sufficiently large 
 \begin{itemize}
 \item In the case when $\beta = 3/2$
  \begin{eqnarray*}  
  \mathbb{P}_{p}^{\circ}\big( \tau \geq A p^{1/2}\big) \leq \mathrm{e}^{- \mathrm{c} A^{\mathrm{c}}}, \qquad \forall p \geq 1.  \end{eqnarray*}
  \item In the case when $\beta = 5/2$
  \begin{eqnarray*} \mathbb{P}_{p}^{\circ}\big( \sup_{0 \leq i \leq \tau} S_{i} \geq A p\big) \leq   \mathrm{c} A^{-\mathrm{c}} \quad \mbox{ and } \quad \mathbb{P}_{p}^{\circ}\big( \tau \geq A p^{3/2}\big) \leq  \mathrm{c} A^{- \mathrm{c}}, \qquad \forall p \geq 1. 
 \end{eqnarray*}
 \end{itemize}
   \end{lemma}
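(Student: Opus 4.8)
\textbf{Proof strategy for Lemma \ref{lem:absorption_pointed}.} The plan is to transfer the estimates to the $\nu$-random walk via the Pointed key-formula (Proposition \ref{prop:llprw}), then to combine the fluctuation estimates of Section \ref{sec:fluctuations} with the asymptotics $\widetilde{W}_p^\circ \threesim p^{-1/2}$ from Lemma \ref{lem:wtildepoint}. The common mechanism is that the process $(S)$ under $\mathbb{P}_p^\circ$ is essentially the $h$-transform of the $\nu$-random walk by the (super)harmonic function $h^\circ$, which is $\threesim h_{\mathrm{pre}}(\cdot - \K)$, i.e.\ the walk conditioned to end continuously near $\K$; its absorption time $\tau$ stochastically resembles the time the walk takes to descend from $p$ to level $\K$.

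\emph{First I would treat the height bound in the case $\beta = 5/2$.} Applying Proposition \ref{prop:llprw} with $f \equiv 1$ at the bounded time $t = A p^{3/2}$ (or just using the inequality direction), one gets, writing $\sigma_{\geq Ap}$ for the first time $(S)$ exceeds $Ap$,
\begin{equation*}
\mathbb{P}_p^\circ\big(\sup_{0\le i \le \tau} S_i \ge Ap\big) \le \mathbb{E}_p^{\RW}\Big[ \mathbbm{1}_{\sigma_{\geq Ap} < \tau_\K}\cdot \frac{\widetilde{W}^\circ_{S_{\sigma_{\geq Ap}}}}{\widetilde{W}^\circ_p}\Big] \lesssim p^{1/2}\,\mathbb{E}_p^{\RW}\big[\mathbbm{1}_{\sigma_{\geq Ap}<\tau_\K}\,(S_{\sigma_{\geq Ap}})^{-1/2}\big],
\end{equation*}
and since $S_{\sigma_{\geq Ap}} \ge Ap$ on that event (using \eqref{eq:sommedelta}, $S_{\sigma_{\geq Ap}} \le Ap + \K$ as well) this is $\lesssim p^{1/2}(Ap)^{-1/2}\,\mathbb{P}_p^{\RW}(\sigma_{\geq Ap}<\tau_\K) \lesssim A^{-1/2}$, because the probability that a centered $\nu$-walk started at $p$ hits level $Ap$ before descending below $\K$ is $\lesssim 1/A$ by the optional stopping / harmonic-function argument for the walk in the domain of attraction of a $3/2$-stable L\'evy process with $\varrho = 2/3$ (the spectral negativity makes the overshoot at the upper level controlled, so the standard gambler's-ruin type bound applies). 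For the absorption time in the case $\beta = 5/2$: on the event $\{\tau \ge Ap^{3/2}\}$, either the walk has wandered high, which is controlled by the first part, or it has stayed in $[\K, A'p]$ for time $\ge A p^{3/2}$; the latter has probability $O(A^{-\mathrm{c}})$ because the $(\beta-1)=3/2$-stable rescaling means that a walk confined to an interval of length $\threesim p$ exits it in time $\threesim p^{3/2}$ with the exit time having a polynomial upper tail (this is where the estimate \eqref{eq:VatVach}, $\mathbb{P}_0^\RW(T_1 > n) \approx n^{-1/3}$, enters: a long survival forces many ladder epochs, each costing a polynomial probability, or alternatively one uses a direct Markov-chain-on-an-interval bound iterated over blocks of length $\threesim p^{3/2}$).

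\emph{Next the case $\beta = 3/2$.} Here $\tau$ should be of order $p^{1/2}$, and the tail should be stretched-exponential. The key input is that for $\beta = 3/2$ the walk is transient to $-\infty$ and $\mathbb{P}_0^\RW(T_1 > n)$ decays stretched-exponentially (as noted in the proof of the Proposition in Section \ref{sec:fluctuations}); equivalently the descending ladder epochs have stretched-exponential tails. Under $\mathbb{P}_p^\circ$, using Proposition \ref{prop:llprw} with $f \equiv 1$ at $t = Ap^{1/2}$,
\begin{equation*}
\mathbb{P}_p^\circ\big(\tau \ge Ap^{1/2}\big) \le \mathbb{E}_p^{\RW}\Big[\mathbbm{1}_{\tau_\K \ge Ap^{1/2}} \cdot \frac{\widetilde{W}^\circ_{S_{Ap^{1/2}}}}{\widetilde{W}^\circ_p}\Big] \lesssim p^{1/2}\,\mathbb{E}_p^{\RW}\big[\mathbbm{1}_{\tau_\K \ge Ap^{1/2}}\,(S_{Ap^{1/2}} \vee 1)^{-1/2}\big].
\end{equation*}
To bound this I would split according to whether $S_{Ap^{1/2}}$ is below $Cp$ (say) or not. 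If $S$ has not gone below $\K$ by time $Ap^{1/2}$ then, because the walk drifts down at rate $\threesim k^{-1/2}$ near level $k$ — more precisely because $\tau_\K \ge Ap^{1/2}$ forces the walk to spend an atypically long time before the first few descending ladder steps from $p$ down through scales — the probability is $\le \exp(-\mathrm{c}A^{\mathrm{c}})$ by chaining the stretched-exponential ladder-epoch bounds over $O(\log)$ scales, after which multiplying by the at most $p^{1/2}$ prefactor still leaves a stretched-exponential bound (absorbing $p^{1/2}$ into $\exp(-\mathrm{c}A^{\mathrm{c}})$ for $A$ large). The contribution where $S_{Ap^{1/2}}$ is still $\ge Cp$ is also stretched-exponentially small since the walk has negative-power tails and is transient, so it cannot remain at height $\threesim p$ for time $\threesim p^{1/2}$ except with stretched-exponential cost, and there the factor $(S_{Ap^{1/2}})^{-1/2}\lesssim p^{-1/2}$ kills the prefactor outright.

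\emph{The main obstacle} will be making the random-walk fluctuation estimates for the conditioned process clean and quantitative, in particular the polynomial upper-tail bound on $\tau_\K$ for $\beta = 5/2$ uniformly in the starting point $p$, and the stretched-exponential bound for $\beta = 3/2$. The cleanest route is probably to avoid the $h$-transform heuristic and argue directly through Proposition \ref{prop:llprw}, reducing everything to events for the plain $\nu$-random walk $(S)$ under $\mathbb{P}_p^\RW$ and then using: (i) for $\beta=5/2$, the diffusive scaling $S_{[p^{3/2}t]}/p \Rightarrow \mathfrak{c}\xi_t$ together with a standard estimate that the exit time of a $3/2$-stable L\'evy process from a bounded interval has all polynomial moments, plus a union bound over $O(1)$-many rescaled blocks; (ii) for $\beta=3/2$, the transience of $(S)$ and the stretched-exponential tail of $T_1$ already recorded in Section \ref{sec:fluctuations}, chained over dyadic scales of the starting height. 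In both cases the $\widetilde{W}^\circ$-ratio is bounded by $\threesim (S_t/S_0)^{-1/2}$ thanks to Lemma \ref{lem:wtildepoint}, which only improves the bounds, so the Radon--Nikodym factor is never an obstacle — it is purely a matter of having sharp enough fluctuation estimates for the underlying walk, uniformly in $p$.
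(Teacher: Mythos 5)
Your overall framework — pass through the Pointed Key formula, use $\widetilde{W}_p^\circ \threesim p^{-1/2}$, then call on random-walk fluctuation estimates and chain over dyadic scales — is the correct framework and is what the paper does. The height bound for $\beta = 5/2$ is essentially the paper's argument (note the auxiliary gambler's-ruin claim $\mathbb{P}_p^{\RW}(\sigma_{\geq Ap}<\tau_\K) \lesssim 1/A$ is both unnecessary, since $p^{1/2}(Ap)^{-1/2}=A^{-1/2}$ already suffices, and not quite right: for a spectrally negative $3/2$-stable walk the two-sided exit probability scales like the ratio of scale functions, which here is $\approx A^{-1/2}$, not $1/A$). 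However, there is a genuine gap in your treatment of the absorption time, and it is exactly the point you identify as "the main obstacle."

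You apply the Key formula at the \emph{fixed} time $Ap^{1/2}$ (resp.\ $Ap^{3/2}$) and then try to control the resulting $p^{1/2}\cdot\mathbb{E}_p^{\RW}[\mathbbm{1}_{\tau_\K\geq t}\,(S_t)^{-1/2}]$. On the event that the walk is near level $\K$ at time $t$, the ratio $\widetilde{W}^\circ_{S_t}/\widetilde{W}^\circ_p$ is $\threesim p^{1/2}$ — an \emph{amplification}, not a damping — and your proposed fix of "absorbing $p^{1/2}$ into $\exp(-\mathrm{c}A^{\mathrm{c}})$ for $A$ large" cannot work: the bound must hold for each fixed $A$ uniformly in $p\geq 1$, and $p^{1/2}\to\infty$. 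Your closing remark that "the Radon--Nikodym factor is never an obstacle" is thus exactly backwards in the $\beta=3/2$ case, where the walk drifts down and the density is large precisely on the relevant event. The paper's resolution is to restrict the Key formula to the event $\{\tau_{p/2}\geq t\}$ rather than $\{\tau_\K\geq t\}$: on that event $S_t>p/2$, so $\widetilde{W}^\circ_{S_t}/\widetilde{W}^\circ_p\lesssim 1$ and there is \emph{no} prefactor at all. This reduces everything to the clean estimate $\mathbb{P}_p^{\RW}(\tau_{p/2}\geq Ap^\gamma)$, after which one chains over the dyadic stopping times $\theta_j$ (first halving, then halving again, \dots) so that the ratio stays bounded at each scale and the geometric decay of scales sums the moments. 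You would also need to close the gap between $\tau_\K$ and the true absorption time $\tau$ — the branch to $v^\circ$ can dip below $\K$ and climb back — which the paper handles via the geometric-number-of-returns bound \eqref{eq:stotau}; your proposal does not address this at all.
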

   \begin{proof} Recall that under $ \mathbb{P}^{\RW}_{p}$ the process $(S)$ is a random walk with i.i.d.\ increments bounded from above and in the domain of attraction of the spectrally negative $\gamma$-stable  law. Let us start with the estimate on $\mathbb{P}_{p}^{\circ}\big( \sup_{0 \leq i \leq \tau} S_{i} \geq A p\big)= \mathbb{P}_{p}^{\circ}\left( \tau_{\geq Ap} < \infty\right)$ in the case $\beta=5/2$.  
   By the pointed Key formula we have 
  \begin{eqnarray*} \mathbb{P}_{p}^{\circ}\left( \tau_{\geq Ap} < \infty\right) &\underset{ \mathrm{Prop.\ } \ref{prop:llprw}}{\leq}& \mathbb{E}_{p}^{\RW} \left[ \mathbbm{1}_{\tau_{\geq Ap} < \infty}  \cdot \frac{ \widetilde{W}^ \circ_{S_{\tau_{\geq Ap}}}}{ \widetilde{W}^ \circ_{S_{0}}}\right]  \underset{  \mathrm{Lem.}\  \ref{lem:wtildepoint}}{ \lesssim } \mathbb{P}_{p}^{\RW} \left(\tau_{\geq Ap} < \infty  \right)  \cdot A^{-1/2}.  \end{eqnarray*} We can then upper bound the probability by $1$ which gives the desired estimate. 
  \\
For the second estimate on the tail of the absorption time,    we start by estimating the time it takes for the process to drop below level $p/2$ when starting from $p$.  We have
   \begin{eqnarray*} \mathbb{P}_{p}^{\circ}\left( \tau_{  p/2} \geq A p^{\gamma}\right) &\underset{ \mathrm{Prop.\ } \ref{prop:llprw}}{\leq}& \mathbb{E}_{p}^{\RW} \left[ \mathbbm{1}_{\tau_{  p/2} \geq A p^{\gamma}}  \cdot \frac{ \widetilde{W}^{\circ}_{S_{Ap^{\gamma}}}}{ \widetilde{W}^{\circ}_{S_{0}}}\right]  \underset{  \mathrm{Lem.}\  \ref{lem:wtildepoint}}{ \lesssim }  \mathbb{P}_{p}^{\RW} \left(\tau_{ p/2} \geq A p^{\gamma} \right).  \end{eqnarray*}
In the heavy-tail case $\beta = 3/2$, recalling from \eqref{eq:sommedelta} that $ S_i \leq p + \K i$ under $ \mathbb{P}_p^\RW$ we have crudely
  \begin{eqnarray} \label{eq:tailexpo32} \mathbb{P}_{p}^{\RW} (\tau_{ p/2} \geq A p^{1/2} ) \leq \prod_{i=0}^{A p^{1/2}} \big(1- \nu (-p - \K i)\big) \lesssim  \left( 1- \frac{ \mathrm{Cst}}{ p^{1/2}} \right)^{A p^{1/2}} \leq \mathrm{e}^{- \mathrm{c} A}  \end{eqnarray} for some $ \mathrm{Cst}, \mathrm{c}>0$. However, in the case $ \beta = 5/2$ we have  that $\mathbb{P}_{p}^{\RW} \left(\tau_{ p/2} \geq A p^{3/2} \right) \leq \mathrm{c} A^{-  \mathrm{c}}$ for some $ 1>\mathrm{c}>0$ by the Lemma \ref{lem:tailhitting}  proved below.  In the case $ \beta = 5/2$, this implies that 
   $$ \mathbb{E}_{p}^{\circ}\big[ \big(\tau_{ p/2}\big)^{ \mathrm{c}/2} \big] \lesssim (p^{3/2})^{c/2}.$$
To deduce from these estimates an upper bound on the tail of $\tau_{\K}$, consider the sequence of stopping times defined by $\theta_{0}=0,\theta_{1} = \tau_{p/2}$ and recursively $\theta_{j} = \inf\{ i \geq \theta_{1} : S_{i} \leq S_{\theta_{j-1}}/2\}$. By applying the previous inequality at those stopping times we deduce that, still in the case $\beta = 5/2$ we have 
   $$ \mathbb{E}_{p}^{\circ}\big[ (\tau_{\K})^{ \mathrm{c}/2} \big] \underset{c/2 < 1}{\leq} \sum_{j \geq 0 } \mathbb{E}_{p}^{\circ}[(\theta_{j+1}-\theta_{j})^{c/2}] \lesssim p^{ \frac{3}{2}\cdot \frac{c}{2}} + (p/2)^{\frac{3}{2}\cdot \frac{c}{2}} + (p/4)^{\frac{3}{2}\cdot \frac{c}{2}}+ ... \ \preceq \   p^{\frac{3}{2}\cdot \frac{c}{2}}.$$ To pass from the stopping time $\tau_{\K}$ to the absorption time $\tau \geq \tau_{\K}$, notice that there exists $c>0$ and $N\geq 1$ such that once $(S)$ is below level $\K$, it has probability at least $c>0$ of being absorbed within the next $N$ units of time, and that if $(S)$ decides to overcome level $\K$ again, then we have $ S_{\tau_{\geq \K}} \leq 2 \K$ by \eqref{eq:sommedelta}. Hence $\tau- \tau_{\K}$ can be stochastically upper bounded by 
  \begin{eqnarray} \label{eq:stotau} \tau - \tau_{\K} \leq N \sum_{i=1}^{ \mathrm{Geo}(c)}(1 + \theta^{{(i)}}),  \end{eqnarray} where $ \mathrm{Geo}(c)$ is a geometric random variable with success parameter $c>0$ and  the $\theta^{{(i)}}$ are i.i.d.\ random variables independent of $ \mathrm{Geo}(c)$ and whose law dominates that of $ \tau_{\K}$ under $ \mathbb{P}_{p}^{\circ}$ for all $ \K \leq p \leq 2 \K$. In particular, we can suppose that $ \mathbb{E}[(\theta^{{(1)}})^{{c/2}}] < \infty$ and we deduce that
   $$ \mathbb{E}_{p}^{\circ}[\tau^{{c/2}}] \underset{c/2 < 1}{\leq} \mathbb{E}_{p}^{\circ}\big[\big(\tau_{\K}\big)^{{c/2}}\big] + N\mathbb{E}[ \mathrm{Geo}(c)]  \cdot \mathbb{E}\big[ \big( \theta^{(1)}\big)^{{c/2}}\big] \ \ \lesssim \ \ p^{\frac{3}{2}\cdot \frac{c}{2}}.$$
   The statement of the lemma then follows from Markov's inequality since $$ \mathbb{P}_{p}^{\circ}( \tau \geq A p^{3/2}) \leq \mathbb{E}_{p}^{\circ}[\tau^{c/2}]/ (Ap^{{3/2}})^{{c/2}}.$$
   
   In the case $\beta=3/2$ we can write similarly for some small $\delta>0$
   $$ \mathbb{E}_{p}^{\circ}\left[ \exp\left( \frac{\delta}{ \sqrt{p}} \tau_{\K}\right)\right] = \mathbb{E}_{p}^{\circ}\left[ \prod_{j \geq 0} \exp\left(   \frac{1}{ \sqrt{2^j}} \frac{\delta}{ \sqrt{2^{-j}p}} (\theta_{j+1}-\theta_j) \right)\right] \underset{ \eqref{eq:tailexpo32}}{\lesssim}  \prod_{j \geq 0} \left( 1 + \mathrm{cst}  \sqrt{2^{-j}} \right)  < \infty.$$
Using again \eqref{eq:stotau} with this time $\theta^{{(i)}}$ having exponential moments, we deduce that $$\sup_{p \geq 1} \mathbb{E}_{p}^{\circ}\left[ \exp\left( \frac{\delta}{ \sqrt{p}} \tau\right)\right] < \infty$$ and derive the tail estimate via Markov's inequality as well.
     \end{proof}
   
   \begin{lemma}[Tails for dropping below $p/2$ when $\beta = 5/2$] \label{lem:tailhitting} In the case $\beta= 5/2$ there exists $ \mathrm{c}>0$ such that for all $A \geq 1$ large enough and all $p\geq 1$ we have 
   $$ \mathbb{P}^{\RW}_{p}( \tau_{p/2} \geq A p^{3/2}) \leq \mathrm{c} A^{- \mathrm{c}}.$$
\end{lemma}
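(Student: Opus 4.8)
The plan is to decompose $\tau_{p/2}$ through the descending ladder structure of the walk: the number of ladder steps needed to drop by $p/2$ is controlled by the tail of the ladder heights~\eqref{eq:CarChau}, while the time taken by each step is controlled by the tail of the ladder epochs~\eqref{eq:VatVach}. By translation invariance, under $\mathbb{P}^{\RW}_p$ the variable $\tau_{p/2}$ has the law of $\tau_{-m}$ under $\mathbb{P}^{\RW}_0$ with $m:=\lceil p/2\rceil$, and since $m \asymp p$ (hence $m^{3/2}\asymp p^{3/2}$) it suffices to prove $\mathbb{P}^{\RW}_0(\tau_{-m}\ge A\,m^{3/2})\lesssim A^{-\mathrm c}$ for all integers $m\ge 1$ and all $A$ large enough.

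Recall from Section~\ref{sec:fluctuations} the strict descending ladder epochs $0=T_0<T_1<\cdots$ and heights $0=H_0>H_1>\cdots$, and set $N_m:=\inf\{i\ge 0: H_i\le -m\}$. Since the running minimum of $(S)$ equals $H_i$ on $[T_i,T_{i+1})$, we have $\tau_{-m}=T_{N_m}$; as $i\mapsto T_i$ is nondecreasing we obtain, for every integer $K\ge 1$,
\begin{equation*}
\mathbb{P}^{\RW}_0\big(\tau_{-m}\ge A\,m^{3/2}\big)\ \le\ \mathbb{P}^{\RW}_0\big(N_m>K\big)\ +\ \mathbb{P}^{\RW}_0\big(T_K\ge A\,m^{3/2}\big),\qquad T_K:=\sum_{i=1}^K\big(T_i-T_{i-1}\big),
\end{equation*}
where $T_K$ is a sum of $K$ i.i.d.\ copies of $T_1=\tau_{-1}$. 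The point of this splitting is that, although $N_m$ and the increments $T_i-T_{i-1}$ are dependent, the deterministic bound $T_{N_m}\le T_K$ on $\{N_m\le K\}$ makes this dependence irrelevant.

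For the first term, put $\zeta_i:=-(H_i-H_{i-1})\ge 1$; the $\zeta_i$ are i.i.d.\ and $\{N_m>K\}=\{\sum_{i=1}^K\zeta_i<m\}\subseteq\{\zeta_1<m\}\cap\cdots\cap\{\zeta_K<m\}$, so using $\mathbb{P}(\zeta_1\ge m)=\mathbb{P}^{\RW}_0(H_1\le -m)\ge c_0\,m^{-1/2}$ (which follows from~\eqref{eq:CarChau}) we get $\mathbb{P}^{\RW}_0(N_m>K)\le(1-c_0 m^{-1/2})^K\le \exp(-c_0\,K\,m^{-1/2})$. For the second term, the increments $T_i-T_{i-1}$ are nonnegative, i.i.d., and regularly varying of index $1/3\in(0,1)$ by~\eqref{eq:VatVach}; the big-jump principle for heavy-tailed sums (see e.g.\ \cite[Lemma 2.5]{AS03}, \cite[Lemma 4.3]{CKdissections}, \cite[Lemma 19]{ConCurParking}) then gives constants $C_1,C_2>0$ such that $\mathbb{P}^{\RW}_0(T_K\ge t)\le C_2\,K\,t^{-1/3}$ whenever $t\ge C_1 K^3$, uniformly in $K\ge 1$. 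Choosing $K=\lceil A^{1/6}m^{1/2}\rceil$ (so that $K^3\lesssim A^{1/2}m^{3/2}$, whence $t:=A\,m^{3/2}\ge C_1 K^3$ for $A$ large), the two terms are bounded by $\exp(-c_0 A^{1/6})$ and $C_2 K (A m^{3/2})^{-1/3}\lesssim A^{1/6}\cdot A^{-1/3}=A^{-1/6}$ respectively, whence $\mathbb{P}^{\RW}_0(\tau_{-m}\ge A m^{3/2})\lesssim A^{-1/6}$ for $A$ large, which is the claim after adjusting constants. The main obstacle is this second step: one really needs the big-jump bound in the regime $t\gg K^3$ \emph{uniformly} in the number of summands $K\asymp m^{1/2}$ — a crude truncation at scale $t/K$ would leave a spurious growing factor $m^{1/6}$ — but this uniformity is precisely the content of the classical heavy-tailed large-deviation lemmas cited above, so no input is needed beyond the tail asymptotics~\eqref{eq:CarChau} and~\eqref{eq:VatVach} already established.
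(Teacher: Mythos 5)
Your proof is correct but takes a genuinely different route from the paper's. The paper splits according to whether the walk reaches level $\sqrt{A}p$ before dropping below $p/2$: for the event where it does not, they slice $[0,Ap^{3/2}]$ into $\approx A^{1/4}$ blocks of length $(\sqrt{A}p)^{3/2}$ and use the scaling-limit Lemma \ref{lem:scalinglimit} to show that in each block the walk drops below $p/2$ with probability bounded below, giving a stretched-exponential bound $c^{A^{1/4}}$; for the event where the walk reaches $\sqrt{A}p$, they iterate the Markov property along the doubling times $\tau_{\geq 2p},\tau_{\geq 4p},\dots$, again using the scaling limit to bound each doubling away from one, giving a polynomial bound $A^{-\mathrm{c}}$. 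You instead read the event directly off the descending-ladder decomposition, bound the number of ladder steps through the ladder-height tails \eqref{eq:CarChau} and the accumulated time through the ladder-epoch tails \eqref{eq:VatVach}, then optimize over the truncation level $K$. Your route is more self-contained in that it does not invoke the scaling-limit machinery of Lemma \ref{lem:scalinglimit}, and it yields an explicit exponent $A^{-1/6}$; the paper's argument has the advantage that it reuses a tool (the invariance principle for $S$ under $\mathbb{P}_p^\RW$) already established and needed elsewhere.

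One point to be more careful about: the cited big-jump lemmas in the paper (\cite[Lemma 2.5]{AS03}, \cite[Lemma 4.3]{CKdissections}, \cite[Lemma 19]{ConCurParking}) are applied in this paper for a \emph{bounded} number of summands, and I would not attribute to them, as stated, the uniformity in $K$ that your argument requires. That said, the uniform bound $\mathbb{P}(T_K\ge t)\lesssim K\,t^{-1/3}$ for $t\gtrsim K^3$, uniformly over $K\ge 1$, \emph{is} true and is elementary in the infinite-mean regime $\alpha\in(0,1)$: split on whether $\max_i(T_i-T_{i-1})>t/2$, bound the first event by $K\,\mathbb{P}(T_1>t/2)\lesssim K\,t^{-1/3}$, and on the complement bound $T_K$ via Markov's inequality using $\mathbb{E}[T_1\mathbbm{1}_{T_1\le t/2}]\lesssim t^{2/3}$, which gives another $K\,t^{-1/3}$. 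Your mention of a ``crude truncation at scale $t/K$'' is a red herring — truncation at a constant fraction of $t$ is what works, and it works cleanly. With that elementary Nagaev-type bound made explicit (or a correct citation, e.g.\ to a Fuk--Nagaev-type inequality for regularly varying tails), your proof is complete.
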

\begin{proof}
Recalling that $\tau_{\geq x} = \inf\{i\geq 0: S_{i} \geq x\}$ we have 
 \begin{eqnarray*}
 \mathbb{P}^{\RW}_{p}( \tau_{p/2} \geq A p^{3/2}) \leq  \mathbb{P}^{\RW}_{p}( \tau_{ \geq \sqrt{A} p} < \tau_{p/2}) +  \mathbb{P}^{\RW}_{p}( \tau_{p/2} \geq A p^{3/2} \mbox{ and }  \tau_{ \geq \sqrt{A} p} > \tau_{p/2})  \end{eqnarray*}
 For the second term we argue that if $ p/2 \leq q \leq \sqrt{A}p$, then by Lemma \ref{lem:scalinglimit} we have the existence of some $c >0$ such that 
 $$ \inf_{ p/2 \leq q \leq \sqrt{A}p}\mathbb{P}_{q}^{\RW}\Big( \tau_{0} \leq \big(\sqrt{A}p\big)^{3/2} \Big) \geq \inf_{ p/2 \leq q \leq \sqrt{A}p}\mathbb{P}_{q}^{\RW}\Big( \tau_{0} \leq q^{3/2} \Big) \geq  c.$$
In words, if the walk stays below $ \sqrt{A}p$, in each time interval of length $\big(\sqrt{A}p\big)^{3/2} $ it has a positive probability to drop below $p/2$. By successive applications of Markov property at time $i \cdot \big(\sqrt{A}p\big)^{3/2}$ for $i=0,1,2,...$ we deduce that the second term in the above display is bounded above by $c^{ A/( \sqrt{A})^{3/2}} = c^{A^{{1/4}}}$. 

The first term is treated similarly. Again by the scaling limit result Lemma \ref{lem:scalinglimit} we have the existence of some constant $c>0$ such that 
$$ \liminf_{p \to \infty}\mathbb{P}_{p}^{\RW}(\tau_{\geq 2p} < \tau_{0}) >c>0.$$
Again in words, once the walk has reached level $p$, it has some chance to drop below $0$ before reaching the level $2p$. Applying again the Markov property at the successive hitting times $ \tau_{\geq 2p}<\tau_{\geq 4p} < \tau_{\geq 8p}...$ (notice that those stopping times are distinct by \eqref{eq:sommedelta}) we deduce that the first term in the penultimate display is bounded above by $c^{ \log_{2}( \sqrt{A})-1}$. The lemma follows.
\end{proof}

Let us deduce two useful estimates on $ \mathrm{Vol}^ \circ$ using the preceding lemma. 
\begin{lemma}[Uniform integrability] \label{lem:ui} In both cases $ \beta \in \{3/2, 5/2\}$  the family of laws 
$$ \left( \frac{ \mathrm{Vol}^\circ}{p^{\beta- \frac{1}{2}}}  \mbox{ under }\mathbb{P}_p: p \geq 0 \right) \mbox{ and } \left( \frac{ \mathrm{Vol}^\bullet}{p^{\beta- \frac{1}{2}}}  \mbox{ under }\mathbb{P}_p: p \geq 0 \right) \mbox{ are uniformly integrable}.$$
\end{lemma}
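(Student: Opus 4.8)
The plan is to reduce the uniform integrability under $\mathbb{P}_{p}$ to a \emph{tightness} statement under the pointed law $\mathbb{P}_{p}^{\circ}$, and then to establish that tightness from the pointed Key formula (Proposition~\ref{prop:llprw}) together with the absorption estimates of Lemma~\ref{lem:absorption_pointed}. First, the size-biasing identity \eqref{eq:biasing} (equivalently $\mathbb{E}_{p}[g(\mathrm{Vol}^{\circ})\,\mathrm{Vol}^{\circ}]=\mathbb{E}_{p}[\mathrm{Vol}^{\circ}]\cdot\mathbb{E}_{p}^{\circ}[g(\mathrm{Vol}^{\circ})]$) gives, for any $K>0$,
\[\mathbb{E}_{p}\!\left[\frac{\mathrm{Vol}^{\circ}}{p^{\beta-1/2}}\,\mathbbm{1}_{\mathrm{Vol}^{\circ}\geq Kp^{\beta-1/2}}\right]=\frac{\mathbb{E}_{p}[\mathrm{Vol}^{\circ}]}{p^{\beta-1/2}}\;\mathbb{P}_{p}^{\circ}\big(\mathrm{Vol}^{\circ}\geq Kp^{\beta-1/2}\big),\]
and since $\mathbb{E}_{p}[\mathrm{Vol}^{\circ}]\threesim p^{\beta-1/2}$ the prefactor is bounded away from $0$ and $\infty$, uniformly in $p\geq1$. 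Hence $(\mathrm{Vol}^{\circ}/p^{\beta-1/2}$ under $\mathbb{P}_{p})_{p}$ is uniformly integrable iff $\sup_{p}\mathbb{P}_{p}^{\circ}(\mathrm{Vol}^{\circ}\geq Kp^{\beta-1/2})\to0$ as $K\to\infty$, i.e.\ iff $\mathrm{Vol}^{\circ}/p^{\beta-1/2}$ is tight under $\mathbb{P}_{p}^{\circ}$ uniformly in $p$. Granting this, the $\mathrm{Vol}^{\bullet}$ case follows from Lemma~\ref{lem:volcompa}: rewriting it as $\mathbb{P}_{p}(\mathrm{Vol}^{\bullet}\geq m)\leq\mathbb{P}_{p}(\mathrm{Vol}^{\circ}\geq m/\delta)+\mathrm{e}^{-m}$ and integrating over $m\geq Kp^{\beta-1/2}$, the exponential term is negligible and the rest is $\lesssim\delta\,\mathbb{E}_{p}[(\mathrm{Vol}^{\circ}/p^{\beta-1/2})\mathbbm{1}_{\mathrm{Vol}^{\circ}\geq(K/\delta)p^{\beta-1/2}}]$, which tends to $0$ as $K\to\infty$ uniformly in $p$.

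Next I would perform a spine decomposition under $\mathbb{P}_{p}^{\circ}$. Conditionally on the decoration--reproduction process $(S,\eta)$ of the distinguished branch, the dangling subtrees are independent with $\circ$-volumes $V_{i,j}$ distributed as $\mathrm{Vol}^{\circ}$ under $\mathbb{P}_{Y^{i}_{j}}$, so $\mathrm{Vol}^{\circ}=\#\{i\leq\tau:S_{i}=\K\}+\sum_{i\leq\tau}\sum_{j}V_{i,j}$ with $\#\{i\leq\tau:S_{i}=\K\}\leq\tau+1$. Conditionally on $(S,\eta)$ the double sum is a sum of independent non-negative variables with total mean $\Sigma:=\sum_{i\leq\tau}\sum_{j}\mathbb{E}_{Y^{i}_{j}}[\mathrm{Vol}^{\circ}]$, whence $\mathbb{P}_{p}^{\circ}(\sum_{i,j}V_{i,j}\geq K\Sigma\mid S,\eta)\leq1/K$ by Markov. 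Using $\mathbb{E}_{q}[\mathrm{Vol}^{\circ}]\lesssim(q+1)^{\beta-1/2}$ and, from \eqref{eq:sommedelta}, that the at most $\K-1$ siblings at step $i$ satisfy $Y^{i}_{j}\leq\K^{2}+|S_{i}-S_{i-1}|$, one gets $\Sigma\lesssim\tau+\sum_{i\leq\tau}|S_{i}-S_{i-1}|^{\beta-1/2}$. Since Lemma~\ref{lem:absorption_pointed} gives $\mathbb{P}_{p}^{\circ}(\tau\geq Ap^{\beta-1})\leq\mathrm{c}A^{-\mathrm{c}}$ (so $\tau=o(p^{\beta-1/2})$ in $\mathbb{P}_{p}^{\circ}$-probability, uniformly in $p$), and since the up-steps are bounded ($(S_{i}-S_{i-1})^{+}\leq\K$ by \eqref{eq:sommedelta}, hence $\sum_{i\leq\tau}((S_{i}-S_{i-1})^{+})^{\beta-1/2}\lesssim\tau$), everything reduces to proving that the down-step sum $\sum_{i\leq\tau}\big((S_{i-1}-S_{i})^{+}\big)^{\beta-1/2}$ is $O_{\mathbb{P}_{p}^{\circ}}(p^{\beta-1/2})$, uniformly in $p$.

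For this I would invoke the pointed Key formula. Setting $M:=\lceil Cp^{\beta-1}\rceil$ and applying Proposition~\ref{prop:llprw} at each time $i\leq M$ to the functional $\big((S_{i-1}-S_{i})^{+}\big)^{\beta-1/2}$,
\[\mathbb{E}_{p}^{\circ}\!\left[\sum_{i=1}^{\tau\wedge M}\big((S_{i-1}-S_{i})^{+}\big)^{\beta-1/2}\right]\leq\sum_{i=1}^{M}\mathbb{E}_{p}^{\RW}\!\left[\big((S_{i-1}-S_{i})^{+}\big)^{\beta-1/2}\,\frac{\widetilde{W}^{\circ}_{S_{i}}}{\widetilde{W}^{\circ}_{p}}\,\mathbbm{1}_{S_{i}\geq0}\right].\]
The key observation is that the conditional expectation inside, given $\mathcal{F}_{i-1}$ with $S_{i-1}=s$, is bounded by $\mathrm{Cst}\cdot p^{1/2}$ \emph{uniformly in $s$ and $i$}: decomposing on the increment value $-m$ ($1\leq m\leq s$), and using $\nu(-m)\lesssim m^{-\beta}$ together with $\widetilde{W}^{\circ}_{q}\threesim(q+1)^{-1/2}$ (Lemma~\ref{lem:wtildepoint}), it is at most a constant times
\[(p+1)^{1/2}\sum_{m=1}^{s}m^{\beta-1/2}\,m^{-\beta}\,(s-m+1)^{-1/2}=(p+1)^{1/2}\sum_{m=1}^{s}m^{-1/2}(s-m+1)^{-1/2}\ \lesssim\ (p+1)^{1/2},\]
the last sum being a discrete Beta-type sum bounded independently of $s$ --- it is exactly the arithmetic identity $(\beta-\tfrac12)+(-\beta)=-\tfrac12$, combined with the renewal exponent $-\tfrac12$ of $\widetilde{W}^{\circ}$, that makes the summand scale-invariant. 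Consequently the left-hand side is $\lesssim M\,p^{1/2}\lesssim p^{\beta-1}\cdot p^{1/2}=p^{\beta-1/2}$. Choosing $C$ large so that $\mathbb{P}_{p}^{\circ}(\tau>M)$ is small (Lemma~\ref{lem:absorption_pointed}) and using Markov's inequality on $\{\tau\leq M\}$, one gets $\sup_{p}\mathbb{P}_{p}^{\circ}\big(\sum_{i\leq\tau}((S_{i-1}-S_{i})^{+})^{\beta-1/2}>Kp^{\beta-1/2}\big)\leq\mathbb{P}_{p}^{\circ}(\tau>M)+\mathrm{Cst}/K\to0$ as $K\to\infty$. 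Together with the previous paragraph this gives the required tightness, and hence the lemma.

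The delicate point is precisely this last estimate: every elementary bound on $\sum_{i\leq\tau}|S_{i}-S_{i-1}|^{\beta-1/2}$ under $\mathbb{P}_{p}^{\circ}$ (via the total variation of $S$, or via $\tau\cdot(\max_{i\leq\tau}S_{i})^{\beta-1/2}$, etc.) overshoots the target $p^{\beta-1/2}$ by a genuine power of $p$ when $\beta=5/2$, because the quadratic variation of the descent is dominated by a few very large downward jumps. One really has to pass through the Key formula so that the fine asymptotics $\widetilde{W}^{\circ}_{q}\threesim q^{-1/2}$ --- itself a strong renewal theorem (Lemma~\ref{lem:wtildepoint}) --- can tame these jumps via the scale-invariant Beta sum above; I expect this to be the main technical hurdle in the write-up, together with the bookkeeping of the absorption tail $\{\tau>M\}$.
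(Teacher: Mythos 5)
Your proof is correct, and it takes a genuinely different technical route in the core estimate, so let me compare.

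Both you and the paper reduce the statement to tightness of $\mathrm{Vol}^{\circ}/p^{\beta-1/2}$ under $\mathbb{P}_{p}^{\circ}$ via the size-biasing identity \eqref{eq:biasing}, then transfer to $\mathrm{Vol}^{\bullet}$ via Lemma~\ref{lem:volcompa}, and both use the spine decomposition $\mathrm{Vol}^{\circ}\lesssim \tau+\sum_{i,j}\mathbb{E}_{Y^{i}_{j}}[\mathrm{Vol}^{\circ}]$ together with $\mathbb{E}_{q}[\mathrm{Vol}^{\circ}]\lesssim q^{\beta-1/2}$. The split happens afterwards. The paper first restricts to the good event $\mathcal{E}_{p}=\{\tau<Ap^{\beta-1}\}\cap\{\sup_{i}S_{i}\leq Ap\}$, and then bounds each per-step contribution by $\sup_{r\leq Ap}\mathbb{E}_{r}^{\circ}\big[(\sum_{j}Y^{1}_{j})^{\beta-1/2}\big]\lesssim\sqrt{Ap}$; the $\sup$ over the current level $r$ is why the \emph{height} estimate of Lemma~\ref{lem:absorption_pointed} is needed in addition to the absorption-time tail (for $\beta=3/2$ the height bound is actually implicit in the $\tau$-bound, since up-steps are bounded by $\K$). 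You instead apply the pointed Key formula (Proposition~\ref{prop:llprw}) in its long-range form, from time $0$ to each deterministic time $i\leq M$, so that the Radon--Nikodym factor $\widetilde{W}^{\circ}_{S_{i}}/\widetilde{W}^{\circ}_{p}$ carries the single factor $p^{1/2}$; the scale-invariant convolution $\sum_{m\leq s}m^{-1/2}(s-m+1)^{-1/2}\lesssim 1$ then makes the resulting per-step bound uniform in the current level $s=S_{i-1}$, so that no control on $\sup S$ is required and only the absorption-time tail from Lemma~\ref{lem:absorption_pointed} is used. This is the same arithmetic cancellation $(\beta-\tfrac12)+(-\beta)+(-\tfrac12)=-1$ that underlies the paper's bound $\mathbb{E}_{r}^{\circ}\big[(\sum_{j}Y^{1}_{j})^{\beta-1/2}\big]\lesssim\sqrt{r}$, but encoded through the full RN derivative rather than through the one-step Markov property plus a truncation on the height. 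Your route is arguably a bit cleaner, at the cost of being slightly more abstract; both are valid, and neither buys a stronger conclusion.

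Two small remarks to tighten the write-up. First, when you apply the Key formula to the functional $g\big(S|_{[0,i]}\big)=\big((S_{i-1}-S_{i})^{+}\big)^{\beta-1/2}$ at time $t=i$, note that the RHS is an equality only on $\{S|_{[0,i]}>\K\}$; you only use the inequality, which is all you need, but the direction should be checked explicitly (it is $\leq$ since $\mathbbm{1}_{p+q\geq s_{0}}\leq1$ in the proof of Proposition~\ref{prop:llprw}). Second, the intermediate conditional Markov step $\mathbb{P}_{p}^{\circ}(\sum_{i,j}V_{i,j}\geq K\Sigma\mid S,\eta)\leq1/K$ combined with tightness of $\Sigma/p^{\beta-1/2}$ needs the two-epsilon splitting argument (fix $A$ large with $\mathbb{P}_{p}^{\circ}(\Sigma\geq Ap^{\beta-1/2})\leq\epsilon$, then Markov on the complementary event); you sketch this correctly but it deserves a line in a full proof.
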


\begin{proof} Let us start with the case of $\circ$. Since by \eqref{eq:biasing} the law $ \mathbb{P}^\circ_p$ is obtained by biaising $ \mathbb{P}_p$ with the variable $ \mathrm{Vol}^\circ$ it suffices to show that the sequence $ \left( { \mathrm{Vol}^\circ} \cdot {p^{-\beta+ \frac{1}{2}}}  \mbox{ under }\mathbb{P}_p^\circ : p \geq 0 \right)$ is tight. Let us fix $A> 1$ and let $ \mathcal{E}_p$ be the event $ \{ \tau < A p^{\beta-1}\} \cap \{ \sup_{i \geq 0} S_i \leq Ap \}$. Recall that by definition, we have
\begin{eqnarray*} \mathbb{E}_p^ \circ \left[ \mathrm{Vol}^ \circ \mathbbm{1}_{ \mathcal{E}_p} \right] &\leq& \mathbb{E}_p^ \circ \left[ \mathbbm{1}_{ \mathcal{E}_p}\left( \tau + \sum_{i =1}^{ \tau}\sum_{j \geq 1} \mathbb{E}_{Y_j^i} \left[ \mathrm{Vol}^ \circ \right ] \right)\right ] \\
&\leq&   A p^{\beta-1} \left(1 +  \K \cdot  \sup_{r \leq A p} \mathbb{E}_r^ \circ \left[\mathbb{E}_{Y_1^1} \left[ \mathrm{Vol}^ \circ \right ] \right ]\right).\\
& \underset{  \mathrm{below\ }\eqref{eq:biasing}}{\lesssim} & A p^{\beta-1} \left(1 +  \K \cdot  \sup_{r \leq A p} \mathbb{E}_r^ \circ \left[ (Y_1^1)^{\beta-1/2} \right ] \right)\\
& \underset{ }{\lesssim} & A p^{\beta-1} \left(1 +  \K \cdot  \sup_{r \leq A p} \mathbb{E}_r^ \circ \left[ \big( \sum_{j \geq 1}Y_j^i\big)^{\beta-1/2} \right ] \right).
\end{eqnarray*}
Recall from \eqref{eq:sommedelta} that $ |(S_{1}-S_{0}) + \sum_{j \geq 1} Y_{1}^{j}| \leq \K^{2}$ under $ \mathbb{P}^\circ_p$. It follows then from the definition of the pointed exploration that the law of $ \mathbb{Y} = (\sum_j Y_1^j)$ satisfies the inequality  $$\mathbb{P}_p^\circ ( \mathbb{Y} = y)  \ \ \lesssim \ \  \mathbb{P}_p^\circ( \Delta S = -y) \underset{  \eqref{eq:tailnu} \ \& \ \mathrm{Lem. \ } \eqref{lem:wtildepoint}}{\lesssim} y^{-\beta} \frac{ \sqrt{p}}{ \sqrt{p-y}}\cdot \mathbbm{1}_{1 \leq y \leq p-1}.$$
In particular we have 
$$\mathbb{E}_p^ \circ \left[ \big( \sum_{j \geq 1}Y_1^j\big)^{\beta-1/2} \right ] \ \ \lesssim \ \ \sum_{\ell=1}^{p-1} \ell^{-\beta} \frac{ \sqrt{p}}{ \sqrt{p-\ell}} \ell^{\beta-1/2} \ \ \lesssim \ \ \sqrt{p}.$$
Plugging back into the penultimate display we find that 
$$  \mathbb{E}_p^ \circ \left[ \mathrm{Vol}^ \circ \mathbbm{1}_{ \mathcal{E}_p} \right] \ \ \lesssim \ \  A p^{\beta-1} \sqrt{Ap} = A^{ \frac{3}{2}}p^{\beta- \frac{1}{2}}.$$
This proves tightness since we have for $B > A >0$
  \begin{eqnarray*} \mathbb{P}_p^{\circ}( \mathrm{Vol}^\circ \geq B p^{\beta- \frac{1}{2}}) &\underset{ \mathrm{Markov}}{\leq}&  \mathbb{P}_p^\circ (\mathcal{E}_p^c) + \frac{\mathbb{E}_p^ \circ \left[ \mathrm{Vol}^ \circ \mathbbm{1}_{ \mathcal{E}_p} \right]}{B p^{\beta- \frac{1}{2}}},  \end{eqnarray*}
where $\mathcal{E}_p^c$ denotes the complementary event of $ \mathcal{E}_p$.  The first probability can be made arbitrarily small by taking $A$ large enough by Lemma \ref{lem:absorption_pointed},  while the second one can made arbitrarily small by adjusting $B$ large enough (once $A$ has been fixed).  

To get that case of $\bullet$ notice that by Lemma \ref{lem:volcompa}, we have for all large $A>0$ and $p$'s
$$ \mathbb{P}_{p}( \mathrm{Vol}^{\bullet} \geq A p^{\beta-1/2}) \leq \mathbb{P}_{p}( \mathrm{Vol}^{\circ} \geq \frac{A}{\delta} p^{\beta-1/2}) + \mathrm{e}^{-A p^{\beta-1/2}}.$$Uniform integrability of $(\mathrm{Vol}^\circ p^{-\beta+ \frac{1}{2}})$ under $ \mathbb{P}_{p}$ is the fact that 
$$\lim_{B\to \infty} \sup_{p \geq 1}\sum_{A=B}^{\infty} \mathbb{P}_{p}(\mathrm{Vol}^{\circ} \geq A p^{\beta-1/2}) =0,$$ and the similar statement is deduce for $ \mathrm{Vol}^{\bullet}$ using the previous two displays. 
\end{proof}
  
  Using a similar ideas  we prove that in the case $\beta = 3/2$  the $\circ$-volume has all its moments (the estimate is very rough since we in fact expect that $ p^{-1}\cdot\mathrm{Vol}^{\circ}$ has exponential moments under $ \mathbb{P}_{p}$). This contrasts with the case $\beta=5/2$ where already the second moment blows up.   This will enable us to settle the dichotomy $\beta \in \{3/2,5/2\}$ as being the same as $x < x_{\crit}$ or $x= x_{\crit}$ in Proposition \ref{prop:dichotomyxxc}.
  
  \begin{lemma}[Dichotomy (III), with volumes] \label{lem:polyvol} \label{dicho3} If $\beta =  \frac{3}{2}$ then we have 
  $$ \forall p, k \geq 0, \qquad \mathbb{E}_{p}\big[ \big(\mathrm{Vol}^{\bullet}\big)^{k}\big] < \infty \mbox{ and } \mathbb{E}_{p}\big[ \big(\mathrm{Vol}^{\circ}\big)^{k}\big] < \infty$$
  whereas if $\beta= \frac{5}{2}$ then 
  $$ \forall p \geq 0, \qquad \mathbb{E}_{p}\big[ \big(\mathrm{Vol}^{\bullet}\big)^{2}\big] = \infty \mbox{ and } \mathbb{E}_{p}\big[ \big(\mathrm{Vol}^{\circ}\big)^{2}\big] = \infty. $$
  \end{lemma}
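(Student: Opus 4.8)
The plan is to leverage the pointed key formula (Proposition \ref{prop:llprw}) together with the absorption/height estimates of Lemma \ref{lem:absorption_pointed}, exactly as in the proof of Lemma \ref{lem:ui}, but now iterating the volume bound to reach arbitrary moments in the case $\beta = 3/2$, and conversely exhibiting a divergent second moment when $\beta = 5/2$. By Lemma \ref{lem:volcompa} the moments of $\mathrm{Vol}^{\bullet}$ and $\mathrm{Vol}^{\circ}$ are comparable (simultaneously finite or infinite), so it suffices to work with $\mathrm{Vol}^{\circ}$.

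\medskip

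\textbf{Case $\beta = 3/2$.} I would set $m_k(p) := \mathbb{E}_p\big[(\mathrm{Vol}^{\circ})^{k}\big]$ and $m_k^{\circ}(p) := \mathbb{E}_p^{\circ}\big[(\mathrm{Vol}^{\circ})^{k-1}\big]$, so that by \eqref{eq:biasing} we have $m_k(p) = \widetilde{W}_p^{\circ}/\widetilde{W}_p \cdot m_k^{\circ}(p) \threesim p^{1/2} \, m_k^{\circ}(p)$ by \eqref{eq:tailnu} and Lemma \ref{lem:wtildepoint}. Decomposing $\mathrm{Vol}^{\circ}$ along the pointed branch as in Lemma \ref{lem:ui}, namely $\mathrm{Vol}^{\circ} \leq \tau + \sum_{i=1}^{\tau}\sum_{j\geq1}(\mathrm{Vol}^{\circ})_{Y^i_j}$ where the dangling subtrees are conditionally independent of law $\mathbb{P}_{Y^i_j}$, and expanding the $(k-1)$-th power, one gets a bound of $m_{k}^{\circ}(p)$ in terms of: (i) moments of $\tau$ under $\mathbb{P}_p^{\circ}$, which are finite and of order at most a power of $p$ (in fact $\tau$ has stretched-exponential tails by Lemma \ref{lem:absorption_pointed}); (ii) moments of $\sum_j Y_1^j$ under $\mathbb{P}_p^{\circ}$, which have polynomial tails $y^{-\beta}\sqrt{p}/\sqrt{p-y}$ as established inside the proof of Lemma \ref{lem:ui}; and (iii) lower-order quantities $m_{\ell}(Y_j^i)$ with $\ell < k$, over at most $\K$ siblings at each of the $\tau$ steps. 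Using $\sup_{i \le \tau} S_i \le \K p + \K\tau$ and the stretched-exponential tail of $\tau$, one caps all the labels $Y_j^i$ polynomially in $p$ with overwhelming probability; combined with the induction hypothesis $m_{\ell}(q) \lesssim q^{\text{poly}}$ for $\ell < k$, Hölder's inequality on the multinomial expansion closes the induction and shows $m_k(p) < \infty$ for all $k$ (in fact $m_k(p) \lesssim p^{C_k}$ for some $C_k$). The base case $k = 1$ is $\mathbb{E}_p[\mathrm{Vol}^{\circ}] \threesim p^{\beta - 1/2} = p$, already recorded below \eqref{eq:biasing}.

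\medskip

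\textbf{Case $\beta = 5/2$.} Here I would argue that already $\mathbb{E}_p[(\mathrm{Vol}^{\circ})^2] = \infty$. The point is that $\mathbb{E}_p[\mathrm{Vol}^{\circ}] \threesim p^{\beta - 1/2} = p^{2}$, and along the pointed branch the walk $(S)$ under $\mathbb{P}_p^{\circ}$ can, at a single step, make a jump of size $\approx q$ for $q$ up to $\approx p$, creating a sibling subtree rooted at a vertex of label $\approx q$ whose expected $\circ$-volume is $\approx q^{2}$. By conditional independence of the dangling subtrees, $\mathbb{E}_p[(\mathrm{Vol}^{\circ})^2] \geq \mathbb{E}_p^{\text{(something)}}$ picking up a term $\sum_j \mathbb{E}_{Y_1^1}[\mathrm{Vol}^{\circ}]^2 \approx (Y_1^1)^{2(\beta-1/2)} = (Y_1^1)^{4}$, and under $\mathbb{P}_p$ the variable $Y_1^1$ has tail $\approx y^{1-\beta} = y^{-3/2}$ by \eqref{eq:tailnu}, so $\mathbb{E}_p[(Y_1^1)^{4}] = \infty$ already. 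More carefully: using the multitype branching structure (Proposition \ref{prop:GW}), $\mathrm{Vol}^{\circ}$ stochastically dominates $\sum_{j} (\mathrm{Vol}^{\circ})_j$ summed over the children of the root, where conditionally on their labels $p_1, \dots, p_k$ the $(\mathrm{Vol}^{\circ})_j$ are independent with means $\threesim p_j^{2}$; hence $\mathbb{E}_p[(\mathrm{Vol}^{\circ})^2] \geq \sum_j \mathbb{E}_p[p_j^{4}] \gtrsim \mathbb{E}_p[(\max_j p_j)^{4}]$, and since the label of the locally-largest child is $p - S_0 + S_1$ with $p - S_1 + S_0$ having the law $\nu(-\cdot)$ in the limit, it has polynomial tail $\approx j^{-3/2}$, whose fourth moment diverges. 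A short tightness/Fatou argument (using the weak convergence in Proposition \ref{prop:nu_proba} and monotonicity in $p$ via translation invariance) upgrades this to $\mathbb{E}_p[(\mathrm{Vol}^{\circ})^2] = \infty$ for every fixed $p \geq 0$, and Lemma \ref{lem:volcompa} transfers it to $\mathrm{Vol}^{\bullet}$.

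\medskip

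\textbf{Main obstacle.} The delicate point is the induction in the case $\beta = 3/2$: when expanding $\big(\tau + \sum_{i,j}(\mathrm{Vol}^{\circ})_{Y^i_j}\big)^{k-1}$, cross-terms couple the (random) number of summands $\tau$, the labels $Y^i_j$ (which are themselves correlated with $\tau$ and with the running maximum of $S$), and the subtree volumes; one must organize the estimate so that the stretched-exponential control on $\tau$ and the polynomial tail on $\sum_j Y^i_j$ beat the polynomial growth $m_{\ell}(q) \lesssim q^{C_{\ell}}$ coming from the induction hypothesis. I expect this bookkeeping — rather than any single inequality — to be the real work; everything else reduces to the estimates already in hand (Lemmas \ref{lem:wtildepoint}, \ref{lem:absorption_pointed}, \ref{lem:tailhitting}, and the tail computations inside the proof of Lemma \ref{lem:ui}).
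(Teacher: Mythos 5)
Your treatment of the case $\beta = 3/2$ is essentially the paper's argument: the same induction $m_{k+1}(p)\lesssim p\cdot\mathbb{E}_p^\circ[(\tau+\sum_{i,j}\mathrm{Vol}(Y_j^i))^k]$, the same crude bound $Y_j^i\leq p+\K\tau+\K$, and the same appeal to the stretched-exponential tail of $\tau$ from Lemma \ref{lem:absorption_pointed} to close the induction, so that part is fine.

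The case $\beta = 5/2$, however, contains a genuine gap. You argue that $\mathbb{E}_p[(\mathrm{Vol}^\circ)^2]\geq\mathbb{E}_p[\sum_j p_j^4]$ (correct, via Jensen on each subtree) and then assert that this is infinite because the locally largest child's label has a heavy $\approx j^{-3/2}$ tail. But that tail is the \emph{limiting} tail of $\nu$ as $p\to\infty$; for a \emph{fixed} $p$ the parking constraints \eqref{eq:sommedelta} force every child label to satisfy $p_j\leq p+\K^2$, so $\mathbb{E}_p[\sum_j p_j^4]\leq \K(p+\K^2)^4<\infty$. The same objection applies to the claim "$\mathbb{E}_p[(Y_1^1)^4]=\infty$ already": $Y_1^1$ is deterministically bounded once $p$ is fixed. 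The "tightness/Fatou" remark does not repair this, since Fatou would only give a finite lower bound and translation invariance of the weights does not imply monotonicity of $\mathbb{E}_p[(\mathrm{Vol}^\circ)^2]$ in $p$. In short, a single generation cannot produce the divergence for a fixed small $p$; it has to come from the walk exploring arbitrarily far over many generations.

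This is exactly what the paper's argument does, and it is a different route: start the $\nu$-walk at $\K+1$, use the cyclic lemma together with the local limit theorem to get $\mathbb{P}_{\K+1}^{\RW}(\tau_\K=n,\,S_{\tau_\K}=\K)\gtrsim n^{-5/3}$, observe that on this event $\mathrm{Vol}^\bullet\geq\tau\geq n$ and the Radon--Nikodym factor $\widetilde W^\circ_\K/\widetilde W^\circ_{\K+1}$ in the pointed key formula (Proposition \ref{prop:llprw}) is a positive constant, so $\mathbb{E}_{\K+1}^\circ[\mathrm{Vol}^\bullet]\gtrsim\sum_n n\cdot n^{-5/3}=\infty$. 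Unwinding the size-biasing identity \eqref{eq:biasing} then gives $\mathbb{E}_{\K+1}[\mathrm{Vol}^\bullet\,\mathrm{Vol}^\circ]=\infty$, hence $\mathbb{E}_{\K+1}[(\mathrm{Vol}^\bullet)^2]=\infty$, and the statement for general $p$ follows. To fix your proposal you would need to replace the one-generation fourth-moment computation by an estimate of this flavour on the whole pointed branch, using the $n^{-5/3}$ tail of the return time rather than the (irrelevant, for fixed $p$) tail of $\nu$.
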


  \begin{proof} Recall that by Lemma \ref{lem:volcompa}, $ \mathrm{Vol}^ \circ$ and $ \mathrm{Vol}^ \bullet$ have the same finite moments. Let us start with the first item and suppose $\beta=3/2$. We will actually show by induction on $k \geq 1$ that for all $ p \geq 0$,
  \begin{eqnarray} \label{eq:HR} \mathbb{E}_{p}[ (\mathrm{Vol}^{\circ})^{k}] \lesssim p^{d_k},  \end{eqnarray}
for some degree $d_k\geq 1$.  For $k=1$, this is granted by \eqref{eq:biasing}. Let us now fix $ k \geq1$ and suppose \eqref{eq:HR} up to power $k$. By \eqref{eq:biasing} again we have
\begin{eqnarray*} 
\mathbb{E}_p \left[ ( \mathrm{Vol}^ \circ)^{k+1} \right ] &=&\mathbb{E}_p^ \circ \left[ ( \mathrm{Vol}^ \circ)^{k} \right ]  \frac{ \widetilde{W}_p^ \circ}{ \widetilde{W}_p} \\ &\lesssim& p \cdot \mathbb{E}_p^ \circ \left[ ( \mathrm{Vol}^ \circ)^{k} \right ]\\
& \lesssim & p \cdot  \mathbb{E}_p^ \circ \left[ \left( \tau +  \sum_{i =1}^{\tau} \sum_{j \geq1}  \mathrm{Vol}(Y_j^i)\right)^{k} \right ]  
\end{eqnarray*}
where $ \mathrm{Vol}(Y_{j}^{i})$ is the $\circ$-volume of the tree starting from the particle of label $Y_{j}^{i}$. Once we condition on $\tau, (Y_{i}^{j})_{1 \leq i \leq \tau, j \geq 0}$,  one can expand the sum and get at most $(\tau ( \K+1))^{k}$ term, each of them being of the form 
$$   \tau^{\ell_{1}} \big( \mathrm{Vol}(Y_{j_{2}}^{i_{2}})\big)^{\ell_{2}} \cdots \big( \mathrm{Vol}(Y_{j_{k}}^{i_{k}})\big)^{\ell_{k}},$$ with exponents $\ell_{\cdot} \leq k$ and $Y_{j}^{i} \leq \max S + \K \leq p + \K \tau + \K$ so that taking expectation and using the induction hypothesis the penultimate display is (crudely) bounded above by 
$$ \lesssim\ \  p \cdot \mathbb{E}_{p}^{\circ}\left[ \tau^{2k}  \left( (p + \K \tau ) ^{ d_{k}}\right)^{k}\right].$$
By the first item of Lemma \ref{lem:absorption_pointed} we know that $\tau$ has exponential tail in the scale $ \sqrt{p}$ so that the above display is bounded above by $ \lesssim p^{k + p\cdot {d}_{k}\cdot k +1}$ as desired.

In the case $\beta = 5/2$, consider a $\nu$-random walk started from $\K+1$ and stop it when it drops below $\K$. By the cyclic lemma and the local limit theorem \cite[Theorem 4.2.1]{IL71} we have 
  \begin{eqnarray}  \mathbb{P}_{\K+1}^{\RW}( \tau_{\K} = n \mbox{ and } S_{\tau_{\K}} =\K) &\underset{ \mathrm{cyclic}}{\geq}&  \frac{1}{n} \mathbb{P}_{0}^{\RW}(S_{n}=-1)\\
  & \underset{ \mathrm{LCLT}}{ \gtrsim} & \frac{n^{{-2/3}}}{n}  = n^{-5/3}.  \end{eqnarray}
Notice that on this event we obviously have $ \mathrm{Vol}^{\bullet} \geq n$.  
Using the pointed Key formula, we thus deduce that $$ \mathbb{E}_{\K+1}[ \mathrm{Vol}^{\bullet} \mathrm{Vol}^{\circ}] =  \mathbb{E}_{\K+1}^{\circ}[ \mathrm{Vol}^{\bullet}] \gtrsim \sum_{n} n^{-5/3} \cdot n = \infty,$$ which implies by \eqref{eq:biasing} that $\mathbb{E}_{\K+1}[ (\mathrm{Vol}^{\bullet})^2] = \infty$.  
\end{proof}

\section{Proofs of the main results}
We can finally reap the rewards of our work. First of all, we will show that the dichotomy $ \beta  \in \{3/2 , 5/2\}$ is nothing more than the dichotomy $ x< x_{\crit}$ or $x= x_{\crit}$. We then verify the assumptions of the powerful invariance principle of \cite{bertoin2024self} to prove Theorem \ref{thm:scaling}. The convergence of the volume is then used to ultimately prove Theorem \ref{thm:5/2}. Let us start with a discussion on the profound algebraicity result of Bousquet--M\'elou and Jehanne \cite{BMJ06} which we have not yet used. \medskip 

The influential result \cite[Theorem 3]{BMJ06} of Bousquet--M\'elou and Jehanne shows that the solution $F(x,y)$ to  \eqref{eq:1} is algebraic. In particular, our partition function $x \mapsto W_{0}^{x}$ is also algebraic so that it is amenable to standard singularity analysis \cite[Chapter VII]{FS09}: Under the last item of our standing assumptions $(*)$, it is a unique dominant singularity so that we have 
  \begin{eqnarray} [x^{n}] W_{0}^{x} \approx (x_{\crit})^{-n} \cdot n^{-\alpha},   \label{eq:BMJtail}\end{eqnarray}
for some $\alpha >0$ which we will  pin down in this section. As a first remark, notice that $W_{0}^{x}$ blows up polynomially as $x=x_{\crit}$ and combined with Lemma \ref{lem:polyvol} this settle the dichotomy that we proved along Propositions   \ref{prop:nodrift}, \ref{dicho2}  and \ref{dicho3} (recall that in the previous sections the dependence in $x$ was implicit)

\begin{proposition}[Dichotomy (IV), via $x$] \label{prop:dichotomyxxc} The dichotomy $\beta \in \{3/2, 5/2\}$ corresponds to 
 $$  \mathbb{E}[\nu]= \infty \iff \beta = \frac{3}{2} \iff x < x_\crit \quad \mbox{ and } \quad  \mathbb{E}[\nu] =0 \iff \beta = \frac{5}{2} \iff x=x_\crit.$$
\end{proposition}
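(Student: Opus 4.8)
The plan is to combine the already-established three dichotomies (Propositions~\ref{prop:nodrift}, \ref{dicho2}, \ref{dicho3}) with the algebraicity-driven asymptotic~\eqref{eq:BMJtail} to decide which branch of the dichotomy corresponds to which regime of $x$. Since the equivalences $\mathbb{E}[\nu]=\infty\iff\beta=3/2$ and $\mathbb{E}[\nu]=0\iff\beta=5/2$ are already recorded in Proposition~\ref{dicho2}, the only thing left is to pin down the relation with $x$. I would first treat the \emph{subcritical} case $x<x_{\crit}$: here, by Lemma~\ref{lem:xcyc}, all the functions $z\mapsto W_p^z=[y^p]F(z,y)$ are analytic in a neighbourhood of $x$, so in particular $\partial_x W_p^x=W_p^{\bullet,x}/x<\infty$, which means $\mathbb{E}_p^x[\mathrm{Vol}^\bullet]<\infty$ for every $p$. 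But by the second part of Lemma~\ref{lem:polyvol} (Dichotomy (III)), if $\beta^x=5/2$ then $\mathbb{E}_p^x[(\mathrm{Vol}^\bullet)^2]=\infty$ already for the second moment; a fortiori (or more carefully: the argument there actually gives $\mathbb{E}_{\K+1}^x[\mathrm{Vol}^\bullet]=\infty$ when one does not square, via $\sum_n n^{-5/3}\cdot n$, wait—one must check the first moment, see below) one deduces $\beta^x\neq 5/2$, hence $\beta^x=3/2$.

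Let me be a bit more careful about that last point, since it is the crux. For $x<x_{\crit}$ we genuinely have $\mathbb{E}_p^x[\mathrm{Vol}^\bullet]=W_p^{\bullet,x}/W_p^x<\infty$ for all $p$. If we were in the case $\beta^x=5/2$, then by the cyclic-lemma/local-limit-theorem estimate in the proof of Lemma~\ref{lem:polyvol}, $\mathbb{P}_{\K+1}^{\RW}(\tau_\K=n,\ S_{\tau_\K}=\K)\gtrsim n^{-5/3}$, and combined with the pointed Key formula this already forces $\mathbb{E}_{\K+1}^\circ[\mathrm{Vol}^\bullet]\gtrsim\sum_n n^{-5/3}\cdot n=\infty$; then \eqref{eq:biasing} with $f\equiv 1$ shows $\mathbb{E}_{\K+1}[(\mathrm{Vol}^\circ)^2]=\infty$. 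One then needs to upgrade this to $\mathbb{E}_{\K+1}[\mathrm{Vol}^\bullet]=\infty$: here Lemma~\ref{lem:wtildepoint} gives $\widetilde W_p/\widetilde W_p^\circ\threesim p^{\beta-1/2}$ so $\mathbb{E}_p[\mathrm{Vol}^\circ]\threesim p^{\beta-1/2}=p^2$; but then $\mathbb{E}_p[\mathrm{Vol}^\bullet]=W_p^{\bullet,x}/W_p^x$, and translation-invariance forces $W_p^{\bullet,x}$ to grow like a fixed power of $p$ away from $x_{\crit}$ with the \emph{same} $p$-asymptotics structure as $W_p^x$ (both satisfy comparable Tutte-type systems), whereas $\mathbb{E}_p[\mathrm{Vol}^\circ]\preceq\mathbb{E}_p[\mathrm{Vol}^\bullet]$ would then have to be bounded by a power of $p$—consistent—so the cleanest route is instead: in the $\beta=5/2$ regime, $W_p^{\bullet,x}=\partial_x W_p^x\cdot x$, and if this were finite for $x<x_{\crit}$ uniformly it would contradict $\mathbb{E}_p[(\mathrm{Vol}^\bullet)^2]=\infty$ only through a second-moment statement, not a first. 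So the honest argument is the reverse implication: show $x=x_{\crit}\Rightarrow\beta=5/2$ first, by proving $\mathbb{E}_p^{x_{\crit}}[\mathrm{Vol}^\bullet]=\infty$ from \eqref{eq:BMJtail} (since $W_0^x$ blows up polynomially as $x\uparrow x_{\crit}$ with exponent $1-\alpha<1$, i.e. $\alpha<2$, we get $\partial_x W_0^x=+\infty$ at $x=x_{\crit}$, hence $\mathbb{E}_0^{x_{\crit}}[\mathrm{Vol}^\bullet]=\infty$), which by Lemma~\ref{lem:polyvol} is incompatible with $\beta^{x_{\crit}}=3/2$ (where all moments are finite), forcing $\beta^{x_{\crit}}=5/2$; and conversely for $x<x_{\crit}$, analyticity gives $\mathbb{E}_p^x[\mathrm{Vol}^\bullet]<\infty$, which by the contrapositive of the $\beta=5/2$ half of Lemma~\ref{lem:polyvol} forces $\beta^x=3/2$.

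So the structure is: (i) for $x<x_{\crit}$, $W_p^{\bullet,x}<\infty$ by Lemma~\ref{lem:xcyc}, hence (contrapositive of Dichotomy (III), $\beta=5/2$ case) $\beta^x\neq 5/2$, hence $\beta^x=3/2$ by Dichotomy (II); (ii) for $x=x_{\crit}$, algebraicity and the unique-dominant-singularity assumption give \eqref{eq:BMJtail} with some $\alpha$; combined with Lemma~\ref{lem:polyvol}—which shows that $\beta^{x_{\crit}}=3/2$ would make all moments of $\mathrm{Vol}^\bullet$ finite, in particular $\mathbb{E}_0^{x_{\crit}}[\mathrm{Vol}^\bullet]=W_0^{\bullet,x_{\crit}}<\infty$—we need this to be contradicted; the contradiction comes from the polynomial blow-up of $W_0^x$ near $x_{\crit}$: if all moments of $\mathrm{Vol}^\bullet$ were finite at $x_{\crit}$ then in particular $x\mapsto W_0^x$ would be differentiable from the left at $x_{\crit}$ with finite derivative, forcing the singularity exponent $\alpha$ in \eqref{eq:BMJtail} to satisfy $\alpha>2$; but Lemma~\ref{lem:polyvol} in the $\beta=3/2$ case gives more, namely $\mathbb{E}_p[(\mathrm{Vol}^\bullet)^k]<\infty$ for all $k$, so all left-derivatives of $W_0^x$ at $x_{\crit}$ are finite, whence $W_0^x$ extends analytically past $x_{\crit}$—impossible since $x_{\crit}$ is its radius of convergence and it has non-negative coefficients (Pringsheim). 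Hence $\beta^{x_{\crit}}=5/2$. Finally, chaining with Dichotomy (II) and Dichotomy (I) gives the full string of equivalences. The main obstacle is step (ii): making rigorous the passage ``all moments of $\mathrm{Vol}^\bullet$ finite at $x_{\crit}$'' $\Rightarrow$ ``$W_0^x$ analytic at $x_{\crit}$'', which requires controlling the interchange of the derivative $\partial_x^k$ with the series defining $W_0^x$ and relating $\partial_x^k W_0^x|_{x_{\crit}}$ to factorial moments of $\mathrm{Vol}^\bullet$ under $\mathbb{P}_0^{x_{\crit}}$ via monotone convergence (all coefficients being non-negative, this is in fact routine: $\partial_x^k W_0^x=\sum_n n(n-1)\cdots(n-k+1)[x^n]W_0^x\,x^{n-k}$, and $\mathbb{E}_0^{x_{\crit}}[(\mathrm{Vol}^\bullet)^{\underline k}]=(x_{\crit})^k\partial_x^k W_0^x|_{x_{\crit}}/W_0^{x_{\crit}}$, so finiteness of all these moments plus Pringsheim's theorem on the non-negative-coefficient series $W_0^x$ yields the contradiction).
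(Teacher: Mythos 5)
Your overall strategy matches the paper's: combine Dichotomies (I)--(III) with the algebraicity asymptotic \eqref{eq:BMJtail} to sort the two branches according to $x<x_{\crit}$ versus $x=x_{\crit}$. Two places in your write-up need repair.

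In step (i), as finally summarized, you derive $\beta^x\neq 5/2$ from the contrapositive of Lemma~\ref{lem:polyvol} after only establishing $\mathbb{E}_p^x[\mathrm{Vol}^\bullet]<\infty$. Lemma~\ref{lem:polyvol} in the $\beta=5/2$ case says $\mathbb{E}_p[(\mathrm{Vol}^\bullet)^2]=\infty$, so its contrapositive needs a finite \emph{second} moment, not a finite first moment. You notice the problem mid-proof ("wait---one must check the first moment'') but then revert to the unrepaired version in your numbered summary. The cheap fix is the paper's route: for $x<x_{\crit}$ one has $[y^p]F(xe^\varepsilon,y)<\infty$ for small $\varepsilon>0$, hence $\mathbb{E}_p^x[\exp(\varepsilon\,\mathrm{Vol}^\bullet)]<\infty$ and in particular the second (indeed every) moment is finite. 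Your ``analytic in a neighbourhood'' observation gives the same thing, but only if you extract all derivatives rather than just $\partial_x$.

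In step (ii), the implication you invoke---finiteness of all left-derivatives of $W_0^x$ at $x_{\crit}$ together with Pringsheim forces analytic continuation past $x_{\crit}$---is false in general. A power series with non-negative coefficients can have all derivatives finite at its radius of convergence without extending: take $\sum_n e^{-\sqrt n}\,x^n$, which has radius $1$, every $\partial_x^k$ finite at $1$, yet no continuation. What actually closes the argument is the explicit asymptotic \eqref{eq:BMJtail}, $[x^n]W_0^x\approx x_{\crit}^{-n}n^{-\alpha}$ for a \emph{finite} $\alpha$, which directly means $\mathbb{P}_0^{x_{\crit}}(\mathrm{Vol}^\bullet=n)\approx n^{-\alpha}$ and hence $\mathrm{Vol}^\bullet$ has no moment of order $\geq\alpha-1$; this is incompatible with the all-moments-finite conclusion of Lemma~\ref{lem:polyvol} in the $\beta=3/2$ case. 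You actually have the relevant factorial-moment identity written down, but you pin the contradiction on the (wrong) Pringsheim step rather than on the finiteness of $\alpha$. The paper's one-line version---``$\mathrm{Vol}^\bullet$ has no moment of order larger than $\alpha$ at $x=x_{\crit}$''---is exactly the clean form of your argument.

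Both gaps are repairable with tools already on your table, so the proof is morally correct, but as written neither direction of the dichotomy is actually established.
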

\begin{proof} Notice that when $x < x_{\crit}$ then $[y^{p}]F(x \mathrm{e}^{ \varepsilon},p) < \infty$ for some $ \varepsilon>0$, which implies that \linebreak[4]
$ \mathbb{E}_{p}\big[ \exp( \varepsilon \mathrm{Vol}^{\bullet})\big]< \infty$ hence $ \mathbb{E}_{p}\big[ (\mathrm{Vol}^{\bullet})^{2}\big]< \infty$, so that by Lemma \ref{lem:polyvol}, we are in the $\beta = 3/2$ case. Conversely, if we are in the $\beta=3/2$ case then $\mathrm{Vol}^{\bullet}$ must have all its moments.  This implies that $x$ cannot be $x_{\crit}$ since by \eqref{eq:BMJtail} the variable $ \mathrm{Vol}^{\bullet}$ has no moment of order larger than $\alpha$ at $x= x_{\crit}$. \end{proof} 

Another consequence of algebraicity is the following sharpening of Lemma \ref{lem:wtildepoint}:
\begin{proposition} \label{prop:equiv!}Regardless of $\beta \in \{3/2, 5/2\}$, the pointed partition functions  satisfy
\begin{eqnarray*} \widetilde{W}_{p}^{\circ} \ \ \approx \ \  p^{-1/2} \quad \mbox{ and } \quad \widetilde{W}_{p}^{\bullet} \ \ \approx \ \  p^{-1/2} \quad \mbox{ as } p \to \infty,  \end{eqnarray*} where we recall that $a_{n}\approx b_{n}$ means that $(a_{n}/b_{n})$ converges towards a positive constant as $n \to \infty$.
\end{proposition}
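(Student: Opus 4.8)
The plan is to upgrade the two-sided bounds $\widetilde{W}_{p}^{\circ} \threesim p^{-1/2}$ and $\widetilde{W}_{p}^{\bullet} \threesim p^{-1/2}$ from Lemma~\ref{lem:wtildepoint} and Lemma~\ref{lem:volcompa} to genuine asymptotic equivalences $\widetilde{W}_{p}^{\circ} \approx p^{-1/2}$ and $\widetilde{W}_{p}^{\bullet} \approx p^{-1/2}$, by invoking the algebraicity of $F(x,y)$ provided by Bousquet--M\'elou and Jehanne \cite{BMJ06}. First I would note that, for fixed $x \in (0, x_{\crit}]$, since $F(x,y)$ is algebraic in $(x,y)$, so is $y \mapsto [y^{p}] x\,\partial_x F(x,y) = W_{p}^{\bullet,x}$ as a power series in $y$ (the derivative of an algebraic series is algebraic, and specializing a variable preserves algebraicity). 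By the classical transfer theorem for algebraic generating functions \cite[Theorem VII.8]{FS09}, together with the aperiodicity already established (as in the proof of Lemma~\ref{lem:asympW}, using the translation-invariance bound \eqref{eq:aperiodic} to rule out a nontrivial period and to force a single dominant singularity), the coefficients $\widetilde{W}_{p}^{\bullet} = (y_{\crit}^x)^{p} W_{p}^{\bullet,x}$ admit a \emph{clean} Puiseux asymptotic of the form $\widetilde{W}_{p}^{\bullet} \sim c \cdot p^{-\beta'}$ for some $c \in \mathbb{R}$ and $\beta' \in \mathbb{Q}$. Combined with the sandwich $\widetilde{W}_{p}^{\circ} \leq \widetilde{W}_{p}^{\bullet}$ and $\widetilde{W}_{p}^{\bullet} \threesim p^{-1/2}$ (the latter via Lemma~\ref{lem:volcompa}, which makes $\mathrm{Vol}^{\bullet}$ and $\mathrm{Vol}^{\circ}$ comparable in all moments, hence $\widetilde{W}_{p}^{\bullet} = \widetilde{W}_p \cdot \mathbb{E}_p[\mathrm{Vol}^{\bullet}] \threesim p^{\beta} \cdot p^{-\beta+1/2} \cdot$ wait---one must be a bit careful here, let me instead use directly that $\widetilde{W}_{p}^{\circ} \threesim p^{-1/2}$ from Lemma~\ref{lem:wtildepoint} and $\widetilde{W}_{p}^{\circ} \leq \widetilde{W}_{p}^{\bullet} \lesssim \widetilde{W}_{p}^{\circ}$ from Lemma~\ref{lem:volcompa}), we get $\beta' = 1/2$ and $c > 0$. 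This yields $\widetilde{W}_{p}^{\bullet} \sim c\, p^{-1/2}$, i.e. $\widetilde{W}_{p}^{\bullet} \approx p^{-1/2}$.

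For the $\circ$-version, the generating function $\sum_p y^p W_p^{\circ,x}$ is not simply a derivative of $F$, but it satisfies its own Tutte-type linear equation over the already-algebraic series $(W_p^x : p \geq 0)$: indeed, writing out the decomposition at the root as in the displayed computation preceding \eqref{eq:htransfos1}, the series $G(x,y) := \sum_p y^p W_p^{\circ,x}$ satisfies a linear functional equation in $y$ with coefficients that are polynomial in $y$, in $F$, and in the discrete differences $\Delta^{(i)}F$, plus the inhomogeneous term $W_{\K}^x y^{\K}$. Since $F$ is algebraic and these operations (polynomial combinations, discrete differences, solving a linear equation) preserve algebraicity, $y \mapsto G(x,y)$ is algebraic, with finite radius of convergence equal to $y_{\crit}^x$ (it is squeezed between multiples of $F$ by $\widetilde{W}_p^{\circ} \threesim \widetilde{W}_p^{\bullet}$ and the known radius of $F$). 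Applying \cite[Theorem VII.8]{FS09} and aperiodicity once more gives $\widetilde{W}_p^{\circ} \sim c' p^{-\beta''}$, and the sandwich from Lemma~\ref{lem:wtildepoint} forces $\beta'' = 1/2$ and $c' > 0$. Hence $\widetilde{W}_p^{\circ} \approx p^{-1/2}$.

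I expect the main obstacle to be the bookkeeping needed to justify rigorously that $G(x,y)$ is algebraic and has a \emph{single} dominant singularity of square-root-type (i.e.\ that no oscillating or log-periodic term survives), rather than the identification of the exponent, which is already pinned by the earlier two-sided estimates. Concretely, one must check: (i) that the linear equation defining $G$ genuinely has algebraic coefficients of the right kind after specializing $x$, which requires recording that the discrete differences $\Delta^{(i)}F(x,\cdot)$ are algebraic in $y$ for fixed $x$ (clear, since subtracting finitely many monomials and dividing by $y^i$ preserves algebraicity); (ii) that the aperiodicity assumption \eqref{ass:ap}---or more precisely the consequence \eqref{eq:aperiodic}---indeed rules out a period $b > 1$ for the coefficients of $G$, by the same maximum-principle argument used at the end of the proof of Proposition~\ref{prop:nu_proba} or the one forcing $\beta_j^x$ to be $j$-independent in Lemma~\ref{lem:asympW}; and (iii) that $G(x, y_{\crit}^x)$ is finite (so that the singularity is ``of Puiseux type'' and the transfer theorem applies), which follows from $\widetilde{W}_p^{\circ} \threesim p^{-1/2}$ and $1/2 < 1$. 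Once these are in place, the conclusion $\widetilde{W}_p^{\circ} \approx \widetilde{W}_p^{\bullet} \approx p^{-1/2}$ is immediate, and this is precisely the sharpening of Lemma~\ref{lem:wtildepoint} needed downstream to extract the constant $C^{x_{\crit}}$ in the statement of Theorem~\ref{thm:5/2}.
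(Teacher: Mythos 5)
Your approach is essentially the same as the paper's: establish algebraicity of $F^{\bullet}(x,\cdot)$ by differentiating the defining polynomial equation and eliminating, apply \cite[Theorem VII.8]{FS09} to get a Puiseux-type asymptotic $\widetilde{W}_p^{\bullet} \sim C_j p^{-1/2}$ along residue classes mod~$b$, pin the exponent $1/2$ by the two-sided bounds from Lemma~\ref{lem:wtildepoint} and Lemma~\ref{lem:volcompa}, and then remove the possible period $b>1$ by the maximum-principle argument from Proposition~\ref{prop:nu_proba}. The paper likewise leaves the $\circ$-case ``similar and to the reader'', and your sketch of the linear functional equation for $G=\sum_p y^p W_p^{\circ,x}$ is a reasonable way to supply that detail.

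Two small corrections. First, your opening assertion that aperiodicity already ``rules out a nontrivial period and forces a single dominant singularity'' is too strong: the sandwich \eqref{eq:aperiodic} only forces the exponents $\beta_j$ to coincide across residue classes, not the constants $C_j$, nor $b=1$. The period removal genuinely needs the super/sub-harmonicity and maximum-principle step, which is exactly what you list under~(ii) and what the paper uses; so your prose under-sells the role of~(ii), but the logic is there. Second, point~(iii) is simply false: with $\widetilde{W}_p^{\circ}\threesim p^{-1/2}$ one has $\sum_p \widetilde{W}_p^{\circ} = \infty$, i.e.\ $G(x,y_{\crit}^x) = \infty$ (and likewise $F^{\bullet}(x,y_{\crit}^x)=\infty$); the inequality you need for summability is $1/2 > 1$, not $1/2 < 1$. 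Fortunately this mis-step is harmless, since \cite[Theorem VII.8]{FS09} applies to any singularity of an algebraic function, whether the value at the singularity is finite or not — the Puiseux expansion and coefficient transfer hold regardless. So the conclusion stands, but you should delete the finiteness claim in~(iii) rather than rely on it.
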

\begin{proof} Recall that $ \sum_{p\geq 0} W_{p}^{\bullet} y^{p} = x \partial_{x} F(x,y) =: F^{\bullet}(y)$. Proceed as in the proof of Lemma \ref{lem:asympW}, start from \eqref{eq:1}, multiply by a large power of $y$, differentiate with respect to $x$ and then set $x$ to a fixed value yields a polynomial equation $ P_{1}(y, F(y),F^{\bullet}(y))=0$. Combining this equation with the equation $P_{2}(y,F(y))=0$ already obtained in Lemma \ref{lem:asympW} yields an equation $P_{3}(y, F^{\bullet}(y))=0$ proving algebraicity of $F^{\bullet}$. As in the proof of Lemma \ref{lem:asympW} we deduce that $\widetilde{W}_{p}^{\bullet}$ satisfy the asymptotic 
$$ \widetilde{W}_{p}^{\bullet} \underset{\substack{p \to + \infty\\ p \equiv j\, \mathrm{mod}\, b}}{\sim} C_j \cdot  p^{- 1/2},$$ for some constants $C_{0}, ... , C_{b-1}$ and a period $b \in \{1,2, ... \}$. We now follow the same proof as in Proposition \ref{prop:nu_proba} and argue that we have 
$$ C_j \geq \sum_{-j \leq q \leq b -1- j} C_{j+q} \nu(b \mathbb{Z} + (j+q)),$$
which is sufficient to imply the equality of those constants by the maximum principle. The case of $ \widetilde{W}^{\circ}_{p}$ is similar and left to the reader.\end{proof}

\subsection{Scaling limits of fully parked trees}
Armed with the estimates gathered in the preceding sections, we can now embark in the proof of our main results Theorem \ref{thm:5/2} and Theorem \ref{thm:scaling}. We shall actually first prove Theorem \ref{thm:scaling} since it mainly consists in checking the assumptions of  the general invariance principle established in \cite{bertoin2024self}.  
Let us recall it here for the reader's convenience: Suppose $(*)$ and denote by $\mu_{ \mathbf{t}}$ either the counting measure or the counting measure on vertices of label $\K$ in $ \mathbf{t}$. Then there exists some constants $ s^{x},v^{x}>0$ such that 
\begin{itemize}
\item When $x < x_{\crit}$ 
$$  \left(    s^{x} \cdot \frac{ \mathrm{t}}{ p^{1/2}}, \frac{\phi}{p}, v^{x} \cdot \frac{\mu_ \mathbf{t}}{p}\right) \mbox{ under } \mathbb{P}_p \xrightarrow[p\to\infty]{(d)}  \boldsymbol{ \mathcal{T}}_{1/2},$$
\item When $x=x_{\crit}$
$$  \left(   s^{x_{\crit}} \cdot \frac{ \mathrm{t}}{ p^{3/2}}, \frac{\phi}{p}, v^{x_{\crit}}\cdot \frac{\mu_ \mathbf{t}}{ p^{2}}\right) \mbox{ under } \mathbb{P}_p \xrightarrow[p\to\infty]{(d)}  \boldsymbol{ \mathcal{T}}_{3/2},$$
\end{itemize} 
the above convergence holds for the Gromov--Hausdorff--Prokhorov hypograph convergence developed in \cite{bertoin2024self}. We also recall that self-similar Markov trees are random decorated trees $ \boldsymbol{ \mathcal{T}}= ( \mathcal{T}, \mathrm{d}, \rho, g, \mu)$ where $( \mathcal{T}, \mathrm{d},\rho)$ is a rooted real tree, $g : \mathcal{T} \to \mathbb{R}_+$ is its decoration and $\mu$ is a probability measure on it. In the case of the Brownian CRT $ \boldsymbol{\mathcal{T}}_{1/2}$ we have $g_{1/2}(x) = \mu ( \mathcal{T}_{1/2}[x])$ where $ \mathcal{T}_{1/2}[x]$ is the subtree above the point $x \in \mathcal{T}_{1/2}$. We refer the reader to \cite[Examples 3.6, 3.9 + remark after it]{bertoin2024self} for details of the construction of those objects and for the precise notion of convergence. In the rest of this work, we shall only use the down-to-earth convergence of the total mass $$  v^{x_\crit} \cdot  \frac{ \mathrm{Vol}^\circ}{ p^{2}}  \mbox{ under } \mathbb{P}_p  \xrightarrow[p\to\infty]{(d)}  \mu( \mathcal{T}_{3/2}),$$
where the last law is a size-biased transform of a 1/2-stable law (in particular for us with positive density over $ \mathbb{R}_+$).

\begin{proof}[Proof of Theorem \ref{thm:scaling}]
We need to check the Assumptions of Theorem 6.9 of \cite{bertoin2024self}. \\
The Assumption 6.8 there is directly given by Lemma \ref{lem:ui} and Proposition \ref{prop:equiv!}. To check Assumption 6.4 we need to exhibit several super-harmonic functions with the correct growth property. 
 In the case $x = x_ \crit$, recalling that the limit should be the ssMt $ \boldsymbol{ \mathcal{T}}_{3/2}$, the two roots of the cumulant function are $\omega_- = 2$ and $\omega_+ = 3$ (see Example 3.9 in \cite{bertoin2024self}). In the case $x < x_ \crit$, for the Brownian CRT $ \boldsymbol{ \mathcal{T}}_{1/2}$, we have $\omega_- = 1$ and $\omega_+ = + \infty$ (see Example 3.9 in \cite{bertoin2024self}). 
Thus in both cases, we shall verify Assumption 6.4 of \cite{bertoin2024self}   
 with $ \gamma_0 = \beta^{ x}$ and $\gamma_1 = \beta^{ x} - \frac{1}{2}$ so that we indeed have $ \omega_- = \gamma_1 < \gamma_0 < \omega_+$. \\
First, the harmonic function $\phi_1$ satisfying $\phi_1(n) \approx n^{\gamma_1}$ there is provided by the function 
$$  \phi_1(p) :=  \frac{\widetilde{W}^{\circ,x}_{p}}{ \widetilde{W}_{p}^x} = \mathbb{E}_p[ \mathrm{Vol}^\circ].$$
On the one hand, by \eqref{eq:biasing} one has $  \phi_1(p) \threesim p^{\beta^{ x} - \frac{1}{2}}$. On the other hand, the Markov property of the label tree and the fact that $\phi_1(p)=\mathbb{E}_p[ \mathrm{Vol}^\circ]$ makes it clear that this function is super-harmonic for the multitype Bienaym\'e--Galton--Watson process (it is even in fact harmonic at all points except $p = \K$). Second, for $\phi_0(p)$ we shall take 
$$ \phi_0(p) := \frac{1}{\widetilde{W}_{p}^x}.$$
By \eqref{eq:tailnu} we have $\phi_0(p) \approx p^{\beta^x}$. Let us check now that $\phi_0$ is super-harmonic for the Bienaym\'e--Galton--Watson process using the same calculations as in the proof of Proposition \ref{prop:llprw}: For $p \geq 0$ we have
  \begin{eqnarray*}  &&\sum_{k\geq 0}\sum_{p_1, ..., p_k} \pi_{p}^x(p_{1}, ... ,p_{k}) \sum_{j=0}^k \frac{1}{ \widetilde{W}_{p_j}^x} \\
  &\underset{ \mathrm{exch.}}{=}& \sum_{k\geq 0} (k+1) \sum_{q, p_1, ..., p_k} \pi_{p}^x(p +q, ... ,p_{k}) \frac{1}{ \widetilde{W}_{p_0}^x} \\
    &\underset{ \mathrm{Prop.\  }\ref{prop:GW}}{=}& x \cdot \sum_{\substack{c, k , q \geq 0} } (k+1) \sum_{ \substack{ (s_0, \dots, s_k) \\ \forall i , 0 \leq s_i \leq p_i \\  \sum_{i = 1}^{k} p_i - \sum_{i = 0}^{k}  s_i+ c = -q}} w_{c, k+1, (s_0, \ldots, s_k)} y^{-\sum_i s_i} y^c  \cdot \frac{  \widetilde{W}_{p_1}^x \ldots \ldots \widetilde{W}_{p_k}^x}{ \widetilde{W}_p^x}\\
    &\underset{ \mathrm{Def. \ } \ref{def:nuhat}}{=}&  \frac{1}{ \widetilde{W}_p^x} \left(\sum_{\substack{c, k , q \geq 0} }  \sum_{ \substack{ (s_0, \dots, s_k) \\ (p_1, ... , p_k)}} \hat{\nu}^x(q ; (s_0, ... , s_k), (p_1, ... , p_k))  \mathbf{1}_{p+q \geq s_0} \right). \end{eqnarray*} Since the parenthesis is less than one, we deduce that $\phi_0$ is super-harmonic. More precisely, since $s_0 \leq \K$, the parenthesis is sandwiched between $ 1-\nu^x((-\infty,-p])$ and $ 1-\nu^x([-\infty,-p+ 2 \K])$, both being of order $ 1- \mathrm{Cst}\cdot p^{-(\beta^{ x} -1)}$ by  \eqref{eq:tailnu}. We deduce that the condition on $\phi_0$  required by \cite[Assumption 6.4]{bertoin2024self} is granted. 
Finally,  Assumption 6.3 (with the locally largest exploration as selection rule) is enforced by the arguments developed in Section \ref{sec:lampertistable}. More precisely, Lemma \ref{lem:scalingLLbertoin} shows that the decoration-reproduction process, stopped at the first time $S$ drops below level $\delta p$ converge towards the continuous decoration-reproduction process used to build $  \boldsymbol{\mathcal{T}}_{\beta^x - 1}$. The stronger Assumption 6.3 is then implied by this convergence and the Assumption 6.4 using  \cite[Lemma 6.23]{bertoin2024self}. 
\end{proof}

\subsection{$\alpha= 5/2$ : Proof of Theorem \ref{thm:5/2}}
Recall \eqref{eq:BMJtail} and let us first explain how one should interpret the exponent $\alpha$ probabilistically in order to compute it using our random walk estimates. The exponent $\alpha$ can also be written as:
$$ \mathbb{P}_0^{x_ \crit} \left( \mathrm{Vol}^ \bullet = n\right) = \frac{1}{ W_0^{x_ \crit}} [x^n] W_0^x = \frac{1}{ \widetilde{W}_0^{x_ \crit}} [x^n] \widetilde{W}_0^x \ \ \approx \ \  n^{ - \alpha}$$
For this reason, we fix $x = x_{ \crit}$ in the rest of the paper. We shall use the easier tail estimate
$$ \mathbb{P}_{0}( \mathrm{Vol}^{\bullet} \geq n) \approx n^{-\alpha+1}.$$
We will then transfert this estimate back on $ \mathrm{Vol}^\circ$ to be able to use our pointed measure $ \mathbb{P}^\circ$. To do that, we use Lemma \ref{lem:volcompa} to deduce that the exponent $\alpha$ is also recoverable from the asymptotics $ \mathbb{P}_{0}( \mathrm{Vol}^{\circ} \geq n) \threesim n^{-\alpha+1}$ or equivalently after biasing by $ \mathrm{Vol}^\circ$ (see \eqref{eq:biasing}) that 
   \begin{eqnarray}   \label{eq:usefulproba}\mathbb{P}_{0}^\circ( \mathrm{Vol}^{\circ} \geq n) \threesim n^{-\alpha+2}.  \end{eqnarray}
The idea is that to get $ \mathrm{Vol}^\circ \geq n$ under $ \mathbb{P}^\circ_0$, the most likely scenario is that the process $(S)$ along the pointed branch survives for time $n^{3/4}$ so that it reaches values of order $ \sqrt{n}$ and it is likely to perform a negative jump of order $ \sqrt{n}$. When doing so, the (essentially unique) tree attached to this jump starts from a label of order $ \sqrt{n}$ and so has volume larger than $n$ by Theorem \ref{thm:scaling}, see Figure \ref{fig:scenario}. Let us proceed. \medskip  
\begin{figure}[!h]
 \begin{center}
 \includegraphics[width=14cm]{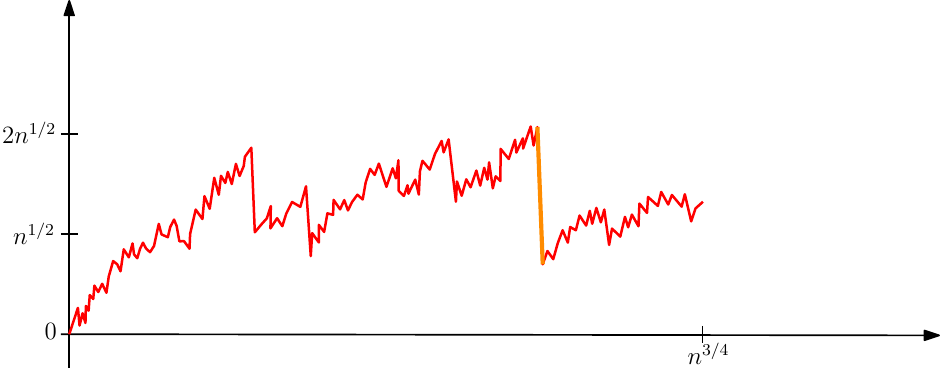}
 \caption{\label{fig:scenario} Illustration of the event $ \mathcal{E}_n$ in \eqref{def:Ep} giving the most likely scenario to reach $ \mathrm{Vol}^\circ \geq n$ under the law $ \mathbb{P}_0^\circ$: a large negative jump of size $ \sqrt{n}$ (in orange) yields a dangling tree of size $\geq n$ with positive probability.}
 \end{center}
 \end{figure}

\textsc{Lower bound.} Suppose that $n^{3/4}$ is an integer to simplify notation and let us consider the event   \begin{eqnarray} \mathcal{E}_n := \left\{S_i \geq \K, \forall 0 \leq i \leq n^{3/4} \ \& \ S_n \in [ \sqrt{n}, 2\sqrt{n}] \ \& \  \exists j \leq n^{3/4}, \ \Delta S_j \leq - \sqrt{n}\right\}.   \label{def:Ep}\end{eqnarray}
First, recall by \eqref{eq:VatVach} that $ \mathbb{P}_\K^{\RW}( S_i \geq \K, \forall 0 \leq i \leq n^{3/4}) \approx n^{-1/4}$. The conditional law of $(S)$ on this event is studied in  \cite{doney1985conditional} and proved to converge in the scaling limit towards the $3/2$-stable L\'evy process conditioned to survive. In particular, it holds that $ \mathbb{P}_\K^\RW( \mathcal{E}_n \mid S_i \geq \K, \forall 0 \leq i \leq n^{3/4}) > c >0$ for some constant $c >0$. Second, conditionally on $ \mathcal{E}_n$ and on the process $(S)$, every large negative jump of order $ \sqrt{n}$ of $(S)$ yields to at least one particle $Y_i^j\geq \sqrt{n}/ (2 \K)$ for $n$ large enough. By the convergence of the measure part\footnote{The reader afraid of our Theorem \ref{thm:scaling} may use instead Lemma \ref{lem:ui} and change $n$ for $c n$ for some small $c>0$.} in Theorem \ref{thm:scaling} we deduce that the $\circ$-volume of the tree started from label $ Y_i^j \geq \sqrt{n}/(2 \K)$ is larger than $n$ with a probability bounded away from $0$, in particular $ \mathbb{P}_\K^\circ( \mathrm{Vol}^\circ \geq n) \gtrsim \mathbb{P}_\K^\circ( \mathcal{E}_n)$. We can then use our pointed Key formula to deduce that 
$$ \mathbb{P}_\K^\circ( \mathrm{Vol}^\circ \geq n) \ \ \gtrsim \ \  \mathbb{P}_\K^\circ \left( \mathcal{E}_n\right) \underset{\mathrm{Prop.\ \ref{prop:llprw}}}{=} \mathbb{E}_\K^{\RW}\left[  \frac{ \widetilde{W}_{S_n}^\circ}{\widetilde{W}_{\K}^\circ} \mathbbm{1}_{ \mathcal{E}_n}\right] \underset{ \mathrm{Lem.\  \ref{lem:wtildepoint}} }{\gtrsim}  {(\sqrt{n})^{-1/2}}\cdot  n^{-1/4} \ \ \gtrsim n^{-1/2}. $$ By transferring the estimate to $ \mathbb{P}_0^\circ$ using aperiodicity, this proves the bound $\alpha \leq \frac{5}{2}$ in \eqref{eq:usefulproba}.\bigskip

\textsc{Upper bound.} To prove the upper bound, one shall establish that the above scenario is the most likely to reach $ \mathrm{Vol}^\circ \geq n$. More precisely, we will show that the event $  \mathrm{Vol}^\circ \geq n$ is unlikely if $(S)$ does not reach values of order $ \sqrt{n}$. We shall first establish that 
  \begin{eqnarray} \label{eq:allerplushaut}\mathbb{P}_\K^{ \circ} \left(\tau_{\geq \sqrt{n}} < \infty\right) \ \ \lesssim \ \  n^{-1/2}.  \end{eqnarray}
To see this, notice first that by the scaling limit result of Lemma \ref{lem:scalinglimit} we have for some $c>0$
$$ \inf_{p \geq \sqrt{n}} \mathbb{P}_{p}^\RW(\tau_{\K} \geq (\sqrt{n})^{3/2}) > c > 0,$$ so that applying Markov property at $\tau_{\geq  \sqrt{n}}$ we have 
$$ \mathbb{P}_\K^{ \RW}( \tau_{\geq \sqrt{n}}< \tau_{\K}) \leq c \cdot \mathbb{P}_\K^ \RW(\tau_{\K} \geq n^{3/4}) \underset{ \eqref{eq:VatVach}}{\approx} n^{-1/4}.$$
By the pointed Key formula (Proposition \ref{prop:llprw}) we deduce that 
$$ \mathbb{P}_\K^\circ( \tau_{ \geq \sqrt{n}} < \tau_ \K) \lesssim (\sqrt{n})^{-1/2} \cdot n^{-1/4}  = n^{-1/2}.$$ 
By aperiodicity, the same estimate is valid even under $ \mathbb{P}_r^\circ$ for $ r \in [\K, 2\K]$. Recall then from \eqref{eq:stotau} that $\tau$ is reached before a geometric number of returns to $[0, \K]$ so that we have the same tail 
$$  \mathbb{P}_{\K}^{\circ}(\tau_{\geq  \sqrt{n}} < \infty) =  \mathbb{P}_{\K}^{\circ}(\tau_{\geq  \sqrt{n}} < \tau) \ \ \lesssim n^{-1/2}.$$
 Second, we fix $ \delta>0$ and shall estimate the expectation of $ \mathrm{Vol}^\circ$ on the event $\tau_{\geq \delta \sqrt{n}} = \infty$. To do this, we proceed as in the proof of Lemma \ref{lem:ui} and get for $ q > \K$ (large) that
  \begin{eqnarray} \mathbb{E}_\K^{ \circ}\left[ \mathbbm{1}_{\tau_{\geq q}= \infty } \mathrm{Vol}^\circ \right] &\lesssim& \mathbb{E}_\K^ \circ \left[ \mathbbm{1}_{\tau_{\geq q} = \infty} \left( \tau + \sum_{i =1}^{ \tau}\sum_{j \geq 1} \mathbb{E}_{Y_j^i} \left[ \mathrm{Vol}^ \circ \right] \right)\right]  \nonumber \\
  & \underset{  \mathrm{below\ }\eqref{eq:biasing}}{\lesssim}  &  \mathbb{E}_\K^\circ\left[ \mathbbm{1}_{\tau_{\geq q} = \infty} \cdot \tau \left(1 +  \K \cdot  \sup_{r \leq q + \K} \mathbb{E}_r^\circ \left[ \big( \sum_{j \geq 1}Y_1^j\big)^{2} \right] \right) \right] \nonumber \\
  & \lesssim & \mathbb{E}_\K^\circ\left[\mathbbm{1}_{\tau_{\geq q} = \infty}\cdot \tau\right] \cdot \sum_{\ell=1}^q \frac{\sqrt{q}}{ \sqrt{q-\ell}} \ell^{-5/2} \cdot \ell^2 \\ &\approx&  \mathbb{E}_\K^\circ\left[\mathbbm{1}_{\tau_{\geq q} = \infty}\cdot \tau\right] \cdot  \sqrt{q}. \label{eq:evaluexpect}
 \end{eqnarray}
To evaluate  $\mathbb{E}_\K^\circ[\mathbbm{1}_{\tau_{\geq q} = \infty}\cdot \tau]$ we split into the possible scale reached by $\sup (S)$ and write $ \mathcal{S}_i$ the event $\{\tau_{\geq 2^{i+1}} = \infty \} \cap \{ \tau_{\geq 2^i} < \infty\}$. We then evaluate  for $A \geq 2$ 
$$\mathbb{P}_\K^\circ\left( \mathcal{S}_i \mbox{ and } \tau \geq 2A (2^i)^{3/2}\right) \leq \mathbb{P}_\K^\circ\left( \mathcal{S}_i \mbox{ and } \left( \tau_{\geq 2^{i}} \geq A( 2^i)^{3/2}  \mbox{ or }\tau - \tau_{\geq 2^{i}} \geq A( 2^i)^{3/2} \right) \right).$$
In the first case, on the event $ \mathcal{S}_i$ and $\tau - \tau_{\geq 2^{i}} \geq A( 2^i)^{3/2}$ applying the Markov property at time $\tau_{\geq 2^i}$ we deduce that 
$$ \mathbb{P}_\K^\circ\left( \mathcal{S}_i  \mbox{ and }\tau - \tau_{\geq 2^{i}} \geq A( 2^i)^{3/2} \right) \leq \mathbb{E}_\K^\circ \left[  \mathbbm{1}_{\tau_{\geq 2^{i}} < \infty}\cdot  \mathbb{P}_{S_{\tau_{\geq 2^{i}}}}^\circ\left(\tau \geq A( 2^i)^{3/2} \mbox{ and } \tau_{\geq 2^{i+1}} = \infty\right)\right].$$
By an argument similar to that used in the proof of Lemma \ref{lem:tailhitting}, on the event $\tau_{\geq 2^{i+1}} = \infty $ the probability that $ \tau \geq A \cdot (2^{i})^{3/2}$ is less than $ \mathrm{e}^{-c\cdot A}$ for some $c>0$ uniformly in $i \geq 0$ since for each time scale of length $(2^{i})^{3/2}$ the process has a probability bounded away from $0$ to be absorbed by Lemma \ref{lem:absorption_pointed} (and there are $A$ such time scales). We deduce that 
$$\mathbb{P}_\K^\circ\left( \mathcal{S}_i  \mbox{ and }\tau - \tau_{\geq 2^{i}} \geq A( 2^i)^{3/2} \right) \leq  \mathbb{P}_\K^\circ(\tau_{\geq 2^{i}} < \infty) \cdot \mathrm{e}^{-c \cdot A} \underset{ \eqref{eq:allerplushaut}}{\lesssim}  \frac{1}{ 2^i} \mathrm{e}^{- c \cdot A}. $$
In the second case, on the event $ \mathcal{S}_i$ and $\tau_{\geq 2^{i}} \geq k( 2^i)^{3/2}$, we use the pointed Key formula  and come back to a random walk estimate under $ \mathbb{P}_\K^\RW$: 
 \begin{eqnarray*}\mathbb{P}_\K^\circ\left( \mathcal{S}_i  \mbox{ and }  \tau_{\geq 2^{i}} \geq A( 2^i)^{3/2}  \right) &\lesssim &  \frac{1}{ \sqrt{2^i}} \mathbb{P}_\K^\RW \left( \mathcal{S}_i  \mbox{ and } A(2^i)^{3/2} \leq \tau_{\geq 2^{i}} < \tau_{-1} \right)\\
 & \leq & \frac{1}{ \sqrt{2^i}} \cdot \mathbb{P}_\K^\RW(\tau_{-1} \geq (2^{i})^{3/2} ) \cdot \sup_{r \leq 2^i} \mathbb{P}_r^\RW \left( (A-1)( 2^i)^{3/2}  \leq  \tau_{\geq 2^{i}} \leq \tau_{-1}\right)\\
 & \underset{  \eqref{eq:VatVach}}{\leq} & \frac{1}{ \sqrt{2^i}} \cdot \frac{1}{ \sqrt{2 ^i}}\cdot  \mathrm{e}^{-c (A-1)},  \end{eqnarray*} by the same reasoning as above. We deduce that $ \mathbb{E}_\K^\circ[\tau \mathbbm{1}_ {\mathcal{S}_i}] \lesssim (2^i)^{3/2-1} = \sqrt{2^i}$, and finally by summing over all $0 \leq i \leq \log_2(q)$ that $ \mathbb{E}_\K^\circ[ \tau  \mathbbm{1}_{\tau_{\geq q} = \infty}] \lesssim \sqrt{q}.$  Coming back into \eqref{eq:evaluexpect} we finally deduce that 
 $$ \mathbb{E}_\K^{ \circ}\left[ \mathbbm{1}_{\tau_{\geq q} = \infty} \mathrm{Vol}^\circ \right] \lesssim q,$$
 so that Markov's inequality yields 
 $$ \mathbb{P}_\K^{ \circ}\left(  \mathrm{Vol}^\circ \geq n \mbox{ and } \tau_{\geq \delta  \sqrt{n}} = \infty \right)Ê\ \  \lesssim  \frac{\delta  \sqrt{n}}{n} = \frac{\delta}{ \sqrt{n}}.$$
 Combined with \eqref{eq:allerplushaut} this gives the matching lower bound on \eqref{eq:usefulproba} and the matching bound $\alpha \geq \frac{5}{2}$ (once transferred to $ \mathbb{P}_0^\circ$ by aperiodicity). \qed

\section{Comments} \label{sec:comments}
We end this work with some comments and possible generalizations.\medskip

The core of our approach is clearly the link with the $\nu$-random walk, in particular via the Key formula. In the case when $\K=1$, the $\nu$-random walk is skip-free ascending. Fluctuation theory for skip-free walks is much simpler than for general walks and this may again explain the particular role played by $\K=1$ in the theory. Similarly, in absence of branching, that is for linear equation \eqref{eq:1} then the step distribution $\nu$ has a finite support making the analysis much less puzzling, see \cite{lalley1995return,lalley2004algebraic}. The link with random walk persists in more challenging cases: one could try to remove the bounded degree assumptions and allow analytic functions in \eqref{eq:1} or even allow heavy tail and classify all possible universality classes. We also hope that the link with random walk may yield new information both in the discrete and in the continuous setting, for example shedding light on the beautiful result of Chapuy \cite{chapuy2024scaling} obtained via analytic methods.

\bibliographystyle{siam}
\bibliography{bibli}

\end{document}